\newtheorem{Thm}{Theorem}[section]
\newtheorem{Cor}{Corollary}[section]
\newtheorem{Lem}{Lemma}[section]
\newtheorem{Pro}{Proposition}[section]
\newtheorem{Rek}{Remark}[section]
\newtheorem{Def}{Definition}[section]
\newcommand{\R}{\mathbb{R}}
\numberwithin{equation}{section}
\newenvironment{proof}{\medskip\par\noindent{\bf Proof\/}:\quad}{\qquad
	\raisebox{-0.5mm}{\rule{1.5mm}{1mm}}\vspace{6pt}}
\providecommand{\qed}{\hfill\ensuremath{\square}}
\begin{document}
	
	\title{Normalized solutions for a class of fractional Choquard equations with mixed nonlinearities}
	
	\author{Shaoxiong Chen \quad Zhipeng Yang\thanks{Corresponding author: yangzhipeng326@163.com} \quad Xi Zhang\\[2pt]
		\small Department of Mathematics, Yunnan Normal University, Kunming, China\\
	}
	
	\date{}
	\maketitle
	
	\begin{abstract}
		In this paper we study the following fractional Choquard equation with mixed nonlinearities:
		\[
		\left\{
		\begin{array}{l}
			(-\Delta)^s u = \lambda u + \alpha \left( I_\mu * |u|^q \right) |u|^{q-2} u + \left( I_\mu * |u|^p \right) |u|^{p-2} u, \quad x \in \mathbb{R}^N, \\[4pt]
			\displaystyle \int_{\mathbb{R}^N} |u|^2 \,\mathrm{d}x = c^2 > 0.
		\end{array}
		\right.
		\]
		Here $N > 2s$, $s \in (0,1)$, $\mu \in (0, N)$, and the exponents satisfy
		\[
		\frac{2N - \mu}{N} < q < p < \frac{2N - \mu}{N - 2s},
		\]
		while $\alpha > 0$ is a sufficiently small parameter, $\lambda \in \mathbb{R}$ is the Lagrange multiplier associated with the mass constraint, and $I_\mu$ denotes the Riesz potential. We establish existence and multiplicity results for normalized solutions and, in addition, prove the existence of ground state normalized solutions for $\alpha$ in a suitable range.

		\par
		\smallskip
		\noindent {\bf  Keywords}: Fractional Choquard equation, Critical growth, Normalized solutions.
		
		\smallskip
		\noindent {\bf MSC2020}: 35A15, 35B40, 35J20.
	\end{abstract}

%\tableofcontents

\section{Introduction and main results}

In this paper, we aim to study the existence of multiple normalized solutions for the nonlinear fractional Choquard equation
\begin{equation}\label{eq1.1}
	\left\{
	\begin{array}{l}
		(-\Delta)^{s} u = \lambda u + \alpha\bigl(I_\mu * |u|^q\bigr)|u|^{q-2} u + \bigl(I_\mu * |u|^p\bigr)|u|^{p-2} u \quad \text{in } \mathbb{R}^N,\\[1mm]
		\displaystyle\int_{\mathbb{R}^N} |u|^2\,dx = c^2,
	\end{array}
	\right.
\end{equation}
where \(s\in(0,1)\), \(N>2s\), \(0<\mu<N\), \(c>0\), and
\[
\frac{2N-\mu}{N}<q<p< 2_{\mu,s}^*:=\frac{2N-\mu}{N-2s}.
\]
Here \(\alpha>0\) is a suitably small real parameter, \(\lambda\in\mathbb{R}\) is the Lagrange multiplier associated with the mass constraint, and \(I_\mu\) is the Riesz potential. More precisely, for each \(x\in\mathbb{R}^N\setminus\{0\}\),
\[
I_\mu(x) = \frac{A_{N,\mu}}{|x|^\mu}, 
\qquad 
A_{N,\mu} = \frac{\Gamma\bigl(\frac{\mu}{2}\bigr)}{2^{\,N-\mu}\pi^{N/2}\Gamma\bigl(\frac{N-\mu}{2}\bigr)},
\]
and
\[
(I_\mu * |u|^t)(x)
=\int_{\mathbb{R}^N}\frac{|u(y)|^t}{|x-y|^\mu}\,dy,
\qquad t\in\{p,q\}.
\]
Alternatively, the fractional Laplacian can be written as
\[
\begin{aligned}
	(-\Delta)^s u(x) 
	&= C_{N,s}\,\mathrm{P.V.}\int_{\mathbb{R}^N}\frac{u(x)-u(y)}{|x-y|^{N+2s}}\,dy \\
	&= -\frac{C_{N,s}}{2}\int_{\mathbb{R}^N}\frac{u(x+y)+u(x-y)-2u(x)}{|y|^{N+2s}}\,dy,
	\qquad u\in\mathcal{S}(\mathbb{R}^N).
\end{aligned}
\]
where \(\mathcal{S}(\mathbb{R}^N)\) denotes the Schwartz space of rapidly decaying smooth functions, P.V. stands for the principal value, and \(C_{N,s}>0\) is a normalization constant.

As a nonlocal counterpart of the classical Laplacian in the framework of nonlinear Schrödinger equations, the operator \((-\Delta)^s\) with \(s\in(0,1)\) appearing in \eqref{eq1.1} was introduced by Laskin \cite{Laskin2000phys} in the context of fractional quantum mechanics, where Brownian trajectories are replaced by Lévy flights in Feynman's path integral formalism. The fractional Laplacian arises naturally in several theoretical and applied contexts, including biology, chemistry, and finance; see, for instance, \cite{2011changADV,2004contchapman,2013Kaymathphys,2000metzlerphys,2007Slivestreappl} and the references therein.

From a physical point of view, normalized solutions, namely solutions with prescribed \(L^2\)-norm, play a central role in nonlinear dispersive models. In the last two decades, normalized solutions of nonlinear elliptic and Schrödinger-type equations have attracted considerable attention, mainly because the \(L^2\)-norm is conserved along the associated evolution flow and because variational characterizations of such solutions are closely related to their orbital stability or instability. A systematic study of normalized solutions was initiated by Jeanjean in \cite{1997jeananal}, where he considered semilinear elliptic equations under the mass constraint
\[
S_c = \Bigl\{u\in H^1(\mathbb{R}^N): \int_{\mathbb{R}^N}|u|^2\,dx = c^2\Bigr\}.
\]
More precisely, Jeanjean studied the equation
\begin{equation}\label{eq1.2}
	\left\{
	\begin{array}{l}
		-\Delta u = \lambda u + |u|^{p-2}u \quad \text{in } \mathbb{R}^N,\quad u\in H^1(\mathbb{R}^N),\\[1mm]
		\displaystyle\int_{\mathbb{R}^N} |u|^2\,dx = c^2,
	\end{array}
	\right.
\end{equation}
where \(\lambda\in\mathbb{R}\) appears as a Lagrange multiplier. His approach is based on a suitable Pohozaev-type manifold and on the construction of bounded Palais--Smale sequences, leading to existence results for normalized solutions.

Later, Soave \cite{2020soavejde} investigated the combined effect of \(L^2\)-subcritical, \(L^2\)-critical, and \(L^2\)-supercritical power nonlinearities, which drastically affects the geometry of the energy functional. He considered, in particular, the problem
\begin{equation}\label{eq1.3}
	-\Delta u = \lambda u + |u|^{p-2}u + \alpha |u|^{q-2}u \quad \text{in } \mathbb{R}^N,
	\qquad \int_{\mathbb{R}^N} |u|^2\,dx = c^2,
\end{equation}
where \(2 < q \leq 2 + \frac{4}{N} \leq p < 2^* = \frac{2N}{N-2}\). Here \(q\) is \(L^2\)-subcritical or \(L^2\)-critical, while \(p\) is subcritical in the Sobolev sense. Among other results, Soave proved the existence of a ground state solution when \(2 < q < 2 + \frac{4}{N}\) and \(2 + \frac{4}{N} < p < 2^*\). In the same paper, the case \(2 < q < 2^* = p\) was also addressed: if \(q\in(2,2+\frac{4}{N})\), a ground state with negative energy was obtained, while for \(q\in(2+\frac{4}{N},2^*)\) a mountain-pass type solution with positive energy was constructed, together with conditions for the existence and nonexistence of normalized solutions when \(\lambda<0\). Subsequent extensions of \eqref{eq1.3} were obtained by Jeanjean--Jendrej--Le--Visciglia \cite{2022jeanmathappl} and Jeanjean--Le \cite{2022jeanmathann}, where several open questions raised in \cite{2020soavejde} were answered.

Equation \eqref{eq1.1} is of Choquard type, due to the presence of the nonlocal convolution terms \((I_\mu * |u|^q)|u|^{q-2}u\) and \((I_\mu * |u|^p)|u|^{p-2}u\).
In the fractional setting, Luo and Zhang \cite{2020luopde} studied the following fractional Schrödinger equation with combined local nonlinearities:
\begin{equation}\label{eq1.4}
	\left\{
	\begin{array}{l}
		(-\Delta)^s u = \lambda u + \mu |u|^{q-2}u + |u|^{p-2}u \quad \text{in } \mathbb{R}^N,\\[1mm]
		\displaystyle\int_{\mathbb{R}^N} |u|^2\,dx = a^2,\quad u\in H^s(\mathbb{R}^N),
	\end{array}
	\right.
\end{equation}
where \(s\in(0,1)\), \(2<q<p<2_s^*:=\frac{2N}{N-2s}\), and \(\mu>0\). They obtained existence and nonexistence results for normalized solutions of \eqref{eq1.4} in the case of combined subcritical nonlinearities. Later, Li and Zou \cite{2022zouadv} and Zhen and Zhang \cite{2022zhenRev} considered the critical case \(p=2_s^*\) and proved the existence and multiplicity of normalized solutions. For further results on normalized solutions of fractional Schrödinger equations we refer, for instance, to \cite{2021AppoJDE,2021CingolaniNonlinearity} and the references therein. Related results for fractional Schrödinger systems can be found in \cite{2022zuoAnalmath,2022zuomediterr,2022liuiMinimax}.

Yang \cite{2020YangPhys} considered the mixed local-nonlocal problem
\begin{equation}\label{eq1.5}
	\left\{
	\begin{array}{l}
		(-\Delta)^\sigma u = \lambda u + |u|^{q-2}u + \mu\bigl(I_\alpha * |u|^p\bigr)|u|^{p-2}u \quad \text{in } \mathbb{R}^N,\\[1mm]
		\displaystyle\int_{\mathbb{R}^N} |u|^2\,dx = a^2,
	\end{array}
	\right.
\end{equation}
where \(N\geq 2\), \(\sigma\in(0,1)\), \(\alpha\in(0,N)\), \(q\in\bigl(2+\frac{4\sigma}{N},\frac{2N}{N-2\sigma}\bigr]\), \(p\in\bigl[1+\frac{2\sigma+\alpha}{N},\frac{N+\alpha}{N-2\sigma}\bigr)\), \(a,\mu>0\). By a refined min--max scheme, it was shown that for suitable choices of the parameters the problem admits a mountain-pass type normalized solution \(\hat u_\mu\) associated with some \(\hat\lambda<0\). Moreover, \(\hat u_\mu\) is a ground state whenever \(p\leq \frac{q}{2}+\frac{\alpha}{N}\).

The HLS upper critical situation $p=2^*_{\alpha,s}$ has also attracted considerable attention. Lan, He and Meng \cite{LanHeMeng2023} investigated a critical fractional Choquard equation perturbed by a nonlocal term and established the existence of normalized solutions by combining sharp HLS inequalities with concentration-compactness arguments.
Yu et al.\ \cite{2023DCDSYU} investigated
\begin{equation}\label{eq1.6}
	\left\{
	\begin{array}{l}
		(-\Delta)^s u = \lambda u + \gamma (I_{\alpha}*|u|^{1+\frac{\alpha}{N}})|u|^{\frac{\alpha}{N}-1}u + \mu |u|^{q-2}u \quad \text{in } \mathbb{R}^N,\\[1mm]
		\displaystyle\int_{\mathbb{R}^N}|u|^2\,dx = a^2,
	\end{array}
	\right.
\end{equation}
where \(N\geq 3\), \(s\in(0,1)\), \(\alpha\in(0,N)\), \(a,\gamma,\mu>0\), and \(2<q\leq 2_s^*:=\frac{2N}{N-2s}\). They established nonexistence and existence results, as well as symmetry properties for normalized ground states. In the \(L^2\)-subcritical regime \(2<q<2+\frac{4s}{N}\), the existence of radially symmetric normalized ground states was proved without additional constraints. In the \(L^2\)-supercritical regime \(2+\frac{4s}{N}<q<2_s^*\), the authors constructed a homotopy-stable family of subsets to obtain a Palais--Smale sequence whose compactness yields normalized ground states. In the critical case \(q=2_s^*\), a subcritical approximation combined with detailed asymptotic analysis leads again to the existence of normalized ground states.

More recently, Chen et al. \cite{2025chenNoDEA} considered the fractional Choquard equation with external potential
\begin{equation}\label{eq1.7}
	\left\{
	\begin{array}{l}
		(-\Delta)^{s} u + V(\varepsilon x) u = \lambda u + \bigl(I_\alpha * |u|^q\bigr)|u|^{q-2}u + \bigl(I_\alpha * |u|^p\bigr)|u|^{p-2}u \quad \text{in } \mathbb{R}^N,\\[1mm]
		\displaystyle\int_{\mathbb{R}^N} |u|^2\,dx = a^2,
	\end{array}
	\right.
\end{equation}
and, by means of Lusternik--Schnirelmann category theory, proved the existence of normalized solutions and showed that the number of such solutions is related to the topology of the set where the potential \(V(x)\) attains its minimum. Later, they also \cite{ChenYangZhangPotentials2025} studied more general weighted Hartree nonlinearities of the form
	\[
	(-\Delta)^s u + V(x)u = \lambda u
	+ f(x)\bigl(I_\alpha*(f|u|^q)\bigr)|u|^{q-2}u
	+ g(x)\bigl(I_\alpha*(g|u|^p)\bigr)|u|^{p-2}u,
	\]
and established existence results for normalized solutions on the mass constraint by combining refined compactness and a careful use of the HLS inequality.

Motivated by the preceding developments and building mainly on the works
\cite{2025chenNoDEA,ChenYangZhangPotentials2025,2020soavejde}, we now turn to problem \eqref{eq1.1} and
address the existence of multiple normalized solutions. A key tool in our
analysis is the Gagliardo–Nirenberg inequality, and the exponent
\[
2+\frac{2s-\mu}{N}
\]
plays the role of the $L^2$–critical threshold for \eqref{eq1.1} (with respect
to the mass–preserving scaling). Moreover, we denote by
\[
2_{\mu,*}=\frac{2N-\mu}{N}, 
\qquad 
2_{\mu,s}^*=\frac{2N-\mu}{N-2s}
\]
the lower and upper Hardy–Littlewood–Sobolev critical exponents, respectively.
Accordingly, we distinguish the following seven regimes, depending on the
relative position of $p$ and $q$ with respect to these thresholds.

\medskip\noindent
Case I: 
\[
2_{\mu,*}<q<2+\frac{2s-\mu}{N}<p<2_{\mu,s}^*.
\]
Here $q$ is $L^2$–subcritical, while $p$ is $L^2$–supercritical and
Hardy–Littlewood–Sobolev (HLS) subcritical.

\medskip\noindent
Case II:
\[
2+\frac{2s-\mu}{N}=q<p<2_{\mu,s}^*.
\]
Here $q$ is $L^2$–critical, while $p$ is $L^2$–supercritical
and HLS–subcritical.

\medskip\noindent
Case III:
\[
2+\frac{2s-\mu}{N}<q<p<2_{\mu,s}^*.
\]
Here both $p$ and $q$ are $L^2$–supercritical and HLS–subcritical.

\medskip\noindent
Case IV:
\[
2_{\mu,*}<q<p\leq 2+\frac{2s-\mu}{N}.
\]
Here both $q$ and $p$ are $L^2$–subcritical, or $q$ is $L^2$–subcritical and
$p$ is $L^2$–critical.

\medskip
Before stating the main results, we fix the following constants:
\begin{equation}\label{eq1.8}
	\alpha_1
	=\left(
	\frac{1-q\gamma_{q,s}}
	{\gamma_{p,s}\bigl(p\gamma_{p,s}-q\gamma_{q,s}\bigr)\,
		C_p\,c^{2p(1-\gamma_{p,s})}}
	\right)^{\frac{1-q\gamma_{q,s}}{p\gamma_{p,s}-1}}\,
	\frac{p\gamma_{p,s}-1}{
		\gamma_{q,s}\bigl(p\gamma_{p,s}-q\gamma_{q,s}\bigr)\,
		C_q\,c^{2q(1-\gamma_{q,s})}}.
\end{equation}
\begin{equation}\label{eq1.9}
	\alpha_2
	=\frac{1}{c^{2q(1-\gamma_{q,s})}}\,
	\frac{q}{C_q}\frac{p\gamma_{p,s}-1}{p\gamma_{p,s}-q\gamma_{q,s}}
	\left(
	\frac{C_p c^{2p(1-\gamma_{p,s})}(p\gamma_{p,s}-q\gamma_{q,s})}
	{p(1-q\gamma_{q,s})}
	\right)^{\frac{1-q\gamma_{q,s}}{1-p\gamma_{p,s}}},
\end{equation}
where $C_p,C_q>0$ and $\gamma_{p,s},\gamma_{q,s}\in(0,1)$ are the constants
appearing in the Gagliardo–Nirenberg inequalities (Lemma \ref{Lem2.2}), and $S_{HL}$ denotes the
sharp HLS constant.

\medskip
We can now state our main results.

\begin{Thm}\label{Thm1.1}
	Let 
	\[
	2_{\mu,*}<q<2+\frac{2s-\mu}{N}<p<2_{\mu,s}^*
	\]
	and 
	\[
	0<\alpha<\min\{\alpha_1,\alpha_2\},
	\]
	where $\alpha_1$ and $\alpha_2$ are given in \eqref{eq1.9} and \eqref{eq1.10}.
	Then the following hold.
	
	\noindent{\rm (1)} The constrained functional $\bigl.J_\alpha\bigr|_{S_c}$ has a
	critical point $u_{c,\alpha,\mathrm{loc}}\in S_c$ such that
	\[
	J_\alpha(u_{c,\alpha,\mathrm{loc}})=m_1(c,\alpha)<0
	\]
	for some Lagrange multiplier $\lambda_{c,\alpha,\mathrm{loc}}<0$. Moreover,
	$u_{c,\alpha,\mathrm{loc}}$ is a local minimizer of $J_\alpha$ on
	\[
	D_{t_0}=\{u\in S_c:\ \|u\|<t_0\}
	\]
	for some $t_0>0$. In particular, $u_{c,\alpha,\mathrm{loc}}$ is a ground state
	of $\bigl.J_\alpha\bigr|_{S_c}$, and any ground state of
	$\bigl.J_\alpha\bigr|_{S_c}$ is a local minimizer of $J_\alpha$ on $D_{t_0}$.
	Furthermore, $u_{c,\alpha,\mathrm{loc}}$ is positive and radially decreasing.
	
	\noindent{\rm (2)} There exists a second critical point
	$u_{c,\alpha,m}\in S_c$ of $\bigl.J_\alpha\bigr|_{S_c}$ such that
	\[
	J_\alpha(u_{c,\alpha,m})=\varsigma(c,\alpha)>0
	\]
	for some Lagrange multiplier $\lambda_{c,\alpha,m}<0$. This solution is also
	positive and radially decreasing.
	
	\noindent{\rm (3)} If $u_{c,\alpha,\mathrm{loc}}\in S_c$ is a ground state of
	$\bigl.J_\alpha\bigr|_{S_c}$, then
	\[
	m_1(c,\alpha)\to 0^{-}
	\quad\text{and}\quad
	\|u_{c,\alpha,\mathrm{loc}}\|\to 0
	\quad\text{as }\alpha\to 0^{+}.
	\]
	
	\noindent{\rm (4)} One has
	\[
	\varsigma(c,\alpha)\to m_1(c,0)
	\quad\text{and}\quad
	u_{c,\alpha,m}\to u_0 \text{ in }H^s(\mathbb{R}^N)
	\quad\text{as }\alpha\to 0^{+},
	\]
	where $m_1(c,0)=J_0(u_0)$ and $u_0$ is the ground state solution of
	$\bigl.J_0\bigr|_{S_c}$.
\end{Thm}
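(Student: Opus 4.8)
The plan is to treat Case~I as a two--solution problem on the constraint $S_c$: a negative--energy local minimizer sitting in a potential well at small Gagliardo seminorm, and a positive--energy mountain--pass critical point separated from that well by a barrier. Throughout I write the energy as
\[
J_\alpha(u)=\frac12\|u\|^2-\frac{\alpha}{2q}\int_{\mathbb{R}^N}(I_\mu*|u|^q)|u|^q\,dx-\frac1{2p}\int_{\mathbb{R}^N}(I_\mu*|u|^p)|u|^p\,dx,
\]
where $\|u\|=\|(-\Delta)^{s/2}u\|_2$. Applying the Choquard Gagliardo--Nirenberg inequality of Lemma~\ref{Lem2.2} together with $\|u\|_2=c$ gives, for $u\in S_c$, the bound $J_\alpha(u)\ge h(\|u\|)$ with
\[
h(t):=\frac12 t^2-\frac{\alpha C_q c^{2q(1-\gamma_{q,s})}}{2q}\,t^{2q\gamma_{q,s}}-\frac{C_p c^{2p(1-\gamma_{p,s})}}{2p}\,t^{2p\gamma_{p,s}},
\qquad 2q\gamma_{q,s}<2<2p\gamma_{p,s}.
\]
These exponents force $h(t)<0$ for small $t>0$ and $h(t)\to-\infty$ as $t\to\infty$; the role of the hypothesis $0<\alpha<\min\{\alpha_1,\alpha_2\}$ is precisely to guarantee that $h$ attains a strictly positive local maximum, so $h$ has a local minimum at some $t_-$ and a local maximum at some $t_+>t_-$ with $h(t_+)>0$. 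I would fix $t_0\in(t_-,t_+)$ so that $\{\|u\|=t_0\}$ lies on the positive barrier, which defines $D_{t_0}$.

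For part~(1) I set $m_1(c,\alpha)=\inf_{D_{t_0}}J_\alpha$ and verify $m_1<0$ by testing on a function with $\|u\|=t_-$. Since $J_\alpha\ge h(t_0)>0$ on $\{\|u\|=t_0\}$ while $h$ is bounded below on $[0,t_0]$, any minimizing sequence stays in a fixed ball and away from $\{\|u\|=t_0\}$; replacing each term by its Schwarz rearrangement (which does not increase $J_\alpha$ and preserves $S_c$) I may assume the sequence is radial and radially decreasing. Because $2_{\mu,*}<q<p<2_{\mu,s}^*$, the Lebesgue exponents $\tfrac{2Nq}{2N-\mu}$ and $\tfrac{2Np}{2N-\mu}$ lie strictly between $2$ and $2_s^*$, so the compact embedding of the radial space into these $L^r$ spaces, combined with the Hardy--Littlewood--Sobolev inequality, yields strong convergence of the two convolution terms. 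Hence the weak limit $u_{c,\alpha,\mathrm{loc}}$ realizes $m_1$ and lies in the interior of $D_{t_0}$; as an interior constrained minimizer it solves \eqref{eq1.1} with a multiplier $\lambda_{c,\alpha,\mathrm{loc}}$. Combining the constraint (Nehari) identity with the fractional Choquard Pohozaev identity and using $J_\alpha(u_{c,\alpha,\mathrm{loc}})<0$ together with $p\gamma_{p,s}>1>q\gamma_{q,s}$ pins down $\lambda_{c,\alpha,\mathrm{loc}}<0$. That it is a ground state follows because every critical point of negative energy must lie in $D_{t_0}$ (the barrier $h\ge 0$ separates the well from larger norms), so $m_1$ is the least critical value; positivity and radial monotonicity come from the equality analysis in the rearrangement (P\'olya--Szeg\H{o}) inequalities and the strong maximum principle for $(-\Delta)^s$.

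For part~(2), the barrier $h(t_+)>0$ and the fact that $J_\alpha(t\star u)\to-\infty$ along the $L^2$--preserving dilation $t\star u=e^{Nt/2}u(e^t\cdot)$ furnish a mountain--pass geometry on $S_c$ with level $\varsigma(c,\alpha)\ge h(t_+)>0$. The crucial difficulty, and the main obstacle of the whole theorem, is to produce a \emph{bounded} Palais--Smale sequence at level $\varsigma$ and to recover compactness on the noncompact space $\mathbb{R}^N$. Here I would follow Jeanjean's device: pass to the augmented functional $\widetilde J_\alpha(u,t)=J_\alpha(t\star u)$ on $S_c\times\mathbb{R}$, whose mountain--pass value coincides with $\varsigma(c,\alpha)$; a minimax sequence for $\widetilde J_\alpha$ yields a Palais--Smale sequence $(u_n)$ for $J_\alpha|_{S_c}$ that additionally satisfies the Pohozaev identity asymptotically, and in the HLS--subcritical range $p<2_{\mu,s}^*$ this extra relation forces $(u_n)$ to be bounded in $H^s$. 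Restricting to radial functions to regain compactness of the subcritical Choquard terms and using a Brezis--Lieb splitting for the convolution nonlinearities, I would show the multipliers converge, $\lambda_n\to\lambda_{c,\alpha,m}$; the sign $\lambda_{c,\alpha,m}<0$ (read off from the limiting Nehari--Pohozaev relations together with $\varsigma>0$) rules out escape of mass to infinity and upgrades weak to strong convergence, giving the second critical point $u_{c,\alpha,m}\in S_c$ with $J_\alpha(u_{c,\alpha,m})=\varsigma(c,\alpha)>0$. Positivity and monotonicity follow as in part~(1).

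For parts~(3)--(4) I argue by perturbation in $\alpha$. For~(3) I sandwich $m_1(c,\alpha)$: the lower bound $m_1\ge\min_{[0,t_0]}h$ and an upper bound obtained by evaluating $J_\alpha$ on a fixed normalized function dilated into the well show $m_1(c,\alpha)\to0^-$, and feeding this into $J_\alpha(u_{c,\alpha,\mathrm{loc}})=m_1$ with the Gagliardo--Nirenberg lower bound forces $\|u_{c,\alpha,\mathrm{loc}}\|\to0$ as $\alpha\to0^+$. For~(4) note that $J_\alpha(u)=J_0(u)-\tfrac{\alpha}{2q}\int(I_\mu*|u|^q)|u|^q\,dx\le J_0(u)$, so minimaxing gives $\varsigma(c,\alpha)\le m_1(c,0)$, the ground state (mountain--pass) level of $J_0|_{S_c}$, while $J_\alpha\to J_0$ uniformly on bounded sets yields the reverse inequality $\liminf_{\alpha\to0^+}\varsigma(c,\alpha)\ge m_1(c,0)$; hence $\varsigma(c,\alpha)\to m_1(c,0)$. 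Finally, the uniform $H^s$ bounds on $u_{c,\alpha,m}$, the compactness established in part~(2) (uniform for small $\alpha$), and the variational characterization of the limiting critical point then give $u_{c,\alpha,m}\to u_0$ strongly in $H^s(\mathbb{R}^N)$, where $m_1(c,0)=J_0(u_0)$.
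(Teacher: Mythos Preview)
Your overall architecture is right and close to the paper's, but part~(1) has two genuine gaps.

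\textbf{Gap 1: mass preservation of the weak limit.} Radial compactness gives strong $L^r$ convergence only for $2<r<2_s^*$, not for $r=2$, so from your argument you only know $\|u_{c,\alpha,\mathrm{loc}}\|_2\le c$, not equality. Saying ``the weak limit realizes $m_1$ and is an interior constrained minimizer'' presupposes $u_{c,\alpha,\mathrm{loc}}\in S_c$, which is exactly what is at stake. The paper does not attempt a direct minimization here: it projects the rearranged minimizing sequence onto $\mathfrak{P}_{\alpha,c}^+$ via the fibering map (so that $P_\alpha=0$ along the new sequence), then applies Ekeland's principle to produce a genuine Palais--Smale sequence $\{u_n\}$ for $J_\alpha|_{S_c}$ with $P_\alpha(u_n)\to0$. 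It is this last property that feeds into the compactness lemma (Lemma~3.1), where the limiting Lagrange multiplier is shown to be strictly negative and is then used to upgrade weak to strong $H^s$ convergence, recovering $\|u\|_2=c$. Your direct route could be repaired by a strict subadditivity argument for $c\mapsto m_1(c,\alpha)$, but you have not supplied one.

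\textbf{Gap 2: the ground-state claim.} The assertion ``every critical point of negative energy must lie in $D_{t_0}$ because the barrier $h\ge0$ separates the well from larger norms'' is not valid: $h$ is only a \emph{lower} bound for $J_\alpha$, and $h(t)\to-\infty$ as $t\to\infty$, so nothing in your inequality rules out a critical point with large $\|u\|$ and very negative energy. The correct mechanism is the Pohozaev decomposition: any constrained critical point lies on $\mathfrak{P}_{\alpha,c}=\mathfrak{P}_{\alpha,c}^+\cup\mathfrak{P}_{\alpha,c}^-$, and the fibering analysis shows $\inf_{\mathfrak{P}_{\alpha,c}^-}J_\alpha>0$ while $\mathfrak{P}_{\alpha,c}^+\subset D_{t_0}$; hence $m_1(c,\alpha)=\inf_{\mathfrak{P}_{\alpha,c}}J_\alpha$ is the least critical value.

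Your parts~(2)--(4) are essentially correct and track the paper's approach (Jeanjean's augmented functional for the bounded Palais--Smale sequence, monotonicity of $\varsigma(c,\alpha)$ in $\alpha$, and uniform bounds for the limit $\alpha\to0^+$). For part~(3) the paper argues slightly differently: it first shows that the zero $t_0=t_0(\alpha)$ of the profile function tends to $0$ as $\alpha\to0^+$, which immediately forces $\|u_{c,\alpha,\mathrm{loc}}\|<t_0(\alpha)\to0$; your route via $m_1\to0^-$ also works once the quantitative dependence of the well on $\alpha$ is made explicit.
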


\begin{Thm}\label{Thm1.2}
	Let
	\[
	 2 + \frac{2s-\mu}{N}=q < p < 2_{\mu,s}^*,
	\]
	and let $\alpha>0$. Assume that
	\begin{equation}\label{eq1.10}
		\frac{1}{2} > \frac{\alpha}{2q}\,C_q\,c^{2q(1-\gamma_{q,s})}.
	\end{equation}
	Then the constrained functional $\bigl.J_\alpha\bigr|_{S_c}$ admits a positive
	radial ground state $u_{c,\alpha,m}\in S_{c,rad}$ such that
	\[
	J_\alpha(u_{c,\alpha,m}) = \varsigma(c,\alpha) > 0,
	\]
	where $\varsigma(c,\alpha)$ is the mountain pass level of $\bigl.J_\alpha\bigr|_{S_{c,r}}$.
	In particular, $u_{c,\alpha,m}$ is a positive radial solution of \eqref{eq1.1} for some
	$\lambda_{c,\alpha,m}<0$, and it realizes
	\[
	J_\alpha(u_{c,\alpha,m})
	= \inf_{u\in\mathfrak{P}_{\alpha,c}} J_\alpha(u),
	\]
	that is, $u_{c,\alpha,m}$ is a ground state of $\bigl.J_\alpha\bigr|_{S_c}$.
\end{Thm}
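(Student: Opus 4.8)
The plan is to obtain $u_{c,\alpha,m}$ as a mountain–pass critical point of $J_\alpha$ on the radial sphere $S_{c,rad}$ and then to identify the mountain–pass value with $\inf_{\mathfrak{P}_{\alpha,c}}J_\alpha$. The backbone of the argument is the mass–preserving fibering $(t\star u)(x)=t^{N/2}u(tx)$, which satisfies $\|t\star u\|^2=t^{2s}\|u\|^2$ and $\int_{\mathbb{R}^N}(I_\mu*|t\star u|^r)|t\star u|^r\,dx=t^{2sr\gamma_{r,s}}\int_{\mathbb{R}^N}(I_\mu*|u|^r)|u|^r\,dx$. Since $q=2+\tfrac{2s-\mu}{N}$ is exactly the $L^2$–critical exponent we have $q\gamma_{q,s}=1$, so the fiber reads
\[
J_\alpha(t\star u)=t^{2s}\Bigl(\tfrac12\|u\|^2-\tfrac{\alpha}{2q}\!\int_{\mathbb{R}^N}(I_\mu*|u|^q)|u|^q\,dx\Bigr)-\tfrac{t^{2sp\gamma_{p,s}}}{2p}\!\int_{\mathbb{R}^N}(I_\mu*|u|^p)|u|^p\,dx.
\]
By the Gagliardo–Nirenberg inequality of Lemma \ref{Lem2.2} and assumption \eqref{eq1.10}, the bracketed coefficient of $t^{2s}$ is strictly positive on $S_c$; since $p\gamma_{p,s}>1$, each fiber is positive for small $t$, has a unique maximizer $t_u$, and diverges to $-\infty$ as $t\to\infty$. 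This yields the mountain–pass geometry of $\bigl.J_\alpha\bigr|_{S_{c,rad}}$, with $\varsigma(c,\alpha)>0$, and shows that each ray meets $\mathfrak{P}_{\alpha,c}=\{u\in S_c:P_\alpha(u)=0\}$ precisely at $t_u\star u$.

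Next I would produce a Palais–Smale sequence for $\bigl.J_\alpha\bigr|_{S_{c,rad}}$ at level $\varsigma(c,\alpha)$ by applying Jeanjean's augmented–functional device to $(t,u)\mapsto J_\alpha(t\star u)$ on $\mathbb{R}\times S_{c,rad}$; this gives a sequence $(u_n)$ that additionally obeys the asymptotic Pohozaev relation $P_\alpha(u_n)\to0$. Boundedness is then automatic: the splitting
\[
J_\alpha(u_n)-\tfrac{1}{2s}P_\alpha(u_n)=\tfrac{p\gamma_{p,s}-1}{2p}\!\int_{\mathbb{R}^N}(I_\mu*|u_n|^p)|u_n|^p\,dx\longrightarrow\varsigma(c,\alpha)
\]
bounds the $p$–Choquard energy, while the Pohozaev relation itself gives
\[
\Bigl(1-\tfrac{\alpha}{q}C_q c^{2q(1-\gamma_{q,s})}\Bigr)\|u_n\|^2\le\gamma_{p,s}\!\int_{\mathbb{R}^N}(I_\mu*|u_n|^p)|u_n|^p\,dx+o(1),
\]
where the left–hand coefficient is positive precisely by \eqref{eq1.10}.

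The decisive step is compactness. Passing to a subsequence, $u_n\rightharpoonup u$ in $H^s_{rad}(\mathbb{R}^N)$, and since $2_{\mu,*}<q<p<2_{\mu,s}^*$ strictly (HLS–subcritical), the compact radial embeddings force both Choquard terms to converge, so $u$ solves the limiting equation with a multiplier $\lambda_{c,\alpha,m}$. Subtracting the Pohozaev identity $\|u\|^2=\tfrac{\alpha}{q}\int_{\mathbb{R}^N}(I_\mu*|u|^q)|u|^q\,dx+\gamma_{p,s}\int_{\mathbb{R}^N}(I_\mu*|u|^p)|u|^p\,dx$ from the Nehari identity $\|u\|^2=\lambda c^2+\alpha\int_{\mathbb{R}^N}(I_\mu*|u|^q)|u|^q\,dx+\int_{\mathbb{R}^N}(I_\mu*|u|^p)|u|^p\,dx$ yields
\[
\lambda c^2=-\alpha\,\tfrac{q-1}{q}\!\int_{\mathbb{R}^N}(I_\mu*|u|^q)|u|^q\,dx-(1-\gamma_{p,s})\!\int_{\mathbb{R}^N}(I_\mu*|u|^p)|u|^p\,dx<0,
\]
so $\lambda_{c,\alpha,m}<0$. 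Exploiting this strict sign, a Brezis–Lieb splitting on $u_n-u$ (whose nonlinear contributions vanish) reduces the residual to $\|u_n-u\|^2-\lambda_{c,\alpha,m}\|u_n-u\|_2^2\to0$, and coercivity ($-\lambda_{c,\alpha,m}>0$) forces $u_n\to u$ in $H^s$; in particular $u\in S_{c,rad}$ and $J_\alpha(u)=\varsigma(c,\alpha)$. I expect this step—excluding loss of $L^2$–mass at the endpoint where no compact embedding is available—to be the main obstacle, and it is exactly the strict negativity of $\lambda_{c,\alpha,m}$ (a consequence of the Pohozaev–Nehari comparison under \eqref{eq1.10}) that resolves it.

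Finally, replacing $u$ by its Schwarz symmetrization does not raise $J_\alpha$ and preserves the constraint, so one may take $u\ge0$; the fractional strong maximum principle then upgrades this to a positive, radially decreasing solution. Since every normalized solution lies on $\mathfrak{P}_{\alpha,c}$ by its Pohozaev identity, the equality $\varsigma(c,\alpha)=\inf_{\mathfrak{P}_{\alpha,c}}J_\alpha$ inherited from the fiber analysis identifies $u_{c,\alpha,m}$ as a ground state of $\bigl.J_\alpha\bigr|_{S_c}$, completing the plan.
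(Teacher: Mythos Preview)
Your approach is essentially the same as the paper's: the fiber analysis under $q\gamma_{q,s}=1$ and \eqref{eq1.10}, the mountain--pass construction on $S_{c,rad}$ via an augmented functional to obtain a Palais--Smale sequence with $P_\alpha(u_n)\to 0$, the boundedness and compactness through $\lambda<0$, and positivity via the strong maximum principle all match Lemmas~\ref{Lem5.1}--\ref{Lem5.4} and Lemma~\ref{Lem3.1}.

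There is, however, one genuine gap at the very end. Because you run the mountain pass on $S_{c,rad}$, the fiber analysis only yields
\[
\varsigma(c,\alpha)=\inf_{u\in S_{c,rad}}\max_{t}J_\alpha(t\star u)=\inf_{\mathfrak{P}_{\alpha,c}\cap S_{c,rad}}J_\alpha,
\]
not $\inf_{\mathfrak{P}_{\alpha,c}}J_\alpha$. The phrase ``inherited from the fiber analysis'' does not bridge this: a non-radial $u\in\mathfrak{P}_{\alpha,c}$ could, a priori, have smaller energy. The missing step is a rearrangement argument \emph{combined} with the fiber projection. Given $u\in\mathfrak{P}_{\alpha,c}$, set $v=|u|^*\in S_{c,rad}$; then P\'olya--Szeg\H{o} and Riesz rearrangement give $J_\alpha(t\star v)\le J_\alpha(t\star u)$ for every $t$, hence
\[
J_\alpha(t_v\star v)=\max_t J_\alpha(t\star v)\le\max_t J_\alpha(t\star u)=J_\alpha(u),
\]
with $t_v\star v\in\mathfrak{P}_{\alpha,c}\cap S_{c,rad}$. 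This delivers the equality. The paper carries this out explicitly (and in fact exploits $q\gamma_{q,s}=1$ to obtain a strict inequality when $P_\alpha(v)<0$). Your sentence about Schwarz symmetrization contains the right ingredient, but you invoke it only to make the \emph{solution} nonnegative, not to compare the radial and full Pohozaev infima; you need to move it earlier and couple it with the fiber map as above.
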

\begin{Thm}\label{Thm1.3}
	Let
	\[
	2+\frac{2s-\mu}{N}<q<p<2_{\mu,s}^*
	\]
	and $\alpha>0$. Then the following hold.
	
	\noindent{\rm (1)} The constrained functional $\bigl.J_\alpha\bigr|_{S_c}$ has a critical
	point $u_{c,\alpha,m}\in S_c$ obtained via the mountain pass theorem such that
	\[
	J_\alpha(u_{c,\alpha,m})=\varsigma(c,\alpha)>0.
	\]
	Moreover, $u_{c,\alpha,m}$ is a positive radial solution of \eqref{eq1.1} for
	some $\lambda_{c,\alpha,m}<0$, and $u_{c,\alpha,m}$ is a ground state of
	$\bigl.J_\alpha\bigr|_{S_c}$.
	
	\noindent{\rm (2)} One has
	\[
	\varsigma(c,\alpha)\to m_2(c,0)
	\quad\text{and}\quad
	u_{c,\alpha,m}\to u_0\ \text{ in }H^s(\mathbb{R}^N)
	\quad\text{as }\alpha\to 0^{+},
	\]
	where $m_2(c,0)=J_0(u_0)$ and $u_0$ is the ground state solution of
	$\bigl.J_0\bigr|_{S_c}$.
\end{Thm}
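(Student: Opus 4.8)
The plan is to treat \eqref{eq1.1} variationally on the radial constraint $S_{c,rad}$, where both nonlinear terms are $L^2$-supercritical (equivalently $q\gamma_{q,s}>1$ and $p\gamma_{p,s}>1$) and Hardy--Littlewood--Sobolev subcritical. Writing $\mathcal{D}_t(u)=\int_{\mathbb{R}^N}(I_\mu*|u|^t)|u|^t\,dx$ and using the mass-preserving scaling $\tau\star u:=\tau^{N/2}u(\tau\,\cdot)$, I first examine the fibering map
\[
\psi_u(\tau)=J_\alpha(\tau\star u)=\frac{\tau^{2s}}{2}\|(-\Delta)^{s/2}u\|_2^2-\frac{\alpha\,\tau^{2sq\gamma_{q,s}}}{2q}\mathcal{D}_q(u)-\frac{\tau^{2sp\gamma_{p,s}}}{2p}\mathcal{D}_p(u).
\]
Since $2s<2sq\gamma_{q,s}<2sp\gamma_{p,s}$, the map $\psi_u$ is positive and increasing for small $\tau$, tends to $-\infty$ as $\tau\to\infty$, and possesses a unique critical point, a strict global maximum. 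I would use this to establish the mountain-pass geometry of $J_\alpha|_{S_{c,rad}}$ and to identify the mountain-pass level with $\varsigma(c,\alpha)=\inf_{\mathfrak{P}_{\alpha,c}}J_\alpha$, where, from $\psi_u'(1)=0$, the Pohozaev manifold is
\[
\mathfrak{P}_{\alpha,c}=\Big\{u\in S_{c,rad}:\ P_\alpha(u)=0\Big\},\qquad P_\alpha(u)=\|(-\Delta)^{s/2}u\|_2^2-\alpha\gamma_{q,s}\mathcal{D}_q(u)-\gamma_{p,s}\mathcal{D}_p(u).
\]
A Gagliardo--Nirenberg estimate (Lemma \ref{Lem2.2}) gives the strict positivity $\varsigma(c,\alpha)>0$.

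Second, I would produce a bounded Palais--Smale sequence at the level $\varsigma(c,\alpha)$. Because both exponents are supercritical, $J_\alpha|_{S_c}$ is unbounded below and direct minimization fails, so I would use Jeanjean's device: pass to the augmented functional $(\tau,u)\mapsto J_\alpha(\tau\star u)$ on $\mathbb{R}\times S_{c,rad}$, whose mountain-pass structure yields a sequence $(\tau_n,v_n)$ for which $u_n:=\tau_n\star v_n\in S_{c,rad}$ satisfies $J_\alpha(u_n)\to\varsigma(c,\alpha)$, $\bigl.J_\alpha\bigr|_{S_c}'(u_n)\to0$ and, crucially, $P_\alpha(u_n)\to0$. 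Boundedness then follows from the algebraic identity
\[
2q\gamma_{q,s}J_\alpha(u_n)-P_\alpha(u_n)=(q\gamma_{q,s}-1)\|(-\Delta)^{s/2}u_n\|_2^2+\Big(\gamma_{p,s}-\tfrac{q\gamma_{q,s}}{p}\Big)\mathcal{D}_p(u_n),
\]
in which both coefficients are strictly positive (as $q\gamma_{q,s}>1$ and $p\gamma_{p,s}>q\gamma_{q,s}$); since the left-hand side tends to $2q\gamma_{q,s}\varsigma(c,\alpha)$ and $\|u_n\|_2=c$, the sequence $(u_n)$ is bounded in $H^s(\mathbb{R}^N)$.

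Third, I would recover compactness. In $H^s_{rad}(\mathbb{R}^N)$ the embedding into $L^r$ is compact for $2<r<2_s^*$, so by the HLS inequality and the subcriticality $p<2_{\mu,s}^*$, the terms $\mathcal{D}_q(u_n),\mathcal{D}_p(u_n)$ converge along a subsequence to those of the weak limit $u_{c,\alpha,m}$, which (by the symmetric criticality principle) solves \eqref{eq1.1} with some multiplier $\lambda_{c,\alpha,m}$. Combining the Nehari identity $\|(-\Delta)^{s/2}u\|_2^2=\lambda c^2+\alpha\mathcal{D}_q+\mathcal{D}_p$ with $P_\alpha(u)=0$ gives
\[
\lambda_{c,\alpha,m}\,c^2=-\alpha(1-\gamma_{q,s})\mathcal{D}_q(u_{c,\alpha,m})-(1-\gamma_{p,s})\mathcal{D}_p(u_{c,\alpha,m})<0,
\]
so $\lambda_{c,\alpha,m}<0$; this negativity prevents vanishing and, together with the convergence of the nonlocal terms, yields $\|(-\Delta)^{s/2}u_n\|_2\to\|(-\Delta)^{s/2}u_{c,\alpha,m}\|_2$, hence strong convergence in $H^s$ and $u_{c,\alpha,m}\in S_c$ with $J_\alpha(u_{c,\alpha,m})=\varsigma(c,\alpha)$. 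Replacing the minimizer by its Schwarz symmetrization does not increase $J_\alpha$ (Pólya--Szegő for $(-\Delta)^{s/2}$ and the Riesz rearrangement inequality for the Choquard terms), so the radial infimum equals the full infimum $\inf_{\mathfrak{P}_{\alpha,c}}J_\alpha$ and $u_{c,\alpha,m}$ is a positive, radially decreasing ground state, proving part~(1). I expect this compactness step to be the main obstacle: one must exclude vanishing and dichotomy on all of $\mathbb{R}^N$, and although radial symmetry restores compactness of the subcritical embeddings, the nonlocal terms still demand a careful HLS analysis, with the strict negativity of $\lambda_{c,\alpha,m}$ being precisely what secures conservation of mass.

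Finally, for part~(2) I would study $\alpha\to0^+$. The upper bound $\varsigma(c,\alpha)\le\max_{\tau>0}J_\alpha(\tau\star u_0)\le\max_{\tau>0}J_0(\tau\star u_0)=m_2(c,0)$ is immediate since the $\alpha$-term is nonpositive and $u_0\in\mathfrak{P}_{0,c}$, while the bound $\tfrac{\alpha}{2q}\mathcal{D}_q=O(\alpha)$ on the uniformly bounded family $\{u_{c,\alpha,m}\}$ gives a matching lower estimate, so $\varsigma(c,\alpha)\to m_2(c,0)$. The uniform $H^s$-bound (from the identity of the second step with $\varsigma(c,\alpha)\le m_2(c,0)$) lets me extract a weak limit $u_0$ which, by radial compactness and the vanishing of the $\alpha$-term, solves the limiting equation $(-\Delta)^su_0=\bar\lambda u_0+(I_\mu*|u_0|^p)|u_0|^{p-2}u_0$ with $\bar\lambda<0$; the energy convergence $J_\alpha(u_{c,\alpha,m})\to m_2(c,0)=J_0(u_0)$ again forces no loss of mass and hence strong convergence $u_{c,\alpha,m}\to u_0$ in $H^s(\mathbb{R}^N)$, with $u_0$ a ground state of $\bigl.J_0\bigr|_{S_c}$. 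The crux here is the same mass-conservation point, now in the singular limit $\alpha\to0^+$.
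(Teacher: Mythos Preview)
Your proposal is correct and follows essentially the same route as the paper: the fibering analysis showing a unique global maximum when $q\gamma_{q,s},p\gamma_{p,s}>1$, the mountain-pass geometry on $S_{c,rad}$ with level $\varsigma(c,\alpha)=\inf_{\mathfrak{P}_{\alpha,c}}J_\alpha>0$, Jeanjean's augmented functional to produce a Palais--Smale sequence with $P_\alpha(u_n)\to 0$, the compactness on $S_{c,rad}$ via the strict negativity of the Lagrange multiplier (this is exactly the paper's Lemma~\ref{Lem3.1}, Case~II), and the rearrangement argument to pass from the radial infimum to the full ground-state level. For part~(2) your sketch likewise mirrors the paper, which simply declares the argument ``completely analogous to that of Theorem~\ref{Thm1.1}\,(4)''; your outline of the uniform $H^s$-bound, the upper bound $\varsigma(c,\alpha)\le m_2(c,0)$, and the mass-conservation step via $\bar\lambda<0$ is precisely that argument.
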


\begin{Thm}\label{Thm1.4}
	Let $N>2s$ and
	\[
	\frac{2N-\mu}{N}<q<p< 2+\frac{2s-\mu}{N}.
	\]
	If
	\[
	0<c<
	\left(\frac{p}{C_p}\right)^{\frac{1}{2p(1-\gamma_{p,s})}}
	=:\bar c_N,
	\]
	then
	\[
	m(c,\alpha):=\inf_{S_c}J_\alpha<0,
	\]
	and the infimum is attained at some $\tilde u\in S_c$ with the following
	properties: $\tilde u$ is positive in $\mathbb{R}^N$, radially symmetric,
	solves \eqref{eq1.1} for some $\lambda<0$, and is a ground state of
	\eqref{eq1.1}.
\end{Thm}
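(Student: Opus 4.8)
The plan is to obtain $\tilde u$ as a global minimizer of $J_\alpha$ over $S_c$, which is feasible precisely because both exponents are $L^2$–subcritical. I would first record that $J_\alpha|_{S_c}$ is bounded below and coercive: applying the Gagliardo–Nirenberg inequality of Lemma~\ref{Lem2.2} to the two Hartree terms yields, for $u\in S_c$,
\[
J_\alpha(u)\ \ge\ \frac12\|u\|^2-\frac{\alpha}{2q}\,C_q\,c^{2q(1-\gamma_{q,s})}\,\|u\|^{2q\gamma_{q,s}}-\frac{1}{2p}\,C_p\,c^{2p(1-\gamma_{p,s})}\,\|u\|^{2p\gamma_{p,s}}.
\]
The strict inequalities $q,p<2+\frac{2s-\mu}{N}$ translate into $q\gamma_{q,s}<1$ and $p\gamma_{p,s}<1$, so both powers of $\|u\|$ in the negative terms lie below $2$; the kinetic term therefore dominates as $\|u\|\to\infty$, giving coercivity and $m(c,\alpha)>-\infty$. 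The hypothesis $c<\bar c_N$, equivalent to $\frac{1}{2p}C_p c^{2p(1-\gamma_{p,s})}<\frac12$, is the quantitative control on the $p$–term that I would carry through the coercivity and negativity estimates.

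Next I would prove $m(c,\alpha)<0$ by a mass–preserving dilation. For fixed $u\in S_c$ put $u_\tau(x)=\tau^{N/2}u(\tau x)$, so $u_\tau\in S_c$ and
\[
J_\alpha(u_\tau)=\frac{\tau^{2s}}{2}\|u\|^2-\frac{\alpha}{2q}\,\tau^{N(q-2)+\mu}\!\!\int_{\mathbb{R}^N}\!(I_\mu*|u|^q)|u|^q\,dx-\frac{1}{2p}\,\tau^{N(p-2)+\mu}\!\!\int_{\mathbb{R}^N}\!(I_\mu*|u|^p)|u|^p\,dx.
\]
Since $q>2_{\mu,*}$ gives $N(q-2)+\mu>0$, while $q<p$ and $q<2+\frac{2s-\mu}{N}$ make $N(q-2)+\mu$ the strictly smallest of the three exponents $2s,\,N(q-2)+\mu,\,N(p-2)+\mu$, factoring out $\tau^{N(q-2)+\mu}$ shows $J_\alpha(u_\tau)<0$ for all sufficiently small $\tau>0$; hence $m(c,\alpha)<0$.

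To produce a minimizer I would pass to symmetric decreasing rearrangements. For a minimizing sequence, replacing each function by its rearrangement does not increase $J_\alpha$ — the fractional Pólya–Szegő inequality lowers $\|\cdot\|$ and the Riesz rearrangement inequality raises the Hartree terms — while preserving the $L^2$–norm, so I may assume the minimizing sequence $(u_n)$ consists of nonnegative, radial, nonincreasing functions. Coercivity makes $(u_n)$ bounded in $H^s(\mathbb{R}^N)$, hence $u_n\rightharpoonup u$ in $H^s$, and the compact embedding $H^s_{rad}(\mathbb{R}^N)\hookrightarrow L^r(\mathbb{R}^N)$ for $r\in(2,2_s^*)$ gives strong convergence in $L^{r_t}$ with $r_t=\frac{2Nt}{2N-\mu}\in(2,2_s^*)$ for $t\in\{q,p\}$. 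By the Hardy–Littlewood–Sobolev inequality the two Hartree integrals of $u_n$ then converge to those of $u$.

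The crux, and the main obstacle, is to rule out loss of $L^2$–mass, since the radial embedding is not compact into $L^2$. Setting $\bar c=\|u\|_2\le c$: if $\bar c=0$ then $u=0$, the Hartree terms vanish in the limit, and $m(c,\alpha)=\lim_n J_\alpha(u_n)\ge0$ contradicts $m(c,\alpha)<0$; if $0<\bar c<c$, weak lower semicontinuity of $\|\cdot\|$ together with the convergence of the Hartree terms gives $J_\alpha(u)\le m(c,\alpha)<0$, and dilating by $v_\rho(x)=u(x/\rho)$ with $\rho=(c/\bar c)^{2/N}>1$ produces $v_\rho\in S_c$ with, using $2N-\mu>N-2s$ and $J_\alpha(u)<0$,
\[
J_\alpha(v_\rho)-J_\alpha(u)<\bigl(\rho^{\,N-2s}-\rho^{\,2N-\mu}\bigr)\Bigl(\tfrac{\alpha}{2q}\!\int(I_\mu*|u|^q)|u|^q+\tfrac{1}{2p}\!\int(I_\mu*|u|^p)|u|^p\Bigr)<0,
\]
so $J_\alpha(v_\rho)<m(c,\alpha)$, contradicting $v_\rho\in S_c$. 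Hence $\bar c=c$, so $u_n\to u$ in $L^2$; since the Hartree terms converge and $J_\alpha(u)=m(c,\alpha)=\lim_n J_\alpha(u_n)$ forces $\|u_n\|\to\|u\|$, weak convergence in the Hilbert space $H^s$ upgrades to strong convergence, and $\tilde u:=u\in S_c$ attains $m(c,\alpha)$. Finally $\tilde u$ is nonnegative, radial and nonincreasing by construction and solves \eqref{eq1.1} with some multiplier $\lambda$; inserting the Pohozaev identity $\|\tilde u\|^2=\gamma_{q,s}\,\alpha\!\int(I_\mu*|\tilde u|^q)|\tilde u|^q+\gamma_{p,s}\!\int(I_\mu*|\tilde u|^p)|\tilde u|^p$ into the multiplier relation $\lambda c^2=\|\tilde u\|^2-\alpha\!\int(I_\mu*|\tilde u|^q)|\tilde u|^q-\int(I_\mu*|\tilde u|^p)|\tilde u|^p$ gives $\lambda c^2=(\gamma_{q,s}-1)\alpha\!\int(I_\mu*|\tilde u|^q)|\tilde u|^q+(\gamma_{p,s}-1)\!\int(I_\mu*|\tilde u|^p)|\tilde u|^p<0$, whence $\lambda<0$; strict positivity follows from the strong maximum principle for $(-\Delta)^s-\lambda$, and minimality over all of $S_c$ makes $\tilde u$ a ground state.
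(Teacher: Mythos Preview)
Your argument is correct, and it takes a genuinely different route to compactness than the paper does. The paper (Section~7) first proves a strict subadditivity inequality $m(c,\alpha)<m(c_1,\alpha)+m(c_2,\alpha)$ for $c_1^2+c_2^2=c^2$ (Lemma~7.1), and then runs the full concentration--compactness trichotomy on a general minimizing sequence (Lemma~7.2): vanishing is ruled out because it would force $m(c,\alpha)\ge 0$, and dichotomy is ruled out by the subadditivity, leaving compactness up to translations; symmetrization is applied only \emph{after} the minimizer is found. You instead symmetrize first, exploit the compact embedding $H^s_{\mathrm{rad}}\hookrightarrow L^r$ for $r\in(2,2_s^*)$ to make the Hartree terms converge, and then rule out $L^2$--mass loss directly with the non--mass--preserving dilation $v_\rho(x)=u(x/\rho)$, using $2N-\mu>N-2s$ and $J_\alpha(u)\le m(c,\alpha)<0$ to conclude $J_\alpha(v_\rho)<J_\alpha(u)$. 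Your approach is more elementary in that it bypasses both the subadditivity lemma and the concentration--compactness machinery; the paper's approach, on the other hand, gives compactness up to translations for \emph{any} minimizing sequence (not just radial ones), which is a slightly stronger statement and can be useful for orbital stability considerations. For the sign of $\lambda$, you use the Pohozaev identity $P_\alpha(\tilde u)=0$ to obtain $\lambda c^2=(\gamma_{q,s}-1)\alpha A+(\gamma_{p,s}-1)B<0$, whereas the paper argues via $\lambda c^2=2m(c,\alpha)+\alpha(\tfrac1q-1)A+(\tfrac1p-1)B<0$; both are valid.
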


\begin{Rek}\label{rem:HLS-range}
	By the Hardy--Littlewood--Sobolev inequality, the Choquard terms
	\[
	(I_\mu * |u|^r)\,|u|^{r-2}u,\qquad r\in\{p,q\},
	\]
	are well defined on $H^s(\mathbb{R}^N)$ provided $r$ lies in the HLS-admissible
	range
	\[
	2_{\mu,*} \le r \le 2_{\mu,s}^*,\qquad
	2_{\mu,*}=\frac{2N-\mu}{N},\quad
	2_{\mu,s}^*=\frac{2N-\mu}{N-2s}.
	\]
    The HLS upper critical situation $2_{\mu,s}^*$ has been considered by Lan, He and Meng \cite{LanHeMeng2023}. 
	In this paper we impose the standing assumption
	\[
	2_{\mu,*}<q<p< 2_{\mu,s}^*
	\]
	and, within this region, all possible configurations of $(q,p)$ are covered by
	Cases~I--IV and Theorems~\ref{Thm1.1}--\ref{Thm1.4}. The only HLS-admissible
	borderline configuration not treated here is the lower critical case
	\[
	q = 2_{\mu,*} < p < 2_{\mu,s}^*,
	\]
	for which the term $(I_\mu*|u|^q)|u|^{q-2}u$ is HLS-critical. The analysis of
	normalized solutions in this critical regime requires additional ideas and will
	be the subject of a future work.
\end{Rek}

\section{Preliminaries}

This section is devoted to the variational framework and basic tools used in
the sequel. We begin by recalling the functional setting and the notion of weak
solution to \eqref{eq1.1}.

For any $s\in(0,1)$, the fractional Sobolev space $H^s(\mathbb{R}^N)$ is
defined by
\[
\begin{aligned}
	H^s(\mathbb{R}^N)
	&=\Bigl\{u\in L^2(\mathbb{R}^N):
	\frac{u(x)-u(y)}{|x-y|^{\frac{N}{2}+s}}\in L^2(\mathbb{R}^N\times\mathbb{R}^N)
	\Bigr\}\\
	&=\Bigl\{u\in L^2(\mathbb{R}^N):
	\int_{\mathbb{R}^N}(1+|\xi|^{2s})\,|\mathcal{F}(u)(\xi)|^2\,d\xi<\infty\Bigr\},
\end{aligned}
\]
where $\mathcal{F}(u)$ denotes the Fourier transform of $u$. The norm in
$H^s(\mathbb{R}^N)$ is given by
\[
\|u\|_{H^s(\mathbb{R}^N)}
=\left(
\int_{\mathbb{R}^N}\int_{\mathbb{R}^N}
\frac{|u(x)-u(y)|^2}{|x-y|^{N+2s}}\,dx\,dy
+\int_{\mathbb{R}^N}|u|^2\,dx
\right)^{1/2}.
\]

For $u\in H^s(\mathbb{R}^N)$, by Propositions 3.4 and 3.6 in
\cite{2012BSMDiNezza} one has
\[
\int_{\mathbb{R}^N}\bigl|(-\Delta)^{\frac{s}{2}}u\bigr|^2\,dx
=\int_{\mathbb{R}^N}|\xi|^{2s}|\mathcal{F}(u)(\xi)|^2\,d\xi
=\frac{1}{2}C_{N,s}
\int_{\mathbb{R}^N}\int_{\mathbb{R}^N}
\frac{|u(x)-u(y)|^2}{|x-y|^{N+2s}}\,dx\,dy,
\]
where $C_{N,s}>0$ is a constant depending only on $N$ and $s$. Thus we will
often use the equivalent norm
\[
\|u\|_{H^s(\mathbb{R}^N)}
=\left(
\int_{\mathbb{R}^N}|u|^2\,dx
+\int_{\mathbb{R}^N}\bigl|(-\Delta)^{\frac{s}{2}}u\bigr|^2\,dx
\right)^{1/2}.
\]

We also introduce the homogeneous fractional Sobolev space
\[
\mathcal{D}^{s,2}(\mathbb{R}^N)
=\Bigl\{
u\in L^{2_s^*}(\mathbb{R}^N):
\int_{\mathbb{R}^N}\int_{\mathbb{R}^N}
\frac{|u(x)-u(y)|^2}{|x-y|^{N+2s}}\,dx\,dy<\infty
\Bigr\},
\]
equipped with the norm
\[
\|u\|=
\left(
\int_{\mathbb{R}^N}\bigl|(-\Delta)^{\frac{s}{2}}u\bigr|^2\,dx
\right)^{1/2}.
\]
In what follows, $\|\cdot\|$ will always denote this homogeneous norm, while
$\|\cdot\|_{H^s}$ denotes the full $H^s$-norm.

We define
\[
H_{\mathrm{rad}}^s(\mathbb{R}^N)
=\{u\in H^s(\mathbb{R}^N): u(x)=u(|x|)\},
\qquad
S_{c,\mathrm{rad}}=H_{\mathrm{rad}}^s(\mathbb{R}^N)\cap S_c.
\]

\begin{Def}\label{Def2.1}
	A function $u\in H^s(\mathbb{R}^N)$ is called a weak solution of
	\eqref{eq1.1} if $u\in S_c$ and there exists $\lambda\in\mathbb{R}$ such that
	\begin{equation}\label{eq2.1}
		\begin{aligned}
			\int_{\mathbb{R}^N}(-\Delta)^{\frac{s}{2}}u\,(-\Delta)^{\frac{s}{2}}v\,dx
			&=\lambda\int_{\mathbb{R}^N}uv\,dx
			+\alpha\int_{\mathbb{R}^N}\bigl(I_\mu*|u|^q\bigr)|u|^{q-2}uv\,dx\\
			&\quad
			+\int_{\mathbb{R}^N}\bigl(I_\mu*|u|^p\bigr)|u|^{p-2}uv\,dx,
			\qquad \forall v\in H^s(\mathbb{R}^N).
		\end{aligned}
	\end{equation}
\end{Def}

The associated energy functional $J_\alpha:H^s(\mathbb{R}^N)\to\mathbb{R}$
corresponding to \eqref{eq1.1} on $S_c$ is defined by
\begin{equation}\label{eq2.2}
	J_\alpha(u)
	=\frac{1}{2}\|u\|^2
	-\frac{\alpha}{2q}\int_{\mathbb{R}^N}\bigl(I_\mu*|u|^q\bigr)|u|^q\,dx
	-\frac{1}{2p}\int_{\mathbb{R}^N}\bigl(I_\mu*|u|^p\bigr)|u|^p\,dx.
\end{equation}

We also introduce the Pohozaev functional
\[
P_\alpha(u)
=s\|u\|^2
-\alpha s\gamma_{q,s}\int_{\mathbb{R}^N}\bigl(I_\mu*|u|^q\bigr)|u|^q\,dx
-s\gamma_{p,s}\int_{\mathbb{R}^N}\bigl(I_\mu*|u|^p\bigr)|u|^p\,dx,
\]
where
\[
\gamma_{r,s}=\frac{N(r-2)+\mu}{2rs},\qquad r\in\{p,q\}.
\]
The Pohozaev manifold associated with $J_\alpha$ at mass $c$ is defined by
\[
\mathfrak{P}_{\alpha,c}
=\{u\in S_c:\ P_\alpha(u)=0\}.
\]

For $u\in S_c$ and $t\in\mathbb{R}$ we introduce the mass-preserving scaling
\[
(t\star u)(x)
=e^{\frac{Nt}{2}}u(e^t x),
\qquad x\in\mathbb{R}^N,\ t\in\mathbb{R}.
\]
It is easy to check that $\|t\star u\|_2=\|u\|_2$, so $t\star u\in S_c$ for all
$t\in\mathbb{R}$. The associated fibering map is
\[
E_u(t):=J_\alpha(t\star u)
=\frac{e^{2st}}{2}\|u\|^2
-\frac{\alpha}{2q}e^{2q\gamma_{q,s}st}
\int_{\mathbb{R}^N}\bigl(I_\mu*|u|^q\bigr)|u|^q\,dx
-\frac{1}{2p}e^{2p\gamma_{p,s}st}
\int_{\mathbb{R}^N}\bigl(I_\mu*|u|^p\bigr)|u|^p\,dx.
\]
A direct computation gives
\[
\begin{aligned}
	E_u'(t)
	&=s e^{2st}\|u\|^2
	-\alpha s\gamma_{q,s}e^{2q\gamma_{q,s}st}
	\int_{\mathbb{R}^N}\bigl(I_\mu*|u|^q\bigr)|u|^q\,dx\\
	&\quad
	-s\gamma_{p,s}e^{2p\gamma_{p,s}st}
	\int_{\mathbb{R}^N}\bigl(I_\mu*|u|^p\bigr)|u|^p\,dx,
\end{aligned}
\]
and
\[
\begin{aligned}
	E_u''(t)
	&=2s^2 e^{2st}\|u\|^2
	-2\alpha s^2\gamma_{q,s}^2 q\,e^{2q\gamma_{q,s}st}
	\int_{\mathbb{R}^N}\bigl(I_\mu*|u|^q\bigr)|u|^q\,dx\\
	&\quad
	-2s^2\gamma_{p,s}^2 p\,e^{2p\gamma_{p,s}st}
	\int_{\mathbb{R}^N}\bigl(I_\mu*|u|^p\bigr)|u|^p\,dx.
\end{aligned}
\]

\begin{Rek}\label{Rek2.1}
	For $u\in S_c$ and $\alpha>0$ one has
	\[
	E_u'(0)=P_\alpha(u).
	\]
	Moreover, for every $u\in S_c$ and $t\in\mathbb{R}$,
	\[
	E_u'(t)=0
	\quad\Longleftrightarrow\quad
	t\star u\in\mathfrak{P}_{\alpha,c}.
	\]
	In particular,
	\[
	\mathfrak{P}_{\alpha,c}
	=\{u\in S_c:\ E_u'(0)=0\}.
	\]
	We further decompose
	\[
	\mathfrak{P}_{\alpha,c}
	=\mathfrak{P}_{\alpha,c}^+\cup\mathfrak{P}_{\alpha,c}^-\cup\mathfrak{P}_{\alpha,c}^0,
	\]
	where
	\[
	\begin{aligned}
		\mathfrak{P}_{\alpha,c}^+
		&=\{u\in\mathfrak{P}_{\alpha,c}: E_u''(0)>0\},\\
		\mathfrak{P}_{\alpha,c}^-
		&=\{u\in\mathfrak{P}_{\alpha,c}: E_u''(0)<0\},\\
		\mathfrak{P}_{\alpha,c}^0
		&=\{u\in\mathfrak{P}_{\alpha,c}: E_u''(0)=0\}.
	\end{aligned}
	\]
\end{Rek}

\begin{Rek}\label{Rek2.2}
	If $u\in S_c$ is a critical point of $\bigl.J_\alpha\bigr|_{S_c}$, then the
	associated Pohozaev identity yields $P_\alpha(u)=0$, that is,
	$u\in\mathfrak{P}_{\alpha,c}$. In particular, every constrained critical point
	of $J_\alpha$ on $S_c$ belongs to the Pohozaev manifold
	$\mathfrak{P}_{\alpha,c}$. We will later show that $\mathfrak{P}_{\alpha,c}$
	is a natural constraint for $J_\alpha$, so that constrained critical points of
	$\bigl.J_\alpha\bigr|_{S_c}$ can be characterized as critical points of
	$\bigl.J_\alpha\bigr|_{\mathfrak{P}_{\alpha,c}}$.
\end{Rek}

\begin{Rek}\label{Rek2.3}
	For
	\[
	\frac{2N-\mu}{N}<r\leq\frac{2N-\mu}{N-2s}
	\]
	one has
	\[
	r\gamma_{r,s}
	\begin{cases}
		<1, & \displaystyle \frac{2N-\mu}{N}<r<2+\frac{2s-\mu}{N},\\[2pt]
		=1, & \displaystyle r=2+\frac{2s-\mu}{N},\\[2pt]
		>1, & \displaystyle 2+\frac{2s-\mu}{N}<r<\frac{2N-\mu}{N-2s}.
	\end{cases}
	\]
\end{Rek}

\begin{Pro}\cite{2023DCDSYU}\label{Pro2.1}
	Assume that
	\[
	p\in\Bigl[\frac{2N-\mu}{N},\,\frac{2N-\mu}{N-2s}\Bigr).
	\]
	Let $\{u_n\}\subset H^s(\mathbb{R}^N)$ be such that
	$u_n\rightharpoonup u$ in $H^s(\mathbb{R}^N)$. Then, for any
	$\varphi\in H^s(\mathbb{R}^N)$,
	\[
	\int_{\mathbb{R}^N}\bigl(I_\mu*|u_n|^p\bigr)|u_n|^{p-2}u_n\varphi\,dx
	\to
	\int_{\mathbb{R}^N}\bigl(I_\mu*|u|^p\bigr)|u|^{p-2}u\varphi\,dx
	\quad\text{as }n\to\infty.
	\]
\end{Pro}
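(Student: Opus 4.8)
The plan is to exploit the duality pairing furnished by the Hardy–Littlewood–Sobolev (HLS) inequality together with almost–everywhere convergence. I set $r_0=\tfrac{2N}{2N-\mu}$ and $q_0=p\,r_0=\tfrac{2Np}{2N-\mu}$, and note that the hypothesis $p\in[2_{\mu,*},2_{\mu,s}^*)$ is exactly the condition $q_0\in[2,2_s^*)$, so the embedding $H^s(\mathbb{R}^N)\hookrightarrow L^{q_0}(\mathbb{R}^N)$ holds and is compact on bounded domains. Since $u_n\rightharpoonup u$ in $H^s$, the sequence is bounded in $H^s$, hence in $L^{q_0}$; passing to a subsequence (to be discarded at the end by the standard ``every subsequence has a further subsequence with the same limit'' argument) I may assume $u_n\to u$ a.e.\ in $\mathbb{R}^N$.

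First I would establish weak convergence of the potential. Since $\||u_n|^p\|_{L^{r_0}}=\|u_n\|_{L^{q_0}}^p$ is bounded and $|u_n|^p\to|u|^p$ a.e., the standard criterion (boundedness plus a.e.\ convergence in the reflexive space $L^{r_0}$, $1<r_0<\infty$) gives $|u_n|^p\rightharpoonup|u|^p$ in $L^{r_0}(\mathbb{R}^N)$. By the HLS inequality the Riesz potential is a bounded linear operator $I_\mu*\,\colon L^{r_0}(\mathbb{R}^N)\to L^{r_0'}(\mathbb{R}^N)$ with $r_0'=\tfrac{2N}{\mu}$ (the Hölder conjugate of $r_0$); being linear and bounded it is weak–weak continuous, whence
\[
I_\mu*|u_n|^p\ \rightharpoonup\ I_\mu*|u|^p
\qquad\text{in }L^{r_0'}(\mathbb{R}^N).
\]

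Next I would prove strong convergence of the test factor, namely $|u_n|^{p-2}u_n\,\varphi\to|u|^{p-2}u\,\varphi$ in $L^{r_0}(\mathbb{R}^N)$. Here the fixed function $\varphi$ is decisive. Writing $h_n=|u_n|^{p-2}u_n\varphi$, continuity of $z\mapsto|z|^{p-2}z$ (valid for $p>1$) yields $h_n\to h:=|u|^{p-2}u\varphi$ a.e. For equi-integrability and tightness I would use Hölder with exponents $\tfrac{p}{p-1}$ and $p$ to obtain, for any measurable $E\subset\mathbb{R}^N$,
\[
\int_E|h_n|^{r_0}\,dx
\le\|u_n\|_{L^{q_0}}^{(p-1)r_0}\Big(\int_E|\varphi|^{q_0}\,dx\Big)^{1/p}.
\]
The uniform bound on $\|u_n\|_{L^{q_0}}$ and the fact that $|\varphi|^{q_0}\in L^1(\mathbb{R}^N)$ then give both equi-integrability (letting $|E|\to0$) and tightness at infinity (taking $E=\{|x|>R\}$ and $R\to\infty$). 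Vitali's convergence theorem upgrades the a.e.\ convergence of $h_n$ to convergence in $L^{r_0}$.

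Finally the two ingredients combine by weak–strong pairing: as $L^{r_0'}$ is the dual of $L^{r_0}$, the product of a weakly convergent sequence in $L^{r_0'}$ with a strongly convergent sequence in $L^{r_0}$ converges to the product of the limits, giving
\[
\int_{\mathbb{R}^N}\bigl(I_\mu*|u_n|^p\bigr)|u_n|^{p-2}u_n\varphi\,dx
\to
\int_{\mathbb{R}^N}\bigl(I_\mu*|u|^p\bigr)|u|^{p-2}u\varphi\,dx,
\]
and the subsequence extraction is harmless since the limit is independent of the chosen subsequence. The main obstacle is precisely the strong convergence of $h_n$ in $L^{r_0}$: the compact embedding only yields local strong convergence, and global convergence on all of $\mathbb{R}^N$ genuinely relies on the decay of the fixed $\varphi$ to control the tail—without a fixed integrable weight one obtains merely weak convergence and the pairing above breaks down.
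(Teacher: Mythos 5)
Your proof is correct: the splitting into weak convergence of the potential $I_\mu*|u_n|^p\rightharpoonup I_\mu*|u|^p$ in $L^{2N/\mu}$ (via boundedness plus a.e.\ convergence in $L^{2N/(2N-\mu)}$ and weak--weak continuity of the Riesz potential under HLS) paired against strong Vitali convergence of $|u_n|^{p-2}u_n\varphi$ in $L^{2N/(2N-\mu)}$, with the fixed $\varphi\in L^{q_0}$ supplying both equi-integrability and tightness, is exactly the standard argument, and your exponent bookkeeping ($q_0=pr_0\in[2,2_s^*)$, $r_0'=2N/\mu$) is accurate. The paper itself imports this proposition from \cite{2023DCDSYU} without proof, and your blind argument coincides in substance with the proof given in that reference, so there is nothing further to reconcile.
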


\begin{Lem}\cite{2001LeibAMS}\label{Lem2.1}
	Let $r,t>1$ and $\mu\in(0,N)$ with
	\[
	\frac{1}{r}+\frac{1}{t}=2-\frac{\mu}{N}.
	\]
	If $f\in L^r(\mathbb{R}^N)$ and $h\in L^t(\mathbb{R}^N)$, then there exists a
	sharp constant $C(r,t,\mu,N)>0$ independent of $f,h$ such that
	\begin{equation}\label{eq2.3}
		\int_{\mathbb{R}^N}\int_{\mathbb{R}^N}
		\frac{f(x)h(y)}{|x-y|^{\mu}}\,dx\,dy
		\le C(r,t,\mu,N)\,\|f\|_r\,\|h\|_t.
	\end{equation}
\end{Lem}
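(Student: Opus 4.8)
The plan is to recognize \eqref{eq2.3} as a mapping property of convolution with the Riesz-type kernel $K(x)=|x|^{-\mu}$ and to prove it by the weak-type/interpolation method. First I would rewrite the left-hand side as a pairing: by Fubini,
\[
\int_{\R^N}\int_{\R^N}\frac{f(x)h(y)}{|x-y|^{\mu}}\,dx\,dy
=\int_{\R^N}h(y)\,(K*f)(y)\,dy,
\qquad K(x)=|x|^{-\mu}.
\]
Applying H\"older's inequality with the conjugate pair $(t,t')$, $\frac1t+\frac1{t'}=1$, this is bounded by $\|h\|_t\,\|K*f\|_{t'}$. Hence it suffices to establish the convolution estimate
\[
\|K*f\|_{t'}\le C(r,t,\mu,N)\,\|f\|_r ,
\]
i.e.\ that $f\mapsto K*f$ maps $L^r(\R^N)$ boundedly into $L^{t'}(\R^N)$. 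The hypotheses $r,t>1$ and $\mu\in(0,N)$ force $r,t'\in(1,\infty)$, so both endpoints lie in the open reflexive range.

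The exponent bookkeeping is dictated by scaling. The kernel $K$ lies in no $L^{q}$, but it belongs to the weak Lebesgue space $L^{N/\mu,\infty}(\R^N)$, since $|\{x:|x|^{-\mu}>\lambda\}|=c_N\lambda^{-N/\mu}$. Writing $q_0=N/\mu\in(1,\infty)$, the constraint $\frac1r+\frac1t=2-\frac{\mu}{N}$ becomes, after using $\frac1t=1-\frac1{t'}$, exactly the weak Young relation
\[
\frac1r+\frac1{q_0}=1+\frac1{t'} .
\]
I would then prove the weak-type bound $\|K*f\|_{t',\infty}\le C\|f\|_r$ directly, by truncating the kernel: split $K=K\mathbf{1}_{\{|x|\le R\}}+K\mathbf{1}_{\{|x|>R\}}$, estimate the convolution with the first piece by Young's inequality (this piece lies in $L^{a}$ for a suitable $a$) and the convolution with the second piece by H\"older, then bound the distribution function $|\{|K*f|>\lambda\}|$ and optimize over the free radius $R=R(\lambda)$. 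This yields the weak-type inequality at the target exponent.

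Finally I would upgrade the weak-type estimate to the strong-type bound by the Marcinkiewicz interpolation theorem, applied between two weak-type estimates at exponents slightly below and above $t'$, which is legitimate precisely because $r,t'\in(1,\infty)$ keeps us away from the endpoints. The sharp constant is then obtained abstractly: having shown that the ratio
\[
\frac{\displaystyle\int_{\R^N}\int_{\R^N}\frac{f(x)h(y)}{|x-y|^{\mu}}\,dx\,dy}{\|f\|_r\,\|h\|_t}
\]
is bounded above uniformly in $(f,h)$, its supremum over nonzero $f,h$ is a finite positive number $C(r,t,\mu,N)$ (positivity follows by testing with nonnegative $f,h$), and by construction \eqref{eq2.3} holds with this best constant. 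The main obstacle is the passage from weak-type to strong-type: one must track the kernel-truncation estimate and the optimization in $R$ carefully and then set up the interpolation with two admissible nearby exponents; alternatively one may invoke O'Neil's convolution inequality in Lorentz spaces to reach the strong-type bound in a single step. The determination of the explicit optimal value and its attainment, via rearrangement and conformal symmetry as in \cite{2001LeibAMS}, is not needed for the statement as worded.
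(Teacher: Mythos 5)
Your proposal is correct, but note that the paper does not prove this lemma at all: it is quoted verbatim from Lieb--Loss \cite{2001LeibAMS}, so there is no internal proof to compare against. Your argument is the classical real-variable route (essentially Stein's): dualize via Fubini and H\"older to reduce to the convolution bound $\|K*f\|_{t'}\le C\|f\|_r$, observe $K=|x|^{-\mu}\in L^{N/\mu,\infty}$, verify the weak Young relation $\frac1r+\frac{\mu}{N}=1+\frac1{t'}$ (your bookkeeping is right, and the hypotheses $r,t>1$ do place $r,t'$ in the open range, with $t'>r$ as Marcinkiewicz requires), prove the weak-type bound by kernel truncation and optimization in $R$, and interpolate. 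This is genuinely different from the proof in the cited source, which proceeds by the layer-cake representation and rearrangement inequalities; Lieb's refinement via symmetrization and conformal invariance is what yields the explicit sharp constant in the diagonal case. Your approach buys elementarity and flexibility (it generalizes to other weak-$L^{q_0}$ kernels) at the cost of any explicit constant, but you correctly observe that the statement as worded only asserts the existence of a finite best constant, which follows abstractly by taking the supremum of the ratio once uniform boundedness is known. Two small hygiene points: the Fubini/duality step should be run with $|f|,|h|$ in place of $f,h$ (the inequality for signed functions then follows a fortiori, and Tonelli justifies the interchange for nonnegative integrands), and in the truncation step one should record that $N-\mu r'<0$, so the optimizing radius $R(\lambda)$ is well defined and the tail piece $K\mathbf{1}_{\{|x|>R\}}$ indeed lies in $L^{r'}$.
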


By Lemma \ref{Lem2.1} and the fractional Sobolev embeddings, the functional
$J_\alpha$ defined in \eqref{eq2.2} is well defined on $H^s(\mathbb{R}^N)$ and
is of class $C^1$.

\begin{Lem}\cite{2018FengJMAA}\label{Lem2.2}
	Let $N>2s$, $0<s<1$ and
	\[
	2_{\mu,*}<t<2_{\mu,s}^*,
	\]
	where $2_{\mu,*}=\frac{2N-\mu}{N}$ and $2_{\mu,s}^*=\frac{2N-\mu}{N-2s}$.
	Then, for all $u\in H^s(\mathbb{R}^N)$,
	\begin{equation}\label{eq2.4}
		\int_{\mathbb{R}^N}(I_\mu*|u|^t)|u|^t\,dx
		\le C_t\,\|u\|^{2\gamma_{t,s}}\|u\|_2^{2t(1-\gamma_{t,s})},
	\end{equation}
	where
	\[
	\gamma_{t,s}=\frac{N(t-2)+\mu}{2ts}
	\]
	and $C_t>0$ is a constant depending only on $t,s,N,\mu$.
\end{Lem}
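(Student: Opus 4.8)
The plan is to obtain \eqref{eq2.4} by decoupling the nonlocal interaction with the Hardy--Littlewood--Sobolev inequality (Lemma \ref{Lem2.1}) and then interpolating the resulting single Lebesgue norm between $L^2$ and the homogeneous norm $\|\cdot\|$. First I would write $I_\mu(x)=A_{N,\mu}|x|^{-\mu}$ and apply Lemma \ref{Lem2.1} in the symmetric case, taking $f=h=|u|^t$ and both Lebesgue exponents equal to $r=\tfrac{2N}{2N-\mu}$, which satisfies $\tfrac1r+\tfrac1r=2-\tfrac{\mu}{N}$. Since $\bigl\||u|^t\bigr\|_r=\|u\|_{rt}^{t}$, this gives
\[
\int_{\mathbb{R}^N}(I_\mu*|u|^t)|u|^t\,dx
\;\le\; A_{N,\mu}\,C(r,r,\mu,N)\,\bigl\||u|^t\bigr\|_r^{2}
\;=\;C_1\,\|u\|_{\sigma}^{2t},
\qquad \sigma:=rt=\frac{2Nt}{2N-\mu},
\]
so that the Choquard term is controlled by a power of the single norm $\|u\|_\sigma$.

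The second ingredient is the fractional Gagliardo--Nirenberg inequality for $\|u\|_\sigma$. The decisive structural fact is that the admissible window for $t$ is precisely the one making $\sigma$ an embedding exponent: the map $t\mapsto\sigma$ is strictly increasing and sends the endpoints $t=2_{\mu,*}$ and $t=2_{\mu,s}^*$ to $\sigma=2$ and $\sigma=2_s^*=\tfrac{2N}{N-2s}$, so $2_{\mu,*}<t<2_{\mu,s}^*$ forces $2<\sigma<2_s^*$. For such $\sigma$ I would interpolate the Lebesgue norms, $\|u\|_\sigma\le\|u\|_2^{1-\theta}\|u\|_{2_s^*}^{\theta}$ with $\tfrac1\sigma=\tfrac{1-\theta}{2}+\tfrac{\theta}{2_s^*}$, and then invoke the fractional Sobolev inequality $\|u\|_{2_s^*}\le S^{-1/2}\|u\|$ (with $S$ the sharp fractional Sobolev constant), obtaining
\[
\|u\|_\sigma\;\le\; S^{-\theta/2}\,\|u\|^{\theta}\,\|u\|_2^{1-\theta},
\qquad
\theta=\frac{N}{s}\Bigl(\frac12-\frac1\sigma\Bigr)=\frac{N(\sigma-2)}{2s\sigma}\in(0,1).
\]

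It then remains to identify $\theta$ with $\gamma_{t,s}$ and to collect the exponents. Substituting $\sigma=\tfrac{2Nt}{2N-\mu}$ gives $\tfrac{\sigma-2}{\sigma}=\tfrac{N(t-2)+\mu}{Nt}$, hence
\[
\theta=\frac{N(t-2)+\mu}{2ts}=\gamma_{t,s}.
\]
Raising the interpolation estimate to the power $2t$ and inserting it into the HLS bound yields
\[
\int_{\mathbb{R}^N}(I_\mu*|u|^t)|u|^t\,dx
\;\le\; C_1\,S^{-t\gamma_{t,s}}\,\|u\|^{2t\gamma_{t,s}}\,\|u\|_2^{2t(1-\gamma_{t,s})},
\]
which is exactly \eqref{eq2.4} with $C_t:=C_1S^{-t\gamma_{t,s}}$, the homogeneous norm carrying the scale-invariant exponent $2t\gamma_{t,s}$ that is used throughout the paper (for instance in the fibering map $E_u$ and in the constants $\alpha_1,\alpha_2$). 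A quick consistency check is the dilation $u\mapsto u(\lambda\cdot)$: the left-hand side scales as $\lambda^{\mu-2N}$ and the right-hand side as $\lambda^{\,2ts\gamma_{t,s}-Nt}$, and these agree precisely because $N(t-2)+\mu=2ts\gamma_{t,s}$. I do not expect a genuine obstacle, since the argument is essentially bookkeeping of exponents; the only points requiring care are verifying that $\sigma$ stays strictly inside $(2,2_s^*)$ so that $\theta\in(0,1)$ and the Sobolev inequality applies, and correctly absorbing the Riesz constant $A_{N,\mu}$ together with the sharp HLS and Sobolev constants into $C_t$.
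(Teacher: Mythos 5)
Your argument is correct, and it is essentially the only reasonable proof of this estimate: the paper itself does not prove Lemma \ref{Lem2.2} but imports it from \cite{2018FengJMAA}, and your chain --- the symmetric Hardy--Littlewood--Sobolev inequality (Lemma \ref{Lem2.1}) with $r=\tfrac{2N}{2N-\mu}$, reducing the Choquard integral to $\|u\|_\sigma^{2t}$ with $\sigma=\tfrac{2Nt}{2N-\mu}\in(2,2_s^*)$, followed by $L^2$--$L^{2_s^*}$ interpolation and the fractional Sobolev inequality, with the identification $\theta=\gamma_{t,s}$ checked by direct substitution --- is the standard derivation used in the cited source. One point in your write-up deserves emphasis: your computation produces the exponent $2t\gamma_{t,s}$ on the homogeneous norm $\|u\|$, whereas the statement \eqref{eq2.4} as printed carries $2\gamma_{t,s}$; your dilation check settles which is right, since with $2\gamma_{t,s}$ the two sides would have different homogeneity in $u$ (degree $2t$ on the left versus $2t+2(1-t)\gamma_{t,s}$ on the right), so \eqref{eq2.4} contains a typo, and $2t\gamma_{t,s}$ --- the exponent the paper in fact uses every time the lemma is invoked, e.g.\ in the proof of Lemma \ref{Lem3.1} and in \eqref{eq4.6}--\eqref{eq4.7} --- is the correct one, which your proof establishes.
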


\begin{Lem}\cite{1993Ghoussoub}\label{Lem2.3}
	Let $X$ be a complete connected $C^1$ Finsler manifold and
	$\varphi\in C^1(X,\mathbb{R})$. Let $\mathcal{F}$ be a homotopy-stable family
	of compact subsets of $X$ with extended closed boundary $B\subset X$. Set
	\[
	c=c(\varphi,\mathcal{F})
	:=\inf_{A\in\mathcal{F}}\,\sup_{x\in A}\varphi(x),
	\]
	and let $F\subset X$ be a closed subset such that
	\[
	(A\cap F)\setminus B\neq\emptyset\quad\text{for every }A\in\mathcal{F},
	\]
	and
	\[
	\sup\varphi(B)\le c\le\inf\varphi(F).
	\]
	Then, for any sequence of sets $\{A_n\}_n\subset\mathcal{F}$ such that
	\[
	\lim_{n\to\infty}\sup_{x\in A_n}\varphi(x)=c,
	\]
	there exists a sequence $\{x_n\}_n\subset X$ such that
	\[
	\varphi(x_n)\to c,\qquad
	\|d\varphi(x_n)\|\to 0,\qquad
	\mathrm{dist}(x_n,F)\to 0,\qquad
	\mathrm{dist}(x_n,A_n)\to 0
	\]
	as $n\to\infty$.
\end{Lem}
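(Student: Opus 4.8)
The plan is a contradiction argument driven by a deformation localized in a tubular neighborhood of $F$, in which the homotopy stability of $\mathcal{F}$ and the linking hypothesis $(A\cap F)\setminus B\neq\emptyset$ carry the whole weight. The guiding observation is that, for every $A\in\mathcal{F}$, any point of $(A\cap F)\setminus B$ has $\varphi$-value at least $\inf_F\varphi\ge c$ by the assumption $c\le\inf\varphi(F)$; hence $\sup_{x\in A}\varphi(x)\ge c$ for all $A\in\mathcal{F}$, so the minimax level equals $c$ and, crucially, no member of the family can be deformed within $\mathcal{F}$ to have supremum strictly below $c$. It is exactly this ``barrier at level $c$ carried by $F$'' that a putative gradient bound near $F$ will be shown to violate.

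First I set up the contradiction. Suppose the asserted Palais--Smale sequence does not exist. Then there are fixed constants $\varepsilon_0,\rho_0\in(0,1)$ and an index $N_0$ such that, for every $n\ge N_0$,
\[
\|d\varphi(x)\|\ge\varepsilon_0
\quad\text{whenever}\quad
|\varphi(x)-c|\le\varepsilon_0,\ \mathrm{dist}(x,F)\le\rho_0,\ \mathrm{dist}(x,A_n)\le\rho_0 .
\]
Since $\sup_{x\in A_n}\varphi(x)\to c$, I may fix one $n\ge N_0$ with $\sup_{A_n}\varphi\le c+\varepsilon_0/2$, and I work with this single set from now on.

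The core step is the deformation. On the tube $\mathcal{N}_n=\{x:|\varphi(x)-c|\le\varepsilon_0,\ \mathrm{dist}(x,F)\le\rho_0,\ \mathrm{dist}(x,A_n)\le\rho_0\}$, where the gradient is bounded below, a standard locally Lipschitz pseudo-gradient flow produces a deformation $\eta\colon X\to X$, homotopic to the identity, with the properties: $\varphi(\eta(x))\le\varphi(x)$ for all $x$; $\eta=\mathrm{id}$ off $\mathcal{N}_n$ (in particular far from $F$ and from $A_n$); each point is displaced by at most $\rho_0/2$; and $\eta$ maps the inner slab $\{|\varphi-c|\le\varepsilon_0/2\}\cap\{\mathrm{dist}(\cdot,F)\le\rho_0/2\}$ into $\{\varphi\le c-\varepsilon_0/2\}$. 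Because $\sup\varphi(B)\le c$, I arrange $\eta=\mathrm{id}$ on $B$, so homotopy stability gives $C_n:=\eta(A_n)\in\mathcal{F}$ and therefore $(C_n\cap F)\setminus B\neq\emptyset$. Choosing $z_n\in(C_n\cap F)\setminus B$ we have $\varphi(z_n)\ge c$ since $z_n\in F$. Writing $z_n=\eta(w_n)$ with $w_n\in A_n$, monotonicity gives $\varphi(w_n)\ge\varphi(z_n)\ge c$, while $w_n\in A_n$ gives $\varphi(w_n)\le c+\varepsilon_0/2$; moreover $z_n\in F$ and the displacement bound force $\mathrm{dist}(w_n,F)\le\rho_0/2$. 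Thus $w_n$ lies in the inner slab, so $\varphi(z_n)=\varphi(\eta(w_n))\le c-\varepsilon_0/2<c$, contradicting $\varphi(z_n)\ge c$.

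This contradiction refutes the existence of any such fixed $\varepsilon_0,\rho_0$; hence for every $\varepsilon,\rho>0$ and every $N$ there are an index $n\ge N$ and a point $x$ with $|\varphi(x)-c|\le\varepsilon$, $\mathrm{dist}(x,F)\le\rho$, $\mathrm{dist}(x,A_n)\le\rho$ and $\|d\varphi(x)\|<\varepsilon$. A diagonal selection along $\varepsilon,\rho\downarrow0$ then yields a sequence $\{x_n\}$ realizing all four asserted convergences. I expect the main obstacle to be the borderline case $\sup\varphi(B)=c$, in which the naive flow may push a boundary point upward across level $c$ and so fail to fix $B$; this is handled by first applying Ekeland's variational principle to the set-functional $A\mapsto\sup_A\varphi$ on $\mathcal{F}$ to replace $A_n$ by a nearby \emph{stable} near-minimizer, a device that at once secures $\mathrm{dist}(x_n,A_n)\to0$ and permits the deformation to be chosen inert on a neighborhood of $B$ without destroying membership in $\mathcal{F}$.
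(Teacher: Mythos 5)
This lemma is quoted in the paper directly from \cite{1993Ghoussoub} and no proof is given there, so there is no internal argument to compare against; I am assessing your proposal on its own. Your architecture --- argue by contradiction, run a quantitative deformation supported on a tube around $F$ at level $c$, use homotopy stability together with the duality condition $(A\cap F)\setminus B\neq\emptyset$ to produce $z_n=\eta(w_n)\in F$ with $\varphi(z_n)\ge c$ that the flow should have pushed below $c$ --- is the standard heuristic behind Ghoussoub's theorem, and your bookkeeping ($\varphi(w_n)\ge\varphi(z_n)\ge c$, $\mathrm{dist}(w_n,F)\le\rho_0/2$ from the displacement bound, etc.) is coherent given the properties you postulate for $\eta$. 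Three slips are repairable: (i) with $\|d\varphi\|\ge\varepsilon_0$ and displacement capped at $\rho_0/2<1$, a pseudo-gradient flow can lower $\varphi$ by at most roughly $\varepsilon_0\rho_0$, not the full $\varepsilon_0$ needed to pass from $c+\varepsilon_0/2$ to $c-\varepsilon_0/2$; you must shrink the energy window to order $\varepsilon_0\rho_0$, which is harmless since $\sup_{A_n}\varphi\to c$. (ii) Your ``inner slab'' omits the constraint $\mathrm{dist}(\cdot,A_n)\le\rho_0$, outside of which you have no gradient bound, so property (iv) cannot hold as stated; harmless again, since you only apply it to $w_n\in A_n$. (iii) The correct negation of the conclusion yields the gradient lower bound only along a subsequence of $n$, not for all $n\ge N_0$; cosmetic, since you use a single suitable $n$.

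The genuine gap is the step ``Because $\sup\varphi(B)\le c$, I arrange $\eta=\mathrm{id}$ on $B$.'' Homotopy stability requires the homotopy to fix $B$ pointwise at all times, and nothing in the hypotheses keeps $B$ away from your deformation region: points of $B$ may have $\varphi$-values arbitrarily close to (or equal to) $c$ and may lie within $\rho_0$ of both $F$ and $A_n$, so the tube $\mathcal{N}_n$ can meet $B$. If you cut the pseudo-gradient field off near $B$ so as to fix $B$, then a point $w_n\in A_n$ close to $B$ is neither displaced nor lowered, and $z_n=\eta(w_n)$ can perfectly well land in $(C_n\cap F)\setminus B$ with $\varphi(z_n)\ge c$ --- no contradiction; note that $z_n\notin B$ in no way prevents $w_n$ from sitting near $B$. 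This borderline interaction, above all when $\sup\varphi(B)=c=\inf\varphi(F)$, is precisely the hard content of the theorem: Ghoussoub's actual proof does not run the naive deformation but proceeds by perturbation, roughly applying Ekeland's variational principle to the set-functional $A\mapsto\sup_A\varphi$ (suitably penalized by a distance-to-$F$ term) on a complete metric space of compact sets, and then extracting the gradient estimate at near-maximal points of a near-minimal set. You flag this obstacle yourself and invoke the same Ekeland device in one sentence, but you do not execute it: you neither define the perturbed functional, nor verify that the Ekeland near-minimizer can be taken compatible with $\mathcal{F}$ and with the boundary condition on $B$, nor derive from the Ekeland inequality the smallness of $\|d\varphi\|$ at points simultaneously near $F$ and near $A_n$. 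Since that is the bulk of the real proof, what you have is a correct outline of the easy regime (where $B$ stays strictly below level $c$ near the deformation region) with the decisive step missing in general.
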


\section{Compactness of Palais--Smale sequences}

In this section we prove that the constrained functional $\bigl.J_\alpha\bigr|_{S_c}$ satisfies
the Palais--Smale condition. The main tool is the Pohozaev constraint.

\begin{Lem}\label{Lem3.1}
	Let
	\[
	2_{\mu,*}<q<2+\frac{2s-\mu}{N}<p<2_{\mu,s}^*
	\quad\text{or}\quad
	2+\frac{2s-\mu}{N}\le q<p<2_{\mu,s}^*.
	\]
	In the $L^2$–critical case $q=2+\frac{2s-\mu}{N}$ we also assume that
	\eqref{eq1.10} holds.
	Let $\{u_n\}\subset S_{c,\mathrm{rad}}$ be a Palais--Smale sequence for
	$\bigl.J_\alpha\bigr|_{S_c}$ at level $l\neq 0$ such that
	$P_\alpha(u_n)\to 0$ as $n\to\infty$. Then, up to a subsequence,
	$u_n\to u$ strongly in $H^s(\mathbb{R}^N)$, where $u\in S_c$ is a radial
	weak solution of \eqref{eq1.1} for some $\lambda<0$.
\end{Lem}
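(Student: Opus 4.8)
\medskip\noindent\textbf{Proof proposal.}
For brevity write $B_r(u):=\int_{\mathbb{R}^N}\bigl(I_\mu*|u|^r\bigr)|u|^r\,dx\ge 0$ for $r\in\{q,p\}$, so that $J_\alpha(u)=\frac12\|u\|^2-\frac{\alpha}{2q}B_q(u)-\frac1{2p}B_p(u)$ and $\frac1s P_\alpha(u)=\|u\|^2-\alpha\gamma_{q,s}B_q(u)-\gamma_{p,s}B_p(u)$. The plan is the classical three-step scheme for constrained Palais--Smale sequences: first establish boundedness, then upgrade weak convergence to convergence of the nonlocal terms via radial symmetry, and finally recover the $L^2$-mass through the sign of the Lagrange multiplier.

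I would first bound $\{u_n\}$. Since $\|u_n\|_2=c$ is fixed it suffices to bound $\|u_n\|$, and to this end I would subtract the relations $J_\alpha(u_n)\to l$ and $P_\alpha(u_n)\to 0$ in the combination $J_\alpha(u_n)-\frac1{2s}P_\alpha(u_n)$, which annihilates the quadratic term and gives
\[
J_\alpha(u_n)-\frac1{2s}P_\alpha(u_n)
=\frac{\alpha(q\gamma_{q,s}-1)}{2q}B_q(u_n)
+\frac{p\gamma_{p,s}-1}{2p}B_p(u_n)\longrightarrow l .
\]
Since $p\gamma_{p,s}>1$ throughout Cases~I--III, the $B_p$-coefficient is positive; feeding this back into the identity $\|u_n\|^2=\alpha\gamma_{q,s}B_q(u_n)+\gamma_{p,s}B_p(u_n)+o(1)$ coming from $P_\alpha(u_n)\to0$, and using the Gagliardo--Nirenberg bound of Lemma \ref{Lem2.2} together with $\gamma_{q,s},\gamma_{p,s}\in(0,1)$ (with Young's inequality when $q$ is $L^2$-subcritical), yields that $\{\|u_n\|\}$ is bounded. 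In the $L^2$-critical case $q=2+\frac{2s-\mu}{N}$, i.e.\ $q\gamma_{q,s}=1$, the $B_q$-term is exactly of order $\|u_n\|^2$, and assumption \eqref{eq1.10} supplies precisely the coercivity needed to close the estimate.

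Next, up to a subsequence $u_n\rightharpoonup u$ in $H^s_{\mathrm{rad}}(\mathbb{R}^N)$. For each $r\in\{q,p\}$ the exponent $\frac{2Nr}{2N-\mu}$ lies strictly in $(2,2_s^*)$ because $2_{\mu,*}<r<2_{\mu,s}^*$, so the compact embedding $H^s_{\mathrm{rad}}(\mathbb{R}^N)\hookrightarrow\hookrightarrow L^{\frac{2Nr}{2N-\mu}}(\mathbb{R}^N)$ and Lemma \ref{Lem2.1} give $B_r(u_n)\to B_r(u)$. The constrained Palais--Smale condition furnishes $\lambda_n:=\frac1{c^2}\langle J_\alpha'(u_n),u_n\rangle$ with $J_\alpha'(u_n)-\lambda_n u_n\to 0$ in the dual of $H^s(\mathbb{R}^N)$. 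Subtracting $\frac1sP_\alpha(u_n)\to0$ from $\langle J_\alpha'(u_n),u_n\rangle=\|u_n\|^2-\alpha B_q(u_n)-B_p(u_n)$ cancels $\|u_n\|^2$ and yields
\[
\lambda_n c^2=-\alpha(1-\gamma_{q,s})B_q(u_n)-(1-\gamma_{p,s})B_p(u_n)+o(1)\longrightarrow\lambda ,
\]
with $\lambda=-\frac1{c^2}\bigl[\alpha(1-\gamma_{q,s})B_q(u)+(1-\gamma_{p,s})B_p(u)\bigr]\le 0$. Here the hypothesis $l\neq 0$ is decisive: if $B_q(u)=B_p(u)=0$ then $B_q(u_n),B_p(u_n)\to0$, hence $\|u_n\|^2\to0$ by $P_\alpha(u_n)\to0$ and therefore $J_\alpha(u_n)\to0$, contradicting $l\neq0$. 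Thus $B_q(u)+B_p(u)>0$ and $\lambda<0$. Passing to the limit in $J_\alpha'(u_n)-\lambda_n u_n\to0$, using $u_n\rightharpoonup u$ and Proposition \ref{Pro2.1} for the convolution terms (and the principle of symmetric criticality to extend the identity from radial to all test functions), shows that $u$ is a radial weak solution of \eqref{eq1.1} for this $\lambda<0$.

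The delicate point, and the main obstacle, is that $H^s_{\mathrm{rad}}(\mathbb{R}^N)\hookrightarrow L^2(\mathbb{R}^N)$ is \emph{not} compact, so a priori $\|u\|_2\le c$ and mass could leak at the $L^2$-endpoint. I would close the argument by comparing two energy identities. Testing $J_\alpha'(u_n)-\lambda_n u_n\to0$ with $u_n$ gives $\|u_n\|^2=\lambda_n c^2+\alpha B_q(u_n)+B_p(u_n)+o(1)$, while testing the limiting equation with $u$ gives $\|u\|^2=\lambda\|u\|_2^2+\alpha B_q(u)+B_p(u)$. Letting $n\to\infty$ in the first identity (using $\lambda_n\to\lambda$ and $B_r(u_n)\to B_r(u)$) and subtracting the second yields
\[
\lim_{n\to\infty}\|u_n\|^2-\|u\|^2=\lambda\bigl(c^2-\|u\|_2^2\bigr).
\]
The left-hand side is $\ge0$ by weak lower semicontinuity of the norm, whereas the right-hand side is $\le0$ since $\lambda<0$ and $c^2-\|u\|_2^2\ge0$. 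Hence both vanish: $\|u\|_2=c$, so no mass escapes and $u\in S_c$, and simultaneously $\|u_n\|\to\|u\|$. Together with $\|u_n\|_2=c\to\|u\|_2$ and $u_n\rightharpoonup u$, this gives $\|u_n\|_{H^s}\to\|u\|_{H^s}$ and hence $u_n\to u$ strongly in $H^s(\mathbb{R}^N)$, as claimed. Everything in this final step hinges on the strict negativity of $\lambda$ obtained above, which is exactly where the assumption $l\neq0$ enters in an essential way.
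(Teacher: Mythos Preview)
Your proof is correct and follows essentially the same route as the paper: use $P_\alpha(u_n)\to0$ together with $J_\alpha(u_n)\to l$ to bound $\{u_n\}$, exploit radial compactness plus HLS to pass the nonlocal terms to the limit, derive $\lambda_n c^2=-\alpha(1-\gamma_{q,s})B_q(u_n)-(1-\gamma_{p,s})B_p(u_n)+o(1)$ and use $l\neq0$ to force $\lambda<0$, and finally upgrade to strong $H^s$ convergence via the sign of $\lambda$. The only cosmetic differences are that the paper eliminates $B_p$ from $J_\alpha$ via $P_\alpha$ (keeping $\|u_n\|^2$) rather than eliminating $\|u_n\|^2$ as you do, and in the last step the paper tests the difference of equations with $u_n-u$ directly to obtain $\|u_n-u\|^2-\lambda\|u_n-u\|_2^2\to0$, whereas you compare $\lim\|u_n\|^2$ with $\|u\|^2$; both arguments are equivalent and rest on exactly the same ingredient, namely $\lambda<0$.
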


\begin{proof}
	Since $\{u_n\}$ is a Palais--Smale sequence at level $l$, we have
	\[
	J_\alpha(u_n)\to l
	\quad\text{and}\quad
	\|(J_\alpha|_{S_c})'(u_n)\|_{(T_{u_n}S_c)^*}\to 0.
	\]
	In particular, there exists $n_0\in\mathbb{N}$ such that
	\begin{equation}\label{eq3.1}
		J_\alpha(u_n)
		=\frac12\|u_n\|^2
		-\frac{\alpha}{2q}\int_{\mathbb{R}^N}(I_\mu*|u_n|^q)|u_n|^q\,dx
		-\frac{1}{2p}\int_{\mathbb{R}^N}(I_\mu*|u_n|^p)|u_n|^p\,dx
		\le l+1
	\end{equation}
	for all $n\ge n_0$. Moreover, by assumption,
	\begin{equation}\label{eq3.2}
		P_\alpha(u_n)
		=s\|u_n\|^2
		-\alpha s\gamma_{q,s}\int_{\mathbb{R}^N}(I_\mu*|u_n|^q)|u_n|^q\,dx
		-s\gamma_{p,s}\int_{\mathbb{R}^N}(I_\mu*|u_n|^p)|u_n|^p\,dx
		=o_n(1).
	\end{equation}
	Set
	\[
	A_n:=\int_{\mathbb{R}^N}(I_\mu*|u_n|^q)|u_n|^q\,dx,
	\qquad
	B_n:=\int_{\mathbb{R}^N}(I_\mu*|u_n|^p)|u_n|^p\,dx.
	\]
	
	Dividing \eqref{eq3.2} by $s$ and rearranging yields
	\begin{equation}\label{eq3.3}
		\gamma_{p,s} B_n
		=\|u_n\|^2-\alpha\gamma_{q,s} A_n+o_n(1).
	\end{equation}
	
	\medskip\noindent
	\textit{Step 1: boundedness of $\{u_n\}$ in $H^s(\mathbb{R}^N)$.}
	
	\medskip\noindent
	\textbf{Case I:}
	$2_{\mu,*}<q<2+\frac{2s-\mu}{N}<p<2_{\mu,s}^*$.
	
	Using \eqref{eq3.1} and \eqref{eq3.3} we obtain
	\[
	\begin{aligned}
		l+1
		&\ge J_\alpha(u_n)\\
		&=\frac12\|u_n\|^2
		-\frac{\alpha}{2q}A_n
		-\frac{1}{2p\gamma_{p,s}}
		\bigl(\|u_n\|^2-\alpha\gamma_{q,s}A_n+o_n(1)\bigr)\\[2pt]
		&=\Bigl(\frac12-\frac{1}{2p\gamma_{p,s}}\Bigr)\|u_n\|^2
		+\frac{\alpha}{2}\Bigl(-\frac{1}{q}+\frac{\gamma_{q,s}}{p\gamma_{p,s}}\Bigr)A_n
		+o_n(1).
	\end{aligned}
	\]
	Hence
	\begin{equation}\label{eq3.4}
		\Bigl(\frac12-\frac{1}{2p\gamma_{p,s}}\Bigr)\|u_n\|^2
		\le l+1
		+\frac{\alpha}{2}\Bigl|\frac{1}{q}-\frac{\gamma_{q,s}}{p\gamma_{p,s}}\Bigr|A_n
		+o_n(1).
	\end{equation}
	By Lemma \ref{Lem2.2} (with $t=q$), we have
	\[
	A_n
	=\int_{\mathbb{R}^N}(I_\mu*|u_n|^q)|u_n|^q\,dx
	\le C_q\|u_n\|^{2q\gamma_{q,s}}\|u_n\|_2^{2q(1-\gamma_{q,s})}
	=C_q c^{2q(1-\gamma_{q,s})}\|u_n\|^{2q\gamma_{q,s}}.
	\]
	Since in this case $q\gamma_{q,s}<1$ and $p\gamma_{p,s}>1$ (see Remark
	\ref{Rek2.3}), we have
	\[
	\frac12-\frac{1}{2p\gamma_{p,s}}>0,
	\qquad
	2q\gamma_{q,s}<2.
	\]
	Therefore \eqref{eq3.4} yields an inequality of the form
	\[
	C_1\|u_n\|^2
	\le C_2 + C_3\|u_n\|^{2q\gamma_{q,s}},
	\]
	with constants $C_1,C_2,C_3>0$ independent of $n$. Since the exponent
	$2q\gamma_{q,s}$ is strictly less than $2$, this implies that $\{\|u_n\|\}$
	is bounded, hence $\{u_n\}$ is bounded in $H^s(\mathbb{R}^N)$.
	
	\medskip\noindent
	\textbf{Case II:}
	$2+\frac{2s-\mu}{N}<q<p<2_{\mu,s}^*$.
	
	From \eqref{eq3.3} we have
	\[
	\|u_n\|^2
	=\alpha\gamma_{q,s}A_n+\gamma_{p,s}B_n+o_n(1),
	\]
	and thus, using again \eqref{eq3.1},
	\[
	\begin{aligned}
		l+1
		&\ge J_\alpha(u_n)\\
		&=\frac12\|u_n\|^2
		-\frac{\alpha}{2q}A_n
		-\frac{1}{2p}B_n\\
		&=\frac12\bigl(\alpha\gamma_{q,s}A_n+\gamma_{p,s}B_n\bigr)
		-\frac{\alpha}{2q}A_n-\frac{1}{2p}B_n+o_n(1)\\
		&=\Bigl(\frac{\alpha}{2}\gamma_{q,s}-\frac{\alpha}{2q}\Bigr)A_n
		+\Bigl(\frac12\gamma_{p,s}-\frac{1}{2p}\Bigr)B_n+o_n(1).
	\end{aligned}
	\]
	Hence
	\[
	\Bigl(\frac12\gamma_{p,s}-\frac{1}{2p}\Bigr)B_n
	+\Bigl(\frac{\alpha}{2}\gamma_{q,s}-\frac{\alpha}{2q}\Bigr)A_n
	\le l+1+o_n(1).
	\]
	For $2+\frac{2s-\mu}{N}<r<2_{\mu,s}^*$ one has $r\gamma_{r,s}>1$ and
	$\gamma_{r,s}<1$ (again by Remark \ref{Rek2.3}), so
	\[
	\frac12\gamma_{p,s}-\frac{1}{2p}>0,
	\qquad
	\frac{\alpha}{2}\gamma_{q,s}-\frac{\alpha}{2q}>0.
	\]
	Therefore both sequences $\{A_n\}$ and $\{B_n\}$ are bounded. Using once
	more \eqref{eq3.3} we deduce that $\{\|u_n\|\}$ is bounded, so $\{u_n\}$ is
	bounded in $H^s(\mathbb{R}^N)$.
	
	\medskip\noindent
	\textbf{Case III:}
	$q=2+\frac{2s-\mu}{N}<p<2_{\mu,s}^*$.
	
	In this case $q\gamma_{q,s}=1$ (Remark \ref{Rek2.3}). From $P_\alpha(u_n)\to 0$
	we have
	\[
	\|u_n\|^2
	=\alpha\gamma_{q,s}A_n+\gamma_{p,s}B_n+o_n(1).
	\]
	Using this and $J_\alpha(u_n)\to l$ gives
	\[
	\begin{aligned}
		l+o_n(1)
		&=J_\alpha(u_n)\\
		&=\frac12\|u_n\|^2
		-\frac{\alpha}{2q}A_n
		-\frac{1}{2p}B_n\\
		&=\frac12\alpha\gamma_{q,s}A_n+\frac12\gamma_{p,s}B_n
		-\frac{\alpha}{2q}A_n-\frac{1}{2p}B_n+o_n(1)\\
		&=\Bigl(\frac12\gamma_{p,s}-\frac{1}{2p}\Bigr)B_n+o_n(1),
	\end{aligned}
	\]
	so $\{B_n\}$ is bounded. On the other hand, applying \eqref{eq2.4} with
	$t=q$ and using $q\gamma_{q,s}=1$ we obtain
	\[
	A_n
	\le C_q\|u_n\|^{2q\gamma_{q,s}}\|u_n\|_2^{2q(1-\gamma_{q,s})}
	=C_q c^{2q(1-\gamma_{q,s})}\|u_n\|^2.
	\]
	Combining this with the identity
	\[
	\|u_n\|^2
	=\alpha\gamma_{q,s}A_n+\gamma_{p,s}B_n+o_n(1),
	\]
	we get
	\[
	\|u_n\|^2
	\le \alpha\gamma_{q,s}C_q c^{2q(1-\gamma_{q,s})}\|u_n\|^2 + C + o_n(1)
	\]
	for some constant $C>0$ independent of $n$. If
	\[
	1-\alpha\gamma_{q,s}C_q c^{2q(1-\gamma_{q,s})}>0,
	\]
	which is precisely the smallness condition on $\alpha$ used in the critical
	case, it follows that $\{\|u_n\|\}$ is bounded. Thus in all the cases under
	consideration, $\{u_n\}$ is bounded in $H^s(\mathbb{R}^N)$.
	
\medskip\noindent
\textit{Step 2: existence of a Lagrange multiplier and weak convergence.}

Since $H^s_{\mathrm{rad}}(\mathbb{R}^N)$ is the fixed-point space of the
natural action of the orthogonal group $O(N)$ and both $J_\alpha$ and the
$L^2$–norm are $O(N)$–invariant, we may apply Palais' principle of symmetric
criticality (see, e.g., \cite{Palais}) to the functional
\[
\mathcal{L}(u)=J_\alpha(u)-\frac{\lambda}{2}\int_{\mathbb{R}^N}|u|^2\,dx.
\]
In particular, once we know that $\mathcal{L}'(u)[v]=0$ for all
$v\in H^s_{\mathrm{rad}}(\mathbb{R}^N)$, it follows that
$\mathcal{L}'(u)=0$ in $H^s(\mathbb{R}^N)$, that is, $u$ solves
\eqref{eq1.1} in the sense of Definition~\ref{Def2.1}.

Since $H_{\mathrm{rad}}^s(\mathbb{R}^N)\hookrightarrow L^r(\mathbb{R}^N)$
compactly for all $r\in(2,2_s^*)$, there exists $u\in H_{\mathrm{rad}}^s(\mathbb{R}^N)$
such that, up to a subsequence,
\[
u_n\rightharpoonup u\quad\text{in }H^s(\mathbb{R}^N),\qquad
u_n\to u\quad\text{in }L^r(\mathbb{R}^N)\ \forall\,2<r<2_s^*,
\]
and $u_n(x)\to u(x)$ almost everywhere in $\mathbb{R}^N$.

	Since $\{u_n\}\subset S_{c,\mathrm{rad}}$ is a Palais--Smale sequence for
	$\bigl.J_\alpha\bigr|_{S_c}$, by the Lagrange multiplier rule there exists a
	sequence $\{\lambda_n\}\subset\mathbb{R}$ such that
	\begin{equation}\label{eq3.5}
		\begin{aligned}
			&\int_{\mathbb{R}^N}(-\Delta)^{\frac{s}{2}}u_n(-\Delta)^{\frac{s}{2}}v\,dx
			-\lambda_n\int_{\mathbb{R}^N}u_n v\,dx\\
			&\quad-\alpha\int_{\mathbb{R}^N}(I_\mu*|u_n|^q)|u_n|^{q-2}u_n v\,dx
			-\int_{\mathbb{R}^N}(I_\mu*|u_n|^p)|u_n|^{p-2}u_n v\,dx
			=o_n(1)
		\end{aligned}
	\end{equation}
	for all $v\in H_{\mathrm{rad}}^s(\mathbb{R}^N)$.
	
	Taking $v=u_n$ in \eqref{eq3.5} and using the definition of $A_n,B_n$ we get
	\begin{equation}\label{eq3.6}
		\lambda_n c^2
		=\|u_n\|^2-\alpha A_n-B_n+o_n(1).
	\end{equation}
	Combining \eqref{eq3.3} and \eqref{eq3.6} we obtain
	\begin{equation}\label{eq3.7}
		\lambda_n c^2
		=\alpha(\gamma_{q,s}-1)A_n+(\gamma_{p,s}-1)B_n+o_n(1).
	\end{equation}
	Using the boundedness of $\{u_n\}$ and Lemma \ref{Lem2.2} we deduce from
	\eqref{eq3.7} that $\{\lambda_n\}$ is bounded, so up to a subsequence
	$\lambda_n\to\lambda\in\mathbb{R}$.
	
	Passing to the limit in \eqref{eq3.5}, using Proposition \ref{Pro2.1} for
	$q$ and $p$ and the strong convergence in $L^r$ for $2<r<2_s^*$, we obtain
	\[
	\begin{aligned}
		&\int_{\mathbb{R}^N}(-\Delta)^{\frac{s}{2}}u(-\Delta)^{\frac{s}{2}}v\,dx
		-\lambda\int_{\mathbb{R}^N}uv\,dx\\
		&\quad-\alpha\int_{\mathbb{R}^N}(I_\mu*|u|^q)|u|^{q-2}u v\,dx
		-\int_{\mathbb{R}^N}(I_\mu*|u|^p)|u|^{p-2}u v\,dx
		=0
	\end{aligned}
	\]
	for all $v\in H_{\mathrm{rad}}^s(\mathbb{R}^N)$. Thus $u$ is a radial weak
	solution of \eqref{eq1.1} corresponding to the Lagrange multiplier $\lambda$.
	
	\medskip\noindent
	\textit{Step 3: sign of $\lambda$ and strong convergence.}
	
	From \eqref{eq3.3} and \eqref{eq3.6} we can also write
	\[
	\lambda_n c^2
	=-\alpha(1-\gamma_{q,s})A_n-(1-\gamma_{p,s})B_n+o_n(1).
	\]
	Since $2_{\mu,*}<q,p\le 2_{\mu,s}^*$, one has $0<\gamma_{r,s}\le 1$ for
	$r\in\{q,p\}$, and at least one of the inequalities is strict. Therefore
	$1-\gamma_{q,s}\ge 0$, $1-\gamma_{p,s}\ge 0$ and not both are zero. As
	$A_n,B_n\ge 0$, it follows that
	\[
	\lambda c^2
	=-\alpha(1-\gamma_{q,s})A-(1-\gamma_{p,s})B\le 0,
	\]
	where $A,B$ are the limits of $A_n,B_n$ along a subsequence. If $\lambda=0$,
	then necessarily $A=B=0$, and by the Pohozaev identity we would get
	$\|u_n\|\to 0$ and hence $J_\alpha(u_n)\to 0$, contradicting $l\neq 0$.
	Thus $\lambda<0$.
	
	Finally, subtracting the limit equation from \eqref{eq3.5} and testing with
	$v=u_n-u$ we obtain
	\begin{equation}\label{eq3.8}
		\begin{aligned}
			&\int_{\mathbb{R}^N}
			\bigl|(-\Delta)^{\frac{s}{2}}(u_n-u)\bigr|^2\,dx
			-\lambda_n\int_{\mathbb{R}^N}|u_n-u|^2\,dx\\
			&\quad-\alpha\int_{\mathbb{R}^N}
			\Bigl[(I_\mu*|u_n|^q)|u_n|^{q-2}u_n
			-(I_\mu*|u|^q)|u|^{q-2}u\Bigr](u_n-u)\,dx\\
			&\quad-\int_{\mathbb{R}^N}
			\Bigl[(I_\mu*|u_n|^p)|u_n|^{p-2}u_n
			-(I_\mu*|u|^p)|u|^{p-2}u\Bigr](u_n-u)\,dx
			=o_n(1).
		\end{aligned}
	\end{equation}
	Using Proposition \ref{Pro2.1} for $q$ and $p$ and the strong convergence
	$u_n\to u$ in $L^r(\mathbb{R}^N)$ for $2<r<2_s^*$, the last two integrals in
	\eqref{eq3.8} tend to zero as $n\to\infty$. Passing to the limit and using
	$\lambda_n\to\lambda<0$ we obtain
	\[
	\int_{\mathbb{R}^N}
	\bigl(|\xi|^{2s}-\lambda\bigr)\,|\widehat{u_n-u}(\xi)|^2\,d\xi\to 0,
	\]
	which implies $u_n\to u$ strongly in $H^s(\mathbb{R}^N)$, since
	$|\xi|^{2s}-\lambda\ge c(1+|\xi|^{2s})$ for some $c>0$ (because $\lambda<0$).
	In particular, $\|u\|_2=\lim_n\|u_n\|_2=c$, that is, $u\in S_c$. This
	completes the proof.
\end{proof}
\section{Mixed $L^2$-subcritical and $L^2$-supercritical case}

In this section we deal with the mixed $L^2$-subcritical and $L^2$-supercritical regime, that is,
we assume
\[
2_{\mu,*} < q < \frac{2s-\mu}{N} + 2 < p< 2_{\mu,s}^*,
\]
so that the lower-order Choquard term is $L^2$-subcritical while the higher-order term is
$L^2$-supercritical under the mass constraint on $S_c$.
In this regime we study the constrained functional \(J_\alpha\) on \(S_c\) and prove
Theorems~\ref{Thm1.1} and~\ref{Thm1.2}.

\subsection{Pohozaev manifold and fibering geometry}

\begin{Lem}\label{Lem4.1}
	Let
	\[
	2_{\mu,*}<q<2+\frac{2s-\mu}{N}<p< 2_{\mu,s}^*
	\]
	and let \(0<\alpha<\alpha_1\), where \(\alpha_1\) is given by \eqref{eq1.8}.
	Then \(\mathfrak{P}_{\alpha,c}^0=\varnothing\), and
	\(\mathfrak{P}_{\alpha,c}\) is a \(C^1\) submanifold of codimension \(2\) in
	\(H^s(\mathbb{R}^N)\). Moreover, every critical point of
	\(\bigl.J_\alpha\bigr|_{\mathfrak{P}_{\alpha,c}}\) is also a critical point
	of \(\bigl.J_\alpha\bigr|_{S_c}\).
\end{Lem}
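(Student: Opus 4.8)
The plan is to read off the geometry of $\mathfrak{P}_{\alpha,c}$ from the fibering map $E_u$, using that $E_u'(0)=P_\alpha(u)$ and, via the group law $\tau\star(t\star u)=(t+\tau)\star u$, that $E_u'(t)=P_\alpha(t\star u)$; differentiating the latter at $t=0$ gives $E_u''(0)=dP_\alpha(u)[w]$ with $w:=\partial_t(t\star u)|_{t=0}$. Throughout I abbreviate $A=\int_{\mathbb{R}^N}(I_\mu*|u|^q)|u|^q\,dx$ and $B=\int_{\mathbb{R}^N}(I_\mu*|u|^p)|u|^p\,dx$. Everything rests on the first assertion $\mathfrak{P}_{\alpha,c}^0=\varnothing$: once $E_u''(0)\neq 0$ is known along $\mathfrak{P}_{\alpha,c}$, the submanifold structure and the natural-constraint property follow mechanically.

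\textbf{Step 1 (the main obstacle): $\mathfrak{P}_{\alpha,c}^0=\varnothing$.} I would argue by contradiction, assuming $u\in\mathfrak{P}_{\alpha,c}^0$, so that $E_u'(0)=0$ and $E_u''(0)=0$. Dividing by $s$ and $2s^2$ turns these into the linear system $\|u\|^2=\alpha\gamma_{q,s}A+\gamma_{p,s}B$ and $\|u\|^2=\alpha q\gamma_{q,s}^2A+p\gamma_{p,s}^2B$ in $(A,B)$, whose determinant $\alpha\gamma_{q,s}\gamma_{p,s}(p\gamma_{p,s}-q\gamma_{q,s})$ is strictly positive in Case~I because $q\gamma_{q,s}<1<p\gamma_{p,s}$ by Remark~\ref{Rek2.3}. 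Cramer's rule then gives $A=\frac{(p\gamma_{p,s}-1)\|u\|^2}{\alpha\gamma_{q,s}(p\gamma_{p,s}-q\gamma_{q,s})}$ and $B=\frac{(1-q\gamma_{q,s})\|u\|^2}{\gamma_{p,s}(p\gamma_{p,s}-q\gamma_{q,s})}$. Inserting the Gagliardo–Nirenberg bounds $A\le C_qc^{2q(1-\gamma_{q,s})}\|u\|^{2q\gamma_{q,s}}$ and $B\le C_pc^{2p(1-\gamma_{p,s})}\|u\|^{2p\gamma_{p,s}}$ from Lemma~\ref{Lem2.2} and dividing out powers of $\|u\|>0$, the exponent $1-q\gamma_{q,s}>0$ converts the first relation into an \emph{upper} bound for $\|u\|^2$ proportional to a positive power of $\alpha$, while $1-p\gamma_{p,s}<0$ converts the second into a \emph{lower} bound for $\|u\|^2$ independent of $\alpha$. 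The constant $\alpha_1$ in \eqref{eq1.8} is precisely the value of $\alpha$ at which these two bounds coincide, so for $0<\alpha<\alpha_1$ the upper bound falls strictly below the lower bound, which is absurd. Hence $\mathfrak{P}_{\alpha,c}^0=\varnothing$. Pinning down this threshold is the delicate point of the whole lemma.

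\textbf{Step 2 (codimension-$2$ submanifold).} Next I would write $\mathfrak{P}_{\alpha,c}=\Phi^{-1}(0)$ with $\Phi=(P_\alpha,G):H^s(\mathbb{R}^N)\to\mathbb{R}^2$ and $G(u)=\|u\|_2^2-c^2$. Since $t\star u$ preserves the $L^2$-norm, $dG(u)[w]=0$, whereas $dP_\alpha(u)[w]=E_u''(0)$. Step~1 ensures $E_u''(0)\neq 0$ for every $u\in\mathfrak{P}_{\alpha,c}$, while $dG(u)[u]=2c^2\neq 0$; testing the relation $a\,dP_\alpha(u)+b\,dG(u)=0$ first on $w$ (forcing $a=0$) and then on $u$ (forcing $b=0$) shows $dP_\alpha(u)$ and $dG(u)$ are linearly independent, so $d\Phi(u)$ is onto $\mathbb{R}^2$. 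The regular value theorem then yields that $\mathfrak{P}_{\alpha,c}$ is a $C^1$ submanifold of codimension~$2$.

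\textbf{Step 3 (natural constraint).} Finally, for a critical point $u$ of $J_\alpha|_{\mathfrak{P}_{\alpha,c}}$, the Lagrange rule for the two constraints produces $\lambda,\nu\in\mathbb{R}$ with $dJ_\alpha(u)=\lambda\,dG(u)+\nu\,dP_\alpha(u)$. Evaluating on $w$ and using $dJ_\alpha(u)[w]=E_u'(0)=P_\alpha(u)=0$, together with $dG(u)[w]=0$ and $dP_\alpha(u)[w]=E_u''(0)\neq 0$, forces $\nu=0$, whence $dJ_\alpha(u)=\lambda\,dG(u)$; that is, $u$ is a critical point of $J_\alpha|_{S_c}$. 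As anticipated, the only real difficulty lies in Step~1, Steps~2 and~3 being routine once $E_u''(0)\neq 0$ is secured on $\mathfrak{P}_{\alpha,c}$.
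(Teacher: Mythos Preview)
Your proposal is correct and follows essentially the same approach as the paper: the contradiction argument in Step~1 via the linear system for $A,B$ and the Gagliardo--Nirenberg bounds is identical, and Step~3 matches the paper's scaling-direction test verbatim. In Step~2 your argument is slightly more direct than the paper's---you test $dP_\alpha(u)$ and $dG(u)$ on the concrete vectors $w$ and $u$, whereas the paper argues by contradiction that linear dependence would make $u$ a constrained critical point of $P_\alpha$ on $S_c$ and then invokes a Pohozaev identity for the resulting equation to force $E_u''(0)=0$---but both routes hinge on the same fact $E_u''(0)\neq 0$ from Step~1.
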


\begin{proof}
	Assume by contradiction that \(\mathfrak{P}_{\alpha,c}^0\neq\varnothing\).
	Then there exists \(u\in\mathfrak{P}_{\alpha,c}^0\), that is,
	\(u\in S_c\), \(P_\alpha(u)=0\) and \(E_u''(0)=0\), where
	\(E_u(t):=J_\alpha(t\star u)\).
	
	Set
	\[
	A=\int_{\mathbb{R}^N}(I_\mu*|u|^q)|u|^q\,dx,
	\qquad
	B=\int_{\mathbb{R}^N}(I_\mu*|u|^p)|u|^p\,dx.
	\]
	Using the expression of \(P_\alpha\) we can write
	\begin{equation}\label{eq4.1}
		s\|u\|^2-s\alpha\gamma_{q,s}A-s\gamma_{p,s}B=0,
	\end{equation}
	while from the explicit formula for \(E_u''(0)\) we obtain
	\begin{equation}\label{eq4.2}
		\|u\|^2-\alpha q\gamma_{q,s}^2 A-p\gamma_{p,s}^2 B=0.
	\end{equation}
	
	From \eqref{eq4.1} and \eqref{eq4.2} we first eliminate \(\|u\|^2\).
	Subtracting \eqref{eq4.1} from \eqref{eq4.2} we get
	\[
	\alpha\gamma_{q,s}(1-q\gamma_{q,s})A
	+\gamma_{p,s}(1-p\gamma_{p,s})B=0,
	\]
	so
\begin{equation}\label{eq4.3}
	B
	=\alpha\,\frac{\gamma_{q,s}(1-q\gamma_{q,s})}
	{\gamma_{p,s}\bigl(p\gamma_{p,s}-1\bigr)}\,A.
\end{equation}
Substituting this into \eqref{eq4.1} we obtain
	\[
	\|u\|^2
	=\alpha\gamma_{q,s}\frac{p\gamma_{p,s}-q\gamma_{q,s}}{p\gamma_{p,s}-1}\,A,
	\]
	that is,
	\begin{equation}\label{eq4.4}
		A
		=\frac{p\gamma_{p,s}-1}{\alpha\gamma_{q,s}(p\gamma_{p,s}-q\gamma_{q,s})}\,
		\|u\|^2.
	\end{equation}
	Using again \eqref{eq4.1} together with \eqref{eq4.4}, we also find
	\begin{equation}\label{eq4.5}
		B
		=\frac{1-q\gamma_{q,s}}{\gamma_{p,s}(p\gamma_{p,s}-q\gamma_{q,s})}\,\|u\|^2.
	\end{equation}
	
	By Lemma \ref{Lem2.2}, there exist positive
	constants \(C_q,C_p\) such that
	\begin{equation}\label{eq4.6}
		A
		\le C_q\,\|u\|^{2q\gamma_{q,s}}\|u\|_2^{2q(1-\gamma_{q,s})}
		=C_q\,\|u\|^{2q\gamma_{q,s}}c^{2q(1-\gamma_{q,s})},
	\end{equation}
	\begin{equation}\label{eq4.7}
		B
		\le C_p\,\|u\|^{2p\gamma_{p,s}}\|u\|_2^{2p(1-\gamma_{p,s})}
		=C_p\,\|u\|^{2p\gamma_{p,s}}c^{2p(1-\gamma_{p,s})}.
	\end{equation}
	
	Combining \eqref{eq4.4} with \eqref{eq4.6} gives
	\[
	\frac{p\gamma_{p,s}-1}{\alpha\gamma_{q,s}(p\gamma_{p,s}-q\gamma_{q,s})}
	\,\|u\|^2
	\le C_q\,\|u\|^{2q\gamma_{q,s}}c^{2q(1-\gamma_{q,s})},
	\]
	and therefore
	\begin{equation}\label{eq4.8}
		\|u\|^{2-2q\gamma_{q,s}}
		\le
		\frac{\alpha\gamma_{q,s}(p\gamma_{p,s}-q\gamma_{q,s})}{p\gamma_{p,s}-1}
		\,C_q\,c^{2q(1-\gamma_{q,s})}.
	\end{equation}
	Since \(q\gamma_{q,s}<1\), the exponent \(2-2q\gamma_{q,s}>0\), and thus
	\begin{equation}\label{eq4.9}
		\|u\|
		\le
		\Biggl(
		\frac{\alpha\gamma_{q,s}(p\gamma_{p,s}-q\gamma_{q,s})}{p\gamma_{p,s}-1}
		\,C_q\,c^{2q(1-\gamma_{q,s})}
		\Biggr)^{\frac{1}{2-2q\gamma_{q,s}}}.
	\end{equation}
	
	On the other hand, using \eqref{eq4.5} together with
	\eqref{eq4.7}, we obtain
	\[
	\frac{1-q\gamma_{q,s}}{\gamma_{p,s}(p\gamma_{p,s}-q\gamma_{q,s})}\,\|u\|^2
	\le C_p\,\|u\|^{2p\gamma_{p,s}}c^{2p(1-\gamma_{p,s})},
	\]
	so that
	\begin{equation}\label{eq4.10}
		\|u\|^{2p\gamma_{p,s}-2}
		\ge
		\frac{1-q\gamma_{q,s}}{\gamma_{p,s}(p\gamma_{p,s}-q\gamma_{q,s})}
		\,\frac{1}{C_p}\,c^{-2p(1-\gamma_{p,s})}.
	\end{equation}
	Since \(p\gamma_{p,s}>1\), the exponent \(2p\gamma_{p,s}-2>0\), and hence
	\begin{equation}\label{eq4.11}
		\|u\|
		\ge
		\Biggl(
		\frac{1-q\gamma_{q,s}}
		{\gamma_{p,s}(p\gamma_{p,s}-q\gamma_{q,s}) C_p}
		\Biggr)^{\frac{1}{2p\gamma_{p,s}-2}}
		c^{-\frac{2p(1-\gamma_{p,s})}{2p\gamma_{p,s}-2}}.
	\end{equation}
	
	Putting together \eqref{eq4.9} and
	\eqref{eq4.11} we obtain a constraint on \(\alpha\).
	Rearranging the inequality yields
	\[
	\alpha
	\ge
	\left(
	\frac{1-q\gamma_{q,s}}
	{\gamma_{p,s}(p\gamma_{p,s}-q\gamma_{q,s}) C_p c^{2p(1-\gamma_{p,s})}}
	\right)^{\frac{1-q\gamma_{q,s}}{p\gamma_{p,s}-1}}
	\frac{p\gamma_{p,s}-1}
	{\gamma_{q,s}(p\gamma_{p,s}-q\gamma_{q,s}) C_q c^{2q(1-\gamma_{q,s})}}.
	\]
	By definition, the right-hand side is exactly \(\alpha_1\), see
	\eqref{eq1.8}. Hence we have shown that any
	\(u\in\mathfrak{P}_{\alpha,c}^0\) forces \(\alpha\ge\alpha_1\), which
	contradicts the assumption \(0<\alpha<\alpha_1\). Therefore
	\(\mathfrak{P}_{\alpha,c}^0=\varnothing\).
	
	We now prove that \(\mathfrak{P}_{\alpha,c}\) is a smooth manifold of codimension
	\(2\). Set
	\[
	C(u)=\int_{\mathbb{R}^N}|u|^2\,dx-c^2,
	\qquad
	\mathfrak{P}_{\alpha,c}
	=\{u\in H^s(\mathbb{R}^N): C(u)=0,\ P_\alpha(u)=0\}.
	\]
	Both \(C\) and \(P_\alpha\) are \(C^1\) on \(H^s(\mathbb{R}^N)\).
	Moreover,
	\[
	C'(u)[v]=2\int_{\mathbb{R}^N}uv\,dx,
	\quad
	T_uS_c=\{v\in H^s(\mathbb{R}^N): C'(u)[v]=0\}.
	\]
	
	Let \(u\in\mathfrak{P}_{\alpha,c}\). Suppose, by contradiction, that
	\(C'(u)\) and \(P_\alpha'(u)\) are linearly dependent in \(H^s(\mathbb{R}^N)^*\),
	that is, there exists \(\beta\in\mathbb{R}\) such that
	\(P_\alpha'(u)=\beta C'(u)\). Then for every \(v\in T_uS_c\) we have
	\(C'(u)[v]=0\) and hence
	\[
	P_\alpha'(u)[v]=\beta C'(u)[v]=0.
	\]
	Thus \(u\) is a constrained critical point of \(P_\alpha\) on \(S_c\).
	By the Lagrange multiplier rule, there exists \(\tau\in\mathbb{R}\) such that
	\(P_\alpha'(u)=\tau C'(u)\) in the whole \(H^s(\mathbb{R}^N)\); this yields a
	fractional Choquard equation of the form
	\[
	(-\Delta)^s u=\tau u
	+\alpha q\gamma_{q,s}(I_\mu*|u|^q)|u|^{q-2}u
	+p\gamma_{p,s}(I_\mu*|u|^p)|u|^{p-2}u
	\quad\text{in }\mathbb{R}^N.
	\]
	The associated Pohožaev identity for this equation reads
	\[
	\|u\|^2
	=\alpha q\gamma_{q,s}\int_{\mathbb{R}^N}(I_\mu*|u|^q)|u|^q\,dx
	+p\gamma_{p,s}\int_{\mathbb{R}^N}(I_\mu*|u|^p)|u|^p\,dx.
	\]
	Combining this with \(P_\alpha(u)=0\) we obtain \(E_u''(0)=0\), that is,
	\(u\in\mathfrak{P}_{\alpha,c}^0\), which is impossible. Therefore
	\(C'(u)\) and \(P_\alpha'(u)\) are linearly independent, and the map
	\[
	(C'(u),P_\alpha'(u)):H^s(\mathbb{R}^N)\to\mathbb{R}^2
	\]
	is surjective. By the implicit function theorem, \(\mathfrak{P}_{\alpha,c}\)
	is a \(C^1\) submanifold of codimension \(2\) in \(H^s(\mathbb{R}^N)\).
	
	Finally, let \(u\in\mathfrak{P}_{\alpha,c}\) be a critical point of
	\(\bigl.J_\alpha\bigr|_{\mathfrak{P}_{\alpha,c}}\). Then there exist
	\(\lambda,\chi\in\mathbb{R}\) such that
	\begin{equation}\label{eq4.12}
		J_\alpha'(u)
		=\lambda C'(u)+\chi P_\alpha'(u)
		\qquad\text{in }H^s(\mathbb{R}^N)^*.
	\end{equation}
	Consider the scaling path \(\gamma(t):=t\star u\). By construction,
	\(\gamma(t)\in S_c\) for all \(t\in\mathbb{R}\), and
	\[
	\frac{d}{dt}\Big|_{t=0}J_\alpha(\gamma(t))
	=E_u'(0)=P_\alpha(u)=0.
	\]
	Differentiating \eqref{eq4.12} along \(\gamma(t)\) at \(t=0\) we obtain
	\[
	0
	=\lambda\,\frac{d}{dt}\Big|_{t=0}C(\gamma(t))
	+\chi\,\frac{d}{dt}\Big|_{t=0}P_\alpha(\gamma(t)).
	\]
	Since \(C(\gamma(t))\equiv 0\) on \(S_c\), its derivative at \(t=0\) vanishes.
	On the other hand,
	\[
	\frac{d}{dt}\Big|_{t=0}P_\alpha(\gamma(t))
	=P_\alpha'(u)\bigl[\gamma'(0)\bigr]
	=\frac{d}{dt}\Big|_{t=0}P_\alpha(t\star u)
	=\frac{d}{dt}\Big|_{t=0}E_u'(t)
	=E_u''(0).
	\]
	Since \(\mathfrak{P}_{\alpha,c}^0=\varnothing\), we have \(E_u''(0)\neq 0\),
	hence \(P_\alpha'(u)\bigl[\gamma'(0)\bigr]\neq 0\). Therefore necessarily
	\(\chi=0\), and \eqref{eq4.12} reduces to
	\[
	J_\alpha'(u)=\lambda C'(u),
	\]
	which exactly means that \(u\) is a critical point of
	\(\bigl.J_\alpha\bigr|_{S_c}\). This completes the proof.
\end{proof}

By Lemma~\ref{Lem2.2} and the fact that $\|u\|_2=c$ for every $u\in S_c$, we have
for all $u\in S_c$ that
\begin{equation*}
	\begin{aligned}
		J_\alpha(u)
		&=\frac{1}{2}\|u\|^2
		-\frac{\alpha}{2 q}\int_{\mathbb{R}^N}(I_\mu *|u|^q)|u|^q \,dx
		-\frac{1}{2p}\int_{\mathbb{R}^N}(I_\mu *|u|^p)|u|^p \,dx \\
		&\geq \frac{1}{2}\|u\|^2
		-\frac{\alpha}{2q} C_q \|u\|^{2q\gamma_{q,s}} c^{2q(1-\gamma_{q,s})}
		-\frac{1}{2p} C_p \|u\|^{2p\gamma_{p,s}} c^{2p(1-\gamma_{p,s})}.
	\end{aligned}
\end{equation*}
To capture the one-dimensional geometry of $J_\alpha$ along $S_c$, we introduce
$g:\mathbb{R}^+\to\mathbb{R}$ by
\begin{equation*}
	g(t)
	=\frac{1}{2} t^2
	-\frac{\alpha}{2q} C_q t^{2q\gamma_{q,s}} c^{2q(1-\gamma_{q,s})}
	-\frac{1}{2p} C_p t^{2p\gamma_{p,s}} c^{2p(1-\gamma_{p,s})},
\end{equation*}
so that, for every \(u\in S_c\),
\[
J_\alpha(u)\ge g(\|u\|).
\]

\begin{Lem}\label{Lem4.2}
	Let
	\[
	2_{\mu,*}<q<2+\frac{2s-\mu}{N}<p\le 2_{\mu,s}^*
	\]
	and let \(0<\alpha<\alpha_2\), where \(\alpha_2\) is defined in \eqref{eq1.9}. Then \(g\) has a global strict maximum of positive level and a local strict minimum of negative level. More precisely, there exist \(0<t_0<t_1\) (depending on \(c\) and \(\alpha\)) such that
	\[
	g(t_0)=g(t_1)=0
	\quad\text{and}\quad
	g(t)>0 \iff t\in(t_0,t_1).
	\]
\end{Lem}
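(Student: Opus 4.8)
The plan is to reduce the lemma to an elementary one-variable analysis, the only genuinely nontrivial point being the identification of the threshold $\alpha_2$ as the exact value at which the maximal value of $g$ changes sign. I would first introduce the local abbreviations
\[
a:=2q\gamma_{q,s},\qquad b:=2p\gamma_{p,s},\qquad
K_1:=\frac{\alpha}{2q}C_q c^{2q(1-\gamma_{q,s})}>0,\qquad
K_2:=\frac{1}{2p}C_p c^{2p(1-\gamma_{p,s})}>0,
\]
so that $g(t)=\tfrac12 t^2-K_1 t^{a}-K_2 t^{b}$. By Remark~\ref{Rek2.3}, the hypothesis $q<2+\tfrac{2s-\mu}{N}<p$ gives $q\gamma_{q,s}<1<p\gamma_{p,s}$, hence $0<a<2<b$. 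This ordering of the three exponents is the only structural input the argument requires.

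Next I would record the qualitative shape. Since $a<2<b$, the term $-K_1t^a$ dominates as $t\to 0^+$, so $g(0)=0$ and $g(t)<0$ for small $t>0$, while $-K_2t^b$ dominates as $t\to\infty$, giving $g(t)\to-\infty$. To control the number of extrema I would count critical points: dividing $g'(t)=t-aK_1t^{a-1}-bK_2t^{b-1}=0$ by $t$, a critical point solves $\psi(t)=1$, where $\psi(t):=aK_1t^{a-2}+bK_2t^{b-2}$. As $a-2<0<b-2$, we have $\psi(t)\to+\infty$ at both ends, and $\psi$ has a single interior critical point (from $t^{b-a}=\tfrac{a(2-a)K_1}{b(b-2)K_2}$); thus $\psi$ is strictly decreasing then strictly increasing, so $\psi(t)=1$ has at most two solutions. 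Consequently $g$ has at most two positive critical points.

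The decisive step is to show that, precisely when $0<\alpha<\alpha_2$, the maximal value of $g$ is strictly positive. For this I would factor $g(t)=t^a\phi(t)$ with $\phi(t):=\tfrac12 t^{2-a}-K_1-K_2t^{b-a}$; since $t^a>0$, the sign of $g$ equals that of $\phi$ on $(0,\infty)$. As $0<2-a<b-a$, one has $\phi(0^+)=-K_1<0$ and $\phi(t)\to-\infty$, and $\phi$ has a unique maximum at $t_*$ determined by $\phi'(t_*)=0$, i.e.\ $t_*^{\,b-2}=\tfrac{2-a}{2(b-a)K_2}$. Using $K_2t_*^{\,b-2}=\tfrac{2-a}{2(b-a)}$ and substituting back yields the closed form
\[
\phi(t_*)=\frac{b-2}{2(b-a)}\left(\frac{2-a}{2(b-a)K_2}\right)^{\frac{2-a}{b-2}}-K_1 .
\]
Rewriting via $a=2q\gamma_{q,s}$, $b=2p\gamma_{p,s}$ so that $\tfrac{2-a}{b-2}=\tfrac{1-q\gamma_{q,s}}{p\gamma_{p,s}-1}$, and recalling $K_1=\tfrac{\alpha}{2q}C_q c^{2q(1-\gamma_{q,s})}$, a direct (if tedious) rearrangement shows that $\phi(t_*)>0$ is equivalent to $\alpha<\alpha_2$, with $\alpha_2$ exactly as in \eqref{eq1.9}. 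This algebraic bookkeeping is the main obstacle, since it requires matching the power $\tfrac{1-q\gamma_{q,s}}{p\gamma_{p,s}-1}$ to the negative exponent $\tfrac{1-q\gamma_{q,s}}{1-p\gamma_{p,s}}$ appearing in $\alpha_2$ and inverting the base $\tfrac{2-a}{2(b-a)K_2}$ accordingly.

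Finally I would assemble the conclusion. Under $\alpha<\alpha_2$ we have $\phi(t_*)>0$, so by the unimodal shape of $\phi$ there are exactly two zeros $0<t_0<t_1$ with $\phi>0$ on $(t_0,t_1)$ and $\phi<0$ on $(0,t_0)\cup(t_1,\infty)$. Translating back through $g=t^a\phi$ gives $g(t_0)=g(t_1)=0$ and $g(t)>0\iff t\in(t_0,t_1)$. Since $g\le 0$ off $(t_0,t_1)$, the maximum of $g$ over $(t_0,t_1)$ is its global maximum and is positive; as $g$ has at most two critical points, this maximum is attained at a single interior point, giving a global strict maximum of positive level. On $(0,t_0)$, $g$ vanishes at both endpoints and is strictly negative inside, hence attains a negative minimum at the unique remaining critical point, which is therefore a local strict minimum of negative level. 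This establishes all the assertions.
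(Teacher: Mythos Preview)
Your proof is correct and follows essentially the same route as the paper: both factor out the lowest power $t^{2q\gamma_{q,s}}$ and reduce the sign analysis of $g$ to a unimodal auxiliary function whose maximum value exceeds the $\alpha$-dependent threshold precisely when $\alpha<\alpha_2$. Your additional critical-point count via $\psi(t)=aK_1t^{a-2}+bK_2t^{b-2}$ makes the strictness of the maximum and minimum slightly cleaner than the paper's treatment, but this is a cosmetic improvement rather than a different strategy.
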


\begin{proof}
	We first describe the behaviour of $g$ near $0$ and as $t\to+\infty$. Using
	Remark~\ref{Rek2.3} and the present assumptions on $q$ and $p$, we have
	\[
	q\gamma_{q,s}<1<p\gamma_{p,s},
	\]
	whence
	\[
	2q\gamma_{q,s}<2<2p\gamma_{p,s}.
	\]
	For $t>0$ we write
	\[
	\begin{aligned}
		g(t)
		&= \frac{1}{2} t^2
		-\frac{\alpha}{2q} C_q c^{2q(1-\gamma_{q,s})} t^{2q\gamma_{q,s}}
		-\frac{1}{2p} C_p c^{2p(1-\gamma_{p,s})} t^{2p\gamma_{p,s}}\\
		&= t^{2q\gamma_{q,s}}
		\biggl[
		\frac{1}{2} t^{2-2q\gamma_{q,s}}
		-\frac{\alpha}{2q} C_q c^{2q(1-\gamma_{q,s})}
		-\frac{1}{2p} C_p c^{2p(1-\gamma_{p,s})}
		t^{2p\gamma_{p,s}-2q\gamma_{q,s}}
		\biggr].
	\end{aligned}
	\]
	Since $2-2q\gamma_{q,s}>0$ and $2p\gamma_{p,s}-2q\gamma_{q,s}>0$, the bracket
	inside the square brackets tends to
	\[
	-\frac{\alpha}{2q} C_q c^{2q(1-\gamma_{q,s})}<0
	\quad\text{as }t\to0^+.
	\]
	Thus there exists $\delta>0$ such that $g(t)<0$ for all $t\in(0,\delta)$.
	
	As $t\to+\infty$, we instead factor out the highest power $t^{2p\gamma_{p,s}}$:
	\[
	\begin{aligned}
		g(t)
		&= t^{2p\gamma_{p,s}}
		\biggl[
		\frac{1}{2} t^{2-2p\gamma_{p,s}}
		-\frac{\alpha}{2q} C_q c^{2q(1-\gamma_{q,s})}
		t^{2q\gamma_{q,s}-2p\gamma_{p,s}}
		-\frac{1}{2p} C_p c^{2p(1-\gamma_{p,s})}
		\biggr].
	\end{aligned}
	\]
	Here $2-2p\gamma_{p,s}<0$ and $2q\gamma_{q,s}-2p\gamma_{p,s}<0$, so the bracket
	tends to $-\frac{1}{2p} C_p c^{2p(1-\gamma_{p,s})}<0$ as $t\to+\infty$. Hence
	$g(t)\to-\infty$ as $t\to+\infty$.
	
	For $t>0$ the condition $g(t)>0$ can be rewritten as
	\[
	t^2
	-\frac{\alpha}{q} C_q c^{2q(1-\gamma_{q,s})} t^{2q\gamma_{q,s}}
	-\frac{1}{p} C_p c^{2p(1-\gamma_{p,s})} t^{2p\gamma_{p,s}}>0.
	\]
	Dividing by $t^{2q\gamma_{q,s}}>0$ yields
	\[
	t^{2(1-q\gamma_{q,s})}
	-\frac{\alpha}{q} C_q c^{2q(1-\gamma_{q,s})}
	-\frac{1}{p} C_p c^{2p(1-\gamma_{p,s})}
	t^{2p\gamma_{p,s}-2q\gamma_{q,s}}>0.
	\]
	We introduce
	\[
	\varphi(t)
	:=\frac{q}{C_q}t^{2(1-q\gamma_{q,s})}
	-\frac{q C_p c^{2p(1-\gamma_{p,s})}}{pC_q} t^{2p\gamma_{p,s}-2q\gamma_{q,s}},
	\quad t>0.
	\]
	Then
	\begin{equation}\label{eq4.13}
		g(t)>0
		\iff
		\varphi(t)>\alpha\,c^{2q(1-\gamma_{q,s})}.
	\end{equation}
	
	A direct calculation gives
	\[
	\begin{aligned}
		\varphi'(t)
		&=\frac{2q(1-q\gamma_{q,s})}{C_q} t^{2(1-q\gamma_{q,s})-1}
		-\frac{2q C_p c^{2p(1-\gamma_{p,s})}(p\gamma_{p,s}-q\gamma_{q,s})}{pC_q}
		t^{2p\gamma_{p,s}-2q\gamma_{q,s}-1}.
	\end{aligned}
	\]
	Since $1-q\gamma_{q,s}>0$ and $p\gamma_{p,s}-q\gamma_{q,s}>0$, the equation
	$\varphi'(t)=0$ has a unique solution $t_\ast>0$, given by
	\[
	t_\ast
	=\left(
	\frac{C_p c^{2p(1-\gamma_{p,s})}(p\gamma_{p,s}-q\gamma_{q,s})}
	{p(1-q\gamma_{q,s})}
	\right)^{\frac{1}{2(1-p\gamma_{p,s})}}.
	\]
	Moreover, $\varphi(0^+)=0$ and $\varphi(t)\to-\infty$ as $t\to+\infty$, so
	$\varphi$ is strictly increasing on $(0,t_\ast)$, strictly decreasing on $(t_\ast,\infty)$, and attains at $t_\ast$ a strict global maximum
	\[
	\varphi_{\max}=\varphi(t_\ast)
	=\frac{q}{C_q}\frac{p\gamma_{p,s}-1}{p\gamma_{p,s}-q\gamma_{q,s}}
	\left(
	\frac{C_p c^{2p(1-\gamma_{p,s})}(p\gamma_{p,s}-q\gamma_{q,s})}
	{p(1-q\gamma_{q,s})}
	\right)^{\frac{1-q\gamma_{q,s}}{1-p\gamma_{p,s}}}.
	\]
	By the definition \eqref{eq1.9} of $\alpha_2$ we have
	\[
	\alpha_2
	=\frac{\varphi_{\max}}{c^{2q(1-\gamma_{q,s})}}.
	\]
	Since $0<\alpha<\alpha_2$, it follows that
	\[
	\alpha\,c^{2q(1-\gamma_{q,s})}<\varphi_{\max}.
	\]
	Because $\varphi$ is continuous, strictly increasing on $(0,t_\ast)$ and strictly decreasing on $(t_\ast,\infty)$, the equation
	\[
	\varphi(t)=\alpha\,c^{2q(1-\gamma_{q,s})}
	\]
	has exactly two solutions $0<t_0<t_1$ with $t_0<t_\ast<t_1$. Consequently,
	\[
	\{t>0:\varphi(t)>\alpha\,c^{2q(1-\gamma_{q,s})}\}=(t_0,t_1).
	\]
	By \eqref{eq4.13}, this is precisely the set $\{t>0:g(t)>0\}$.
	Combining this with the negativity of $g$ near $0$ and for $t$ large, we obtain
	\[
	g(t_0)=g(t_1)=0,\quad
	g(t)>0\ \text{for }t\in(t_0,t_1),\quad
	g(t)<0\ \text{for }t\in(0,t_0)\cup(t_1,\infty).
	\]
	
	We now locate the critical points of $g$ and identify their nature. Since
	$g$ is continuous on $[t_0,t_1]$ and $g(t_0)=g(t_1)=0<g(t)$ for all
	$t\in(t_0,t_1)$, there exists $\tau_1\in(t_0,t_1)$ such that
	\[
	g(\tau_1)=\max_{t>0}g(t)>0.
	\]
	By the usual necessary condition for interior extrema, $g'(\tau_1)=0$, and
	$g(\tau_1)>g(t)$ for $t$ in a neighbourhood of $\tau_1$, so $\tau_1$ is a
	strict local maximum. Since $g(t)\le 0$ for $t\notin(t_0,t_1)$ and
	$g(\tau_1)>0$, this local maximum is in fact global.
	
	On the other hand, $g(0)=0$ and $g(t)<0$ for all $t\in(0,t_0]$. The minimum
	of $g$ on the compact interval $[0,t_0]$ is attained at some
	$\tau_0\in(0,t_0)$, and satisfies $g(\tau_0)<0$. Again $g'(\tau_0)=0$, and
	$g(\tau_0)<g(t)$ for $t$ close to $\tau_0$, which shows that $\tau_0$ is a
	strict local minimum of negative level.
	
	This proves that $g$ possesses a local strict minimum at $\tau_0$ with
	$g(\tau_0)<0$ and a global strict maximum at $\tau_1$ with $g(\tau_1)>0$, and
	that the sign of $g$ is described by
	\[
	g(t_0)=g(t_1)=0
	\quad\text{and}\quad
	g(t)>0\iff t\in(t_0,t_1),
	\]
	as claimed.
\end{proof}

\begin{Lem}\label{Lem4.3}
	Let
	\[
	2_{\mu,*}<q<2+\frac{2s-\mu}{N}<p\le 2_{\mu,s}^*
	\]
	and let \(0<\alpha<\min\{\alpha_1,\alpha_2\}\), where \(\alpha_1,\alpha_2\) are defined in \eqref{eq1.8} and \eqref{eq1.9}. Then for every \(u\in S_c\) the fiber map
	\[
	E_u:\mathbb{R}\to\mathbb{R},\qquad E_u(t):=J_\alpha(t\star u),
	\]
	has exactly two critical points \(t_u^1<t_u^3\) and exactly two zeros \(t_u^2<t_u^4\), with
	\[
	t_u^1<t_u^2<t_u^3<t_u^4.
	\]
	Moreover:
	\begin{itemize}
		\item[(1)] \(t_u^1\star u\in \mathfrak{P}_{\alpha,c}^+\), \(t_u^3\star u\in \mathfrak{P}_{\alpha,c}^-\), and
		\[
		\mathfrak{P}_{\alpha,c}\cap\{t\star u:\ t\in\mathbb{R}\}
		=\{t_u^1\star u,\ t_u^3\star u\}.
		\]
		\item[(2)] Let \(t_0,t_1\) be as in Lemma \ref{Lem4.2}. Then
		\[
		\|t\star u\|\le t_0 \quad\text{for all } t\le t_u^2,
		\]
		and
		\[
		J_\alpha(t_u^3\star u)=\max_{t\in\mathbb{R}} J_\alpha(t\star u)>0.
		\]
		Moreover,
		\[
		J_\alpha(t_u^1\star u)
		=\min\{J_\alpha(t\star u):\ t\in\mathbb{R},\ \|t\star u\|\le t_0\}<0,
		\]
		and \(E_u\) is strictly decreasing on \((t_u^3,+\infty)\).
		\item[(3)] The maps
		\[
		S_c\ni u\mapsto t_u^1\in\mathbb{R},
		\qquad
		S_c\ni u\mapsto t_u^3\in\mathbb{R}
		\]
		are of class \(C^1\).
	\end{itemize}
\end{Lem}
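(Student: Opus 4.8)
The plan is to reduce the statement to a one–variable analysis of the fibering map and to transfer the information already obtained for the auxiliary function $g$ in Lemma~\ref{Lem4.2}. Write $A=\int_{\mathbb{R}^N}(I_\mu*|u|^q)|u|^q\,dx>0$ and $B=\int_{\mathbb{R}^N}(I_\mu*|u|^p)|u|^p\,dx>0$. Setting $y=e^{st}\in(0,\infty)$, the explicit formula for $E_u'$ shows that $E_u'(t)=0$ is equivalent, after dividing by $s\,y^{2q\gamma_{q,s}}$, to $\psi(y)=\alpha\gamma_{q,s}A$, where $\psi(y):=\|u\|^2 y^{2-2q\gamma_{q,s}}-\gamma_{p,s}B\,y^{2p\gamma_{p,s}-2q\gamma_{q,s}}$. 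Since in the present regime $q\gamma_{q,s}<1<p\gamma_{p,s}$, both exponents are positive and the second exceeds the first, so exactly as for $\varphi$ in Lemma~\ref{Lem4.2} the function $\psi$ vanishes at $0^+$, tends to $-\infty$, and has a unique interior maximum; hence $E_u$ has \emph{at most two} critical points. The same sign pattern of the exponents gives the boundary behaviour $E_u(t)\to0^-$ as $t\to-\infty$ (with $E_u'(t)<0$ there, the term $-\alpha s\gamma_{q,s}e^{2q\gamma_{q,s}st}A$ dominating) and $E_u(t)\to-\infty$ as $t\to+\infty$.

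Next I would pin down that there are exactly two critical points and classify them. For non-degeneracy I use the cocycle identity $r\star(t\star u)=(r+t)\star u$, which gives $E_{t\star u}(r)=E_u(t+r)$, so a critical point $t$ of $E_u$ with $E_u''(t)=0$ would produce a point $t\star u\in\mathfrak{P}_{\alpha,c}^0$; since $0<\alpha<\alpha_1$, Lemma~\ref{Lem4.1} forces $\mathfrak{P}_{\alpha,c}^0=\varnothing$, so every critical point of $E_u$ is non-degenerate, i.e.\ a strict extremum. For existence I use $J_\alpha(t\star u)\ge g(\|t\star u\|)=g(e^{st}\|u\|)$; since $0<\alpha<\alpha_2$, Lemma~\ref{Lem4.2} gives $g>0$ on $(t_0,t_1)$, and as $e^{st}\|u\|$ sweeps all of $(0,\infty)$ we may choose $t$ with $e^{st}\|u\|\in(t_0,t_1)$, so $E_u$ attains a strictly positive value. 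Combining the two boundary limits with $E_u'<0$ near $-\infty$ and the attainment of a positive value forces $E_u$ to first decrease to a (negative) local minimum, then increase to a (positive) local maximum, then decrease to $-\infty$; together with the upper bound of two critical points this yields exactly two, namely $t_u^1<t_u^3$ with $E_u''(t_u^1)>0$ and $E_u''(t_u^3)<0$, hence $t_u^1\star u\in\mathfrak{P}_{\alpha,c}^+$ and $t_u^3\star u\in\mathfrak{P}_{\alpha,c}^-$. Since $t\star u\in\mathfrak{P}_{\alpha,c}$ iff $E_u'(t)=0$, the intersection in (1) consists precisely of $t_u^1\star u$ and $t_u^3\star u$. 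The zeros are then located from the sign of $E_u$: it is negative on $(-\infty,t_u^2)$, crosses $0$ upward at $t_u^2\in(t_u^1,t_u^3)$, stays positive past $t_u^3$, and crosses $0$ downward at $t_u^4>t_u^3$, giving $t_u^1<t_u^2<t_u^3<t_u^4$ and exactly two zeros.

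For part (2) I would exploit that $\|t\star u\|=e^{st}\|u\|$ is strictly increasing in $t$ and that $E_u(t_u^2)=0$. Then $0=J_\alpha(t_u^2\star u)\ge g(\|t_u^2\star u\|)$ forces $\|t_u^2\star u\|\notin(t_0,t_1)$, so either $\|t_u^2\star u\|\le t_0$ or $\|t_u^2\star u\|\ge t_1$. The latter is impossible: by monotonicity and $\|t\star u\|\to0$ as $t\to-\infty$, the value $\|t\star u\|$ would have to pass through $(t_0,t_1)$ at some $t'<t_u^2$, where $g>0$ and hence $E_u(t')\ge g(\|t'\star u\|)>0$, contradicting $E_u<0$ on $(-\infty,t_u^2)$. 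Thus $\|t_u^2\star u\|\le t_0$, and by monotonicity $\|t\star u\|\le t_0$ for all $t\le t_u^2$. Since $t_u^3$ is the unique local maximum and $E_u\to0^-,-\infty$ at the two ends, it is the global maximum with $J_\alpha(t_u^3\star u)>0$, and $E_u$ is strictly decreasing on $(t_u^3,\infty)$. Finally, on the half-line $\{t:\|t\star u\|\le t_0\}=(-\infty,t^\ast]$ (where $t^\ast\ge t_u^2>t_u^1$) the map $E_u$ decreases on $(-\infty,t_u^1)$ and increases on $(t_u^1,t^\ast]$, so its minimum there is $E_u(t_u^1)<0$, exactly the stated characterization.

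For part (3) I would apply the implicit function theorem to $F(u,t):=E_u'(t)=\partial_t J_\alpha(t\star u)$, which is jointly $C^1$ on $S_c\times\mathbb{R}$. At each critical point $\partial_t F(u,t_u^i)=E_u''(t_u^i)\neq0$ by the non-degeneracy established above, so $t_u^1$ and $t_u^3$ are locally $C^1$ functions of $u$; uniqueness of the two critical points makes the assignments globally well defined and $C^1$ on $S_c$. The main obstacle throughout is the \emph{existence} of the two critical points uniformly in $u$ — equivalently, that the level $\alpha\gamma_{q,s}A$ lies below the maximum of $\psi$ — which is not automatic from the shape of $E_u$ alone and is secured only through the positivity of $g$ (hence $\alpha<\alpha_2$); a secondary delicate point is the localization $\|t_u^2\star u\|\le t_0$, where one must rule out the branch $\ge t_1$ by the intermediate-value argument above.
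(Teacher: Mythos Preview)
Your proposal is correct and follows essentially the same strategy as the paper: reduce $E_u'(t)=0$ to a level-set equation for a unimodal auxiliary function (your $\psi$, the paper's $h_u$), use the lower bound $E_u(t)\ge g(e^{st}\|u\|)$ from Lemma~\ref{Lem4.2} to force a positive value of $E_u$, and combine this with the boundary behaviour to obtain exactly two critical points; part~(3) is the implicit function theorem in both. The only noteworthy difference is how non-degeneracy of the critical points is obtained: you invoke Lemma~\ref{Lem4.1} ($\mathfrak{P}_{\alpha,c}^0=\varnothing$) via the cocycle identity, whereas the paper extracts $E_u''(t_u^i)\neq0$ directly from $E_u''(t_u^i)=s\,e^{2q\gamma_{q,s}st_u^i}h_u'(t_u^i)$ and the strict monotonicity of $h_u$ on each side of its unique maximum. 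Your localization argument for $\|t_u^2\star u\|\le t_0$ in part~(2)---using $g(\|t_u^2\star u\|)\le 0$ and ruling out the branch $\ge t_1$ by an intermediate-value contradiction---is in fact more explicit than what the paper writes there.
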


\begin{proof}
	Let \(u\in S_c\) be fixed. We study the behavior of the fibering map \(E_u\) along the scaling orbit \(\{t\star u:\ t\in\mathbb{R}\}\) and relate it to the one–variable function \(g\) introduced in Lemma~\ref{Lem4.2}.
	
	Recall that
	\[
	(t\star u)(x)=e^{\frac{Nt}{2}}u(e^t x),
	\qquad
	\|t\star u\|=e^{st}\|u\|.
	\]
	By Lemma \ref{Lem2.2} and the definition of \(g\) in Lemma \ref{Lem4.2}, for all \(t\in\mathbb{R}\),
	\[
	E_u(t)=J_\alpha(t\star u)\ge g(\|t\star u\|)=g(e^{st}\|u\|).
	\]
	By Lemma \ref{Lem4.2}, there exist \(0<t_0<t_1\) such that
	\[
	g(t_0)=g(t_1)=0,\qquad
	g(t)>0 \iff t\in(t_0,t_1),
	\]
	and \(g\) has a strict local minimum at negative level in \((0,t_0)\) and a strict global maximum at positive level in \((t_0,t_1)\).
	
	Using the explicit expression of \(E_u\),
	\[
	\begin{aligned}
		E_u(t)
		&=\frac12 e^{2st}\|u\|^2
		-\frac{\alpha}{2q}e^{2q\gamma_{q,s}st}
		\int_{\R^N}(I_\mu*|u|^q)|u|^q\,dx\\
		&\quad-\frac{1}{2p}e^{2p\gamma_{p,s}st}
		\int_{\R^N}(I_\mu*|u|^p)|u|^p\,dx,
	\end{aligned}
	\]
	and the inequalities \(q\gamma_{q,s}<1<p\gamma_{p,s}\), one checks that
	\[
	\lim_{t\to -\infty}E_u(t)=0^{-},
	\qquad
	\lim_{t\to +\infty}E_u(t)=-\infty.
	\]
	Moreover, since \(g>0\) on \((t_0,t_1)\), we can choose \(t\) so that
	\(e^{st}\|u\|\in(t_0,t_1)\), and then
	\[
	E_u(t)\ge g(e^{st}\|u\|)>0.
	\]
	Thus \(E_u\) is negative for \(t\) sufficiently negative and again for \(t\) sufficiently large, while it is positive on a nonempty bounded interval. By continuity, there exist
	\[
	t_u^2<t_u^4
	\]
	such that
	\[
	E_u(t_u^2)=E_u(t_u^4)=0,\qquad
	E_u(t)>0 \ \text{for all }t\in(t_u^2,t_u^4),
	\]
	and \(E_u(t)<0\) for \(t\ll -1\) and \(t\gg 1\).
	
	We now analyze the critical points of \(E_u\). Differentiating, we obtain
	\[
	\begin{aligned}
		E_u'(t)
		&=s e^{2st}\|u\|^2
		-\alpha s\gamma_{q,s} e^{2q\gamma_{q,s}st}
		\int_{\mathbb{R}^N}(I_\mu*|u|^q)|u|^q\,dx  \\
		&\quad
		-s\gamma_{p,s} e^{2p\gamma_{p,s}st}
		\int_{\mathbb{R}^N}(I_\mu*|u|^p)|u|^p\,dx.
	\end{aligned}
	\]
	Set
	\[
	A_q(u)
	=\int_{\mathbb{R}^N}(I_\mu*|u|^q)|u|^q\,dx,
	\qquad
	A_p(u)
	=\int_{\mathbb{R}^N}(I_\mu*|u|^p)|u|^p\,dx.
	\]
	Since \(e^{2q\gamma_{q,s}st}>0\) for all \(t\), the equation \(E_u'(t)=0\) is equivalent to
	\[
	h_u(t)=\alpha\gamma_{q,s}A_q(u),
	\]
	where
	\[
	h_u(t)
	=e^{2(1-q\gamma_{q,s})st}\|u\|^2
	-\gamma_{p,s}A_p(u)\,e^{2(p\gamma_{p,s}-q\gamma_{q,s})st}.
	\]
	Here \(1-q\gamma_{q,s}>0\) and \(p\gamma_{p,s}-q\gamma_{q,s}>0\), hence
	\[
	\lim_{t\to -\infty}h_u(t)=0^{+},
	\qquad
	\lim_{t\to +\infty}h_u(t)=-\infty.
	\]
	A direct computation gives
	\[
	\begin{aligned}
		h_u'(t)
		&=2s(1-q\gamma_{q,s}) e^{2(1-q\gamma_{q,s})st}\|u\|^2
		-2s(p\gamma_{p,s}-q\gamma_{q,s})\gamma_{p,s} A_p(u)\,
		e^{2(p\gamma_{p,s}-q\gamma_{q,s})st},
	\end{aligned}
	\]
	so the equation \(h_u'(t)=0\) has a unique solution \(t_c(u)\in\mathbb{R}\). At this point,
	\[
	h_u''(t_c(u))
	=4s^2(1-q\gamma_{q,s})(1-p\gamma_{p,s})
	e^{2(1-q\gamma_{q,s})st_c(u)}\|u\|^2<0,
	\]
	since \(p\gamma_{p,s}>1\) and \(q\gamma_{q,s}<1\). Thus \(h_u\) is strictly increasing on \((-\infty,t_c(u))\) and strictly decreasing on \((t_c(u),+\infty)\), and attains a strict global maximum at \(t_c(u)\).
	
	We claim that
	\[
	\sup_{t\in\mathbb{R}} h_u(t)>\alpha\gamma_{q,s}A_q(u).
	\]
	Indeed, if \(\sup h_u\le\alpha\gamma_{q,s}A_q(u)\), then
	\[
	h_u(t)-\alpha\gamma_{q,s}A_q(u)\le 0
	\quad\text{for all }t\in\R,
	\]
	and hence
	\[
	E_u'(t)
	=s e^{2q\gamma_{q,s}st}\bigl(h_u(t)-\alpha\gamma_{q,s}A_q(u)\bigr)\le 0
	\quad\text{for all }t\in\R.
	\]
	In this case \(E_u\) would be nonincreasing on \(\R\). Since
	\(\lim_{t\to -\infty}E_u(t)=0^{-}\), this would imply
	\(E_u(t)\le 0\) for all \(t\), which contradicts the existence of an interval where \(E_u>0\). The claim follows.
	
	Because \(h_u(-\infty)=0<\alpha\gamma_{q,s}A_q(u)\), \(h_u(t_c(u))>\alpha\gamma_{q,s}A_q(u)\), and \(h_u(+\infty)=-\infty<\alpha\gamma_{q,s}A_q(u)\), the continuity and unimodality of \(h_u\) imply that the equation
	\[
	h_u(t)=\alpha\gamma_{q,s}A_q(u)
	\]
	has exactly two solutions
	\[
	t_u^1<t_u^3.
	\]
	These are precisely the solutions of \(E_u'(t)=0\). Moreover, from the monotonicity of \(h_u\) we obtain
	\[
	h_u(t)<\alpha\gamma_{q,s}A_q(u)\ \text{for }t<t_u^1,\qquad
	h_u(t)>\alpha\gamma_{q,s}A_q(u)\ \text{for }t\in(t_u^1,t_u^3),
	\]
	and
	\[
	h_u(t)<\alpha\gamma_{q,s}A_q(u)\ \text{for }t>t_u^3.
	\]
	Since
	\[
	E_u'(t)
	=s e^{2q\gamma_{q,s}st}\bigl(h_u(t)-\alpha\gamma_{q,s}A_q(u)\bigr),
	\]
	it follows that
	\[
	E_u'(t)<0\ \text{for }t<t_u^1,\quad
	E_u'(t)>0\ \text{for }t\in(t_u^1,t_u^3),\quad
	E_u'(t)<0\ \text{for }t>t_u^3.
	\]
	Thus \(E_u\) is strictly decreasing on \((-\infty,t_u^1)\), strictly increasing on \((t_u^1,t_u^3)\), and strictly decreasing on \((t_u^3,+\infty)\).
	
	From \(\lim\limits_{t\to -\infty}E_u(t)=0^{-}\) and the monotonicity on \((-\infty,t_u^1)\) we obtain
	\(E_u(t_u^1)<0\); hence \(t_u^1\) is a strict local minimum at negative level. On the other hand, since \(E_u\) is positive on \((t_u^2,t_u^4)\), the strict monotonicity on
	\((t_u^1,t_u^3)\) and \((t_u^3,+\infty)\) implies that
	\[
	E_u(t_u^3)=\max_{t\in\mathbb{R}}E_u(t)>0,
	\]
	so \(t_u^3\) is the unique global maximum point of \(E_u\), and
	\[
	J_\alpha(t_u^3\star u)=\max_{t\in\mathbb{R}}J_\alpha(t\star u)>0.
	\]
	Since \(E_u\) decreases on \((-\infty,t_u^1)\), increases on \((t_u^1,t_u^3)\), and decreases again on \((t_u^3,\infty)\), the sign pattern of \(E_u\) described above forces exactly two zeroes: one in \((t_u^1,t_u^3)\) and one in \((t_u^3,\infty)\). These are precisely \(t_u^2,t_u^4\), and the ordering
	\[
	t_u^1<t_u^2<t_u^3<t_u^4
	\]
	follows. In particular,
	\[
	J_\alpha(t_u^1\star u)
	=\min\{J_\alpha(t\star u):\ t\in\mathbb{R},\ \|t\star u\|\le t_0\}<0,
	\]
	and \(E_u\) is strictly decreasing on \((t_u^3,+\infty)\), as claimed in (2).
	
	By Remark \ref{Rek2.1}, for every \(t\in\mathbb{R}\),
	\[
	E_u'(t)=0
	\iff
	t\star u\in \mathfrak{P}_{\alpha,c},
	\]
	so along the ray \(\{t\star u:\ t\in\mathbb{R}\}\) the intersection with \(\mathfrak{P}_{\alpha,c}\) consists precisely of the two points \(\{t_u^1\star u,\ t_u^3\star u\}\). The signs of \(E_u''(t_u^1)\) and \(E_u''(t_u^3)\) give
	\[
	t_u^1\star u\in \mathfrak{P}_{\alpha,c}^+,
	\qquad
	t_u^3\star u\in \mathfrak{P}_{\alpha,c}^-,
	\]
	which proves (1).
	
	Finally, to prove (3), consider the map
	\[
	F:S_c\times\mathbb{R}\to\mathbb{R},\qquad
	F(u,t)=E_u'(t).
	\]
	For each \(u\in S_c\) we have \(F(u,t_u^1)=0\) and \(F(u,t_u^3)=0\). By Lemma \ref{Lem4.1} we know that \(\mathfrak{P}_{\alpha,c}^0=\varnothing\), so
	\[
	\partial_t F(u,t_u^1)=E_u''(t_u^1)\neq 0,
	\qquad
	\partial_t F(u,t_u^3)=E_u''(t_u^3)\neq 0.
	\]
	Therefore, by the implicit function theorem, in a neighborhood of any given
	\(u\in S_c\) there exist two \(C^1\)-functions giving the lower and upper solutions \(t_u^1\) and \(t_u^3\) of \(F(u,t)=0\). The uniqueness of these two solutions for each \(u\in S_c\) allows one to patch the local parametrizations together and obtain two globally defined \(C^1\)-maps
	\[
	S_c\ni u\mapsto t_u^1\in\mathbb{R},
	\qquad
	S_c\ni u\mapsto t_u^3\in\mathbb{R},
	\]
	which proves (3) and completes the proof.
\end{proof}

For \(r>0\), we set
\[
D_r=\{u\in S_c:\ \|u\|<r\},
\]
and denote by \(\overline{D_r}\) the closure of \(D_r\) in \(H^s(\mathbb{R}^N)\).
Let
\[
m_1(c,\alpha)=\inf_{u\in D_{t_0}} J_\alpha(u),
\]
where \(t_0\) is given by Lemma \ref{Lem4.2}.

\begin{Cor}\label{Cor4.1}
	Under the assumptions of Lemma \ref{Lem4.3} one has
	\[
	\mathfrak{P}_{\alpha,c}^+\subset D_{t_0}
	\qquad\text{and}\qquad
	\sup_{\mathfrak{P}_{\alpha,c}^+} J_\alpha\le 0\le \inf_{\mathfrak{P}_{\alpha,c}^-} J_\alpha.
	\]
\end{Cor}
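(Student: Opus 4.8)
The plan is to test each element of $\mathfrak{P}_{\alpha,c}^{\pm}$ against its own fiber map and simply read off the required inequalities from Lemma~\ref{Lem4.3}. The starting observation is that if $u\in\mathfrak{P}_{\alpha,c}$, then $E_u'(0)=P_\alpha(u)=0$ by Remark~\ref{Rek2.1}, so $t=0$ is one of the two critical points of $E_u$. Since Lemma~\ref{Lem4.3} identifies these critical points as exactly $t_u^1<t_u^3$ with $t_u^1\star u\in\mathfrak{P}_{\alpha,c}^+$ and $t_u^3\star u\in\mathfrak{P}_{\alpha,c}^-$, the sign of $E_u''(0)$ pins down which one occurs: if $u\in\mathfrak{P}_{\alpha,c}^+$ then $E_u''(0)>0$ forces $t_u^1=0$, whereas if $u\in\mathfrak{P}_{\alpha,c}^-$ then $E_u''(0)<0$ forces $t_u^3=0$. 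In either case $u=0\star u$ coincides with the corresponding distinguished point on its own fiber.

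For $u\in\mathfrak{P}_{\alpha,c}^+$ I would first use $t_u^1=0<t_u^2$ together with part~(2) of Lemma~\ref{Lem4.3}, which gives $\|u\|=\|0\star u\|\le t_0$ and $J_\alpha(u)=J_\alpha(t_u^1\star u)<0$. To upgrade $\|u\|\le t_0$ to the strict inequality defining $D_{t_0}$, I would invoke the pointwise lower bound $J_\alpha(u)\ge g(\|u\|)$ valid on $S_c$: were $\|u\|=t_0$, then $J_\alpha(u)\ge g(t_0)=0$ by Lemma~\ref{Lem4.2}, contradicting $J_\alpha(u)<0$. Hence $\|u\|<t_0$, i.e.\ $u\in D_{t_0}$, and since $u\in\mathfrak{P}_{\alpha,c}^+$ was arbitrary, $\mathfrak{P}_{\alpha,c}^+\subset D_{t_0}$. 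The same computation $J_\alpha(u)<0$ already yields $\sup_{\mathfrak{P}_{\alpha,c}^+}J_\alpha\le 0$.

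For the remaining inequality, let $u\in\mathfrak{P}_{\alpha,c}^-$, so that $t_u^3=0$ and $u=t_u^3\star u$. By part~(2) of Lemma~\ref{Lem4.3} the value at the global maximum of the fiber map is positive, namely $J_\alpha(u)=J_\alpha(t_u^3\star u)=\max_{t\in\mathbb{R}}J_\alpha(t\star u)>0$, whence $\inf_{\mathfrak{P}_{\alpha,c}^-}J_\alpha\ge 0$. Combining the two bounds gives $\sup_{\mathfrak{P}_{\alpha,c}^+}J_\alpha\le 0\le\inf_{\mathfrak{P}_{\alpha,c}^-}J_\alpha$, as claimed.

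Most of the work is already absorbed into Lemma~\ref{Lem4.3}, so no serious obstacle remains; the only two points requiring care are the correct matching of $t=0$ with $t_u^1$ versus $t_u^3$ through the sign of $E_u''(0)$, and the passage from the non-strict bound $\|u\|\le t_0$ to the strict one, which is precisely where the comparison between $g(t_0)=0$ and $J_\alpha(u)<0$ is used.
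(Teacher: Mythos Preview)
Your proposal is correct and follows essentially the same approach as the paper's proof: identify $u\in\mathfrak{P}_{\alpha,c}^{\pm}$ with $t_u^1\star u$ or $t_u^3\star u$ on its own fiber, read off the sign of $J_\alpha(u)$ and the bound $\|u\|\le t_0$ from Lemma~\ref{Lem4.3}(2), and upgrade to the strict inequality via $J_\alpha(u)\ge g(\|u\|)$ and $g(t_0)=0$. The only cosmetic difference is that you match $0$ with $t_u^1$ or $t_u^3$ through the sign of $E_u''(0)$, whereas the paper argues via the disjointness of $\mathfrak{P}_{\alpha,c}^+$ and $\mathfrak{P}_{\alpha,c}^-$ along the ray; these are equivalent.
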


\begin{proof}
	By Lemma~\ref{Lem4.3}, for every \(u\in S_c\) the fibering map
	\(E_u(t)=J_\alpha(t\star u)\) has exactly two critical points
	\(t_u^1<t_u^3\), and
	\[
	\mathfrak{P}_{\alpha,c}\cap\{t\star u:\ t\in\mathbb{R}\}
	=\{t_u^1\star u,\ t_u^3\star u\},
	\]
	with
	\[
	t_u^1\star u\in\mathfrak{P}_{\alpha,c}^+,\qquad
	t_u^3\star u\in\mathfrak{P}_{\alpha,c}^-,
	\]
	and
	\[
	J_\alpha(t_u^1\star u)
	=\min\{J_\alpha(t\star u):\ t\in\mathbb{R},\ \|t\star u\|\le t_0\}<0,
	\qquad
	J_\alpha(t_u^3\star u)
	=\max_{t\in\mathbb{R}}J_\alpha(t\star u)>0.
	\]
	
	Let \(u\in\mathfrak{P}_{\alpha,c}^+\). Since
	\(u\in\mathfrak{P}_{\alpha,c}\cap\{t\star u:\ t\in\mathbb{R}\}\) and
	\(\mathfrak{P}_{\alpha,c}\cap\{t\star u\}=\{t_u^1\star u,t_u^3\star u\}\), while
	\(t_u^1\star u\in\mathfrak{P}_{\alpha,c}^+\) and
	\(t_u^3\star u\in\mathfrak{P}_{\alpha,c}^-\), it follows that
	\[
	u=t_u^1\star u.
	\]
	In particular,
	\[
	J_\alpha(u)=J_\alpha(t_u^1\star u)
	=\min\{J_\alpha(t\star u):\ t\in\mathbb{R},\ \|t\star u\|\le t_0\}<0,
	\]
	and \(\|u\|\le t_0\) because \(t_u^1\) belongs to the set
	\(\{t\in\mathbb{R}:\ \|t\star u\|\le t_0\}\).
	Moreover, for every \(v\in S_c\) we have \(J_\alpha(v)\ge g(\|v\|)\), and by
	Lemma~\ref{Lem4.2} \(g(t_0)=0\). If \(\|u\|=t_0\), then
	\[
	J_\alpha(u)\ge g(\|u\|)=g(t_0)=0,
	\]
	which contradicts \(J_\alpha(u)<0\). Hence \(\|u\|<t_0\), that is,
	\(u\in D_{t_0}\). Since \(J_\alpha(u)<0\) for every
	\(u\in\mathfrak{P}_{\alpha,c}^+\), we conclude that
	\[
	\mathfrak{P}_{\alpha,c}^+\subset D_{t_0},
	\qquad
	\sup_{\mathfrak{P}_{\alpha,c}^+}J_\alpha\le 0.
	\]
	
	Now let \(u\in\mathfrak{P}_{\alpha,c}^-\). As before,
	\(u\in\mathfrak{P}_{\alpha,c}\cap\{t\star u:\ t\in\mathbb{R}\}\), and the
	intersection consists of the two points \(t_u^1\star u\in\mathfrak{P}_{\alpha,c}^+\)
	and \(t_u^3\star u\in\mathfrak{P}_{\alpha,c}^-\). Since
	\(u\in\mathfrak{P}_{\alpha,c}^-\), we must have
	\[
	u=t_u^3\star u.
	\]
	Hence
	\[
	J_\alpha(u)
	=J_\alpha(t_u^3\star u)
	=\max_{t\in\mathbb{R}}J_\alpha(t\star u)>0.
	\]
	In particular \(J_\alpha(u)\ge 0\) for all \(u\in\mathfrak{P}_{\alpha,c}^-\), and
	\[
	\inf_{\mathfrak{P}_{\alpha,c}^-}J_\alpha\ge 0.
	\]
	This proves the corollary.
\end{proof}

\begin{Lem}\label{Lem4.4}
	Let
	\[
	2_{\mu,*}<q<2+\frac{2s-\mu}{N}<p< 2_{\mu,s}^*
	\]
	and \(0<\alpha<\min\{\alpha_1,\alpha_2\}\), where \(\alpha_1,\alpha_2\) are
	given by \eqref{eq1.8} and \eqref{eq1.9}. Then
	\[
	-\infty<m_1(c,\alpha)
	=m_2(c,\alpha):=\inf_{\mathfrak{P}_{\alpha,c}} J_\alpha
	=\inf_{\mathfrak{P}_{\alpha,c}^+} J_\alpha<0,
	\]
	and there exists \(k>0\) such that
	\[
	m_1(c,\alpha)
	<
	\inf_{D_{t_0}\setminus D_{t_0-k}} J_\alpha.
	\]
\end{Lem}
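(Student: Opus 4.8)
The plan is to split the statement into four pieces and derive each from the fibering analysis already carried out in Lemmas~\ref{Lem4.2}--\ref{Lem4.3} and Corollary~\ref{Cor4.1}; no new variational machinery is needed.

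First I would establish the two-sided estimate $-\infty<m_1(c,\alpha)<0$. For the lower bound, every $u\in D_{t_0}$ satisfies $\|u\|<t_0$ and $J_\alpha(u)\ge g(\|u\|)$, and since $g$ is continuous on the compact interval $[0,t_0]$ it is bounded below there; hence $m_1(c,\alpha)\ge\min_{[0,t_0]}g>-\infty$. For strict negativity, I would fix any $u\in S_c$ and pass to its lower fiber point: by Lemma~\ref{Lem4.3} one has $t_u^1\star u\in\mathfrak{P}_{\alpha,c}^+$ with $J_\alpha(t_u^1\star u)<0$, while Corollary~\ref{Cor4.1} gives $\mathfrak{P}_{\alpha,c}^+\subset D_{t_0}$, so $t_u^1\star u\in D_{t_0}$ and therefore $m_1(c,\alpha)\le J_\alpha(t_u^1\star u)<0$.

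Next I would prove the chain of equalities. Since $\mathfrak{P}_{\alpha,c}^0=\varnothing$ (Lemma~\ref{Lem4.1}) and Corollary~\ref{Cor4.1} yields $\sup_{\mathfrak{P}_{\alpha,c}^+}J_\alpha\le 0\le\inf_{\mathfrak{P}_{\alpha,c}^-}J_\alpha$, the infimum of $J_\alpha$ over $\mathfrak{P}_{\alpha,c}=\mathfrak{P}_{\alpha,c}^+\cup\mathfrak{P}_{\alpha,c}^-$ is attained on the $+$ part, so $m_2(c,\alpha)=\inf_{\mathfrak{P}_{\alpha,c}^+}J_\alpha$. To identify this with $m_1(c,\alpha)$, the inclusion $\mathfrak{P}_{\alpha,c}^+\subset D_{t_0}$ gives $\inf_{\mathfrak{P}_{\alpha,c}^+}J_\alpha\ge m_1(c,\alpha)$ at once. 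For the reverse inequality I would take an arbitrary $u\in D_{t_0}$ and observe that $\|0\star u\|=\|u\|<t_0$, so $t=0$ belongs to the admissible set $\{t\in\mathbb{R}:\ \|t\star u\|\le t_0\}$; the minimization-along-the-fiber property $J_\alpha(t_u^1\star u)=\min\{J_\alpha(t\star u):\ \|t\star u\|\le t_0\}$ from Lemma~\ref{Lem4.3} then gives $J_\alpha(u)\ge J_\alpha(t_u^1\star u)\ge\inf_{\mathfrak{P}_{\alpha,c}^+}J_\alpha$, and taking the infimum over $D_{t_0}$ closes the argument, so $m_1(c,\alpha)=m_2(c,\alpha)=\inf_{\mathfrak{P}_{\alpha,c}^+}J_\alpha<0$.

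Finally, for the gap estimate I would exploit that $m_1(c,\alpha)<0=g(t_0)$. Choosing $\eta=m_1(c,\alpha)/2$, so that $m_1(c,\alpha)<\eta<0$, the continuity of $g$ at $t_0$ together with $g(t_0)=0>\eta$ produces some $k>0$ with $g(t)>\eta$ for all $t\in[t_0-k,t_0]$. Then any $u\in D_{t_0}\setminus D_{t_0-k}$ has $\|u\|\in[t_0-k,t_0)$, whence $J_\alpha(u)\ge g(\|u\|)>\eta>m_1(c,\alpha)$, and passing to the infimum over the annulus preserves the strict inequality $m_1(c,\alpha)<\inf_{D_{t_0}\setminus D_{t_0-k}}J_\alpha$. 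I do not anticipate a serious obstacle here, since the hard geometric work lies in the preceding lemmas; the one point deserving genuine care is the reverse inequality $m_1(c,\alpha)\ge\inf_{\mathfrak{P}_{\alpha,c}^+}J_\alpha$, where one must verify that $t=0$ really is admissible for the fiber minimization at every $u\in D_{t_0}$, so that the characterization of $t_u^1$ may legitimately be invoked.
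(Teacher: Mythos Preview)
Your proposal is correct and follows essentially the same route as the paper: the lower bound via $J_\alpha\ge g$ on $D_{t_0}$, the equalities via Corollary~\ref{Cor4.1} and the fiber minimization property of $t_u^1$ from Lemma~\ref{Lem4.3}, and the gap estimate via continuity of $g$ at $t_0$ with the threshold $m_1(c,\alpha)/2$. The only cosmetic difference is that for $m_1(c,\alpha)<0$ the paper argues directly from $\lim_{t\to-\infty}J_\alpha(t\star u)=0^-$ rather than invoking $t_u^1\star u\in\mathfrak{P}_{\alpha,c}^+\subset D_{t_0}$, and in the last step you should phrase the conclusion as $\inf_{D_{t_0}\setminus D_{t_0-k}}J_\alpha\ge\eta>m_1(c,\alpha)$ (the infimum need not exceed $\eta$ strictly, but it is at least $\eta$, which suffices).
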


\begin{proof}
	For any \(u\in D_{t_0}\) we have, by Lemma \ref{Lem4.2},
	\[
	J_\alpha(u)\ge g(\|u\|)\ge \min_{t\in[0,t_0]} g(t)>-\infty,
	\]
	so \(m_1(c,\alpha)>-\infty\).
	
	Next, fix \(u\in S_c\). Using the scaling properties of the fractional Laplacian,
	one checks that
	\[
	\|t\star u\|^2
	=\int_{\mathbb{R}^N}\bigl|(-\Delta)^{\frac{s}{2}}(t\star u)\bigr|^2\,dx
	=e^{2st}\|u\|^2,
	\]
	so \(\|t\star u\|=e^{st}\|u\|\). Hence, for \(t\ll -1\),
	\(\|t\star u\|<t_0\), that is, \(t\star u\in D_{t_0}\). Moreover, from the
	fiber analysis (see Lemma \ref{Lem4.2} and Lemma \ref{Lem4.3}) we know that
	\[
	\lim_{t\to -\infty} J_\alpha(t\star u)=0^{-},
	\]
	so for \(t\) sufficiently negative,
	\[
	t\star u\in D_{t_0}
	\quad\text{and}\quad
	J_\alpha(t\star u)<0.
	\]
	Therefore \(m_1(c,\alpha)<0\).
	
	From Corollary \ref{Cor4.1} we already know that
	\(\mathfrak{P}_{\alpha,c}^+\subset D_{t_0}\), hence
	\[
	m_1(c,\alpha)
	=\inf_{u\in D_{t_0}}J_\alpha(u)
	\le \inf_{u\in\mathfrak{P}_{\alpha,c}^+} J_\alpha(u).
	\]
	Conversely, if \(u\in D_{t_0}\subset S_c\), Lemma \ref{Lem4.3} yields a unique
	\(t_u^1\in\mathbb{R}\) such that \(t_u^1\star u\in\mathfrak{P}_{\alpha,c}^+\),
	and
	\[
	J_\alpha(t_u^1\star u)
	=\min\{J_\alpha(t\star u):\ t\in\mathbb{R},\ \|t\star u\|\le t_0\}
	\le J_\alpha(u).
	\]
	Since \(t_u^1\star u\in\mathfrak{P}_{\alpha,c}^+\subset D_{t_0}\), this implies
	\[
	\inf_{u\in\mathfrak{P}_{\alpha,c}^+}J_\alpha(u)
	\le m_1(c,\alpha).
	\]
	Combining the two inequalities we obtain
	\[
	m_1(c,\alpha)
	=\inf_{u\in\mathfrak{P}_{\alpha,c}^+}J_\alpha(u).
	\]
	On the other hand, Corollary \ref{Cor4.1} shows that \(J_\alpha>0\) on
	\(\mathfrak{P}_{\alpha,c}^-\), hence
	\[
	\inf_{u\in\mathfrak{P}_{\alpha,c}}J_\alpha
	=\inf_{u\in\mathfrak{P}_{\alpha,c}^+}J_\alpha
	=m_1(c,\alpha),
	\]
	which proves the equality \(m_1(c,\alpha)=m_2(c,\alpha)\) and the strict
	negativity \(m_1(c,\alpha)<0\).
	
	Finally, by the continuity of \(g\) on \([0,t_0]\) and the fact that
	\[
	m_1(c,\alpha)
	=\inf_{u\in D_{t_0}}J_\alpha(u)<0,
	\]
	there exists \(\rho>0\) such that
	\[
	g(t)\ge \frac{m_1(c,\alpha)}{2}
	\qquad\text{for all }t\in[t_0-\rho,t_0].
	\]
	If \(u\in S_c\) satisfies \(t_0-\rho\le \|u\|\le t_0\), then
	\[
	J_\alpha(u)\ge g(\|u\|)\ge \frac{m_1(c,\alpha)}{2}>m_1(c,\alpha).
	\]
	Thus
	\[
	m_1(c,\alpha)
	<
	\inf_{u\in D_{t_0}\setminus D_{t_0-\rho}}J_\alpha(u).
	\]
	Setting \(k:=\rho\) gives the desired inequality.
\end{proof}

\begin{Lem}\label{Lem4.5}
	Let
	\[
	2_{\mu,*}<q<2+\frac{2s-\mu}{N}<p< 2_{\mu,s}^*
	\]
	and \(0<\alpha<\min\{\alpha_1,\alpha_2\}\), where \(\alpha_1,\alpha_2\) are
	defined in \eqref{eq1.8}–\eqref{eq1.9}. Suppose that \(u\in S_c\) satisfies
	\(J_\alpha(u)<m_1(c,\alpha)\). Then the critical point \(t_u^3\) obtained in
	Lemma \ref{Lem4.3} is negative. Moreover,
	\[
	\breve{m}(c,\alpha):=\inf_{\mathfrak{P}_{\alpha,c}^-}J_\alpha>0.
	\]
\end{Lem}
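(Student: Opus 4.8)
The plan is to read off the position of the parameter value $t=0$ among the critical points and zeros $t_u^1<t_u^2<t_u^3<t_u^4$ of the fiber map $E_u$ from Lemma~\ref{Lem4.3}, using throughout the elementary identity $0\star u=u$ (so that $E_u(0)=J_\alpha(u)$ and $\|0\star u\|=\|u\|$), the comparison $J_\alpha(v)\ge g(\|v\|)$ valid for every $v\in S_c$, and the sign/shape description of $g$ provided by Lemma~\ref{Lem4.2}. The combination of these three ingredients reduces both assertions to short sign chases.

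For the first assertion, assume $J_\alpha(u)<m_1(c,\alpha)$. Since $m_1(c,\alpha)=\inf_{D_{t_0}}J_\alpha<0$ by Lemma~\ref{Lem4.4}, I would first show $\|u\|>t_0$: if $\|u\|<t_0$ then $u\in D_{t_0}$ gives $J_\alpha(u)\ge m_1(c,\alpha)$, while if $\|u\|=t_0$ then $J_\alpha(u)\ge g(t_0)=0>m_1(c,\alpha)$, both contradicting the hypothesis. Next, Lemma~\ref{Lem4.3}(2) gives $\|t_u^2\star u\|\le t_0$, and since $\|t\star u\|=e^{st}\|u\|$ this forces $e^{st_u^2}\|u\|\le t_0<\|u\|$, hence $t_u^2<0$. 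Finally, from the shape of $E_u$ established in Lemma~\ref{Lem4.3} one has $E_u<0$ exactly on $(-\infty,t_u^2)\cup(t_u^4,+\infty)$; as $E_u(0)=J_\alpha(u)<0$, either $0<t_u^2$ or $0>t_u^4$, and the former is excluded by $t_u^2<0$. Therefore $t_u^4<0$, and consequently $t_u^3<t_u^4<0$.

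For the second assertion, I would establish a \emph{uniform} positive lower bound. By Corollary~\ref{Cor4.1} together with Lemma~\ref{Lem4.3}, every $u\in\mathfrak{P}_{\alpha,c}^-$ is the global-maximum representative of its own scaling orbit, so $J_\alpha(u)=\max_{t\in\mathbb{R}}J_\alpha(t\star u)$. Using $J_\alpha(t\star u)\ge g(\|t\star u\|)=g(e^{st}\|u\|)$ and the fact that $t\mapsto e^{st}\|u\|$ is a bijection of $\mathbb{R}$ onto $(0,+\infty)$ (note $\|u\|>0$ since $\|u\|_2=c>0$), I obtain
\[
J_\alpha(u)=\max_{t\in\mathbb{R}}J_\alpha(t\star u)\ge\sup_{r>0}g(r)>0,
\]
where the strict positivity of $\sup_{r>0}g(r)$ is exactly the statement that $g$ attains a strictly positive global maximum, guaranteed by Lemma~\ref{Lem4.2}. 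Since this bound is independent of $u$, it yields $\breve m(c,\alpha)=\inf_{\mathfrak{P}_{\alpha,c}^-}J_\alpha\ge\sup_{r>0}g(r)>0$.

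The arguments are routine once the fiber picture of Lemma~\ref{Lem4.3} and the profile of $g$ are available; the only genuinely delicate point is the strictness bookkeeping in the first part. I must make sure to derive $\|u\|>t_0$ \emph{strictly} (using $g(t_0)=0>m_1(c,\alpha)$ to dispose of the boundary case $\|u\|=t_0$) and $t_u^2<0$ \emph{strictly}, since a merely non-strict inequality would leave open the borderline $t_u^4=0$ and hence only $t_u^3\le 0$ instead of $t_u^3<0$. The second part is conceptually the cleaner, its crux being the recognition that the fiber maximum is bounded below by the $u$-independent quantity $\sup_{r>0}g(r)$.
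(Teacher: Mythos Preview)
Your proposal is correct and follows essentially the same strategy as the paper: both parts rely on the fiber picture of Lemma~\ref{Lem4.3} together with the comparison $J_\alpha(v)\ge g(\|v\|)$ and the profile of $g$ from Lemma~\ref{Lem4.2}. For the second assertion your argument is identical to the paper's (the paper phrases it by picking $\tau_u$ with $\|\tau_u\star u\|=t_{\max}$, which is your bijection observation unwound). For the first assertion the paper argues by contradiction (assume $t_u^4>0$, deduce $0\le t_u^2$, then contradict $m_1(c,\alpha)$ via Lemma~\ref{Lem4.3}(2)), whereas you go directly: first $\|u\|>t_0$, then $t_u^2<0$, then the sign pattern of $E_u$ forces $t_u^4<0$. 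This is a minor reorganization of the same ingredients; your direct route is arguably a touch cleaner.
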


\begin{proof}
	Let \(t_u^1<t_u^2<t_u^3<t_u^4\) be the two critical points and the two zeros of
	\(E_u(t)=J_\alpha(t\star u)\) given by Lemma \ref{Lem4.3}. If \(t_u^4\le 0\),
	then in particular \(t_u^3<0\), and the first claim follows. Hence we may assume
	by contradiction that \(t_u^4>0\).
	
	Since \(E_u(t)>0\) for all \(t\in(t_u^2,t_u^4)\), if \(0\in(t_u^2,t_u^4)\) then
	\[
	J_\alpha(u)=E_u(0)>0,
	\]
	which is impossible because \(J_\alpha(u)<m_1(c,\alpha)<0\). Therefore
	\(0\notin(t_u^2,t_u^4)\). Together with \(t_u^4>0\) this implies \(0\le t_u^2\)
	(otherwise we would have \(t_u^2<0<t_u^4\), so \(0\in(t_u^2,t_u^4)\)). In
	particular \(t_u^3>t_u^2\ge 0\), so \(t_u^3>0\).
	
	By Lemma \ref{Lem4.3}(2), for all \(t\le t_u^2\) one has \(\|t\star u\|\le t_0\).
	Using this and the definition of \(m_1(c,\alpha)\), we obtain
	\[
	\begin{aligned}
		m_1(c,\alpha)
		&>J_\alpha(u)
		=E_u(0)
		\;\ge\;\inf_{t\in(-\infty,t_u^2]}E_u(t)\\
		&\ge\inf\bigl\{J_\alpha(t\star u):t\in\mathbb{R},\ \|t\star u\|\le t_0\bigr\}\\
		&=J_\alpha(t_u^1\star u)
		\;\ge\;m_1(c,\alpha),
	\end{aligned}
	\]
	where we used Lemma~\ref{Lem4.3}(2) for the equality and Lemma~\ref{Lem4.4} for
	the last inequality. This is a contradiction. Hence our assumption \(t_u^4>0\)
	is false, and we must have \(t_u^4\le 0\), so in particular \(t_u^3<0\).
	
	We now prove the positivity of the energy on \(\mathfrak{P}_{\alpha,c}^-\).
	Let \(t_{\max}>0\) be the unique point where the function \(g\) attains its
	global strict maximum at a positive level (see Lemma~\ref{Lem4.2}). For every
	\(u\in\mathfrak{P}_{\alpha,c}^-\) there exists a unique \(\tau_u\in\mathbb{R}\)
	such that
	\[
	\|\tau_u\star u\|=t_{\max},
	\]
	since \(\|t\star u\|=e^{st}\|u\|\) for all \(t\in\mathbb{R}\).
	
	Because \(u\in\mathfrak{P}_{\alpha,c}^-\), we have \(E_u'(0)=0\) and
	\(E_u''(0)<0\). By Lemma~\ref{Lem4.3}(1), there are exactly two critical points
	of \(E_u\) on \(\mathbb{R}\), namely \(t_u^1\) and \(t_u^3\), with
	\(t_u^1\star u\in\mathfrak{P}_{\alpha,c}^+\) and
	\(t_u^3\star u\in\mathfrak{P}_{\alpha,c}^-\). Since \(0\) is a critical point
	with \(E_u''(0)<0\), it must coincide with the “upper” critical point:
	\(0=t_u^3\). In particular, \(t=0\) is the unique strict global maximum point
	of \(E_u\), and hence
	\[
	J_\alpha(u)=E_u(0)\;\ge\;E_u(\tau_u)=J_\alpha(\tau_u\star u).
	\]
	
	Using the lower bound \(J_\alpha(v)\ge g(\|v\|)\) valid for all \(v\in S_c\),
	we obtain
	\[
	J_\alpha(u)
	\;\ge\;J_\alpha(\tau_u\star u)
	\;\ge\;g(\|\tau_u\star u\|)
	=g(t_{\max})>0.
	\]
	Since \(u\in\mathfrak{P}_{\alpha,c}^-\) was arbitrary, we deduce that
	\[
	\breve{m}(c,\alpha)=\inf_{\mathfrak{P}_{\alpha,c}^-}J_\alpha
	\;\ge\;g(t_{\max})>0,
	\]
	as claimed.
\end{proof}

\subsection{A local minimizer on the Pohozaev manifold}

\noindent\textbf{Proof of Theorem \ref{Thm1.1}\,(1).}
	Let \(\{w_n\}\subset S_c\) be a minimizing sequence for \(m_1(c,\alpha)\).
	Without loss of generality, we may assume that \(\{w_n\}\subset S_{c,\mathrm{rad}}\) consists of radially decreasing functions: if this is not the case, we replace each \(|w_n|\) by its symmetric decreasing rearrangement \(|w_n|^*\), for which
	\[
	J_\alpha(|w_n|^*) \le J_\alpha(|w_n|),
	\]
	so that \(\{|w_n|^*\}\) is still a minimizing sequence for \(m_1(c,\alpha)\).
	
	By Lemma \ref{Lem4.3}, for each \(n\) there exists a unique \(t_{w_n}^1\in\mathbb{R}\) such that
	\[
	t_{w_n}^1\star w_n \in \mathfrak{P}_{\alpha,c}^+,\qquad
	\bigl\|t_{w_n}^1\star w_n\bigr\|\le t_0,
	\]
	and
	\[
	J_\alpha(t_{w_n}^1\star w_n)
	=
	\min\bigl\{J_\alpha(t\star w_n):\ t\in\mathbb{R},\ \|t\star w_n\|\le t_0\bigr\}
	\le J_\alpha(w_n).
	\]
	Define
	\[
	v_n=t_{w_n}^1\star w_n\in S_{c,\mathrm{rad}}\cap\mathfrak{P}_{\alpha,c}^+.
	\]
	Then \(P_\alpha(v_n)=0\) for all \(n\), and
	\[
	J_\alpha(v_n)\to m_1(c,\alpha).
	\]
	
	By Lemma \ref{Lem4.4}, there exists \(k>0\), independent of \(c\) and \(\alpha\), such that
	\[
	m_1(c,\alpha)
	<
	\inf_{D_{t_0}\setminus D_{t_0-k}} J_\alpha.
	\]
	Since \(J_\alpha(v_n)\to m_1(c,\alpha)\), we have \(\|v_n\|\le t_0-k\) for all sufficiently large \(n\). Passing to a subsequence, we may assume that
	\[
	\|v_n\|<t_0-k\qquad\text{for all }n\in\mathbb{N}.
	\]
	
	We now apply Ekeland's variational principle to the restriction of \(J_\alpha\) to the complete metric space \(D_{t_0}\cap S_{c,\mathrm{rad}}\). There exists a minimizing sequence \(\{u_n\}\subset D_{t_0}\cap S_{c,\mathrm{rad}}\) for \(m_1(c,\alpha)\) such that
	\[
	J_\alpha(u_n)\to m_1(c,\alpha),\qquad
	\|(J_\alpha|_{S_c})'(u_n)\|_{(T_{u_n}S_c)^*}\to 0,
	\]
	and
	\[
	\|u_n-v_n\|\to 0\quad\text{as }n\to\infty.
	\]
	Since \(\{v_n\}\) is bounded in \(H^s(\mathbb{R}^N)\), the sequence \(\{u_n\}\) is also bounded in \(H^s(\mathbb{R}^N)\).
	Moreover, from \(\|u_n-v_n\|\to 0\) and \(\|v_n\|\le t_0-k\) we infer that, for sufficiently large \(n\),
	\[
	\|u_n-v_n\|<\frac{k}{2}
	\quad\text{and}\quad
	\|u_n\|
	\le\|u_n-v_n\|+\|v_n\|
	<\frac{k}{2}+(t_0-k)=t_0-\frac{k}{2}<t_0,
	\]
	so \(u_n\in D_{t_0}\) for all large \(n\).
	
	Since \(P_\alpha:H^s(\mathbb{R}^N)\to\mathbb{R}\) is continuous and \(P_\alpha(v_n)=0\), the convergence \(\|u_n-v_n\|\to 0\) implies
	\[
	P_\alpha(u_n)\to 0\quad\text{as }n\to\infty.
	\]
	Thus \(\{u_n\}\subset S_{c,\mathrm{rad}}\) is a bounded Palais--Smale sequence for \(J_\alpha|_{S_c}\) at the level \(m_1(c,\alpha)\neq 0\), with \(P_\alpha(u_n)\to 0\).
	
	By Lemma \ref{Lem3.1}, there exists \(u_{c,\alpha,\mathrm{loc}}\in S_c\) such that, up to a subsequence,
	\[
	u_n\to u_{c,\alpha,\mathrm{loc}}\quad\text{strongly in }H^s(\mathbb{R}^N),
	\]
	and \(u_{c,\alpha,\mathrm{loc}}\) is a radial weak solution of \eqref{eq1.1} for some Lagrange multiplier \(\lambda_{c,\alpha,\mathrm{loc}}<0\).
since 
\(m_1(c,\alpha)=J_\alpha(u_{c,\alpha,\mathrm{loc}})=\inf_{v\in D_{t_0}}J_\alpha(v)\),\(J_\alpha(v)\geq J_\alpha(u_{c,\alpha,\mathrm{loc}})\)

Let \(v=|u_{c,\alpha,\mathrm{loc}}|\), then \(v\in{S_c}\), we have
\[
    \begin{aligned}
       \|v\|_{H^s(\mathbb{R}^N)}&=
     \left(\int_{\mathbb{R}^N}\frac{|v(x)-v(y)|^2}{|x-y|^{N+2s}}\,dx\,dy
+\int_{\mathbb{R}^N}|v|^2\,dx
\right)^{1/2}
\\&=\left(\int_{\mathbb{R}^N}\frac{||u_{c,\alpha,\mathrm{loc}}(x)|-|u_{c,\alpha,\mathrm{loc}}(y)||^2}{|x-y|^{N+2s}}\,dx\,dy+\int_{\mathbb{R}^N}|u_{c,\alpha,\mathrm{loc}}(x)|^2\,dx\right)^{1/2}
\\&\leq\left(\int_{\mathbb{R}^N}\frac{|u_{c,\alpha,\mathrm{loc}}(x)-u_{c,\alpha,\mathrm{loc}}(y)|^2}{|x-y|^{N+2s}}\,dx\,dy+\int_{\mathbb{R}^N}|u_{c,\alpha,\mathrm{loc}}(x)|^2\,dx\right)^{1/2}\\&=\|u_{c,\alpha,\mathrm{loc}}\|_{H^s(\mathbb{R}^N)}
    \end{aligned}
    \]
so\(J_\alpha(v)\leq J_\alpha(u_{c,\alpha,\mathrm{loc}})\), we get \(u_{c,\alpha,\mathrm{loc}}\geq 0 \). To prove strict positivity, suppose that there exists \(x_0\in\mathbb{R}^N\) such that \(u_{c,\alpha,\mathrm{loc}}(x_0)=0\). Then, by the representation formula for the fractional Laplacian,
	\[
	(-\Delta)^s u_{c,\alpha,\mathrm{loc}}(x_0)
	=-\frac{C_{N,s}}{2}\int_{\mathbb{R}^N}
	\frac{
		u_{c,\alpha,\mathrm{loc}}(x_0+y)
		+u_{c,\alpha,\mathrm{loc}}(x_0-y)
		-2u_{c,\alpha,\mathrm{loc}}(x_0)
	}{|y|^{N+2s}}\,\mathrm{d}y.
	\]
	Since \(u_{c,\alpha,\mathrm{loc}}\ge 0\) and \(u_{c,\alpha,\mathrm{loc}}(x_0)=0\), the integrand is nonnegative, so
	\[
	(-\Delta)^s u_{c,\alpha,\mathrm{loc}}(x_0)\le 0.
	\]
	On the other hand, at \(x_0\) the right-hand side of \eqref{eq1.1} vanishes, so
	\[
	(-\Delta)^s u_{c,\alpha,\mathrm{loc}}(x_0)=0.
	\]
	Hence the integrand is zero for a.e.\ \(y\in\mathbb{R}^N\), and therefore
	\(u_{c,\alpha,\mathrm{loc}}(x_0\pm y)=0\) for a.e.\ \(y\), which implies
	\(u_{c,\alpha,\mathrm{loc}}\equiv 0\). This contradicts
	\(\|u_{c,\alpha,\mathrm{loc}}\|_2^2=c^2>0\), so
	\(u_{c,\alpha,\mathrm{loc}}(x)>0\) for all \(x\in\mathbb{R}^N\).
	
	By construction, \(\{u_n\}\) is a minimizing sequence for \(m_1(c,\alpha)\) and
	\(u_n\to u_{c,\alpha,\mathrm{loc}}\) in \(H^s(\mathbb{R}^N)\), hence
	\[
	J_\alpha(u_{c,\alpha,\mathrm{loc}})
	=\lim_{n\to\infty}J_\alpha(u_n)
	=m_1(c,\alpha).
	\]
	Moreover, Lemma \ref{Lem4.4} shows that
	\[
	m_1(c,\alpha)
	=\inf_{u\in D_{t_0}}J_\alpha(u)
	=\inf_{u\in\mathfrak{P}_{\alpha,c}}J_\alpha(u)
	<0.
	\]
	On the other hand, any critical point \(u\in S_c\) of \(J_\alpha|_{S_c}\) satisfies the Pohozaev identity and hence belongs to \(\mathfrak{P}_{\alpha,c}\). Therefore
	\[
	J_\alpha(u_{c,\alpha,\mathrm{loc}})
	=\inf_{u\in\mathfrak{P}_{\alpha,c}}J_\alpha(u)
	=\inf\bigl\{J_\alpha(u):u\in S_c,\ (J_\alpha|_{S_c})'(u)=0\bigr\},
	\]
	that is, \(u_{c,\alpha,\mathrm{loc}}\) is a ground state solution of
	\(J_\alpha|_{S_c}\).
	
	It remains to show that every ground state solution is a local minimizer of \(J_\alpha\) on \(D_{t_0}\). Let \(u\in S_c\) be a ground state solution of \(J_\alpha|_{S_c}\). Then
	\[
	J_\alpha(u)
	=\inf\bigl\{J_\alpha(v):v\in S_c,\ (J_\alpha|_{S_c})'(v)=0\bigr\}
	=\inf_{\mathfrak{P}_{\alpha,c}}J_\alpha
	=m_1(c,\alpha)
	<0<\inf_{\mathfrak{P}_{\alpha,c}^-}J_\alpha.
	\]
	Hence \(u\in\mathfrak{P}_{\alpha,c}^+\). By Lemma \ref{Lem4.4} and Corollary \ref{Cor4.1} we have \(\mathfrak{P}_{\alpha,c}^+\subset D_{t_0}\), so \(u\) is a local minimizer of \(J_\alpha\) on \(D_{t_0}\). This proves Theorem \ref{Thm1.1}\,(1).
\qed

\noindent\textbf{Proof of Theorem \ref{Thm1.1}\,(3).}
By Lemma \ref{Lem4.2}, the number \(t_0=t_0(\alpha)\) satisfies
\[
t_0(\alpha)\to 0
\quad\text{as }\alpha\to 0^+.
\]
From Theorem \ref{Thm1.1}\,(1) and Lemma \ref{Lem4.3} we know that the local minimizer \(u_{c,\alpha,\mathrm{loc}}\in S_c\) satisfies
\[
\|u_{c,\alpha,\mathrm{loc}}\|<t_0(\alpha),
\]
hence
\[
\|u_{c,\alpha,\mathrm{loc}}\|
\le t_0(\alpha)\to 0
\quad\text{as }\alpha\to 0^+.
\]

Using the lower bound given by \(g\) in Lemma \ref{Lem4.2}, we have
\[
\begin{aligned}
	0>m_1(c,\alpha)
	&=\inf_{u\in D_{t_0(\alpha)}}J_\alpha(u)
	\;=\;J_\alpha(u_{c,\alpha,\mathrm{loc}})\\
	&\ge \frac{1}{2}\|u_{c,\alpha,\mathrm{loc}}\|^2
	-\frac{\alpha}{2q}C_q \|u_{c,\alpha,\mathrm{loc}}\|^{2q\gamma_{q,s}}
	c^{2q(1-\gamma_{q,s})}
	-\frac{1}{2p}C_p \|u_{c,\alpha,\mathrm{loc}}\|^{2p\gamma_{p,s}}
	c^{2p(1-\gamma_{p,s})}.
\end{aligned}
\]
Since \(\|u_{c,\alpha,\mathrm{loc}}\|\to 0\) and \(\alpha\to 0\), the right-hand side tends to \(0\), so
\[
\limsup_{\alpha\to 0^+} m_1(c,\alpha)\le 0.
\]

On the other hand, for all \(u\in D_{t_0(\alpha)}\) we have \(J_\alpha(u)\ge g(\|u\|)\), hence
\[
m_1(c,\alpha)
=\inf_{u\in D_{t_0(\alpha)}}J_\alpha(u)
\ge \inf_{0\le t\le t_0(\alpha)} g(t).
\]
By the explicit expression of \(g\), for \(t\in[0,t_0(\alpha)]\) we have
\[
g(t)
\ge -\frac{\alpha}{2q}C_q c^{2q(1-\gamma_{q,s})}t^{2q\gamma_{q,s}}
-\frac{1}{2p}C_p c^{2p(1-\gamma_{p,s})}t^{2p\gamma_{p,s}},
\]
and thus
\[
m_1(c,\alpha)
\ge -C\bigl(\alpha\,t_0(\alpha)^{2q\gamma_{q,s}}
+t_0(\alpha)^{2p\gamma_{p,s}}\bigr)
\]
for some constant \(C>0\) independent of \(\alpha\). Since \(t_0(\alpha)\to 0\) and \(\alpha\to 0\), the right-hand side tends to \(0\), so
\[
\liminf_{\alpha\to 0^+} m_1(c,\alpha)\ge 0.
\]
Therefore,
\[
m_1(c,\alpha)\to 0
\quad\text{as }\alpha\to 0^+.
\]
\qed

\subsection{A mountain pass type normalized solution}

\noindent\textbf{Proof of Theorem \ref{Thm1.1}\,(2).}
\begin{proof}
	We prove the existence of a second critical point of \(J_\alpha|_{S_c}\),
	obtained via a mountain pass argument on the scaling orbits.
	
	For \(\rho\in\mathbb{R}\) set
	\[
	J_\alpha^\rho
	=\{u\in S_c:\ J_\alpha(u)\le\rho\}.
	\]
	Define the auxiliary \(C^1\)–functional
	\(\widehat J_\alpha:\mathbb{R}\times H^s(\mathbb{R}^N)\to\mathbb{R}\) by
	\[
	\widehat J_\alpha(t,u)
	:=J_\alpha(t\star u)
	=\frac{e^{2st}}{2}\|u\|^2
	-\frac{\alpha}{2q}e^{2q\gamma_{q,s}st}
	\int_{\mathbb{R}^N}(I_\mu*|u|^q)|u|^q\,dx
	-\frac{1}{2p}e^{2p\gamma_{p,s}st}
	\int_{\mathbb{R}^N}(I_\mu*|u|^p)|u|^p\,dx.
	\]
	The functional \(\widehat J_\alpha\) is invariant under spatial rotations in
	the \(u\)–variable; in particular, a Palais–Smale sequence for
	\(\widehat J_\alpha|_{\mathbb{R}\times S_{c,rad}}\) corresponds, via
	\((t,u)\mapsto t\star u\), to a Palais–Smale sequence for \(J_\alpha|_{S_c}\).
	
	We introduce the minimax class
	\[
	\Gamma_1
	:=\Bigl\{
	\gamma(\tau)=(\zeta(\tau),\beta(\tau))
	\in C\bigl([0,1],\mathbb{R}\times S_{c,rad}\bigr):
	\ \gamma(0)\in\{0\}\times\mathfrak{P}_{\alpha,c}^+,\ 
	\gamma(1)\in\{0\}\times J_\alpha^{2m_1(c,\alpha)}
	\Bigr\},
	\]
	where \(J_\alpha^{2m_1(c,\alpha)}=\{u\in S_c:\ J_\alpha(u)\le 2m_1(c,\alpha)\}\)
	and \(m_1(c,\alpha)<0\) is given by Lemma \ref{Lem4.4}.
	
	We first verify that \(\Gamma_1\neq\emptyset\).
	Fix any \(u\in S_{c,rad}\). By Lemma \ref{Lem4.3} there exist
	\(t_u^1<t_u^3\) such that \(t_u^1\star u\in\mathfrak{P}_{\alpha,c}^+\) and
	\(E_u(t):=J_\alpha(t\star u)\to-\infty\) as \(t\to+\infty\).
	Hence we can choose \(t_1\gg1\) so that
	\(J_\alpha(t_1\star u)\le 2m_1(c,\alpha)\).
	Then the path
	\begin{equation}\label{eq4.14}
		\gamma_u:\ [0,1]\to\mathbb{R}\times S_{rad},\qquad
		\gamma_u(\tau)
		:=\bigl(0,\bigl((1-\tau)t_u^1+\tau t_1\bigr)\star u\bigr)
	\end{equation}
	belongs to \(\Gamma_1\). Thus \(\Gamma_1\neq\emptyset\).
	
	We define the minimax value
	\[
	\varsigma(c,\alpha)
	:=\inf_{\gamma\in\Gamma_1}\ \max_{(t,u)\in\gamma([0,1])}\widehat J_\alpha(t,u)
	\in\mathbb{R}.
	\]
	We now show that for every \(\gamma\in\Gamma_1\) there exists
	\(\tau_\gamma\in(0,1)\) such that
	\begin{equation}\label{eq4.15}
		\zeta(\tau_\gamma)=t_{\beta(\tau_\gamma)}^3,
	\end{equation}
	where \(t_{v}^3\) is the ``upper'' critical point of the fiber
	\(E_v(t)=J_\alpha(t\star v)\) given by Lemma \ref{Lem4.3}. In particular, this
	implies \(\zeta(\tau_\gamma)\star\beta(\tau_\gamma)\in\mathfrak{P}_{\alpha,c}^-\).
	
	Write \(\gamma(\tau)=(\zeta(\tau),\beta(\tau))\).
	Since \(\gamma(0)\in\{0\}\times\mathfrak{P}_{\alpha,c}^+\), we have
	\(\beta(0)\in\mathfrak{P}_{\alpha,c}^+\). By Lemma \ref{Lem4.3}, the associated
	critical levels satisfy
	\[
	t_{\beta(0)}^1=0,\qquad t_{\beta(0)}^3>0.
	\]
	On the other hand, \(\gamma(1)\in\{0\}\times J_\alpha^{2m_1(c,\alpha)}\)
	implies \(\beta(1)\in S_{rad}\) and
	\(J_\alpha(\beta(1))\le 2m_1(c,\alpha)<m_1(c,\alpha)\). Thus Lemma
	\ref{Lem4.5} yields
	\[
	t_{\beta(1)}^3<0.
	\]
	By Lemma \ref{Lem4.3}, the map \(u\mapsto t_u^3\) is \(C^1\) on \(S_c\), hence
	continuous. Since \(\beta\) and \(\zeta\) are continuous on \([0,1]\), the map
	\[
	\phi(\tau):=\zeta(\tau)-t_{\beta(\tau)}^3
	\]
	is continuous on \([0,1]\). Using the information at the endpoints,
	\[
	\phi(0)=\zeta(0)-t_{\beta(0)}^3=0-t_{\beta(0)}^3<0,
	\qquad
	\phi(1)=\zeta(1)-t_{\beta(1)}^3=0-t_{\beta(1)}^3>0.
	\]
	By the intermediate value theorem there exists \(\tau_\gamma\in(0,1)\) such that
	\(\phi(\tau_\gamma)=0\), that is,
	\[
	\zeta(\tau_\gamma)=t_{\beta(\tau_\gamma)}^3,
	\]
	which is \eqref{eq4.15}.
	
	Now set \(v_\gamma:=\zeta(\tau_\gamma)\star\beta(\tau_\gamma)\in S_c\).
	For the fiber associated with \(\beta(\tau_\gamma)\) we have
	\[
	E_{\beta(\tau_\gamma)}(t)
	=J_\alpha(t\star\beta(\tau_\gamma)),
	\]
	and \(t_{\beta(\tau_\gamma)}^3\) is the unique ``upper'' critical point:
	\(E'_{\beta(\tau_\gamma)}(t_{\beta(\tau_\gamma)}^3)=0\),
	\(E''_{\beta(\tau_\gamma)}(t_{\beta(\tau_\gamma)}^3)<0\).
	For the fiber associated with \(v_\gamma\) we note
	\[
	E_{v_\gamma}(t)
	=J_\alpha\bigl(t\star(\zeta(\tau_\gamma)\star\beta(\tau_\gamma))\bigr)
	=J_\alpha\bigl((t+\zeta(\tau_\gamma))\star\beta(\tau_\gamma)\bigr)
	=E_{\beta(\tau_\gamma)}(t+\zeta(\tau_\gamma)).
	\]
	Thus the critical points of \(E_{v_\gamma}\) are obtained from those of
	\(E_{\beta(\tau_\gamma)}\) by translation in \(t\), and in particular,
	\[
	E_{v_\gamma}'(0)
	=E_{\beta(\tau_\gamma)}'(\zeta(\tau_\gamma))
	=E_{\beta(\tau_\gamma)}'(t_{\beta(\tau_\gamma)}^3)=0,
	\]
	\[
	E_{v_\gamma}''(0)
	=E_{\beta(\tau_\gamma)}''(\zeta(\tau_\gamma))
	=E_{\beta(\tau_\gamma)}''(t_{\beta(\tau_\gamma)}^3)<0.
	\]
	Hence \(v_\gamma=\zeta(\tau_\gamma)\star\beta(\tau_\gamma)
	\in\mathfrak{P}_{\alpha,c}^-\).
	
	From this we deduce that for any \(\gamma\in\Gamma_1\),
	\begin{equation}\label{eq4.16}
		\max_{\gamma([0,1])}\widehat J_\alpha
		\;\ge\;
		\widehat J_\alpha\bigl(\gamma(\tau_\gamma)\bigr)
		=J_\alpha\bigl(\zeta(\tau_\gamma)\star\beta(\tau_\gamma)\bigr)
		\;\ge\;
		\inf_{\mathfrak{P}_{\alpha,c}^-\cap S_{c,rad}}J_\alpha.
	\end{equation}
	Thus
	\[
	\varsigma(c,\alpha)\ge
	\inf_{\mathfrak{P}_{\alpha,c}^-\cap S_{c,rad}}J_\alpha.
	\]
	Conversely, if \(u\in\mathfrak{P}_{\alpha,c}^-\cap S_{c,rad}\), then the path
	\(\gamma_u\) defined in \eqref{eq4.14} belongs to \(\Gamma_1\), and
	\[
	J_\alpha(u)
	=\widehat J_\alpha(0,u)
	=\max_{\gamma_u([0,1])}\widehat J_\alpha
	\ge\varsigma(c,\alpha).
	\]
	Hence
	\[
	\inf_{\mathfrak{P}_{\alpha,c}^-\cap S_{c,rad}}J_\alpha
	\ge\varsigma(c,\alpha),
	\]
	and combining with \eqref{eq4.16} gives
	\[
	\varsigma(c,\alpha)
	=\inf_{\mathfrak{P}_{\alpha,c}^-\cap S_{c,rad}}J_\alpha.
	\]
	
	By Corollary \ref{Cor4.1} and Lemma \ref{Lem4.5} we have
	\begin{equation}\label{eq4.17}
		\varsigma(c,\alpha)
		=\inf_{\mathfrak{P}_{\alpha,c}^- \cap S_{c,rad}}J_\alpha
		>0
		\ge
		\sup_{(\mathfrak{P}_{\alpha,c}^+\cup J_\alpha^{2m_1(c,\alpha)})
			\cap S_{c,rad}}J_\alpha
		=
		\sup_{\bigl({\bigl(\{0\}\times\mathfrak{P}_{\alpha,c}^+\bigr)\cup
			\bigl(\{0\}\times J_\alpha^{2m_1(c,\alpha)}\bigr)}{\cap\bigl(\{\R\}\times {S_{c,rad}}\bigl)\bigl)}}
		\widehat J_\alpha.
	\end{equation}
	
	Let \(\gamma_n(\tau)=(\zeta_n(\tau),\beta_n(\tau))\in\Gamma_1\) be a minimizing
	sequence for \(\varsigma(c,\alpha)\), i.e.
	\[
	\max_{\gamma_n([0,1])}\widehat J_\alpha\to \varsigma(c,\alpha).
	\]
	Using the invariance of \(\widehat J_\alpha\) under the scaling in the first
	variable, we may replace each \(\gamma_n\) by
	\[
	\widetilde\gamma_n(\tau)
	:=\bigl(0,\zeta_n(\tau)\star\beta_n(\tau)\bigr),
	\]
	which still belongs to \(\Gamma_1\) and satisfies
	\(\max_{\widetilde\gamma_n([0,1])}\widehat J_\alpha
	=\max_{\gamma_n([0,1])}\widehat J_\alpha\).
	Thus, without loss of generality, we may assume that
	\(\gamma_n(\tau)=(0,\beta_n(\tau))\) for all \(\tau\in[0,1]\).
	
	We apply Lemma \ref{Lem2.3} to the functional
	\(\varphi=\widehat J_\alpha\) on
	\[
	X=\mathbb{R}\times S_{c,r},\quad
	\mathcal F=\{\gamma([0,1]):\ \gamma\in\Gamma_1\},
	\]
	with
	\[
	B=\bigl(\{0\}\times\mathfrak{P}_{\alpha,c}^+\bigr)
	\cup \bigl(\{0\}\times J_\alpha^{2m_1(c,\alpha)}\bigr),
	\]
	and
	\[
	F
	=\{(t,u)\in\mathbb{R}\times S_{c,rad}:\ \widehat J_\alpha(t,u)\ge\varsigma(c,\alpha)\}.
	\]
	By \eqref{eq4.16} and \eqref{eq4.17} we have
	\[
	(A\cap F)\setminus B\neq\emptyset
	\quad\text{for every }A\in\mathcal F,
	\qquad
	\sup\widehat J_\alpha(B)\le\varsigma(c,\alpha)\le\inf\widehat J_\alpha(F),
	\]
	so the assumptions of Lemma \ref{Lem2.3} are satisfied.
	
	Consequently, there exists a Palais–Smale sequence
	\(\{(t_n,w_n)\}\subset\mathbb{R}\times S_{c,rad}\) for
	\(\widehat J_\alpha|_{\mathbb{R}\times S_{c,rad}}\) at level
	\(\varsigma(c,\alpha)>0\) such that
	\begin{equation}\label{eq4.18}
		\partial_t\widehat J_\alpha(t_n,w_n)\to 0,
		\qquad
		\|\partial_u\widehat J_\alpha(t_n,w_n)\|_{(T_{w_n}S_{c,r})^*}\to 0
		\quad\text{as }n\to\infty,
	\end{equation}
	and, in addition,
    \[dist\bigl((t_n,w_n),A_n\bigl)=inf_{v\in\beta([0,1]}\{|t_n-0|+\|w_n-v\|\}\to 0\]
	\begin{equation}\label{eq4.19}
|t_n|+\operatorname{dist}_{H^s}\bigl(w_n,\beta_n([0,1])\bigr)\to 0
		\quad\text{as }n\to\infty.
	\end{equation}
	In particular, \(t_n\to 0\).
	
	Using the identity
	\[
	\partial_t\widehat J_\alpha(t,u)
	=E_u'(t)=P_\alpha(t\star u),
	\]
	we deduce from \eqref{eq4.18} that
	\[
	P_\alpha(t_n\star w_n)\to 0\quad\text{as }n\to\infty.
	\]
	Moreover, for every \(\varphi\in T_{w_n}S_{c,rad},\beta(0) = w_n,\beta'{(0)} = \varphi \)
\[
\begin{aligned}
    \partial_n \hat{J}_\alpha(t_n, w_n,\varphi)& = \lim_{t \to 0} \frac{\hat{J}_\alpha(t_n, \beta{(t+1)}) - \hat{J}_\alpha(t_n, \beta{(0)})}{t}\\
&= \lim_{t \to 0} \frac{{J}_\alpha(t_n \star\beta{(t)}) - {J}_\alpha(t_n \star \beta{(0)})}{t}\\
&= \left\langle \hat{J}_\alpha'(t_n \star w_n), t_n \star  \varphi \right\rangle
\end{aligned}     
\]
	so from \eqref{eq4.18} we obtain
	\begin{equation}\label{eq4.20}
		\bigl\langle J_\alpha'(t_n\star w_n),\,t_n\star\varphi\bigr\rangle
		=o(1)\,\|\varphi\|_{H^s}
		=o(1)\,\|t_n\star\varphi\|_{H^s}
		\quad\text{as }n\to\infty.
	\end{equation}
	Since \(t_n\to 0\), the norms \(\|\varphi\|_{H^s}\) and
	\(\|t_n\star\varphi\|_{H^s}\) are equivalent uniformly in \(n\).
	
	Let
	\[
	u_n:=t_n\star w_n\in S_{c,rad}.
	\]
	Then \eqref{eq4.20} shows that the gradient of \(J_\alpha\) restricted to the
	tangent space \(T_{u_n}S_{c,r}\) tends to zero, while
	\(P_\alpha(u_n)=P_\alpha(t_n\star w_n)\to 0\). By Lemma~3.6 in
	\cite{2019BartschPDE}, the sequence \(\{u_n\}\) is a Palais–Smale sequence for
	\(J_\alpha|_{S_{c,r}}\) at level \(\varsigma(c,\alpha)>0\), with
	\[
	P_\alpha(u_n)\to 0\quad\text{as }n\to\infty.
	\]
	
	By Lemma \ref{Lem3.1} , there exists
	\(u_{c,\alpha,m}\in S_{c,rad}\) such that, up to a subsequence,
	\[
	u_n\to u_{c,\alpha,m}\quad\text{strongly in }H^s(\mathbb{R}^N),
	\]
	and \(u_{c,\alpha,m}\) is a radial weak solution of \eqref{eq1.1} for some
	Lagrange multiplier \(\lambda_{c,\alpha,m}<0\).
	
	Testing the equation with \(|(u_{c,\alpha,m})|\) yields
	\(u_{c,\alpha,m}\ge 0\). Then, by the fractional strong maximum principle, we obtain
	\[
	u_{c,\alpha,m}(x)>0\quad\text{for all }x\in\mathbb{R}^N.
	\]
	Moreover,
	\[
	J_\alpha(u_{c,\alpha,m})
	=\lim_{n\to\infty}J_\alpha(u_n)
	=\varsigma(c,\alpha)>0.
	\]
	
	Therefore \(u_{c,\alpha,m}\) is a positive mountain pass type normalized
	solution of \eqref{eq1.1} at level \(\varsigma(c,\alpha)>0\), distinct from the
	local minimizer obtained in Theorem \ref{Thm1.1}\,(1).
\end{proof}

\subsection{Convergence to the autonomous problem as $\alpha\to0$}
\begin{Lem}\label{Lem4.6}
	Let $\frac{2s-\mu}{N}+2<p<2_{\mu,s}^*$ and $\alpha=0$.
	Then $\mathfrak{P}_{0,c}^0=\varnothing$, and
	$\mathfrak{P}_{0,c}$ is a $C^1$ submanifold of codimension $2$ in
	$H^s(\mathbb{R}^N)$.
\end{Lem}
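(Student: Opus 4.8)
The plan is to read this as the $\alpha=0$ specialization of Lemma~\ref{Lem4.1}, which is in fact simpler because only the single Choquard term of exponent $p$ survives. For $\alpha=0$ the Pohozaev functional reduces to
\[
P_0(u)=s\|u\|^2-s\gamma_{p,s}B(u),
\qquad
B(u):=\int_{\mathbb{R}^N}(I_\mu*|u|^p)|u|^p\,dx,
\]
so that membership $u\in\mathfrak{P}_{0,c}$ is equivalent to $E_u'(0)=0$, i.e.\ $\|u\|^2=\gamma_{p,s}B(u)$, while the second-derivative formula gives $E_u''(0)=2s^2\bigl(\|u\|^2-p\gamma_{p,s}^2B(u)\bigr)$.

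First I would dispose of $\mathfrak{P}_{0,c}^0$ by substituting the constraint $\|u\|^2=\gamma_{p,s}B(u)$ directly into $E_u''(0)$, which yields
\[
E_u''(0)=2s^2\gamma_{p,s}B(u)\bigl(1-p\gamma_{p,s}\bigr)
\qquad\text{for every }u\in\mathfrak{P}_{0,c}.
\]
Since $p>2+\frac{2s-\mu}{N}$ forces $p\gamma_{p,s}>1$ by Remark~\ref{Rek2.3}, and since $\gamma_{p,s}>0$ and $B(u)>0$ for every $u\in S_c$ (because $\|u\|_2=c>0$ implies $u\not\equiv0$), the right-hand side is strictly negative. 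Hence $E_u''(0)<0$ on all of $\mathfrak{P}_{0,c}$, so $\mathfrak{P}_{0,c}=\mathfrak{P}_{0,c}^-$ and in particular $\mathfrak{P}_{0,c}^0=\varnothing$. In contrast with Lemma~\ref{Lem4.1}, no smallness condition on any parameter is required here: the competition between a subcritical and a supercritical nonlinearity has disappeared, and the sign of $E_u''(0)$ is fixed purely by the supercriticality of $p$.

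For the manifold structure I would follow verbatim the scheme of Lemma~\ref{Lem4.1}. Writing $C(u)=\|u\|_2^2-c^2$, the set $\mathfrak{P}_{0,c}=\{u:C(u)=0,\ P_0(u)=0\}$ is the common zero set of two $C^1$ functionals, so by the implicit function theorem it suffices to check that $C'(u)$ and $P_0'(u)$ are linearly independent at each $u\in\mathfrak{P}_{0,c}$. If they were dependent, then $u$ would be a constrained critical point of $P_0$ on $S_c$, and the Lagrange multiplier rule would furnish $\tau\in\mathbb{R}$ with $P_0'(u)=\tau C'(u)$ in $H^s(\mathbb{R}^N)$, i.e.\ $u$ would solve
\[
(-\Delta)^s u=\tfrac{\tau}{s}\,u+p\gamma_{p,s}(I_\mu*|u|^p)|u|^{p-2}u.
\]
Differentiating the associated unconstrained energy along $t\mapsto t\star u$ at $t=0$ (the linear term drops out since $\|t\star u\|_2=\|u\|_2$) yields the Pohozaev identity $\|u\|^2=p\gamma_{p,s}^2B(u)$, which is exactly $E_u''(0)=0$; thus $u\in\mathfrak{P}_{0,c}^0$, contradicting the previous step. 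Therefore $(C'(u),P_0'(u)):H^s(\mathbb{R}^N)\to\mathbb{R}^2$ is surjective and $\mathfrak{P}_{0,c}$ is a $C^1$ submanifold of codimension $2$. The only point needing slight care is the bookkeeping of the factor $\gamma_{p,s}^2$ in this Pohozaev identity; beyond that I expect no genuine obstacle, the whole argument being a stripped-down version of the $\alpha>0$ case.
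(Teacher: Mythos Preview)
Your proof is correct and follows essentially the same approach as the paper: substitute the Pohozaev constraint $\|u\|^2=\gamma_{p,s}B(u)$ into $E_u''(0)$ to obtain a strictly negative sign (hence $\mathfrak{P}_{0,c}^0=\varnothing$), then invoke the Lagrange-multiplier/implicit function argument of Lemma~\ref{Lem4.1} for the codimension-$2$ manifold structure. Your expression $E_u''(0)=2s^2\gamma_{p,s}B(u)(1-p\gamma_{p,s})$ is in fact the one consistent with the general $E_u''$ formula recorded in Section~2.
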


\begin{proof}
	For $\alpha=0$ the Pohozaev functional is
	\[
	P_0(u)
	= s\|u\|^2
	- s\gamma_{p,s}\int_{\mathbb{R}^N}(I_\mu*|u|^p)|u|^p\,dx,
	\qquad u\in S_c,
	\]
	and along the fiber we have
	\[
	E_u(t)=J_0(t\star u)
	=\frac{e^{2st}}{2}\|u\|^2
	-\frac{1}{2p}e^{2p\gamma_{p,s}st}
	\int_{\mathbb{R}^N}(I_\mu*|u|^p)|u|^p\,dx.
	\]
	A direct computation gives
	\[
	E_u'(t)
	= s e^{2st}\|u\|^2
	- s\gamma_{p,s}e^{2p\gamma_{p,s}st}
	\int_{\mathbb{R}^N}(I_\mu*|u|^p)|u|^p\,dx
	= \frac{1}{s}P_0(t\star u),
	\]
	and
	\[
	E_u''(t)
	= 2s^2 e^{2st}\|u\|^2
	-2s^2 p\gamma_{p,s}e^{2p\gamma_{p,s}st}
	\int_{\mathbb{R}^N}(I_\mu*|u|^p)|u|^p\,dx.
	\]
	
	If $u\in\mathfrak{P}_{0,c}$, then $P_0(u)=0$, that is
	\[
	\|u\|^2
	=\gamma_{p,s}\int_{\mathbb{R}^N}(I_\mu*|u|^p)|u|^p\,dx.
	\]
	Denoting
	\[
	A=\int_{\mathbb{R}^N}(I_\mu*|u|^p)|u|^p\,dx>0,
	\]
	this reads $\|u\|^2=\gamma_{p,s}A$, and inserting into $E_u''(0)$ gives
	\[
	E_u''(0)
	=2s^2\|u\|^2-2s^2p\gamma_{p,s}A
	=2s^2(\gamma_{p,s}A-p\gamma_{p,s}A)
	=-2s^2(p-1)\gamma_{p,s}A<0,
	\]
	since $p>1$ and $\gamma_{p,s}>0$. Hence there is no $u\in S_c$ with
	$P_0(u)=0$ and $E_u''(0)=0$, so $\mathfrak{P}_{0,c}^0=\varnothing$.
	
	The fact that $\mathfrak{P}_{0,c}$ is a $C^1$ submanifold of codimension
	$2$ follows as in Lemma~\ref{Lem4.1}, by considering the map
	\[
	C(u):=\int_{\mathbb{R}^N}|u|^2\,dx-c^2,\qquad
	\mathfrak{P}_{0,c}=\{u\in H^s(\mathbb{R}^N):C(u)=0,\ P_0(u)=0\},
	\]
	and observing that for $u\in\mathfrak{P}_{0,c}$ the functionals $C'(u)$ and
	$P_0'(u)$ are linearly independent in $H^s(\mathbb{R}^N)^*$. The implicit
	function theorem then yields the claim.
\end{proof}

\begin{Lem}\label{Lem4.7}
	Let $\frac{2s-\mu}{N}+2<p<2_{\mu,s}^*$ and $\alpha=0$.
	For any $u\in S_c$, the function
	\[
	E_u(t)=J_0(t\star u)
	\]
	has a unique critical point $t_u^*\in\mathbb{R}$, which is a strict global
	maximum of positive level.
	
	Moreover:
	\begin{enumerate}
		\item $E_u$ is strictly decreasing and concave on $(t_u^*,+\infty)$.
		\item One has $\mathfrak{P}_{0,c}=\mathfrak{P}_{0,c}^-$. In particular,
		if $P_0(u)<0$ then $t_u^*<0$.
		\item The map $u\in S_c\mapsto t_u^*\in\mathbb{R}$ is of class $C^1$.
	\end{enumerate}
\end{Lem}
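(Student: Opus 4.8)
The plan is to exploit the fact that for $\alpha=0$ the fiber map is a difference of only two exponentials, so the whole analysis is governed by the single inequality $p\gamma_{p,s}>1$ (valid here by Remark~\ref{Rek2.3}, since $p>2+\frac{2s-\mu}{N}$); this makes the situation strictly simpler than Lemma~\ref{Lem4.3}, where a subcritical competitor produced a second critical point. Fixing $u\in S_c$ and abbreviating $A_0:=\|u\|^2>0$ and $A:=\int_{\mathbb{R}^N}(I_\mu*|u|^p)|u|^p\,dx>0$, I would record
\[
E_u(t)=\tfrac{A_0}{2}e^{2st}-\tfrac{A}{2p}e^{2p\gamma_{p,s}st},
\qquad
E_u'(t)=sA_0e^{2st}-s\gamma_{p,s}Ae^{2p\gamma_{p,s}st}.
\]
Solving $E_u'(t)=0$ gives the unique root
\[
t_u^*=\frac{1}{2(p\gamma_{p,s}-1)s}\ln\!\Big(\frac{A_0}{\gamma_{p,s}A}\Big),
\]
the uniqueness being immediate because $p\gamma_{p,s}-1>0$ makes $t\mapsto e^{2(p\gamma_{p,s}-1)st}$ strictly increasing. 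Substituting the critical relation $A_0e^{2st_u^*}=\gamma_{p,s}Ae^{2p\gamma_{p,s}st_u^*}$ into $E_u''$ yields
\[
E_u''(t_u^*)=2s^2A_0e^{2st_u^*}\bigl(1-p\gamma_{p,s}\bigr)<0,
\]
so $t_u^*$ is a strict local maximum.

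To finish the opening claim I would read off the endpoint behaviour from the ordering $1<p\gamma_{p,s}$: as $t\to-\infty$ the slower term $e^{2st}$ dominates and $E_u(t)\to0^+$, whereas as $t\to+\infty$ the faster term $e^{2p\gamma_{p,s}st}$ dominates and $E_u(t)\to-\infty$. Since $t_u^*$ is the only zero of $E_u'$ and $E_u''(t_u^*)<0$, necessarily $E_u'>0$ on $(-\infty,t_u^*)$ and $E_u'<0$ on $(t_u^*,+\infty)$; hence $E_u$ climbs from the level $0^+$ up to $E_u(t_u^*)$ and then decreases to $-\infty$, forcing $E_u(t_u^*)>0$ and exhibiting $t_u^*$ as the strict global maximum at a positive level.

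Item~(1) then follows from the same sign information: strict decrease on $(t_u^*,+\infty)$ is exactly $E_u'<0$ there, and for concavity I would locate the unique inflection point $t_{\mathrm{infl}}$ solving $E_u''=0$, i.e.\ $e^{2(p\gamma_{p,s}-1)st_{\mathrm{infl}}}=A_0/(p\gamma_{p,s}^2A)$, and compare it with $t_u^*$. Because $p\gamma_{p,s}>1$ we have $A_0/(p\gamma_{p,s}^2A)<A_0/(\gamma_{p,s}A)$, so monotonicity of the exponential gives $t_{\mathrm{infl}}<t_u^*$ and thus $E_u''<0$ on $(t_u^*,+\infty)$; this single comparison, which is the only mildly delicate point of the proof, is what guarantees concavity on the whole ray rather than merely near the maximum. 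For item~(2) I would use $E_u'(0)=P_0(u)$ (Remark~\ref{Rek2.1} with $\alpha=0$): thus $u\in\mathfrak{P}_{0,c}\iff E_u'(0)=0\iff t_u^*=0$, and then $E_u''(0)=E_u''(t_u^*)<0$ shows $u\in\mathfrak{P}_{0,c}^-$, giving $\mathfrak{P}_{0,c}=\mathfrak{P}_{0,c}^-$ (and re-proving $\mathfrak{P}_{0,c}^0=\varnothing$); moreover $P_0(u)<0$ means $E_u'(0)<0$, which places $0$ in the region $E_u'<0$, i.e.\ $t_u^*<0$. Finally, for item~(3) I would apply the implicit function theorem exactly as in Lemma~\ref{Lem4.3}(3) to $F(u,t):=E_u'(t)$, noting $F(u,t_u^*)=0$ and $\partial_tF(u,t_u^*)=E_u''(t_u^*)\neq0$, and patch the local $C^1$ parametrisations together using uniqueness of $t_u^*$; alternatively the explicit formula above already exhibits $u\mapsto t_u^*$ as the logarithm of a positive $C^1$ function of $u$, hence $C^1$.
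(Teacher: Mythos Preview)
Your proof is correct and follows essentially the same approach as the paper: both reduce to the elementary analysis of a two-exponential fiber map governed by the single inequality $p\gamma_{p,s}>1$, obtain the unique critical point, check $E_u''(t_u^*)<0$, read off the endpoint behaviour, and finish items (2)--(3) via Remark~\ref{Rek2.1} and the implicit function theorem. The only minor variation is in the concavity argument for item~(1): the paper uses the identity $E_u''(t)=2sE_u'(t)-2s^2\gamma_{p,s}(p\gamma_{p,s}-1)Ae^{2p\gamma_{p,s}st}$ to see that both summands are negative for $t>t_u^*$, whereas you locate the unique inflection point explicitly and show $t_{\mathrm{infl}}<t_u^*$---both are equally direct and yield the same conclusion.
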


\begin{proof}
	For $\alpha=0$ we have
	\[
	P_0(u)
	= s\|u\|^2
	-s\gamma_{p,s}\int_{\mathbb{R}^N}(I_\mu*|u|^p)|u|^p\,dx,
	\]
	\[
	E_u(t)=J_0(t\star u)
	=\frac{e^{2st}}{2}\|u\|^2
	-\frac{1}{2p}e^{2p\gamma_{p,s}st}
	\int_{\mathbb{R}^N}(I_\mu*|u|^p)|u|^p\,dx.
	\]
	Let $A:=\displaystyle\int_{\mathbb{R}^N}(I_\mu*|u|^p)|u|^p\,dx>0$. Then
	\[
	E_u'(t)
	= s e^{2st}\|u\|^2
	-s\gamma_{p,s}e^{2p\gamma_{p,s}st}A
	=\frac{1}{s}P_0(t\star u),
	\]
	\[
	E_u''(t)
	= 2s^2 e^{2st}\|u\|^2
	-2s^2 p\gamma_{p,s}e^{2p\gamma_{p,s}st}A.
	\]
	
	Solving $E_u'(t)=0$ gives
	\[
	e^{2st}\|u\|^2
	=\gamma_{p,s}e^{2p\gamma_{p,s}st}A
	\quad\Longleftrightarrow\quad
	e^{2st(p\gamma_{p,s}-1)}
	=\frac{\|u\|^2}{\gamma_{p,s}A}.
	\]
	Since $p\gamma_{p,s}>1$, this equation has a unique solution
	$t=t_u^*\in\mathbb{R}$, so $E_u$ has exactly one critical point.
	
	At $t=t_u^*$ the above relation implies
	$e^{2st_u^*(p\gamma_{p,s}-1)}=\|u\|^2/(\gamma_{p,s}A)$, and hence
	\[
	\begin{aligned}
		E_u''(t_u^*)
		&= 2s^2 e^{2st_u^*}\|u\|^2
		-2s^2 p\gamma_{p,s}e^{2p\gamma_{p,s}st_u^*}A \\
		&= 2s^2 e^{2st_u^*}\|u\|^2
		-2s^2 p\gamma_{p,s}e^{2st_u^*}
		e^{2st_u^*(p\gamma_{p,s}-1)}A \\
		&= 2s^2 e^{2st_u^*}\|u\|^2
		-2s^2 p\gamma_{p,s}e^{2st_u^*}\frac{\|u\|^2}{\gamma_{p,s}} \\
		&= 2s^2 e^{2st_u^*}\|u\|^2(1-p\gamma_{p,s})<0,
	\end{aligned}
	\]
	so $t_u^*$ is a strict local maximum.
	
	As $t\to-\infty$ we have
	\[
	E_u(t)
	=\frac{e^{2st}}{2}\|u\|^2
	-\frac{1}{2p}e^{2p\gamma_{p,s}st}A
	\to 0^{+},
	\]
	because $2p\gamma_{p,s}>2$, and therefore the second term decays faster than
	the first one. On the other hand,
	$E_u(t)\to-\infty$ as $t\to+\infty$, since the negative term with exponent
	$2p\gamma_{p,s}>2$ dominates. Together with the uniqueness of the critical
	point, this shows that $t_u^*$ is the unique global maximizer of $E_u$.
	In particular, $E_u(t_u^*)>0$, because $E_u(t)>0$ for $t$ sufficiently negative.
	
	Using the expressions of $E_u'$ and $E_u''$ we write
	\[
	E_u''(t)
	= 2s^2 e^{2st}\|u\|^2
	-2s^2 p\gamma_{p,s}e^{2p\gamma_{p,s}st}A
	= 2s\,E_u'(t)
	-2s^2\gamma_{p,s}(p-1)e^{2p\gamma_{p,s}st}A.
	\]
	Since $E_u'(t_u^*)=0$ and $t_u^*$ is the unique zero of $E_u'$, we have
	$E_u'(t)>0$ for $t<t_u^*$ and $E_u'(t)<0$ for $t>t_u^*$. For every
	$t>t_u^*$ the second term above is strictly negative and the first term is
	also negative, so $E_u''(t)<0$ for all $t>t_u^*$. Hence $E_u$ is strictly
	concave and strictly decreasing on $(t_u^*,+\infty)$, which proves (1).
	
	Now let $u\in\mathfrak{P}_{0,c}$, so $P_0(u)=0$.
	Then $E_u'(0)=\frac{1}{s}P_0(u)=0$, and from the computation in the proof
	of Lemma~\ref{Lem4.7} we know that $E_u''(0)<0$, so
	$u\in\mathfrak{P}_{0,c}^-$. Thus
	$\mathfrak{P}_{0,c}^+=\varnothing$, $\mathfrak{P}_{0,c}^0=\varnothing$, and
	$\mathfrak{P}_{0,c}=\mathfrak{P}_{0,c}^-$.
	
	Moreover, for general $u\in S_c$ we have
	\[
	P_0(u)=E_u'(0).
	\]
	If $P_0(u)<0$, then $E_u'(0)<0$. Since $E_u'(-\infty)=0^{+}$,
	$E_u'(+\infty)=-\infty$ and $E_u'$ has exactly one zero $t_u^*$, we must
	have $t_u^*<0$ (otherwise, for $t_u^*>0$ we would have $E_u'(0)>0$).
	This proves (2).
	
	Finally, the map
	\[
	F:S_c\times\mathbb{R}\to\mathbb{R},\qquad
	F(u,t)=E_u'(t),
	\]
	is $C^1$, and for each $u\in S_c$ the equation $F(u,t)=0$ has a unique
	solution $t=t_u^*$ with $F_t(u,t_u^*)=E_u''(t_u^*)\neq0$. The implicit
	function theorem yields a $C^1$ map $u\mapsto t_u^*$ on $S_c$, which gives (3).
\end{proof}
\begin{Lem}\label{Lem4.8}
	Assume that
	\[
	2_{\mu,*}<q<\frac{2s-\mu}{N}+2<p<2_{\mu,s}^*
	\]
	and \(0<\alpha<\min\{\alpha_1,\alpha_2\}\).
	Then
	\begin{equation}\label{eq4.21}
		\inf_{u\in\mathfrak{P}_{\alpha,c}^-\cap S_{rad}} J_\alpha(u)
		=\inf_{u\in S_{c,rad}}\ \max_{t\in\mathbb{R}} J_\alpha(t\star u).
	\end{equation}
	For \(\alpha=0\) one has
	\begin{equation}\label{eq4.22}
		\inf_{u\in\mathfrak{P}_{0,c}^-\cap S_{c,rad}} J_0(u)
		=\inf_{u\in S_{c,rad}}\ \max_{t\in\mathbb{R}} J_0(t\star u).
	\end{equation}
	Moreover, if \(0<\alpha_3<\alpha_4<\min\{\alpha_1,\alpha_2\}\), then
	\[
	\varsigma(c,\alpha_4)\le\varsigma(c,\alpha_3),
	\]
	where \(\varsigma(c,\alpha)\) is as in \eqref{eq4.17}; in addition,
	\[
	\varsigma(c,\alpha)\le m_r(c,0)
	\qquad\text{for all }0\le\alpha<\min\{\alpha_1,\alpha_2\},
	\]
	with \(m_r(c,0)=\inf\limits_{u\in S_{c,rad}}\max\limits_{t\in\mathbb{R}}J_0(t\star u)\).
\end{Lem}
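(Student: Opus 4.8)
The plan is to reduce all four assertions to the two min-max identities \eqref{eq4.21} and \eqref{eq4.22}, after which the monotonicity in $\alpha$ will follow from an elementary pointwise comparison of the fiber maps, requiring no new compactness. First I would prove \eqref{eq4.21}. By Lemma \ref{Lem4.3}, for every $u\in S_{c,rad}$ the fiber $E_u(t)=J_\alpha(t\star u)$ attains its global maximum at the unique point $t_u^3$, with $t_u^3\star u\in\mathfrak{P}_{\alpha,c}^-$. Since the scaling $t\star\,\cdot\,$ preserves radial symmetry, $t_u^3\star u\in\mathfrak{P}_{\alpha,c}^-\cap S_{c,rad}$, so that
\[
\max_{t\in\mathbb{R}}J_\alpha(t\star u)=J_\alpha(t_u^3\star u)\ge\inf_{v\in\mathfrak{P}_{\alpha,c}^-\cap S_{c,rad}}J_\alpha(v),
\]
and taking the infimum over $u\in S_{c,rad}$ yields one inequality. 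For the reverse, if $v\in\mathfrak{P}_{\alpha,c}^-\cap S_{c,rad}$ then $E_v'(0)=P_\alpha(v)=0$ and $E_v''(0)<0$; since Lemma \ref{Lem4.3} asserts that the only critical points of $E_v$ are $t_v^1$ (a minimum) and $t_v^3$ (the maximum), the sign $E_v''(0)<0$ forces $0=t_v^3$, whence $J_\alpha(v)=\max_{t}J_\alpha(t\star v)$. Taking the infimum over such $v$ gives the opposite inequality, proving \eqref{eq4.21}.

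The identity \eqref{eq4.22} I would obtain by the same two-sided argument, with Lemma \ref{Lem4.7} in place of Lemma \ref{Lem4.3}: for $\alpha=0$ each fiber $E_u(t)=J_0(t\star u)$ has a single critical point $t_u^*$, which is its global maximum, and $\mathfrak{P}_{0,c}=\mathfrak{P}_{0,c}^-$. Hence $\max_t J_0(t\star u)=J_0(t_u^*\star u)$ with $t_u^*\star u\in\mathfrak{P}_{0,c}^-\cap S_{c,rad}$, while any $v\in\mathfrak{P}_{0,c}^-\cap S_{c,rad}$ satisfies $0=t_v^*$ and thus $J_0(v)=\max_t J_0(t\star v)$.

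With \eqref{eq4.21} and \eqref{eq4.22} in hand, the remaining claims are immediate from the observation that, for fixed $u$ and $t$, the map $\alpha\mapsto J_\alpha(t\star u)$ is nonincreasing, since
\[
J_\alpha(t\star u)=J_0(t\star u)-\frac{\alpha}{2q}e^{2q\gamma_{q,s}st}\int_{\mathbb{R}^N}(I_\mu*|u|^q)|u|^q\,dx
\]
and the Choquard integral is nonnegative. Thus if $0<\alpha_3<\alpha_4<\min\{\alpha_1,\alpha_2\}$, then $J_{\alpha_4}(t\star u)\le J_{\alpha_3}(t\star u)$ for every $t$, so $\max_t J_{\alpha_4}(t\star u)\le\max_t J_{\alpha_3}(t\star u)$; taking the infimum over $u\in S_{c,rad}$ and using \eqref{eq4.21} gives $\varsigma(c,\alpha_4)\le\varsigma(c,\alpha_3)$. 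Likewise $J_\alpha(t\star u)\le J_0(t\star u)$ for all $\alpha\ge0$ yields $\varsigma(c,\alpha)\le m_r(c,0)$ after maximizing in $t$, taking the infimum over $u\in S_{c,rad}$, and invoking \eqref{eq4.21}--\eqref{eq4.22}.

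The only delicate point is the min-max identity itself, namely verifying that each radial orbit meets $\mathfrak{P}_{\alpha,c}^-$ precisely at its fiber maximizer; this is exactly the content of Lemmas \ref{Lem4.3} and \ref{Lem4.7}, so the care needed is to confirm that the fiber analysis remains valid inside the radial class. This holds because the scaling commutes with symmetric decreasing rearrangement and because $\|u\|$, $\int_{\mathbb{R}^N}(I_\mu*|u|^q)|u|^q\,dx$ and $\int_{\mathbb{R}^N}(I_\mu*|u|^p)|u|^p\,dx$---the only data entering $E_u$---depend only on the scaling orbit of $u$. Once this is secured, the monotonicity statements follow purely from the pointwise comparison above.
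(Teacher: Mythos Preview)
Your proposal is correct and follows essentially the same approach as the paper: both prove \eqref{eq4.21} and \eqref{eq4.22} via the two-sided inequality arising from the fiber analysis of Lemmas \ref{Lem4.3} and \ref{Lem4.7} (each orbit meets $\mathfrak{P}_{\alpha,c}^-$ exactly at its fiber maximizer), and then derive the monotonicity in $\alpha$ and the bound by $m_r(c,0)$ from the pointwise inequality $J_{\alpha'}(t\star u)\le J_{\alpha}(t\star u)$ for $\alpha'\ge\alpha$. Your treatment is slightly more explicit in identifying $0=t_v^3$ via the sign of $E_v''(0)$, but the argument is otherwise identical.
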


\begin{proof}
	Fix \(\alpha\in(0,\min\{\alpha_1,\alpha_2\})\). For every \(u\in S_{c,rad}\),
	Lemma \ref{Lem4.3} yields a unique \(t_u\in\mathbb{R}\) such that
	\(t_u\star u\in\mathfrak{P}_{\alpha,c}^-\cap S_{c,rad}\), and the map
	\(t\mapsto J_\alpha(t\star u)\) attains its global maximum at \(t=t_u\). In
	particular,
	\[
	\max_{t\in\mathbb{R}} J_\alpha(t\star u)=J_\alpha(t_u\star u).
	\]
	
	If \(u\in\mathfrak{P}_{\alpha,c}^-\cap S_{c,rad}\), the uniqueness of \(t_u\)
	implies \(t_u=0\), hence
	\[
	J_\alpha(u)
	=J_\alpha(t_u\star u)
	=\max_{t\in\mathbb{R}} J_\alpha(t\star u)
	\ge\inf_{v\in S_{c,rad}}\max_{t\in\mathbb{R}} J_\alpha(t\star v).
	\]
	Taking the infimum over \(u\in\mathfrak{P}_{\alpha,c}^-\cap S_{c,rad}\) gives
	\begin{equation}\label{eq4.23}
		\inf_{u\in\mathfrak{P}_{\alpha,c}^-\cap S_{c,rad}} J_\alpha(u)
		\ge\inf_{v\in S_{c,rad}}\max_{t\in\mathbb{R}} J_\alpha(t\star v).
	\end{equation}
	
	Conversely, for arbitrary \(u\in S_{c,rad}\) we have
	\[
	\max_{t\in\mathbb{R}}J_\alpha(t\star u)
	=J_\alpha(t_u\star u)
	\ge\inf_{v\in\mathfrak{P}_{\alpha,c}^-\cap S_{c,rad}}J_\alpha(v),
	\]
	and taking the infimum over \(u\in S_{c,rad}\) gives
	\begin{equation}\label{eq4.24}
		\inf_{u\in S_{c,rad}}\max_{t\in\mathbb{R}}J_\alpha(t\star u)
		\ge\inf_{v\in\mathfrak{P}_{\alpha,c}^-\cap S_{c,rad}}J_\alpha(v).
	\end{equation}
	Combining \eqref{eq4.23} and \eqref{eq4.24} yields
	\eqref{eq4.21}.
	
	The same argument applies to \(J_0\) (i.e. to the case \(\alpha=0\)), since
	the Pohozaev manifold and the scaling properties are preserved when the
	lower order nonlocal term is removed. This gives
	\eqref{eq4.22}.
	
	By Lemma \ref{Lem4.5} and \eqref{eq4.17}, for
	\(0<\alpha<\min\{\alpha_1,\alpha_2\}\) we have
	\[
	\varsigma(c,\alpha)
	=\inf_{u\in S_{c,rad}}\max_{t\in\mathbb{R}}J_\alpha(t\star u).
	\]
	Let \(0<\alpha_3<\alpha_4<\min\{\alpha_1,\alpha_2\}\). For every
	\(u\in S_{c,rad}\) and \(t\in\mathbb{R}\),
	\[
	J_{\alpha_4}(t\star u)
	=J_{\alpha_3}(t\star u)
	-\frac{\alpha_4-\alpha_3}{2q}
	\int_{\mathbb{R}^N}(I_\mu*|t\star u|^q)|t\star u|^q\,dx
	\le J_{\alpha_3}(t\star u),
	\]
	so
	\[
	\max_{t\in\mathbb{R}}J_{\alpha_4}(t\star u)
	\le \max_{t\in\mathbb{R}}J_{\alpha_3}(t\star u).
	\]
	Taking the infimum over \(u\in S_{c,rad}\) gives
	\[
	\varsigma(c,\alpha_4)
	=\inf_{u\in S_{c,rad}}\max_{t\in\mathbb{R}}J_{\alpha_4}(t\star u)
	\le\inf_{u\in S_{c,rad}}\max_{t\in\mathbb{R}}J_{\alpha_3}(t\star u)
	=\varsigma(c,\alpha_3).
	\]
	
	Finally, for every \(\alpha\ge0\), every \(u\in S_{c,rad}\), and every
	\(t\in\mathbb{R}\),
	\[
	J_\alpha(t\star u)
	=J_0(t\star u)
	-\frac{\alpha}{2q}\int_{\mathbb{R}^N}(I_\mu*|t\star u|^q)|t\star u|^q\,dx
	\le J_0(t\star u),
	\]
	so
	\[
	\max_{t\in\mathbb{R}}J_\alpha(t\star u)
	\le \max_{t\in\mathbb{R}}J_0(t\star u).
	\]
	Taking the infimum over \(u\in S_{c,rad}\) yields
	\[
	\varsigma(c,\alpha)
	=\inf_{u\in S_{c,rad}}\max_{t\in\mathbb{R}}J_\alpha(t\star u)
	\le\inf_{u\in S_{c,rad}}\max_{t\in\mathbb{R}}J_0(t\star u)
	=m(c,0),
	\]
	which holds for all \(0\le\alpha<\min\{\alpha_1,\alpha_2\}\).
\end{proof}

\begin{Lem}\label{Lem4.9}
	Let $\frac{2s-\mu}{N}+2<p<2_{\mu,s}^*$ and $\alpha=0$.
	Define
	\[
	m_2(c,0)=\inf_{u\in\mathfrak{P}_{0,c}}J_0(u).
	\]
	Then $m_2(c,0)>0$. Moreover, there exists $r>0$ sufficiently small such that
	\[
	0<\sup_{u\in\overline{D_r}}J_0(u)<m(c,0),
	\]
	where
	\[
	D_r=\{u\in S_c:\ \|u\|<r\}.
	\]
	In particular, for all $u\in\overline{D_r}$ one has $J_0(u)>0$ and
	$P_0(u)>0$.
\end{Lem}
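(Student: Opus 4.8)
The plan is to exploit the fact that on the Pohozaev manifold $\mathfrak{P}_{0,c}$ the energy $J_0$ collapses to a strictly positive multiple of $\|u\|^2$, and then to use the Gagliardo--Nirenberg inequality of Lemma \ref{Lem2.2} twice: once to bound $\|u\|$ from below on $\mathfrak{P}_{0,c}$, and once to control $J_0$ and $P_0$ on the small ball $\overline{D_r}$.

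First I would establish $m_2(c,0)>0$. If $u\in\mathfrak{P}_{0,c}$, then $P_0(u)=0$ gives $\|u\|^2=\gamma_{p,s}A$, where $A:=\int_{\mathbb{R}^N}(I_\mu*|u|^p)|u|^p\,dx$. Substituting into the energy yields
\[
J_0(u)=\tfrac12\|u\|^2-\tfrac{1}{2p}A=\tfrac12\Bigl(1-\tfrac{1}{p\gamma_{p,s}}\Bigr)\|u\|^2,
\]
and since $p>2+\frac{2s-\mu}{N}$ forces $p\gamma_{p,s}>1$ (Remark \ref{Rek2.3}), the prefactor is strictly positive. It then suffices to bound $\|u\|$ away from $0$. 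Applying Lemma \ref{Lem2.2} with $t=p$ to the identity $A=\|u\|^2/\gamma_{p,s}$ gives $\|u\|^2/\gamma_{p,s}\le C_p c^{2p(1-\gamma_{p,s})}\|u\|^{2p\gamma_{p,s}}$, i.e. $\|u\|^{2p\gamma_{p,s}-2}\ge(\gamma_{p,s}C_pc^{2p(1-\gamma_{p,s})})^{-1}$; as $2p\gamma_{p,s}-2>0$ this produces a uniform bound $\|u\|\ge\rho_0>0$ on $\mathfrak{P}_{0,c}$, whence $m_2(c,0)\ge\frac12(1-\frac{1}{p\gamma_{p,s}})\rho_0^2>0$. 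The very same computation restricted to radial functions, together with the identity $m(c,0)=\inf_{\mathfrak{P}_{0,c}\cap S_{c,\mathrm{rad}}}J_0$ coming from $\mathfrak{P}_{0,c}=\mathfrak{P}_{0,c}^-$ (Lemma \ref{Lem4.7}) and the minimax characterization \eqref{eq4.22} (Lemma \ref{Lem4.8}), shows $m(c,0)\ge m_2(c,0)>0$. This fixed positive number is precisely what the small ball must undercut.

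For the estimate on $\overline{D_r}$, the upper bound is immediate: the nonlocal term is nonnegative, so $J_0(u)\le\frac12\|u\|^2\le\frac12 r^2$ for every $u\in\overline{D_r}$, and choosing $r<\sqrt{2\,m(c,0)}$ already forces $\sup_{\overline{D_r}}J_0<m(c,0)$. For the lower estimates I would again invoke Lemma \ref{Lem2.2}: for every $u\in\overline{D_r}$,
\[
J_0(u)\ge\tfrac12\|u\|^2\Bigl(1-\tfrac{1}{p}C_p c^{2p(1-\gamma_{p,s})}\|u\|^{2p\gamma_{p,s}-2}\Bigr),
\quad
P_0(u)\ge s\|u\|^2\Bigl(1-\gamma_{p,s}C_p c^{2p(1-\gamma_{p,s})}\|u\|^{2p\gamma_{p,s}-2}\Bigr).
\]
Because $2p\gamma_{p,s}-2>0$, each bracket is decreasing in $\|u\|$ and hence smallest at $\|u\|=r$; both are therefore strictly positive on all of $\overline{D_r}$ once $r$ is taken small enough. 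Since every $u\in S_c$ has $\|u\|>0$ (as $\|u\|=0$ would force $u$ constant and thus $u\equiv0$, contradicting $\|u\|_2=c$), this gives $J_0(u)>0$ and $P_0(u)>0$ for all $u\in\overline{D_r}$, which is the ``in particular'' claim. As $\overline{D_r}\neq\varnothing$ (obtained by scaling $t\star u_0$ with $t\to-\infty$), it follows that $\sup_{\overline{D_r}}J_0>0$, and combining with the upper bound yields $0<\sup_{\overline{D_r}}J_0<m(c,0)$.

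The only delicate point is the logical order in the final choice of $r$: the threshold below which the two brackets stay positive and the cap $\sqrt{2\,m(c,0)}$ coming from the energy bound are both determined solely by the fixed data $(c,p,s,\mu)$ and are independent of $r$, so it is legitimate to take $r$ smaller than their minimum. Apart from this bookkeeping there is no genuine analytic obstacle—no compactness is invoked—; the whole statement is an elementary consequence of the Pohozaev identity $P_0(u)=0$ and the sharp Gagliardo--Nirenberg inequality, provided the positivity of $m(c,0)$ has first been secured as above.
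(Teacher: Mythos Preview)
Your proposal is correct and follows essentially the same approach as the paper: both arguments use the Pohozaev identity to rewrite $J_0(u)$ on $\mathfrak{P}_{0,c}$ as a positive multiple of $\|u\|^2$, then apply Lemma~\ref{Lem2.2} to bound $\|u\|$ from below, and finally control $J_0$ and $P_0$ on $\overline{D_r}$ via the same Gagliardo--Nirenberg estimate together with the trivial upper bound $J_0(u)\le\frac12\|u\|^2$. The only noteworthy difference is that you take the additional step of invoking Lemmas~\ref{Lem4.7}--\ref{Lem4.8} to justify $m(c,0)\ge m_2(c,0)$, whereas the paper's proof works directly with $m_2(c,0)$ throughout (the appearance of $m(c,0)$ in the statement is a notational inconsistency that is only resolved in Lemma~\ref{Lem4.10}).
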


\begin{proof}
	Let $u\in\mathfrak{P}_{0,c}$ be arbitrary. Since $P_0(u)=0$, we have
	\[
	s\|u\|^2
	= s\gamma_{p,s}\int_{\mathbb{R}^N}(I_\mu*|u|^p)|u|^p\,dx.
	\]
	By Lemma~\ref{Lem2.2} and Remark~\ref{Rek2.3} there exists $C_p>0$ such that
	\[
	\int_{\mathbb{R}^N}(I_\mu*|u|^p)|u|^p\,dx
	\le C_p\|u\|^{2p\gamma_{p,s}}\|u\|_2^{2p(1-\gamma_{p,s})}.
	\]
	Using $\|u\|_2^2=c^2$, we obtain
	\[
	\|u\|^2
	\le \gamma_{p,s}C_p c^{2p(1-\gamma_{p,s})}\|u\|^{2p\gamma_{p,s}}.
	\]
	Since $p\gamma_{p,s}>1$, this inequality yields a uniform lower bound
	\[
	\|u\|\ge C_0>0
	\]
	for all $u\in\mathfrak{P}_{0,c}$, where $C_0>0$ depends only on $c,p,s,\mu$.
	
	On $\mathfrak{P}_{0,c}$ we have
	\[
	\gamma_{p,s}\int_{\mathbb{R}^N}(I_\mu*|u|^p)|u|^p\,dx
	=\|u\|^2,
	\]
	so
	\[
	\begin{aligned}
		J_0(u)
		&= \frac{1}{2}\|u\|^2
		-\frac{1}{2p}\int_{\mathbb{R}^N}(I_\mu*|u|^p)|u|^p\,dx \\
		&= \frac{1}{2}\|u\|^2
		-\frac{1}{2p\gamma_{p,s}}\|u\|^2
		=\Big(\frac{1}{2}-\frac{1}{2p\gamma_{p,s}}\Big)\|u\|^2.
	\end{aligned}
	\]
	Since $p\gamma_{p,s}>1$ and $\|u\|\ge C_0$, there exists $C_1>0$ such that
	\[
	J_0(u)\ge C_1>0
	\quad\text{for all }u\in\mathfrak{P}_{0,c}.
	\]
	Therefore
	\[
	m_2(c,0)=\inf_{u\in\mathfrak{P}_{0,c}}J_0(u)\ge C_1>0.
	\]
	
	Now let $u\in S_c$ be arbitrary. Using again the nonlocal inequality, we have
	\[
	\begin{aligned}
		J_0(u)
		&= \frac{1}{2}\|u\|^2
		-\frac{1}{2p}\int_{\mathbb{R}^N}(I_\mu*|u|^p)|u|^p\,dx \\
		&\ge \frac{1}{2}\|u\|^2
		-\frac{1}{2p}C_p\|u\|^{2p\gamma_{p,s}}\|u\|_2^{2p(1-\gamma_{p,s})} \\
		&= \frac{1}{2}\|u\|^2
		-\frac{1}{2p}C_p c^{2p(1-\gamma_{p,s})}\|u\|^{2p\gamma_{p,s}},
	\end{aligned}
	\]
	and
	\[
	\begin{aligned}
		P_0(u)
		&= s\|u\|^2
		-s\gamma_{p,s}\int_{\mathbb{R}^N}(I_\mu*|u|^p)|u|^p\,dx \\
		&\ge s\|u\|^2
		-s\gamma_{p,s}C_p c^{2p(1-\gamma_{p,s})}\|u\|^{2p\gamma_{p,s}}.
	\end{aligned}
	\]
	Since $2p\gamma_{p,s}>2$, there exists $r_0>0$ such that for all
	$t\in(0,r_0]$,
	\[
	\frac{1}{2}t^2
	-\frac{1}{2p}C_p c^{2p(1-\gamma_{p,s})}t^{2p\gamma_{p,s}}>0,
	\qquad
	s t^2
	-s\gamma_{p,s}C_p c^{2p(1-\gamma_{p,s})}t^{2p\gamma_{p,s}}>0.
	\]
	Therefore, if $u\in\overline{D_r}$ with $0<r\le r_0$ (so $\|u\|\le r$), then
	\[
	J_0(u)>0,\qquad P_0(u)>0.
	\]
	In particular,
	\[
	\sup_{u\in\overline{D_r}}J_0(u)>0.
	\]
	
	Moreover, for all $u\in S_c$ we have
	\[
	J_0(u)
	=\frac{1}{2}\|u\|^2
	-\frac{1}{2p}\int_{\mathbb{R}^N}(I_\mu*|u|^p)|u|^p\,dx
	\le \frac{1}{2}\|u\|^2,
	\]
	because the nonlocal term is nonnegative. Hence, for $u\in\overline{D_r}$,
	\[
	J_0(u)\le\frac{1}{2}\|u\|^2\le\frac{1}{2}r^2,
	\]
	so
	\[
	\sup_{u\in\overline{D_r}}J_0(u)\le\frac{1}{2}r^2.
	\]
	Since $m_3(c,0)>0$ is fixed, we can choose $r>0$ small enough such that
	$r\le r_0$ and $\frac{1}{2}r^2<m_2(c,0)$. For this choice of $r$ we obtain
	\[
	0<\sup_{u\in\overline{D_r}}J_0(u)<m_2(c,0),
	\]
	and $J_0(u)>0$, $P_0(u)>0$ for all $u\in\overline{D_r}$. This concludes the proof.
\end{proof}

\begin{Lem}\label{Lem4.10}
	Let $\frac{2s-\mu}{N}+2<p<2_{\mu,s}^*$ and $\alpha=0$.
	Then there exists a positive radial critical point $u_0\in S_{c,rad}$ of
	$J_0|_{S_c}$ such that
	\[
	0<m_r(c,0)
	=\inf_{\mathfrak{P}_{0,c}\cap S_{c,rad}}J_0(u)
	= m(c,0)
	= J_0(u_0),
	\]
	where
	\[
	m_2(c,0)=\inf_{\mathfrak{P}_{0,c}}J_0(u).
	\]
\end{Lem}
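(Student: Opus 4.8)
The plan is to realize the common value in the statement as a mountain--pass level for $J_0$ restricted to the radial sphere $S_{c,\mathrm{rad}}$, to extract a Palais--Smale sequence sitting asymptotically on the Pohozaev manifold, and then to run the compactness machinery of Lemma~\ref{Lem3.1} (now with $\alpha=0$) to produce the ground state. First I would record the variational identities that are already available. By Lemma~\ref{Lem4.7}, for every $u\in S_{c,\mathrm{rad}}$ the fiber $E_u(t)=J_0(t\star u)$ possesses a unique critical point $t_u^{*}$, which is a strict global maximum of positive level, with $t_u^{*}\star u\in\mathfrak{P}_{0,c}^{-}\cap S_{c,\mathrm{rad}}$, and $\mathfrak{P}_{0,c}=\mathfrak{P}_{0,c}^{-}$. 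Combining this with Lemma~\ref{Lem4.8} (equation~\eqref{eq4.22}) and the strict positivity furnished by Lemma~\ref{Lem4.9} gives at once
\[
0<m_r(c,0)
=\inf_{u\in S_{c,\mathrm{rad}}}\max_{t\in\mathbb{R}}J_0(t\star u)
=\inf_{\mathfrak{P}_{0,c}^{-}\cap S_{c,\mathrm{rad}}}J_0
=\inf_{\mathfrak{P}_{0,c}\cap S_{c,\mathrm{rad}}}J_0
=m(c,0),
\]
so all the equalities in the statement except the attainment $m_r(c,0)=J_0(u_0)$ are settled, and it remains to show that this level is achieved.

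Next I would construct a Palais--Smale sequence at level $m_r(c,0)$ exactly as in the proof of Theorem~\ref{Thm1.1}\,(2), but with the simpler $\alpha=0$ geometry. Working with $\widehat J_0(t,u)=J_0(t\star u)$ on $\mathbb{R}\times S_{c,\mathrm{rad}}$, I take as boundary data the small ball $\overline{D_r}$ from Lemma~\ref{Lem4.9} (on which $0<J_0<m(c,0)$ and $P_0>0$) and far endpoints $T\star u$ with $T\gg1$ and $u\in\overline{D_r}$, where $J_0<0$ and $P_0=E_u'(T)<0$ because $T>t_u^{*}$. Since $P_0$ changes sign from positive to negative along any admissible path, the intermediate value theorem forces every such path to meet $\mathfrak{P}_{0,c}=\{P_0=0\}=\mathfrak{P}_{0,c}^{-}$; hence the minimax value is $\ge m_r(c,0)$, while the scaling paths through $t_u^{*}\star u$ give the reverse inequality, so the minimax value equals $m_r(c,0)$. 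Applying the homotopy--stable family result, Lemma~\ref{Lem2.3}, then yields a Palais--Smale sequence $\{(t_n,w_n)\}$ with $t_n\to0$ and $\partial_t\widehat J_0(t_n,w_n)\to0$; setting $u_n:=t_n\star w_n$ produces a radial Palais--Smale sequence for $J_0|_{S_{c,\mathrm{rad}}}$ at level $m_r(c,0)>0$ with $P_0(u_n)\to0$.

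Then I would pass to the limit. Since $P_0(u_n)\to0$ gives $\|u_n\|^2=\gamma_{p,s}\int_{\mathbb{R}^N}(I_\mu*|u_n|^p)|u_n|^p\,dx+o(1)$, the energy identity yields $\bigl(\tfrac12-\tfrac{1}{2p\gamma_{p,s}}\bigr)\|u_n\|^2=m_r(c,0)+o(1)$, and $p\gamma_{p,s}>1$ makes the sequence bounded. The compactness argument of Lemma~\ref{Lem3.1}, which carries over verbatim (indeed more easily) when the subcritical Choquard term is dropped, then provides a radial limit $u_0\in S_c$ with $u_n\to u_0$ strongly, solving \eqref{eq1.1} with $\alpha=0$ for some $\lambda<0$ and with $J_0(u_0)=m_r(c,0)$. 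For positivity I would take the original minimizing sequence to consist of symmetric decreasing rearrangements (which do not increase $J_0$), deduce $u_0\ge0$ by testing the equation with $|u_0|$, and invoke the fractional strong maximum principle to obtain $u_0>0$; the same rearrangement shows that the infimum over the full manifold is already attained in the radial class, so $m_2(c,0)=m_r(c,0)$.

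The main obstacle is the compactness step: one must rule out vanishing and dichotomy so that no $L^2$--mass escapes and $u_0\in S_c$. This is precisely where radial symmetry is essential, since $H^s_{\mathrm{rad}}(\mathbb{R}^N)\hookrightarrow L^r(\mathbb{R}^N)$ is compact for $2<r<2_s^{*}$; the upgrade from weak to strong convergence then hinges on the strict sign $\lambda<0$, which turns $|\xi|^{2s}-\lambda$ into a weight equivalent to $1+|\xi|^{2s}$. Verifying $\lambda<0$ is the delicate point and follows from the limiting Pohozaev relation $P_0(u_0)=0$ together with $\gamma_{p,s}<1$, exactly as in Step~3 of the proof of Lemma~\ref{Lem3.1}.
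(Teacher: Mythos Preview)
Your proposal is correct and follows essentially the same route as the paper: both arguments combine the fibering identities (Lemmas~\ref{Lem4.7}--\ref{Lem4.9}) to identify $m_r(c,0)$ with a positive mountain--pass level, extract a radial Palais--Smale sequence with $P_0(u_n)\to0$, and then invoke the compactness of Lemma~\ref{Lem3.1} (with $\alpha=0$) together with the fractional strong maximum principle. The only organizational difference is that the paper establishes the equality $m_r(c,0)=m_2(c,0)$ via rearrangement \emph{before} constructing the critical point, whereas you defer this to the end; either placement works.
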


\begin{proof}
	By Lemma~\ref{Lem4.8} one has $\mathfrak{P}_{0,c}=\mathfrak{P}_{0,c}^-$, and
	for every $u\in S_c$ there exists a unique $t_u\in\mathbb{R}$ such that
	$t_u\star u\in\mathfrak{P}_{0,c}$ and
	\[
	J_0(t_u\star u)=\max_{t\in\mathbb{R}}J_0(t\star u).
	\]
	In particular, if $v\in\mathfrak{P}_{0,c}$ then $t_v=0$ and
	\[
	J_0(v)=\max_{t\in\mathbb{R}}J_0(t\star v)\ge
	\inf_{w\in S_c}\max_{t\in\mathbb{R}}J_0(t\star w),
	\]
	so
	\[
	m_2(c,0)=\inf_{v\in\mathfrak{P}_{0,c}}J_0(v)
	\ge \inf_{w\in S_c}\max_{t\in\mathbb{R}}J_0(t\star w).
	\]
	Conversely, for any $u\in S_c$,
	\[
	\max_{t\in\mathbb{R}}J_0(t\star u)
	=J_0(t_u\star u)\ge \inf_{v\in\mathfrak{P}_{0,c}}J_0(v)
	= m_2(c,0),
	\]
	so taking the infimum over $u\in S_c$ gives
	\[
	\inf_{u\in S_c}\max_{t\in\mathbb{R}}J_0(t\star u)\ge m_2(c,0).
	\]
	Hence
	\begin{equation}\label{eq4.25}
		m_2(c,0)
		=\inf_{u\in S_c}\max_{t\in\mathbb{R}}J_0(t\star u).
	\end{equation}
	
	Let $u\in S_c$ and let $u^\ast$ denote its symmetric decreasing
	rearrangement. By the fractional P\'olya--Szeg\H{o} inequality \cite{Carbotti} and the Riesz
	rearrangement inequality one has
	\[
	\|u^\ast\|\le\|u\|,
	\qquad
	\int_{\mathbb{R}^N}(I_\mu*|u|^p)|u|^p\,dx\le \int_{\mathbb{R}^N}(I_\mu*|u^\ast|^p)|u^\ast|^p\,dx,
	\]
	and $\|u^\ast\|_2=\|u\|_2=c$. Thus $u^\ast\in S_{c,rad}$ and, for every
	$t\in\mathbb{R}$,
	\[
	J_0(t\star u^\ast)\le J_0(t\star u).
	\]
	It follows that
	\[
	\max_{t\in\mathbb{R}}J_0(t\star u^\ast)
	\le\max_{t\in\mathbb{R}}J_0(t\star u),
	\]
	and taking the infimum over $u\in S_c$ yields
	\[
	\inf_{u\in S_{c,rad}}\max_{t\in\mathbb{R}}J_0(t\star u)
	=\inf_{u\in S_c}\max_{t\in\mathbb{R}}J_0(t\star u).
	\]
	Together with \eqref{eq4.25} this gives
	\begin{equation}\label{eq4.26}
		m(c,0)
		=\inf_{u\in S_{c,rad}}\max_{t\in\mathbb{R}}J_0(t\star u).
	\end{equation}
	
	For $u\in S_{c,rad}$, Lemma~\ref{Lem4.8} implies that there exists a unique
	$t_u\in\mathbb{R}$ such that $t_u\star u\in\mathfrak{P}_{0,c}\cap S_{c,rad}$
	and
	\[
	\max_{t\in\mathbb{R}}J_0(t\star u)=J_0(t_u\star u).
	\]
	Hence
	\[
	\inf_{u\in S_{c,rad}}\max_{t\in\mathbb{R}}J_0(t\star u)
	=\inf_{u\in S_{c,rad}}J_0(t_u\star u)
	\ge\inf_{v\in\mathfrak{P}_{0,c}\cap S_{c,rad}}J_0(v)
	=m_r(c,0).
	\]
	On the other hand, if $v\in\mathfrak{P}_{0,c}\cap S_{c,rad}$ then $t_v=0$ and
	\[
	J_0(v)=\max_{t\in\mathbb{R}}J_0(t\star v),
	\]
	so
	\[
	m_r(c,0)
	=\inf_{v\in\mathfrak{P}_{0,c}\cap S_{c,rad}}J_0(v)
	\ge\inf_{u\in S_{c,rad}}\max_{t\in\mathbb{R}}J_0(t\star u).
	\]
	Combining these inequalities with \eqref{eq4.26} we obtain
	\[
	m_r(c,0)
	=\inf_{u\in S_{c,rad}}\max_{t\in\mathbb{R}}J_0(t\star u)
	=m_2(c,0).
	\]
	
	By Lemma~\ref{Lem4.9} we have $m_2(c,0)>0$ and there exists
	$r>0$ such that $J_0>0$ on $\overline{D_r}\subset S_{c,rad}$, while, for
	every $u\in S_{c,rad}$, the map $t\mapsto J_0(t\star u)$ tends to $-\infty$
	as $t\to+\infty$ (see Lemma~\ref{Lem4.8}). Therefore $J_0|_{S_{c,rad}}$ has a
	mountain pass geometry and $m_r(c,0)>0$ is its mountain pass level.
	
	By the constrained mountain pass theorem on $S_{c,rad}$ (see, for instance,
	\cite{2020soavejde}) there exists a sequence $(u_n)\subset S_{c,rad}$ such that
	\[
	J_0(u_n)\to m_r(c,0),
	\qquad
	\|(J_0|_{S_c})'(u_n)\|\to 0,
	\]
	and, in addition, $P_0(u_n)\to 0$. In particular $(u_n)$ is bounded in
	$H^s(\mathbb{R}^N)$. By Lemma~\ref{Lem3.1} (applied with $\alpha=0$), up to
	a subsequence,
	\[
	u_n\to u_0\quad\text{strongly in }H^s(\mathbb{R}^N),
	\]
	for some $u_0\in S_{c,rad}$, and $u_0$ is a critical point of $J_0|_{S_c}$
	with
	\[
	J_0(u_0)=m_r(c,0)=m_2(c,0).
	\]
	
	Testing the equation with $|u_0|$ gives $u_0\ge0$. Since $u_0$ is a
	nontrivial solution of the autonomous fractional Choquard equation, the
	strong maximum principle implies $u_0>0$ in $\mathbb{R}^N$. Thus $u_0$ is a
	positive radial critical point of $J_0|_{S_c}$ with energy $m_2(c,0)$,
	which concludes the proof.
\end{proof}

\medskip
\noindent\textbf{Proof of Theorem \ref{Thm1.1} (4).}
Let $\dot\alpha>0$ be sufficiently small and consider the family of
mountain–pass solutions $\{u_{c,\alpha,m}:0<\alpha<\dot\alpha\}\subset S_{c,rad}$
given by Theorem~\ref{Thm1.1} (2). By construction one has
\[
J_\alpha(u_{c,\alpha,m})=\varsigma(c,\alpha),
\qquad
P_\alpha(u_{c,\alpha,m})=0,
\]
and Lemma~\ref{Lem4.6} yields
\[
0<\varsigma(c,\dot\alpha)
\le\varsigma(c,\alpha)\le m(c,0)
\quad\text{for all }0<\alpha<\dot\alpha,
\]
where $m(c,0)$ is the mountain pass level of the autonomous problem
$\alpha=0$, see Lemma~\ref{Lem4.10}.

Using $P_\alpha(u_{c,\alpha,m})=0$ we can express the $p$–term as
\[
\int_{\mathbb{R}^N}(I_\mu*|u_{c,\alpha,m}|^p)|u_{c,\alpha,m}|^p\,dx
=\frac{1}{\gamma_{p,s}}\Big(
\|u_{c,\alpha,m}\|^2
-\alpha\gamma_{q,s}
\int_{\mathbb{R}^N}(I_\mu*|u_{c,\alpha,m}|^q)|u_{c,\alpha,m}|^q\,dx
\Big).
\]
Hence
\[
\begin{aligned}
	J_\alpha(u_{c,\alpha,m})
	&=\frac{1}{2}\|u_{c,\alpha,m}\|^2
	-\frac{\alpha}{2q}\int_{\mathbb{R}^N}(I_\mu*|u_{c,\alpha,m}|^q)
	|u_{c,\alpha,m}|^q\,dx
	-\frac{1}{2p}
	\int_{\mathbb{R}^N}(I_\mu*|u_{c,\alpha,m}|^p)
	|u_{c,\alpha,m}|^p\,dx\\[0.2em]
	&= \Big(\frac{1}{2}-\frac{1}{2p\gamma_{p,s}}\Big)\,
	\|u_{c,\alpha,m}\|^2
	-\frac{\alpha}{2q}
	\Big(1-\frac{q\gamma_{q,s}}{p\gamma_{p,s}}\Big)
	\int_{\mathbb{R}^N}(I_\mu*|u_{c,\alpha,m}|^q)
	|u_{c,\alpha,m}|^q\,dx.
\end{aligned}
\]
By Lemma~\ref{Lem2.2} and Remark~\ref{Rek2.3},
\[
\int_{\mathbb{R}^N}(I_\mu*|u|^q)|u|^q\,dx
\le C_q\|u\|^{2q\gamma_{q,s}}c^{2q(1-\gamma_{q,s})}
\]
for all $u\in S_c$. Therefore, for all $0<\alpha<\dot\alpha$,
\begin{equation}\label{eq4.27}
	\varsigma(c,\alpha)=J_\alpha(u_{c,\alpha,m})
	\ge A\,\|u_{c,\alpha,m}\|^2
	-C\,\alpha\|u_{c,\alpha,m}\|^{2q\gamma_{q,s}},
\end{equation}
with
\[
A=\frac{1}{2}-\frac{1}{2p\gamma_{p,s}}>0,
\quad
C>0\ \text{independent of }\alpha.
\]

Since $q\gamma_{q,s}<1$, we have $2q\gamma_{q,s}<2$, so the right–hand side
of \eqref{eq4.27} tends to $+\infty$ as $\|u_{c,\alpha,m}\|
\to+\infty$, uniformly for $0<\alpha\le\dot\alpha$. Combined with
$0<\varsigma(c,\alpha)\le m(c,0)$, this shows that
$\{u_{c,\alpha,m}:0<\alpha<\dot\alpha\}$ is bounded in $H^s(\mathbb{R}^N)$,
uniformly for $0<\alpha<\dot\alpha$.

Since $u_{c,\alpha,m}\in S_{c,rad}$ for all $\alpha$, we can fix a sequence
$\alpha_n\to0$ and, up to a subsequence, assume that
\[
u_{c,\alpha_n,m}\rightharpoonup u_0
\quad\text{in }H^s(\mathbb{R}^N),
\qquad
u_{c,\alpha_n,m}\to u_0
\quad\text{in }L^r(\mathbb{R}^N)
\ \ \forall\,2<r<2_s^*,
\]
and $u_{c,\alpha_n,m}(x)\to u_0(x)\ge0$ a.e.\ in $\mathbb{R}^N$. For brevity
we write $u_n:=u_{c,\alpha_n,m}$.

Each $u_n\in S_{c,rad}$ solves
\begin{equation}\label{eq4.28}
	\begin{aligned}
		&\int_{\mathbb{R}^N}(-\Delta)^{\frac{s}{2}}u_n
		(-\Delta)^{\frac{s}{2}}v\,dx
		-\lambda_n\int_{\mathbb{R}^N}u_n v\,dx\\
        &=\alpha_n\int_{\mathbb{R}^N}(I_\mu*|u_n|^q)
		|u_n|^{q-2}u_n v\,dx +\int_{\mathbb{R}^N}(I_\mu*|u_n|^p)
		|u_n|^{p-2}u_n v\,dx
	\end{aligned}
\end{equation}
for all $v\in H^s(\mathbb{R}^N)$, where $\lambda_n=\lambda_{c,\alpha_n,m}<0$
is the Lagrange multiplier corresponding to the mass constraint.

Testing \eqref{eq4.28} with $v=u_n$ and using $P_{\alpha_n}(u_n)=0$ gives
(as in Lemma~\ref{Lem3.1})
\[
\lambda_n c^2
= \alpha_n(\gamma_{q,s}-1)
\int_{\mathbb{R}^N}(I_\mu*|u_n|^q)|u_n|^q\,dx
+(\gamma_{p,s}-1)
\int_{\mathbb{R}^N}(I_\mu*|u_n|^p)|u_n|^p\,dx.
\]
By Lemma~\ref{Lem2.2} and the boundedness of $(u_n)$ in $H^s(\mathbb{R}^N)$,
both nonlocal integrals are uniformly bounded. Since
$\gamma_{q,s},\gamma_{p,s}<1$, it follows that $(\lambda_n)$ is bounded
in $\mathbb{R}$, and, up to a subsequence,
\[
\lambda_n\to\lambda_0\le 0
\quad\text{as }n\to\infty.
\]

Passing to the limit in \eqref{eq4.28} we obtain the autonomous equation.
Indeed, the linear terms converge by weak convergence in $H^s$ and $L^2$, the
$q$–term vanishes because $\alpha_n\to0$ and the integrals are uniformly
bounded, and the $p$–term converges by Proposition~\ref{Pro2.1} and the strong
convergence of $(u_n)$ in $L^r(\mathbb{R}^N)$ for $2<r<2_s^*$.
Therefore $u_0$ satisfies
\begin{equation}\label{eq4.29}
	(-\Delta)^s u_0
	=\lambda_0 u_0
	+ (I_\mu*|u_0|^p)|u_0|^{p-2}u_0
	\quad\text{in }\mathbb{R}^N,
\end{equation}
in the weak sense.

We claim that $u_0\not\equiv0$. Suppose, by contradiction, that $u_0=0$.
Then $u_n\to0$ in $L^r(\mathbb{R}^N)$ for all $2<r<2_s^*$, and by
Proposition~\ref{Pro2.1} we have
\[
\int_{\mathbb{R}^N}(I_\mu*|u_n|^q)|u_n|^q\,dx\to 0,
\qquad
\int_{\mathbb{R}^N}(I_\mu*|u_n|^p)|u_n|^p\,dx\to 0.
\]
Using $P_{\alpha_n}(u_n)=0$,
\[
\|u_n\|^2
= \alpha_n\gamma_{q,s}\int_{\mathbb{R}^N}(I_\mu*|u_n|^q)|u_n|^q\,dx
+ \gamma_{p,s}\int_{\mathbb{R}^N}(I_\mu*|u_n|^p)|u_n|^p\,dx,
\]
we deduce $\|u_n\|\to0$. Then
\[
J_{\alpha_n}(u_n)
= \frac{1}{2}\|u_n\|^2
-\frac{\alpha_n}{2q}\int_{\mathbb{R}^N}(I_\mu*|u_n|^q)|u_n|^q\,dx
-\frac{1}{2p}\int_{\mathbb{R}^N}(I_\mu*|u_n|^p)|u_n|^p\,dx\to0.
\]
But $J_{\alpha_n}(u_n)=\varsigma(c,\alpha_n)$ and Lemma~\ref{Lem4.6} yields
\[
0<\varsigma(c,\dot\alpha)
\le\varsigma(c,\alpha_n)
\le m(c,0)
\]
for all $n$. This contradicts $J_{\alpha_n}(u_n)\to0$. Hence $u_0$ is
nontrivial.

Since $u_0$ is a nontrivial solution of \eqref{eq4.29}, the Pohozaev
identity for the autonomous problem gives $P_0(u_0)=0$, that is
\[
s\|u_0\|^2
-s\gamma_{p,s}\int_{\mathbb{R}^N}(I_\mu*|u_0|^p)|u_0|^p\,dx=0.
\]
Testing \eqref{eq4.29} with $u_0$ we also obtain
\[
\|u_0\|^2
-\lambda_0 c^2
-\int_{\mathbb{R}^N}(I_\mu*|u_0|^p)|u_0|^p\,dx=0.
\]
Eliminating $\|u_0\|^2$ from these two identities yields
\[
\lambda_0 c^2
=(\gamma_{p,s}-1)
\int_{\mathbb{R}^N}(I_\mu*|u_0|^p)|u_0|^p\,dx.
\]
Since $u_0\not\equiv0$ and the Riesz potential $I_\mu$ is strictly positive,
\[
\int_{\mathbb{R}^N}(I_\mu*|u_0|^p)|u_0|^p\,dx>0,
\]
so $\lambda_0<0$. In particular $u_0$ is a positive solution by the strong
maximum principle.

To upgrade weak convergence to strong convergence in $H^s(\mathbb{R}^N)$, we
subtract \eqref{eq4.29} from \eqref{eq4.28} and test the resulting
identity with $v=u_n-u_0$. Using again Proposition~\ref{Pro2.1} and the
Brezis–Lieb lemma to control the nonlocal terms, and the convergence
$\lambda_n\to\lambda_0$, we obtain
\[
\|u_n-u_0\|^2
-\lambda_0\int_{\mathbb{R}^N}|u_n-u_0|^2\,dx
=o(1)\quad\text{as }n\to\infty.
\]
Since $\lambda_0<0$, the second term on the left–hand side is nonnegative,
hence
\[
\|u_n-u_0\|^2\le
\|u_n-u_0\|^2
-\lambda_0\int_{\mathbb{R}^N}|u_n-u_0|^2\,dx
=o(1),
\]
and therefore $u_n\to u_0$ strongly in $H^s(\mathbb{R}^N)$.

Finally, from the strong convergence and the definition of $J_\alpha$ we have
\[
J_0(u_n)\to J_0(u_0),
\qquad
J_{\alpha_n}(u_n)
=J_0(u_n)
-\frac{\alpha_n}{2q}\int_{\mathbb{R}^N}(I_\mu*|u_n|^q)|u_n|^q\,dx
\to J_0(u_0),
\]
because $\alpha_n\to0$ and the $q$–term is uniformly bounded. Since
$J_{\alpha_n}(u_n)=\varsigma(c,\alpha_n)$, Lemma~\ref{Lem4.6} implies
\[
J_0(u_0)
=\lim_{n\to\infty}\varsigma(c,\alpha_n)
\le m(c,0).
\]
On the other hand $u_0\in S_c$ is a nontrivial critical point of $J_0$, so
$u_0\in\mathfrak{P}_{0,c}$ and hence $m(c,0)\le J_0(u_0)$. Thus
\[
J_0(u_0)=m(c,0),
\]
and $u_0$ is the ground state solution of $J_0|_{S_c}$. Moreover,
\[
u_{c,\alpha_n,m}\to u_0
\quad\text{strongly in }H^s(\mathbb{R}^N)
\quad\text{as }n\to\infty,
\]
that is, $u_{c,\alpha,m}\to u_0$ in $H^s(\mathbb{R}^N)$ as $\alpha\to0^+$.

This completes the proof of Theorem~\ref{Thm1.1} (4).

\section{$L^2$-critical }

In this section, we first discuss the existence of normalized solutions to \eqref{eq1.1}
when
\[
q = \frac{2s-\mu}{N} + 2 < p < 2_{\mu,s}^*,
\]

\begin{Lem}\label{Lem5.1}
	Let \(\frac{2s - \mu}{N} + 2 = q < p <2_{\mu,s}^*\). Then
	\(\mathfrak{P}_{\alpha,c}^0 = \emptyset\), and \(\mathfrak{P}_{\alpha,c}\) is a smooth
	manifold of codimension \(2\) in \(H^s(\mathbb{R}^N)\).
\end{Lem}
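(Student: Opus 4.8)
The plan is to exploit the $L^2$-critical identity $q\gamma_{q,s}=1$, which holds by Remark~\ref{Rek2.3} in the regime $q=2+\frac{2s-\mu}{N}$, to produce an exact cancellation that forces $\mathfrak{P}_{\alpha,c}^0$ to be empty for \emph{every} $\alpha>0$ — in contrast with Lemma~\ref{Lem4.1}, no smallness of $\alpha$ is needed here. First I would suppose $u\in\mathfrak{P}_{\alpha,c}^0$, write $A=\int_{\mathbb{R}^N}(I_\mu*|u|^q)|u|^q\,dx$ and $B=\int_{\mathbb{R}^N}(I_\mu*|u|^p)|u|^p\,dx$, and record the two defining relations. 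The condition $P_\alpha(u)=0$ gives, after dividing by $s$,
\[
\|u\|^2=\alpha\gamma_{q,s}A+\gamma_{p,s}B,
\]
while $E_u''(0)=0$ gives, after dividing by $2s^2$,
\[
\|u\|^2=\alpha q\gamma_{q,s}^2A+p\gamma_{p,s}^2B.
\]

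The key step is to insert $q\gamma_{q,s}=1$ into the second identity, so that $q\gamma_{q,s}^2=\gamma_{q,s}$ and the coefficient of $A$ becomes exactly $\alpha\gamma_{q,s}$, matching the first identity. Subtracting the two relations then annihilates both $\|u\|^2$ and the $A$-term simultaneously, leaving
\[
\gamma_{p,s}\bigl(p\gamma_{p,s}-1\bigr)B=0.
\]
Since $\gamma_{p,s}>0$ and $p\gamma_{p,s}>1$ (again by Remark~\ref{Rek2.3}, as $p>2+\frac{2s-\mu}{N}$), this forces $B=0$, hence $u\equiv0$, which contradicts $u\in S_c$ (so $\|u\|_2=c>0$ and therefore $B>0$). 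Thus $\mathfrak{P}_{\alpha,c}^0=\varnothing$.

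For the manifold structure I would argue exactly as in the second half of Lemma~\ref{Lem4.1}. Setting $C(u)=\int_{\mathbb{R}^N}|u|^2\,dx-c^2$, so that $\mathfrak{P}_{\alpha,c}=\{u\in H^s(\mathbb{R}^N):C(u)=0,\ P_\alpha(u)=0\}$, I would show that $C'(u)$ and $P_\alpha'(u)$ are linearly independent in $H^s(\mathbb{R}^N)^*$ for every $u\in\mathfrak{P}_{\alpha,c}$. If they were dependent, the Lagrange multiplier rule would make $u$ a constrained critical point of $P_\alpha$ on $S_c$; the Pohozaev identity for the resulting fractional Choquard equation, combined with $P_\alpha(u)=0$, would give $E_u''(0)=0$, i.e.\ $u\in\mathfrak{P}_{\alpha,c}^0$, contradicting the first part. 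Linear independence makes the map $(C'(u),P_\alpha'(u)):H^s(\mathbb{R}^N)\to\mathbb{R}^2$ surjective, and the implicit function theorem then yields that $\mathfrak{P}_{\alpha,c}$ is a $C^1$ submanifold of codimension $2$.

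The proof presents no serious obstacle. The only delicate point is recognizing that the $L^2$-criticality $q\gamma_{q,s}=1$ produces the \emph{simultaneous} cancellation of $\|u\|^2$ and the subcritical term $A$; this is precisely what removes any dependence on the size of $\alpha$ and makes the emptiness of $\mathfrak{P}_{\alpha,c}^0$ unconditional. The manifold part is then a verbatim repetition of the implicit-function-theorem argument already carried out in Lemma~\ref{Lem4.1}.
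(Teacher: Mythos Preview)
Your proposal is correct and essentially identical to the paper's proof: the paper also observes that the two conditions $E_u'(0)=0$ and $E_u''(0)=0$, combined with $q\gamma_{q,s}=1$, force $\int_{\mathbb{R}^N}(I_\mu*|u|^p)|u|^p\,dx=0$ and hence $u\equiv 0$, and then refers to Lemma~\ref{Lem4.1} for the codimension-$2$ manifold structure. Your write-up simply spells out the subtraction step that the paper leaves implicit.
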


\begin{proof}
	If \(u \in \mathfrak{P}_{\alpha,c}^0\), then \(E_u'(0) = E_u''(0) = 0\). From the explicit
	expressions of \(E_u'(0)\) and \(E_u''(0)\) this forces
	\[
	\int_{\mathbb{R}^N} (I_\mu * |u|^{p})\,|u|^{p}\,dx = 0,
	\]
	so that \(u \equiv 0\), which is impossible since \(u \in S_c\).
	The rest of the proof, concerning the manifold structure and the codimension, is completely
	analogous than the proof of Lemma \ref{Lem4.1}, and is therefore omitted.
\end{proof}

\begin{Lem}\label{Lem5.2}
	Let \(\frac{2s - \mu}{N} + 2 = q < p <2_{\mu,s}^*\). Then for every
	\(u \in S_c\) there exists a unique \(t_u \in \mathbb{R}\) such that
	\(t_u \star u \in \mathfrak{P}_{\alpha,c}\). Moreover, \(t_u\) is the unique critical point
	of the function \(E_u(t) = J_\alpha(t \star u)\), and it is a strict maximum point at
	positive level. In particular:
	\begin{itemize}
		\item[(1)] \(\mathfrak{P}_{\alpha,c} = \mathfrak{P}_{\alpha,c}^{-}\).
		\item[(2)] \(E_u\) is strictly decreasing and concave on \((t_u,+\infty)\).
		\item[(3)] The map \(u \in S_c \mapsto t_u \in \mathbb{R}\) is of class \(C^1\).
		\item[(4)] If \(P_\alpha(u) < 0\), then \(t_u < 0\).
	\end{itemize}
\end{Lem}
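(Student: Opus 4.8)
The plan is to exploit the defining feature of the $L^2$–critical case, namely $q\gamma_{q,s}=1$ (Remark~\ref{Rek2.3}), which makes the lower–order Choquard term scale \emph{exactly} like the kinetic term along the fiber. Writing
\[
A=\int_{\mathbb{R}^N}(I_\mu*|u|^q)|u|^q\,dx,
\qquad
B=\int_{\mathbb{R}^N}(I_\mu*|u|^p)|u|^p\,dx,
\]
and using $2q\gamma_{q,s}=2$, the fibering map collapses to
\[
E_u(t)=J_\alpha(t\star u)
=\frac{e^{2st}}{2}\Big(\|u\|^2-\frac{\alpha}{q}A\Big)
-\frac{1}{2p}e^{2p\gamma_{p,s}st}B
=:\frac{e^{2st}}{2}\,\Lambda(u)-\frac{1}{2p}e^{2p\gamma_{p,s}st}B,
\]
with $\Lambda(u):=\|u\|^2-\frac{\alpha}{q}A$. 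This is precisely the two–term structure of the autonomous fiber treated in Lemma~\ref{Lem4.7}, with the positive quadratic coefficient $\|u\|^2$ there replaced by $\Lambda(u)$.

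The crucial point—and the only place where the critical case genuinely differs from $\alpha=0$—is to verify that $\Lambda(u)>0$ for every $u\in S_c$. Here Lemma~\ref{Lem2.2} with $t=q$ and $q\gamma_{q,s}=1$ gives $A\le C_q\,c^{2q(1-\gamma_{q,s})}\|u\|^2$, whence
\[
\Lambda(u)\ge\Big(1-\frac{\alpha}{q}C_q\,c^{2q(1-\gamma_{q,s})}\Big)\|u\|^2,
\]
and the smallness assumption \eqref{eq1.10}, i.e. $\tfrac12>\tfrac{\alpha}{2q}C_q c^{2q(1-\gamma_{q,s})}$, forces the bracket to be strictly positive. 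Thus $\Lambda(u)>0$ uniformly on $S_c$; without this, $E_u$ could be monotone decreasing and $\mathfrak{P}_{\alpha,c}$ empty, so \eqref{eq1.10} is exactly what rescues the mountain–pass geometry.

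Once $\Lambda(u)>0$ is in hand, I would rerun the argument of Lemma~\ref{Lem4.7} verbatim. Solving $E_u'(t)=0$ reduces to $e^{2st(p\gamma_{p,s}-1)}=\Lambda(u)/(\gamma_{p,s}B)$, which, since $p\gamma_{p,s}>1$, has a unique root $t_u$; since $t\star u\in\mathfrak{P}_{\alpha,c}\iff E_u'(t)=0$, this is the unique $t_u$ with $t_u\star u\in\mathfrak{P}_{\alpha,c}$. Using the relation $\gamma_{p,s}e^{2p\gamma_{p,s}st_u}B=e^{2st_u}\Lambda(u)$ at the critical point gives
\[
E_u''(t_u)=2s^2 e^{2st_u}\Lambda(u)\,(1-p\gamma_{p,s})<0,
\]
so $t_u$ is a strict maximum; the limits $E_u(t)\to0^+$ as $t\to-\infty$ (because $2p\gamma_{p,s}>2$) and $E_u(t)\to-\infty$ as $t\to+\infty$ identify it as the unique global maximum at positive level. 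For (2), the identity $E_u''(t)=2sE_u'(t)+2s^2\gamma_{p,s}(1-p\gamma_{p,s})e^{2p\gamma_{p,s}st}B$ shows both summands are negative for $t>t_u$ (where $E_u'<0$), giving strict concavity and monotone decrease on $(t_u,+\infty)$.

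The remaining structural claims are then immediate. For (1), any $u\in\mathfrak{P}_{\alpha,c}$ has $E_u'(0)=P_\alpha(u)=0$, so by uniqueness $t_u=0$ and $E_u''(0)<0$, i.e. $u\in\mathfrak{P}_{\alpha,c}^-$; the reverse inclusion is trivial, so $\mathfrak{P}_{\alpha,c}=\mathfrak{P}_{\alpha,c}^-$. For (3), I apply the implicit function theorem to $F(u,t)=E_u'(t)$, whose derivative $E_u''(t_u)\neq0$ is guaranteed by $\mathfrak{P}_{\alpha,c}^0=\varnothing$ (Lemma~\ref{Lem5.1}), and patch the local $C^1$ branches using uniqueness of $t_u$. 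For (4), $P_\alpha(u)=E_u'(0)<0$ together with $E_u'>0$ on $(-\infty,t_u)$ and $E_u'<0$ on $(t_u,+\infty)$ forces $t_u<0$. The main obstacle is thus purely the positivity of $\Lambda(u)$ secured by \eqref{eq1.10}; everything downstream is a transcription of Lemma~\ref{Lem4.7}.
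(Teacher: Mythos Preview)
Your proposal is correct and follows essentially the same approach as the paper: use $q\gamma_{q,s}=1$ to collapse $E_u$ into a two-term fiber with leading coefficient $\Lambda(u)=\|u\|^2-\frac{\alpha}{q}A$, invoke Lemma~\ref{Lem2.2} together with assumption~\eqref{eq1.10} to secure $\Lambda(u)>0$, and then read off the uniqueness, maximality, concavity, and $C^1$ dependence exactly as in the autonomous case (Lemma~\ref{Lem4.7}). Your write-up is in fact more detailed than the paper's, which simply notes that once the coefficient is positive the fibering analysis goes through and refers to Lemma~\ref{Lem5.1} for $\mathfrak{P}_{\alpha,c}^0=\varnothing$.
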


\begin{proof}
	Since \(q = \frac{2s - \mu}{N} + 2\) and \(q < p < 2_{\mu,s}^*\), we have
	\(\gamma_{q,s} q = 1\). Hence
	\[
	E_u(t)
	= \left(\frac{1}{2}\|u\|^2
	- \frac{\alpha}{2q}\int_{\mathbb{R}^N}(I_\mu * |u|^q)\,|u|^q\,dx\right)e^{2st}
	- \frac{1}{2p}e^{2p\gamma_{p,s}st}
	\int_{\mathbb{R}^N}(I_\mu * |u|^p)\,|u|^p\,dx .
	\]
	By Remark \ref{Rek2.1}, in order to prove the existence and uniqueness of \(t_u\),
	as well as the monotonicity and convexity properties of \(E_u\), it is enough to show that
	the coefficient in parentheses is positive. Using Lemma\ref{Lem2.2} and
	assumption \eqref{eq1.10}, we obtain
	\[
	\frac{1}{2}\|u\|^2
	- \frac{\alpha}{2q}\int_{\mathbb{R}^N}(I_\mu * |u|^q)\,|u|^q\,dx
	\ge \left(\frac{1}{2} - \frac{\alpha}{2q} C_q c^{2q(1-\gamma_{q,s})}\right)\|u\|^2 > 0.
	\]
	Therefore \(E_u\) has exactly one critical point, which is a global maximum at positive level.
	
	If \(u \in \mathfrak{P}_{\alpha,c}\), then \(t_u = 0\), and since \(t_u\) is the maximum point,
	we have \(E_u''(0) \le 0\). In fact, by Lemma \ref{Lem5.1} we know that
	\(\mathfrak{P}_{\alpha,c}^0 = \emptyset\), so necessarily \(E_u''(0) < 0\), which implies
	\(\mathfrak{P}_{\alpha,c} = \mathfrak{P}_{\alpha,c}^{-}\).
	
	The smoothness of \(u \mapsto t_u\) follows from the implicit function theorem, as in the
	proof of Lemma \ref{Lem4.1}. Finally, since \(E_u'(t) < 0\) if and only if \(t > t_u\),
	the condition \(P_\alpha(u) = E_u'(0) < 0\) forces \(t_u < 0\).
\end{proof}

\begin{Lem}\label{Lem5.3}
	Let \(\frac{2s - \mu}{N} + 2 = q < p <2_{\mu,s}^*\). Then
	\[
	\inf_{u \in \mathfrak{P}_{\alpha,c}} J_\alpha(u) > 0 .
	\]
\end{Lem}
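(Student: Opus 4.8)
The plan is to turn the pointwise positivity already implicit in the fibering analysis of Lemma \ref{Lem5.2} into a \emph{uniform} positive lower bound. First I would fix $u\in\mathfrak{P}_{\alpha,c}$ and write $A=\int_{\mathbb{R}^N}(I_\mu*|u|^q)|u|^q\,dx$ and $B=\int_{\mathbb{R}^N}(I_\mu*|u|^p)|u|^p\,dx$. Since $q=2+\frac{2s-\mu}{N}$ we have $q\gamma_{q,s}=1$ by Remark \ref{Rek2.3}, so the constraint $P_\alpha(u)=0$ reads, after dividing by $s$,
\[
\|u\|^2=\frac{\alpha}{q}A+\gamma_{p,s}B .
\]
Note that $\|u\|>0$: if $\|u\|=0$ this identity would force $A=B=0$, hence $u\equiv0$, contradicting $\|u\|_2=c>0$.

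Next I would invoke the Gagliardo--Nirenberg inequality (Lemma \ref{Lem2.2}). For the $L^2$-critical exponent $q$ it gives $A\le C_q\,c^{2q(1-\gamma_{q,s})}\|u\|^2$, the power of $\|u\|$ being exactly $2$ because $q\gamma_{q,s}=1$; for $p$ it gives $B\le C_p\,c^{2p(1-\gamma_{p,s})}\|u\|^{2p\gamma_{p,s}}$. Inserting the bound on $A$ into the Pohozaev identity and using the smallness hypothesis \eqref{eq1.10}, which is precisely what makes $\theta:=1-\frac{\alpha}{q}C_q\,c^{2q(1-\gamma_{q,s})}>0$, I obtain $\theta\|u\|^2\le\gamma_{p,s}B\le\gamma_{p,s}C_p\,c^{2p(1-\gamma_{p,s})}\|u\|^{2p\gamma_{p,s}}$. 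Dividing by $\|u\|^2>0$ and recalling that $2p\gamma_{p,s}-2>0$ (again Remark \ref{Rek2.3}), this yields a uniform lower bound $\|u\|\ge C_0>0$, with $C_0$ depending only on $c,p,s,\mu,\alpha$, valid for every $u\in\mathfrak{P}_{\alpha,c}$.

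Finally I would eliminate $B$ from the energy. Substituting $\gamma_{p,s}B=\|u\|^2-\frac{\alpha}{q}A$ into \eqref{eq2.2} gives $J_\alpha(u)=\bigl(\tfrac12-\tfrac{1}{2p\gamma_{p,s}}\bigr)\|u\|^2-\frac{\alpha}{2q}\frac{p\gamma_{p,s}-1}{p\gamma_{p,s}}A$. Bounding $A$ once more by its Gagliardo--Nirenberg estimate and collecting terms produces
\[
J_\alpha(u)\ge\frac{p\gamma_{p,s}-1}{2p\gamma_{p,s}}\Bigl(1-\frac{\alpha}{q}C_q\,c^{2q(1-\gamma_{q,s})}\Bigr)\|u\|^2=\frac{p\gamma_{p,s}-1}{2p\gamma_{p,s}}\,\theta\,\|u\|^2 ,
\]
a positive multiple of $\|u\|^2$ since $p\gamma_{p,s}>1$ and $\theta>0$. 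Combined with $\|u\|\ge C_0$, this gives $J_\alpha(u)\ge\frac{(p\gamma_{p,s}-1)\theta}{2p\gamma_{p,s}}C_0^2>0$ uniformly on $\mathfrak{P}_{\alpha,c}$, whence $\inf_{\mathfrak{P}_{\alpha,c}}J_\alpha>0$.

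The only genuinely delicate point is the uniform lower bound $\|u\|\ge C_0$. That each individual $u\in\mathfrak{P}_{\alpha,c}$ has $J_\alpha(u)>0$ already follows from Lemma \ref{Lem5.2} (every such $u$ is the global maximizer of $E_u$, at positive level); the real content is excluding a sequence on the manifold with $\|u_n\|\to0$, which would drive the infimum to $0$. This is exactly where the $L^2$-critical homogeneity makes the $q$-term compete with the kinetic term at the same scaling, so that the smallness condition \eqref{eq1.10}, through the positivity of $\theta$, is indispensable to keep $\mathfrak{P}_{\alpha,c}$ away from the origin.
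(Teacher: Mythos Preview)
Your proof is correct and follows essentially the same approach as the paper: both use $P_\alpha(u)=0$ together with the Gagliardo--Nirenberg inequality and condition \eqref{eq1.10} to obtain a uniform lower bound on $\|u\|$ over $\mathfrak{P}_{\alpha,c}$, then eliminate the $p$-term from $J_\alpha(u)$ via the Pohozaev identity to reach the same lower bound $J_\alpha(u)\ge\frac{1}{2}\bigl(1-\tfrac{1}{p\gamma_{p,s}}\bigr)\theta\,\|u\|^2$. Your presentation is slightly more explicit in tracking the constant $\theta$ and in justifying why the lower bound $\|u\|\ge C_0$ is the genuine content, but the argument is the same.
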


\begin{proof}
	If \(u \in \mathfrak{P}_{\alpha,c}\), then \(P_\alpha(u) = 0\), so by lemma\ref{Lem2.2} we have
	\[
	\|u\|^2
	\le \alpha \gamma_{q,s} C_q \|u\|^{2} c^{2q(1-\gamma_{q,s})}
	+ \gamma_{p,s} C_p \|u\|^{2p\gamma_{p,s}} c^{2p(1-\gamma_{p,s})}.
	\]
	Since \(p\gamma_{p,s} > 1\) and, by assumption \eqref{eq1.10}, the coefficient in front of
	\(\|u\|^2\) on the right-hand side is strictly smaller than \(1\), we deduce
	\begin{equation}\label{eq5.1}
		\|u\|^{2p\gamma_{p,s}}
		\ge \|u\|^2 \frac{1}{\gamma_{p,s} C_p c^{2p(1-\gamma_{p,s})}}
		\left(1 - \frac{\alpha}{q} C_q c^{2q(1-\gamma_{q,s})}\right)
		\to
		\inf_{u \in \mathfrak{P}_{\alpha,c}} \|u\|^2 > 0 .
	\end{equation}
	
	On the other hand, using again \(P_\alpha(u) = 0\), for any \(u \in \mathfrak{P}_{\alpha,c}\) we obtain
	\[
	\begin{aligned}
		J_\alpha(u)
		&= \frac{1}{2}\|u\|^2
		- \frac{\alpha}{2q}\int_{\mathbb{R}^N}(I_\mu * |u|^q)\,|u|^q\,dx
		- \frac{1}{2p}\int_{\mathbb{R}^N}(I_\mu * |u|^p)\,|u|^p\,dx \\
		&= \left(\frac{1}{2} - \frac{1}{2p\gamma_{p,s}}\right)\|u\|^2
		- \frac{\alpha}{2q}\left(1 - \frac{1}{p\gamma_{p,s}}\right)
		\int_{\mathbb{R}^N}(I_\mu * |u|^q)\,|u|^q\,dx \\
		&\ge \frac{1}{2}\left(1 - \frac{1}{p\gamma_{p,s}}\right)
		\left(1 - \frac{\alpha}{q} C_q c^{2q(1-\gamma_{q,s})}\right)\|u\|^2.
	\end{aligned}
	\]
	Combining this lower bound with \eqref{eq5.1}, we conclude that
	\(\inf_{u \in \mathfrak{P}_{\alpha,c}} J_\alpha(u) > 0\).
\end{proof}

\begin{Lem}\label{Lem5.4}
	There exists \(r>0\) sufficiently small such that
	\[
	0\leq \inf_{\overline{D_r}} J_\alpha
	\;<\;
	\sup_{\overline{D_r}} J_\alpha
	\;<\;
	\inf_{u\in\mathfrak{P}_{\alpha,c}} J_\alpha(u)
	\quad\text{and}\quad
	\inf_{\overline{D_r}} P_\alpha \geq 0,
	\]
	where \(D_r := \{u\in S_c : \|u\|<r\}\) and \(\overline{D_r}\) denotes its closure in \(S_c\).
\end{Lem}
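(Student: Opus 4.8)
The plan is to exploit the fact that in the $L^2$--critical regime one has $q\gamma_{q,s}=1$, so the lower-order Choquard term scales exactly like $\|u\|^2$ along $S_c$ and is absorbed into the leading quadratic term precisely under the smallness hypothesis \eqref{eq1.10}. Concretely, I would first combine the definitions of $J_\alpha$ and $P_\alpha$ with Lemma~\ref{Lem2.2} (applied with $t=q$ and $t=p$) and $\|u\|_2=c$ to obtain, for every $u\in S_c$,
\[
J_\alpha(u)\ge\Bigl(\tfrac12-\tfrac{\alpha}{2q}C_q c^{2q(1-\gamma_{q,s})}\Bigr)\|u\|^2-\tfrac{1}{2p}C_p c^{2p(1-\gamma_{p,s})}\|u\|^{2p\gamma_{p,s}},
\]
and likewise, using $\gamma_{q,s}=1/q$,
\[
P_\alpha(u)\ge s\Bigl(1-\tfrac{\alpha}{q}C_q c^{2q(1-\gamma_{q,s})}\Bigr)\|u\|^2-s\gamma_{p,s}C_p c^{2p(1-\gamma_{p,s})}\|u\|^{2p\gamma_{p,s}}.
\]
By \eqref{eq1.10} both leading coefficients are strictly positive, while by Remark~\ref{Rek2.3} the exponent $2p\gamma_{p,s}$ exceeds $2$. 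Hence each right-hand side has the form (positive)$\,\|u\|^2-$(positive)$\,\|u\|^{2p\gamma_{p,s}}$ with a genuinely superquadratic correction, so there is $r_0>0$ such that both are nonnegative whenever $\|u\|\le r_0$; this already yields $\inf_{\overline{D_{r_0}}}J_\alpha\ge0$ and $\inf_{\overline{D_{r_0}}}P_\alpha\ge0$.

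Next I would fix the threshold $m:=\inf_{\mathfrak{P}_{\alpha,c}}J_\alpha$, which is strictly positive by Lemma~\ref{Lem5.3}. Since both nonlocal integrals are nonnegative, one has the crude upper bound $J_\alpha(u)\le\tfrac12\|u\|^2$ for all $u\in S_c$, whence $\sup_{\overline{D_r}}J_\alpha\le\tfrac12 r^2$ for any $r>0$. It therefore suffices to select $r\in(0,r_0]$ small enough that $\tfrac12 r^2<m$; for such $r$ we immediately get
\[
\sup_{\overline{D_r}}J_\alpha\le\tfrac12 r^2<m=\inf_{u\in\mathfrak{P}_{\alpha,c}}J_\alpha(u),
\]
together with $J_\alpha\ge0$ and $P_\alpha\ge0$ on $\overline{D_r}$.

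It remains to verify the strict inequality $\inf_{\overline{D_r}}J_\alpha<\sup_{\overline{D_r}}J_\alpha$, and this is the only point requiring a little care rather than a direct estimate. For the infimum I would use the mass-preserving scaling: for fixed $u\in S_c$ the fiber map $E_u(t)=J_\alpha(t\star u)$ satisfies, in this critical case, $E_u(t)\to 0^{+}$ as $t\to-\infty$, since its $e^{2st}$ coefficient is positive by the estimate above and dominates the faster-decaying $e^{2p\gamma_{p,s}st}$ term. As $\|t\star u\|=e^{st}\|u\|\to0$, we have $t\star u\in D_r$ for $t$ sufficiently negative, whence $\inf_{\overline{D_r}}J_\alpha\le0$; combined with $J_\alpha\ge0$ on $\overline{D_r}$ this forces $\inf_{\overline{D_r}}J_\alpha=0$. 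On the other hand, evaluating the lower bound for $J_\alpha$ at any $v\in S_c$ with $\|v\|=r/2$ (such $v$ exists, again by scaling) gives $J_\alpha(v)>0$ once $r$ is small, so $\sup_{\overline{D_r}}J_\alpha>0=\inf_{\overline{D_r}}J_\alpha$. The main, and rather mild, obstacle is thus the bookkeeping of choosing a single $r$ that simultaneously makes both coercive lower bounds nonnegative, keeps $\tfrac12 r^2$ below the positive threshold $m$ of Lemma~\ref{Lem5.3}, and still admits a point of positive energy; all three requirements are compatible precisely because $m>0$ and the $p$--term is genuinely superquadratic.
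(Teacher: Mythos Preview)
Your proof is correct and follows essentially the same approach as the paper: both derive the lower bounds for $J_\alpha$ and $P_\alpha$ from Lemma~\ref{Lem2.2} and hypothesis~\eqref{eq1.10}, use $p\gamma_{p,s}>1$ to make these nonnegative for small $\|u\|$, invoke Lemma~\ref{Lem5.3} for the positive threshold $m$, and bound $\sup_{\overline{D_r}}J_\alpha$ by $\tfrac12 r^2$. You are in fact slightly more careful than the paper about the strict inequality $\inf_{\overline{D_r}}J_\alpha<\sup_{\overline{D_r}}J_\alpha$, supplying the fiber-map argument that pins $\inf_{\overline{D_r}}J_\alpha=0$ while exhibiting a point of positive energy; the paper simply asserts this step.
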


\begin{proof}
	By lemma\ref{Lem2.2} and assumption \eqref{eq1.10}, there exist
	constants \(C_q,C_p>0\) such that for every \(u\in S_c\),
	\[
	\begin{aligned}
		J_\alpha(u)
		&\ge \left(\frac{1}{2}-\frac{\alpha}{2q} C_q c^{2q(1-\gamma_{q,s})}\right)\|u\|^2
		- \frac{1}{2p} C_p c^{2p(1-\gamma_{p,s})}\,\|u\|^{2p\gamma_{p,s}},\\
		P_\alpha(u)
		&\ge \left(1-\frac{\alpha}{q} C_q c^{2q(1-\gamma_{q,s})}\right)\|u\|^2
		- \gamma_{p,s} C_p c^{2p(1-\gamma_{p,s})}\,\|u\|^{2p\gamma_{p,s}} .
	\end{aligned}
	\]
	Since \(p\gamma_{p,s}>1\), we have \(2p\gamma_{p,s}>2\). Moreover, by \eqref{eq1.10} the
	coefficients
	\[
	\frac{1}{2}-\frac{\alpha}{2q} C_q c^{2q(1-\gamma_{q,s})}>0,
	\qquad
	1-\frac{\alpha}{q} C_q c^{2q(1-\gamma_{q,s})}>0.
	\]
	Hence, shrinking \(r>0\) if necessary, both right-hand sides above are strictly positive
	for all \(u\in\overline{D_r}\). Thus
	\[
	\inf_{\overline{D_r}} J_\alpha \geq 0,
	\qquad
	\inf_{\overline{D_r}} P_\alpha \geq  0.
	\]
	In particular,
	\[
	0<\inf_{\overline{D_r}} J_\alpha
	\;<\;
	\sup_{\overline{D_r}} J_\alpha.
	\]
	
	By Lemma~\ref{Lem5.3}, we have
	\[
	\inf_{u\in\mathfrak{P}_{\alpha,c}} J_\alpha(u) > 0.
	\]
	On the other hand, for all \(u\in S_c\),
	\[
	J_\alpha(u)
	= \frac{1}{2}\|u\|^2
	- \frac{\alpha}{2q}\int_{\mathbb{R}^N}(I_\mu*|u|^q)|u|^q\,dx
	- \frac{1}{2p}\int_{\mathbb{R}^N}(I_\mu*|u|^p)|u|^p\,dx
	\le \frac{1}{2}\|u\|^2.
	\]
	Therefore
	\[
	\sup_{u\in\overline{D_r}} J_\alpha(u)
	\le \frac{1}{2}r^2.
	\]
	Choosing \(r>0\) so small that
	\[
	\frac{1}{2}r^2
	< \inf_{u\in\mathfrak{P}_{\alpha,c}} J_\alpha(u),
	\]
	we obtain
	\[
	\sup_{\overline{D_r}} J_\alpha
	< \inf_{u\in\mathfrak{P}_{\alpha,c}} J_\alpha(u).
	\]
	This proves the claim.
\end{proof}

Let \(r>0\) be as in Lemma \ref{Lem5.4}. We work in the radial setting and consider the minimax class
\[
\Gamma_2
:=\Big\{\gamma\in C([0,1],S_{c,rad}) :
\gamma(0)\in\overline{D_r},\ J_\alpha(\gamma(1))<0,P_\alpha(\gamma(1))<0\Big\},
\]
with associated minimax level
\[
\sigma(c,\alpha)
:= \inf_{\gamma\in\Gamma_2} \ \max_{u\in\gamma([0,1])} J_\alpha(u).
\]

First, \(\Gamma_2\neq\emptyset\). Indeed, by Lemma \ref{Lem5.2}, for any \(u\in S_{c,rad}\) there exist
\(t_0\ll -1\) and \(t_1\gg 1\) such that
\[
t_0\star u\in\overline{D_r},
\qquad
J_\alpha(t_1\star u)<0,
\]
and the map \(t\mapsto t\star u\) is continuous from \(\mathbb{R}\) to \(S_{c,rad}\).
Thus
\[
\gamma(\tau)
:=\bigl((1-\tau)t_0+\tau t_1\bigr)\star u,
\qquad \tau\in[0,1],
\]
defines an admissible path in \(\Gamma_2\), so \(\Gamma_2\neq\emptyset\) and
\(\sigma(c,\alpha)\in\mathbb{R}\). Moreover, by Lemma \ref{Lem5.4} we have
\[
\max_{u\in\gamma([0,1])}J_\alpha(u)
\;\ge\; J_\alpha(\gamma(0))
\;\ge\;\inf_{\overline{D_r}}J_\alpha>0,
\]
hence
\[
\sigma(c,\alpha)\ge\inf_{\overline{D_r}}J_\alpha>0.
\]

By Lemmas \ref{Lem5.2} and \ref{Lem5.4}, for every \(\gamma\in\Gamma_2\) we have
\[
P_\alpha(\gamma(0))>0,
\qquad
P_\alpha(\gamma(1))<0.
\]
By continuity of \(P_\alpha\) there exists \(\tau_\gamma\in(0,1)\) such that
\(P_\alpha(\gamma(\tau_\gamma))=0\), that is,
\begin{equation}\label{eq5.2}
	\gamma([0,1])\cap\mathfrak{P}_{\alpha,c}\neq\emptyset
	\quad\text{for every }\gamma\in\Gamma_2.
\end{equation}
Consequently,
\[
\max_{\gamma([0,1])} J_\alpha
\;\ge\;
J_\alpha(\gamma(\tau_\gamma))
\;\ge\;
\inf_{\mathfrak{P}_{\alpha,c}\cap S_{c,rad}} J_\alpha,
\]
and taking the infimum over \(\gamma\in\Gamma_2\) gives
\[
\sigma(c,\alpha)
\;\ge\;
\inf_{\mathfrak{P}_{\alpha,c}\cap S_{c,rad}} J_\alpha
\;\ge\;
\inf_{u\in\mathfrak{P}_{\alpha,c}} J_\alpha(u).
\]

By Lemma \ref{Lem5.3} we know that
\[
\inf_{u\in\mathfrak{P}_{\alpha,c}} J_\alpha(u)>0,
\]
while Lemma \ref{Lem5.4} yields
\[
0<\sup_{\overline{D_r}}J_\alpha
<\inf_{u\in\mathfrak{P}_{\alpha,c}} J_\alpha(u).
\]
Since \(J_\alpha\le 0\) on \(J_\alpha^0:=\{u\in S_c:\ J_\alpha(u)\le 0\}\), we also have
\[
\sup_{J_\alpha^0}J_\alpha\le 0
<\inf_{u\in\mathfrak{P}_{\alpha,c}} J_\alpha(u).
\]
Thus
\[
\sup_{\overline{D_r}\cup J_\alpha^0}J_\alpha
=\max\Big\{\sup_{\overline{D_r}}J_\alpha,\ \sup_{J_\alpha^0}J_\alpha\Big\}
<\inf_{u\in\mathfrak{P}_{\alpha,c}} J_\alpha(u)
\le\sigma(c,\alpha).
\]

In particular, by Lemmas \ref{Lem5.2}, \ref{Lem5.3} and \ref{Lem5.4},
\begin{equation}\label{eq5.3}
	\mathfrak{P}_{\alpha,c}^{-}\cap\big(\overline{D_r}\cup J_\alpha^0\big)=\emptyset.
\end{equation}
Indeed, on \(\overline{D_r}\) one has \(P_\alpha>0\), so no point there can belong to
\(\mathfrak{P}_{\alpha,c}^-\); on \(J_\alpha^0\) one has \(J_\alpha\le 0\), whereas Lemma~\ref{Lem5.3}
gives \(J_\alpha>0\) on \(\mathfrak{P}_{\alpha,c}\).

By \eqref{eq5.2}–\eqref{eq5.3} we can apply \cite[Theorem~5.2]{1993Ghoussoub}, taking
\(F=\mathfrak{P}_{\alpha,c}\) as dual set and \(\overline{D_r}\cup J_\alpha^0\) as extended closed
boundary. Hence, given any minimizing sequence \(\{\gamma_n\}\subset\Gamma_2\) for \(\sigma(c,\alpha)\),
with \(\gamma_n(\tau)\ge 0\) a.e. in \(\mathbb{R}^N\) for every \(\tau\in[0,1]\) and \(n\in\mathbb{N}\),
there exists a Palais--Smale sequence \(\{u_n\}\subset S_{c,rad}\) for \(\left.J_\alpha\right|_{S_{c,r}}\)
at level \(\sigma(c,\alpha)>0\) such that
\[
\operatorname{dist}_{H^s}(u_n,\mathfrak{P}_{\alpha,c})\to 0
\quad\text{and}\quad
\operatorname{dist}_{H^s}(u_n,\gamma_n([0,1]))\to 0.
\]

As in the proof of Theorem~\ref{Thm1.1}\,(2), from the properties above and
\(\operatorname{dist}_{H^s}(u_n,\mathfrak{P}_{\alpha,c})\to 0\) we obtain that
\(\{u_n\}\subset S_{c,rad}\) is a bounded Palais--Smale sequence for \(\left.J_\alpha\right|_{S_c}\) at
level \(\sigma(c,\alpha)>0\), with \(P_\alpha(u_n)\to 0\). Therefore, by Lemma~\ref{Lem3.1} , there exists
\(u_{c,\alpha,m}\in S_{c,rad}\) such that, up to a subsequence,
\[
u_n\to u_{c,\alpha,m}\quad\text{strongly in }H^s(\mathbb{R}^N),
\]
and \(u_{c,\alpha,m}\) is a nonnegative radial solution of \eqref{eq1.1} for some \(\lambda<0\).
By the strong maximum principle, \(u_{c,\alpha,m}>0\) in \(\mathbb{R}^N\).

\medskip

\noindent\textbf{Proof of Theorem \ref{Thm1.3}.}
To show that \(u_{c,\alpha,m}\) is a ground state, we prove that it realizes
\[\inf_{u\in\mathfrak{P}_{\alpha,c}} J_\alpha(u)>0.\]

From the above construction we know that
\[
\sigma(c,\alpha)
= J_\alpha(u_{c,\alpha,m})
\ge \inf_{\mathfrak{P}_{\alpha,c}\cap S_{c,rad}} J_\alpha
\ge \inf_{u\in\mathfrak{P}_{\alpha,c}} J_\alpha(u) > 0.
\]
Thus it remains to prove the reverse inequality
\[
\inf_{\mathfrak{P}_{\alpha,c}\cap S_{c,rad}} J_\alpha
\le
\inf_{\mathfrak{P}_{\alpha,c}} J_\alpha.
\]
Assume by contradiction that there exists
\(u\in\mathfrak{P}_{\alpha,c}\setminus S_{c,rad}\) such that
\[
J_\alpha(u)
< \inf_{\mathfrak{P}_{\alpha,c}\cap S_{c,rad}} J_\alpha.
\]
Let \(v=|u|^*\) be the symmetric decreasing rearrangement of \(|u|\). Then \(v\in S_{c,rad}\). By the
fractional Pólya--Szegő inequality (see e.g. \cite{Carbotti}) we have
\[
\int_{\mathbb{R}^N}\big|(-\Delta)^{s/2} v\big|^2 dx
\le
\int_{\mathbb{R}^N}\big|(-\Delta)^{s/2} u\big|^2 dx,
\]
and clearly \(\|v\|_2 = \|u\|_2\). Moreover, by the Riesz rearrangement inequality (see
\cite[Theorem~3.4]{2001LeibAMS}) we obtain
\[
\int_{\mathbb{R}^N} (I_{\mu} * |v|^q)\,|v|^q\,dx
\ge
\int_{\mathbb{R}^N} (I_{\mu} * |u|^q)\,|u|^q\,dx,
\]
and similarly
\[
\int_{\mathbb{R}^N} (I_{\mu} * |v|^p)\,|v|^p\,dx
\ge
\int_{\mathbb{R}^N} (I_{\mu} * |u|^p)\,|u|^p\,dx,
\]
since the kernel \(I_\mu\) is radial and radially decreasing and the Choquard integrals are
increasing under symmetric decreasing rearrangement. As the nonlocal terms enter \(J_\alpha\) and
\(P_\alpha\) with negative coefficients, it follows that
\[
J_\alpha(v) \le J_\alpha(u),
\qquad
P_\alpha(v) \le P_\alpha(u) = 0.
\]

If \(P_\alpha(v)=0\), then \(v\in\mathfrak{P}_{\alpha,c}\cap S_{c,r}\) and
\[
J_\alpha(v) \le J_\alpha(u)
< \inf_{\mathfrak{P}_{\alpha,c}\cap S_{c,rad}} J_\alpha,
\]
which is a contradiction. Hence we must have \(P_\alpha(v)<0\). By Lemma \ref{Lem5.2}(4),
there exists a unique \(t_v<0\) such that \(t_v\star v\in\mathfrak{P}_{\alpha,c}\), and \(t_v\) is the
unique maximizer of \(t\mapsto J_\alpha(t\star v)\).

Using the explicit expression of \(J_\alpha\) on \(\mathfrak{P}_{\alpha,c}\) and the fact that
\(q\gamma_{q,s}=1\), we obtain
\[
\begin{aligned}
	J_\alpha(t_v\star v)
	&= \frac{1}{2}
	\|t_v\star v\|^2-\frac{\alpha}{2q}\int_{\mathbb{R}^N}\bigl(I_{\mu}*|t_v\star v|^q)\,|t_v\star v|^q\,dx\bigl)-\frac{1}{2p}\int_{\mathbb{R}^N}\bigl(I_{\mu}*|t_v\star v|^p)\,|t_v\star v|^p\,dx\bigl)
	 \\&=\frac{1}{2}e^{2st_v}
	\| v\|^2-\frac{\alpha}{2q}e^{2q\gamma_{q,s}st_v}\int_{\mathbb{R}^N}\bigl(I_{\mu}*| v|^q)\,|v|^q\,dx\bigl)-\frac{1}{2p}e^{2p\gamma_{p,s}st_v}\int_{\mathbb{R}^N}\bigl(I_{\mu}*| v|^p)\,| v|^p\,dx\bigl)
    \\&=\bigl(\frac{1}{2}-\frac{1}{2p\gamma_{p,s}}\bigl)e^{2st_v}\|v\|^2+\alpha e^{2st_v}\bigl(\frac{{\gamma_{q,s}}}{2p\gamma_{p,s}}-\frac{1}{2q}\bigl)\int_{\mathbb{R}^N}\bigl(I_{\mu}*| v|^q)\,| v|^q\,dx\bigl)
	\\&= \frac{e^{2st_v}}{2}
	\Big(\|v\|^2\Big(1-\frac{1}{p\gamma_{p,s}}\Big)
	+ {\alpha}\bigl(\frac{{\gamma_{q,s}}}{2p\gamma_{p,s}}-\frac{1}{2q}\bigl)
	\int_{\mathbb{R}^N}(I_{\mu}*|v|^q)\,|v|^q\,dx\Big).
\end{aligned}
\]
Using \(\|v\|\le\|u\|\) and the inequality for the \(q\)-term above, we obtain
\[
\begin{aligned}
	J_\alpha(t_v\star v)
	&\le  \frac{e^{2st_v}}{2}
	\Big(\|u\|^2\Big(1-\frac{1}{p\gamma_{p,s}}\Big)
	+ {\alpha}\bigl(\frac{{\gamma_{q,s}}}{2p\gamma_{p,s}}-\frac{1}{2q}\bigl)
	\int_{\mathbb{R}^N}(I_{\mu}*|u|^q)\,|u|^q\,dx\Big).\\
	&= e^{2st_v}J_\alpha(u),
\end{aligned}
\]
where in the last equality we used the Pohozaev identity for \(u\in\mathfrak{P}_{\alpha,c}\) to
rewrite \(J_\alpha(u)\) in terms of \(\|u\|\) and \(\displaystyle\int(I_\mu*|u|^q)|u|^q\). Since
\(t_v<0\), we have \(e^{2st_v}<1\), and therefore
\[
J_\alpha(t_v\star v) < J_\alpha(u).
\]

But \(t_v\star v\in\mathfrak{P}_{\alpha,c}\cap S_{c,rad}\), so we have found a point in
\(\mathfrak{P}_{\alpha,c}\cap S_{rad}\) with energy strictly smaller than \(J_\alpha(u)\),
contradicting the choice of \(u\). Hence our assumption was false, and
\[
\inf_{\mathfrak{P}_{\alpha,c}\cap S_{c,rad}} J_\alpha
\le
\inf_{\mathfrak{P}_{\alpha,c}} J_\alpha.
\]
Combining this with the inequalities at the beginning of the proof yields
\[
\sigma(c,\alpha)
= J_\alpha(u_{c,\alpha,m})
= \inf_{\mathfrak{P}_{\alpha,c}\cap S_{c,rad}} J_\alpha
= \inf_{\mathfrak{P}_{\alpha,c}} J_\alpha.
\]
Therefore \(u_{c,\alpha,m}\) is a ground state of \(\left.J_\alpha\right|_{S_c}\).
\qed

\section{$L^2$-supercritical case}

In this section we deal with the $L^2$-supercritical regime, namely
\[
\frac{2s-\mu}{N}+2<q<p< 2_{\mu,s}^* .
\]
We first prove the existence of normalized solutions to \eqref{eq1.1} when
\(\frac{2s-\mu}{N}+2<q<p<2_{\mu,s}^*\),  corresponding to the $L^2$-supercritical
and HLS-subcritical situation.

\begin{Lem}\label{Lem6.1}
	Let $\frac{2s-\mu}{N}+2<q<p< 2_{\mu,s}^*$. Then
	\(\mathfrak{P}_{\alpha,c}^0=\emptyset\) and \(\mathfrak{P}_{\alpha,c}\) is a smooth
	manifold of codimension \(2\) in \(H^s(\mathbb{R}^N)\).
\end{Lem}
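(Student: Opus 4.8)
The plan is to argue exactly as in Lemma~\ref{Lem4.1} and Lemma~\ref{Lem5.1}, exploiting that in the present $L^2$--supercritical regime \emph{both} exponents lie strictly above the $L^2$--critical threshold, so that $q\gamma_{q,s}>1$ and $p\gamma_{p,s}>1$ by Remark~\ref{Rek2.3}. This uniform sign makes the emptiness of $\mathfrak{P}_{\alpha,c}^0$ immediate and, in contrast to Case~I, requires no smallness restriction on $\alpha$.

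First I would show $\mathfrak{P}_{\alpha,c}^0=\varnothing$. Suppose $u\in\mathfrak{P}_{\alpha,c}^0$, so that $E_u'(0)=P_\alpha(u)=0$ and $E_u''(0)=0$. Writing
\[
A=\int_{\mathbb{R}^N}(I_\mu*|u|^q)|u|^q\,dx,
\qquad
B=\int_{\mathbb{R}^N}(I_\mu*|u|^p)|u|^p\,dx,
\]
the conditions $E_u'(0)=0$ and $E_u''(0)=0$ read, after dividing by $s$ and by $2s^2$ respectively,
\[
\|u\|^2=\alpha\gamma_{q,s}A+\gamma_{p,s}B,
\qquad
\|u\|^2=\alpha q\gamma_{q,s}^2 A+p\gamma_{p,s}^2 B.
\]
Subtracting the first identity from the second eliminates $\|u\|^2$ and yields
\[
\alpha\gamma_{q,s}(q\gamma_{q,s}-1)A+\gamma_{p,s}(p\gamma_{p,s}-1)B=0.
\]
Since $\alpha,\gamma_{q,s},\gamma_{p,s}>0$, $q\gamma_{q,s}-1>0$, $p\gamma_{p,s}-1>0$ and $A,B\ge 0$, this is a sum of nonnegative terms equal to zero; hence $A=B=0$. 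As $I_\mu>0$, this forces $u\equiv 0$, contradicting $\|u\|_2=c>0$. Therefore $\mathfrak{P}_{\alpha,c}^0=\varnothing$.

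For the manifold structure I would repeat verbatim the implicit--function--theorem argument of Lemma~\ref{Lem4.1}. Setting $C(u)=\int_{\mathbb{R}^N}|u|^2\,dx-c^2$, both $C$ and $P_\alpha$ are $C^1$ on $H^s(\mathbb{R}^N)$, and it suffices to prove that $C'(u)$ and $P_\alpha'(u)$ are linearly independent in $H^s(\mathbb{R}^N)^*$ for every $u\in\mathfrak{P}_{\alpha,c}$. If they were dependent, then $u$ would be a constrained critical point of $P_\alpha$ on $S_c$, so by the Lagrange multiplier rule $P_\alpha'(u)=\tau C'(u)$ in $H^s(\mathbb{R}^N)$ for some $\tau\in\mathbb{R}$; the Pohozaev identity of the resulting Euler--Lagrange equation, combined with $P_\alpha(u)=0$, forces $E_u''(0)=0$, that is $u\in\mathfrak{P}_{\alpha,c}^0$, which we have just shown to be empty. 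Hence the map $(C'(u),P_\alpha'(u))\colon H^s(\mathbb{R}^N)\to\mathbb{R}^2$ is surjective, and the implicit function theorem gives that $\mathfrak{P}_{\alpha,c}$ is a $C^1$ submanifold of codimension $2$.

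I expect no genuine obstacle here: the $L^2$--supercritical case is the most favorable one, precisely because the two nonlocal terms enter the combination $E_u''(0)-E_u'(0)$ with the \emph{same} strictly positive sign, so the degenerate set is empty automatically and for every $\alpha>0$, with no threshold $\alpha_1$ or smallness condition \eqref{eq1.10} needed. The only point requiring the same care as before is the Lagrange/Pohozaev bookkeeping in the manifold step, which is identical to Lemma~\ref{Lem4.1} and can simply be cited.
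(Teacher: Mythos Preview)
Your proof is correct and follows exactly the approach the paper indicates (it just cites Lemma~\ref{Lem4.1} and omits the details): you derive the same two identities from $E_u'(0)=E_u''(0)=0$ and subtract, and in this regime the sign structure $q\gamma_{q,s}>1$, $p\gamma_{p,s}>1$ forces $A=B=0$ directly, so no smallness of $\alpha$ is required. The manifold part is identical to Lemma~\ref{Lem4.1} and your summary is accurate.
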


\begin{proof}
	The proof is completely analogous to that of Lemma~\ref{Lem4.1} (with $q$ now
	strictly $L^2$-supercritical) and is therefore omitted.
\end{proof}

\begin{Lem}\label{Lem6.2}
	Let $\frac{2s-\mu}{N}+2<q<p< 2_{\mu,s}^*$. For every $u\in S_c$, the function
	\(E_u:\mathbb{R}\to\mathbb{R}\), \(E_u(t)=J_\alpha(t\star u)\), has a unique
	critical point \(t_u^*\in\mathbb{R}\), which is a strict global maximum at a
	positive level. Moreover:
	\begin{enumerate}
		\item \(E_u\) is strictly decreasing on \((t_u^*,+\infty)\). In particular, if
		\(t_u^*<0\) then \(P_\alpha(u)=E_u'(0)<0\).
		\item \(\mathfrak{P}_{\alpha,c}=\mathfrak{P}_{\alpha,c}^{-}\). Moreover, if
		\(P_\alpha(u)<0\), then \(t_u^*<0\).
		\item The map \(u\in S_c\mapsto t_u^*\in\mathbb{R}\) is of class \(C^1\).
	\end{enumerate}
\end{Lem}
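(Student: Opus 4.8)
The plan is to reduce everything to the monotonicity of a single auxiliary function, exactly as in the autonomous case of Lemma~\ref{Lem4.7}, the only difference being that now \emph{both} nonlocal terms are $L^2$-supercritical, so $q\gamma_{q,s}>1$ and $p\gamma_{p,s}>1$ simultaneously. Writing $A:=\int_{\mathbb{R}^N}(I_\mu*|u|^q)|u|^q\,dx>0$ and $B:=\int_{\mathbb{R}^N}(I_\mu*|u|^p)|u|^p\,dx>0$, I would first record the factorization
\[
E_u'(t)=s\,e^{2st}\bigl(\|u\|^2-\psi_u(t)\bigr),\qquad
\psi_u(t):=\alpha\gamma_{q,s}e^{2(q\gamma_{q,s}-1)st}A+\gamma_{p,s}e^{2(p\gamma_{p,s}-1)st}B.
\]
Since $q\gamma_{q,s}-1>0$ and $p\gamma_{p,s}-1>0$ by Remark~\ref{Rek2.3}, $\psi_u$ is a sum of two strictly increasing exponentials with positive coefficients, hence continuous and strictly increasing, with $\psi_u(-\infty)=0^+$ and $\psi_u(+\infty)=+\infty$. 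Therefore the equation $\psi_u(t)=\|u\|^2>0$ has exactly one solution $t_u^*\in\mathbb{R}$, which is the unique critical point of $E_u$.

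Next I would read off the sign of $E_u'$ directly from this factorization: because $\psi_u$ is strictly increasing, $\psi_u(t)<\|u\|^2$ for $t<t_u^*$ and $\psi_u(t)>\|u\|^2$ for $t>t_u^*$, so $E_u'>0$ on $(-\infty,t_u^*)$ and $E_u'<0$ on $(t_u^*,+\infty)$. Thus $E_u$ is strictly increasing then strictly decreasing, and $t_u^*$ is a strict global maximum, which already gives the decreasing claim in (1). For the level, among the exponents $2s,\,2q\gamma_{q,s}s,\,2p\gamma_{p,s}s$ the first is the smallest, so the quadratic term dominates as $t\to-\infty$ and $E_u(t)\to0^+$, whence $E_u(t_u^*)>E_u(t)>0$ for $t$ sufficiently negative; meanwhile the $p$-term dominates as $t\to+\infty$ and $E_u(t)\to-\infty$. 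To obtain the strict second-order information I would evaluate $E_u''(t_u^*)$ after inserting the critical relation $e^{2st_u^*}\|u\|^2=\alpha\gamma_{q,s}e^{2q\gamma_{q,s}st_u^*}A+\gamma_{p,s}e^{2p\gamma_{p,s}st_u^*}B$, which collapses it to
\[
E_u''(t_u^*)
=2s^2\alpha\gamma_{q,s}(1-q\gamma_{q,s})e^{2q\gamma_{q,s}st_u^*}A
+2s^2\gamma_{p,s}(1-p\gamma_{p,s})e^{2p\gamma_{p,s}st_u^*}B<0,
\]
since both factors $1-q\gamma_{q,s}$ and $1-p\gamma_{p,s}$ are negative.

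With these facts in hand, (1)–(3) follow quickly using $E_u'(0)=P_\alpha(u)$ from Remark~\ref{Rek2.1}. If $t_u^*<0$ then $0$ lies on the decreasing branch, so $P_\alpha(u)=E_u'(0)<0$, completing (1); conversely $P_\alpha(u)<0$ forces $0$ into the decreasing branch, i.e. $t_u^*<0$, which is the last assertion of (2). For the identity $\mathfrak{P}_{\alpha,c}=\mathfrak{P}_{\alpha,c}^-$, any $u\in\mathfrak{P}_{\alpha,c}$ satisfies $E_u'(0)=0$, so by uniqueness $t_u^*=0$ and the computation above yields $E_u''(0)=E_u''(t_u^*)<0$, that is $u\in\mathfrak{P}_{\alpha,c}^-$; this is consistent with $\mathfrak{P}_{\alpha,c}^0=\varnothing$ from Lemma~\ref{Lem6.1}. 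Finally, for (3) I would apply the implicit function theorem to $F(u,t):=E_u'(t)$, which is $C^1$ on $S_c\times\mathbb{R}$ because $\|u\|^2$, $A$ and $B$ depend $C^1$ on $u$ and the exponential factors are smooth in $t$, at the point $(u,t_u^*)$ where $\partial_tF(u,t_u^*)=E_u''(t_u^*)\neq0$; the uniqueness of $t_u^*$ then lets me patch the local $C^1$ parametrizations into a single globally defined $C^1$ map $u\mapsto t_u^*$. I do not expect a genuine obstacle: the whole argument is the one–supercritical–term proof of Lemma~\ref{Lem4.7} run with two supercritical terms, and the only point requiring a moment's care is identifying the dominant exponential as $t\to-\infty$, which fixes $E_u(-\infty)=0^+$ and hence the positivity of the maximal level.
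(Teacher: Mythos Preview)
Your proof is correct and follows essentially the same approach as the paper: both factor $E_u'(t)=s e^{2st}(\|u\|^2-\psi_u(t))$ with $\psi_u$ (the paper calls it $h$) a strictly increasing function built from the two supercritical exponentials, deduce the unique critical point and the monotonicity pattern, and finish (2)--(3) via Remark~\ref{Rek2.1}, Lemma~\ref{Lem6.1}, and the implicit function theorem. Your version is in fact slightly more explicit, since you compute $E_u''(t_u^*)<0$ directly rather than inferring it from $\mathfrak{P}_{\alpha,c}^0=\varnothing$.
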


\begin{proof}
	For \(u\in S_c\) we have
	\[
	\begin{aligned}
		E_u(t)
		&=J_\alpha(t\star u) \\
		&=\frac{1}{2}e^{2st}\|u\|^2
		-\frac{\alpha}{2q}e^{2q\gamma_{q,s}st}
		\int_{\mathbb{R}^N}(I_\mu*|u|^q)|u|^q\,dx
		-\frac{1}{2p}e^{2p\gamma_{p,s}st}
		\int_{\mathbb{R}^N}(I_\mu*|u|^p)|u|^p\,dx .
	\end{aligned}
	\]
	Since \(p>q>\frac{2s-\mu}{N}+2\) and \(p\gamma_{p,s}>1\), it follows that
	\(E_u(t)\to 0\) as \(t\to -\infty\) and \(E_u(t)\to -\infty\) as \(t\to +\infty\).
	Hence \(E_u\) attains a positive global maximum at some point \(t_u^*\in\mathbb{R}\).
	
	Differentiating,
	\[
	\begin{aligned}
		E_u'(t)
		&=s e^{2st}\|u\|^2
		-\alpha\gamma_{q,s}s\,e^{2q\gamma_{q,s}st}
		\int_{\mathbb{R}^N}(I_\mu*|u|^q)|u|^q\,dx\\
		&\quad-\gamma_{p,s}s\,e^{2p\gamma_{p,s}st}
		\int_{\mathbb{R}^N}(I_\mu*|u|^p)|u|^p\,dx .
	\end{aligned}
	\]
	Set
	\[
	h(t)
	:=\alpha\gamma_{q,s}e^{2q\gamma_{q,s}st-2st}
	\int_{\mathbb{R}^N}(I_\mu*|u|^q)|u|^q\,dx
	+\gamma_{p,s}e^{2p\gamma_{p,s}st-2st}
	\int_{\mathbb{R}^N}(I_\mu*|u|^p)|u|^p\,dx,
	\]
	so that \(E_u'(t)=s \|u\|^2 - s\,e^{2st}h(t)\). A direct computation shows
	\[
	h'(t)
	=2\gamma_{q,s}(q\gamma_{q,s}-1) s\alpha\,e^{2q\gamma_{q,s}st}
	\int_{\mathbb{R}^N}(I_\mu*|u|^q)|u|^q\,dx
	+2\gamma_{p,s}(p\gamma_{p,s}-1) s\,e^{2p\gamma_{p,s}st}
	\int_{\mathbb{R}^N}(I_\mu*|u|^p)|u|^p\,dx>0,
	\]
	so \(h\) is strictly increasing. Since \(e^{2st}\|u\|^2\) is also strictly
	increasing and
	\[
	E_u'(t)\to 0 \quad\text{as }t\to-\infty,
	\qquad
	E_u'(t)\to -\infty \quad\text{as }t\to+\infty,
	\]
   exisit a unique \(t_u^*\),such that $h(t_u^*)=s\|u\|^2$.
	the equation \(E_u'(t)=0\) has a unique solution \(t_u^*\in\mathbb{R}\).
	This critical point is necessarily a strict global maximum, so
	\(E_u''(t_u^*)\leq 0\), and the sign of \(E_u'\) implies that \(E_u\) is strictly
	decreasing on \((t_u^*,+\infty)\). In particular, if \(t_u^*<0\) then
	\(E_u'(0)<0\), i.e. \(P_\alpha(u)<0\), proving the last assertion in (1).
	
	By Remark~\ref{Rek2.2} and Lemma~\ref{Lem6.1}, on the Pohozaev manifold
	\(\mathfrak{P}_{\alpha,c}\) one has \(E_u'(0)=P_\alpha(u)=0\) and
	\(E_u''(0)<0\), hence
	\(\mathfrak{P}_{\alpha,c}=\mathfrak{P}_{\alpha,c}^{-}\). Moreover, if
	\(P_\alpha(u)<0\), then \(E_u'(0)<0\). Since \(E_u'\) is strictly decreasing and
	has a unique zero at \(t_u^*\), this forces \(t_u^*<0\). This proves (2).
	
	Finally, the map \((t,u)\mapsto E_u'(t)\) is \(C^1\) on
	\(\mathbb{R}\times S_c\), and \(\partial_t E_u'(t_u^*)=E_u''(t_u^*)\neq 0\).
	By the implicit function theorem, the map \(u\mapsto t_u^*\) is \(C^1\) on
	\(S_c\), giving (3).
\end{proof}

\begin{Lem}\label{lem6.3}
	Let $\frac{2s-\mu}{N}+2<q<p\le 2_{\mu,s}^*$. Then
	\[
	m_2(c,\alpha)=\inf_{u\in\mathfrak{P}_{\alpha,c}}J_\alpha(u)>0 .
	\]
	Moreover, there exists \(r>0\) sufficiently small such that
	\[
	0<\sup_{u\in\overline{D_r}}J_\alpha(u)<m_2(c,\alpha),
	\]
	where
	\(
	D_r:=\{u\in S_c:\ \|u\|<r\}.
	\)
	In particular, if \(u\in\overline{D_r}\), then \(J_\alpha(u)\geq0\) and
	\(P_\alpha(u)\geq 0\).
\end{Lem}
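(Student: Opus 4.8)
The plan is to follow closely the strategy of Lemmas~\ref{Lem4.9} and~\ref{Lem5.3}--\ref{Lem5.4}, now exploiting that in this regime \emph{both} exponents are $L^2$-supercritical, i.e. $q\gamma_{q,s}>1$ and $p\gamma_{p,s}>1$ by Remark~\ref{Rek2.3}. First I would establish a uniform lower bound on the homogeneous norm over the Pohozaev manifold. For $u\in\mathfrak{P}_{\alpha,c}$ the constraint $P_\alpha(u)=0$ reads
\[
\|u\|^2=\alpha\gamma_{q,s}A+\gamma_{p,s}B,\qquad
A:=\int_{\mathbb{R}^N}(I_\mu*|u|^q)|u|^q\,dx,\quad
B:=\int_{\mathbb{R}^N}(I_\mu*|u|^p)|u|^p\,dx,
\]
and inserting the Gagliardo--Nirenberg bounds \eqref{eq2.4} with $t=q$ and $t=p$ gives
\[
\|u\|^2\le \alpha\gamma_{q,s}C_q\,c^{2q(1-\gamma_{q,s})}\|u\|^{2q\gamma_{q,s}}
+\gamma_{p,s}C_p\,c^{2p(1-\gamma_{p,s})}\|u\|^{2p\gamma_{p,s}}.
\]
Since $2q\gamma_{q,s}>2$ and $2p\gamma_{p,s}>2$, dividing by $\|u\|^2$ forces $\|u\|\ge C_0$ for some $C_0>0$ depending only on $\alpha,c,q,p,s,\mu$.

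Next I would rewrite $J_\alpha$ on $\mathfrak{P}_{\alpha,c}$. Substituting $\|u\|^2=\alpha\gamma_{q,s}A+\gamma_{p,s}B$ into \eqref{eq2.2} yields, after a short computation,
\[
J_\alpha(u)
=\frac{\alpha}{2}\gamma_{q,s}\Big(1-\tfrac{1}{q\gamma_{q,s}}\Big)A
+\frac{1}{2}\gamma_{p,s}\Big(1-\tfrac{1}{p\gamma_{p,s}}\Big)B .
\]
Here it is essential to retain both nonlocal integrals rather than eliminating one of them: because $q\gamma_{q,s}>1$ and $p\gamma_{p,s}>1$ both coefficients are strictly positive, whereas substituting away $A$ or $B$ would create a term of ambiguous sign (recall $q\gamma_{q,s}<p\gamma_{p,s}$). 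Setting $a:=\alpha\gamma_{q,s}A\ge0$, $b:=\gamma_{p,s}B\ge0$, so that $a+b=\|u\|^2$, and $\kappa:=\min\{1-\tfrac{1}{q\gamma_{q,s}},\,1-\tfrac{1}{p\gamma_{p,s}}\}>0$, I obtain
\[
J_\alpha(u)\ge\frac{\kappa}{2}(a+b)=\frac{\kappa}{2}\|u\|^2\ge\frac{\kappa}{2}C_0^2>0,
\]
whence $m_2(c,\alpha)\ge\frac{\kappa}{2}C_0^2>0$.

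For the local statement I would argue as in Lemma~\ref{Lem5.4}. For every $u\in S_c$, applying \eqref{eq2.4} to both terms gives
\[
J_\alpha(u)\ge \tfrac12\|u\|^2
-\tfrac{\alpha}{2q}C_q c^{2q(1-\gamma_{q,s})}\|u\|^{2q\gamma_{q,s}}
-\tfrac{1}{2p}C_p c^{2p(1-\gamma_{p,s})}\|u\|^{2p\gamma_{p,s}},
\]
together with the analogous bound carrying the Pohozaev coefficients for $P_\alpha(u)$. Since $2q\gamma_{q,s},2p\gamma_{p,s}>2$ and $\|u\|>0$ for all $u\in S_c$, the quadratic term dominates for small norm, so there exists $r_0>0$ with $J_\alpha\ge0$ and $P_\alpha\ge0$ on $\overline{D_{r_0}}$, in fact strictly positive there. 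Dropping the nonnegative nonlocal terms then gives the crude upper bound $J_\alpha(u)\le\tfrac12\|u\|^2\le\tfrac12 r^2$ on $\overline{D_r}$; choosing $r\le r_0$ small enough that $\tfrac12 r^2<m_2(c,\alpha)$ yields $0<\sup_{\overline{D_r}}J_\alpha<m_2(c,\alpha)$, completing the proof.

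The argument is essentially mechanical once the double supercriticality $q\gamma_{q,s}>1<p\gamma_{p,s}$ is invoked in each step; I do not expect a genuine obstacle here. The only point requiring care is the sign bookkeeping in the formula for $J_\alpha$ on $\mathfrak{P}_{\alpha,c}$, where keeping both nonlocal integrals (rather than using the Pohozaev identity to discard one) is what guarantees the uniform positive lower bound on $m_2(c,\alpha)$.
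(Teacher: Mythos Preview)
Your proposal is correct and follows essentially the same route as the paper, which simply refers back to Lemmas~\ref{Lem5.3}--\ref{Lem5.4} and notes that the negative terms now have exponents $2q\gamma_{q,s},2p\gamma_{p,s}>2$. Your observation that on $\mathfrak{P}_{\alpha,c}$ one should keep \emph{both} nonlocal integrals rather than eliminate one via the Pohozaev identity is precisely the clean way to exploit the double supercriticality $q\gamma_{q,s}>1$, $p\gamma_{p,s}>1$, and it yields the uniform lower bound $J_\alpha(u)\ge\frac{\kappa}{2}\|u\|^2$ directly.
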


\begin{proof}
	The argument is the same as in Lemmas~\ref{Lem5.3}–\ref{Lem5.4}.  
	Using Lemma\ref{Lem2.2}, one shows
	first that any \(u\in\mathfrak{P}_{\alpha,c}\) must satisfy \(\|u\|\ge C_0>0\),
	so that \(m_2(c,\alpha)>0\). Then, since the negative terms in \(J_\alpha\) and
	\(P_\alpha\) are of order \(\|u\|^{2q\gamma_{q,s}}\) and \(\|u\|^{2p\gamma_{p,s}}\)
	with exponents strictly larger than \(2\), there exists \(r>0\) such that
	\(J_\alpha(u)>0\) and \(P_\alpha(u)>0\) for all \(u\in\overline{D_r}\), and
	\(\sup_{\overline{D_r}}J_\alpha<m_2(c,\alpha)\). We omit the details.
\end{proof}

 	\textbf{Proof of Theorem \ref{Thm1.3} (1).}
 	Let $r>0$ be as in Lemma~\ref{lem6.3} and set
 	\[
 	\Gamma
 	:=\Bigl\{\gamma\in C\bigl([0,1],S_{c,rad}\bigr):
 	\gamma(0)\in\overline{D_r},\ J_\alpha(\gamma(1))< 0,P_\alpha(u)<0\Bigr\},
 	\]
 	where $D_r=\{u\in S_c:\ \|u\|^2<r\}$.
 	By Lemma~\ref{lem6.3} we have
 	\[
 	0<\sup_{\overline{D_r}}J_\alpha<m_2(c,\alpha),
 	\]
 	and by Lemma~\ref{Lem6.2} there exists $w\in S_{c,rad}$ such that
 	$J_\alpha(t\star w)\to-\infty$ as $t\to+\infty$, so that
 	$\Gamma\neq\emptyset$. Define the mountain pass level
 	\[
 	\sigma(c,\alpha)
 	=\inf_{\gamma\in\Gamma}\ \max_{t\in[0,1]}J_\alpha(\gamma(t)).
 	\]
 	Then
 	\[
 	0<\sup_{\overline{D_r}}J_\alpha\le\sigma(c,\alpha)<+\infty.
 	\]
 	
 	By the compactness results of Section~3 for the subcritical case
 	(see in particular Lemma~\ref{Lem3.1} with $p<2_{\mu,s}^*$), the functional
 	$J_\alpha|_{S_c}$ satisfies the Palais--Smale condition at levels in
 	$(0,+\infty)$.
 	Hence, applying the mountain pass theorem to $J_\alpha|_{S_c}$ we obtain a
 	critical point $u_{c,\alpha,m}\in S_c$ such that
 	\[
 	J_\alpha(u_{c,\alpha,m})=\sigma(c,\alpha)>0.
 	\]
 	
 	Since the minimizing paths can be chosen in $S_{c,r}$ with nonnegative values
 	a.e.~in $\mathbb{R}^N$, it follows by standard rearrangement arguments that
 	$u_{c,\alpha,m}$ is radial and nonnegative; by the strong maximum principle
 	for the fractional Laplacian we actually have $u_{c,\alpha,m}>0$ in
 	$\mathbb{R}^N$. Moreover, $u_{c,\alpha,m}$ solves \eqref{eq1.1} for some
 	$\lambda_{c,\alpha,m}<0$.
 	
 	Finally, every constrained critical point of $J_\alpha|_{S_c}$ lies on the
 	Pohozaev manifold $\mathfrak{P}_{\alpha,c}$ (Remark~\ref{Rek2.1}), and
 	\[
 	m_2(c,\alpha)
 	=\inf_{u\in\mathfrak{P}_{\alpha,c}}J_\alpha(u)
 	\le J_\alpha(u_{c,\alpha,m})
 	=\sigma(c,\alpha).
 	\]
 	Arguing as in the proof of Theorem~\ref{Thm1.3}, one checks that
 	$J_\alpha(u_{c,\alpha,m})=m_2(c,\alpha)$, so $u_{c,\alpha,m}$ is a ground
 	state of $J_\alpha|_{S_c}$.

 	\textbf{Proof of Theorem \ref{Thm1.3} (2).}
 	The proof is completely analogous to that of Theorem~\ref{Thm1.1} (4): one
 	considers the family $\{u_{c,\alpha,m}\}_{\alpha>0}$ of mountain--pass
 	solutions given by part~(1), uses the Pohozaev identity and the uniform
 	bounds to show that, up to a subsequence, $u_{c,\alpha,m}$ converges
 	strongly in $H^s(\mathbb{R}^N)$ as $\alpha\to 0^+$ to a nontrivial critical
 	point of the limiting functional $J_0|_{S_c}$, and then identifies this
 	limit as a ground state of $J_0|_{S_c}$. We omit the details.

 \section{$L^2$-subcritical case}
 
 In this section we prove Theorem~\ref{Thm1.4}. Throughout we assume
 \[
 N>2s,\qquad \frac{2N-\mu}{N}<q<p\le \frac{2s-\mu}{N}+2,
 \]
 so that both nonlocal nonlinearities are $L^2$-subcritical and $q\gamma_{q,s}<1$.
 For every $u\in S_c$, by Lemma\ref{Lem2.2} we have
 \begin{equation*}
 	\begin{aligned}
 		J_\alpha(u)
 		&= \frac{1}{2}\|u\|^2
 		-\frac{1}{2p}\int_{\mathbb{R}^N}(I_\mu*|u|^p)|u|^p\,dx
 		-\frac{\alpha}{2q}\int_{\mathbb{R}^N}(I_\mu*|u|^q)|u|^q\,dx \\
 		&\ge \frac{1}{2}\|u\|^2
 		-\frac{C_p}{2p}\|u\|^{2p\gamma_{p,s}}\|u\|_2^{2p(1-\gamma_{p,s})}
 		-\frac{\alpha C_q}{2q}\|u\|^{2q\gamma_{q,s}}\|u\|_2^{2q(1-\gamma_{q,s})}.
 	\end{aligned}
 \end{equation*}
 Using the smallness condition $c<\bar c_N$ and the fact that $q\gamma_{q,s}<1$, we obtain
 \begin{equation*}
 	J_\alpha(u)
 	\ge \frac{1}{2}\Bigl(1-\frac{C_p}{p}c^{2p(1-\gamma_{p,s})}\Bigr)\|u\|^2
 	-\frac{\alpha C_q}{2q}\|u\|^{2q\gamma_{q,s}}c^{2q(1-\gamma_{q,s})}.
 \end{equation*}
 Hence $J_\alpha$ is coercive and bounded from below on $S_c$, and we can define
 \[
 m(c,\alpha)=\inf_{S_c}J_\alpha>-\infty.
 \]
 On the other hand, since $\alpha>0$, for any fixed $u\in S_c$ and $t\ll -1$ the scaling $t\star u$ satisfies $J_\alpha(t\star u)<0$, so
 \[
 m(c,\alpha)<0.
 \]
 Furthermore, by the fractional Pólya--Szegő inequality and Riesz rearrangement,
 \[
 \int_{\mathbb{R}^N}\bigl|(-\Delta)^{\frac{s}{2}}u^*\bigr|^2\,dx
 \le \int_{\mathbb{R}^N}\bigl|(-\Delta)^{\frac{s}{2}}u\bigr|^2\,dx,
 \]
 and the nonlocal terms decrease under symmetric decreasing rearrangement. Hence
 \[
 \inf_{S_c\cap H^s_{\mathrm{rad}}(\mathbb{R}^N)}J_\alpha
 =\inf_{S_c}J_\alpha
 =m(c,\alpha).
 \]
 
 \begin{Lem}\label{Lem7.1}
 	Let $c_1,c_2>0$ be such that $c_1^2+c_2^2=c^2$. Then
 	\begin{equation}\label{eq7.1}
 		m(c,\alpha)<m(c_1,\alpha)+m(c_2,\alpha).
 	\end{equation}
 \end{Lem}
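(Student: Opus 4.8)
The plan is to reduce the strict subadditivity to a single scaling property of the ground state level, namely the inequality
\begin{equation}\label{eq:star}
	m(\theta c,\alpha) < \theta^{2}\, m(c,\alpha)
	\qquad\text{for every } \theta>1 \text{ and } c>0.
\end{equation}
Once \eqref{eq:star} is established, \eqref{eq7.1} will follow from an elementary monotonicity argument. Throughout I would use that $m(c,\alpha)\in(-\infty,0)$, as already shown at the beginning of this section.

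To prove \eqref{eq:star}, fix $c>0$ and $\theta>1$. For $u\in S_c$ I set $w(x)=u(x/\sigma)$ with $\sigma=\theta^{2/N}$, so that $\|w\|_2^2=\sigma^N\|u\|_2^2=\theta^2c^2$, i.e.\ $w\in S_{\theta c}$. A change of variables gives $\|w\|^2=\theta^{a}\|u\|^2$ and $\int_{\mathbb{R}^N}(I_\mu*|w|^t)|w|^t\,dx=\theta^{b}\int_{\mathbb{R}^N}(I_\mu*|u|^t)|u|^t\,dx$ for $t\in\{q,p\}$, where $a=\tfrac{2(N-2s)}{N}$ and $b=\tfrac{2(2N-\mu)}{N}$. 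The decisive observation is that $N>2s$ and $0<\mu<N$ force $a<2<b$. Writing $A=\int_{\mathbb{R}^N}(I_\mu*|u|^q)|u|^q\,dx\ge0$ and $B=\int_{\mathbb{R}^N}(I_\mu*|u|^p)|u|^p\,dx\ge0$, I would compute
\[
	J_\alpha(w)-\theta^2 J_\alpha(u)
	=\tfrac12(\theta^{a}-\theta^2)\|u\|^2
	-\tfrac{\alpha}{2q}(\theta^{b}-\theta^2)A
	-\tfrac{1}{2p}(\theta^{b}-\theta^2)B.
\]
Here the kinetic term is strictly negative because $\theta^{a}<\theta^2$ and $\|u\|^2>0$, while the two nonlocal terms are nonpositive because $\theta^{b}>\theta^2$ and $A,B\ge0$. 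Hence $J_\alpha(w)<\theta^2 J_\alpha(u)$ for every $u\in S_c$.

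To upgrade this to the strict inequality \eqref{eq:star} I would argue along a minimizing sequence $\{u_n\}\subset S_c$ with $J_\alpha(u_n)\to m(c,\alpha)<0$. Since $m(c,\alpha)<0$, this sequence cannot vanish in the homogeneous norm: by Lemma~\ref{Lem2.2}, $\|u_n\|\to0$ would force $A_n,B_n\to0$ and hence $J_\alpha(u_n)\to0$, contradicting $m(c,\alpha)<0$. Thus there exists $\delta>0$ with $\|u_n\|^2\ge\delta$ for all large $n$. Discarding the two nonpositive nonlocal contributions and keeping only the kinetic discrepancy, I obtain
\[
	m(\theta c,\alpha)\le J_\alpha(w_n)
	\le \theta^2 J_\alpha(u_n)+\tfrac12(\theta^{a}-\theta^2)\delta,
\]
and letting $n\to\infty$ gives $m(\theta c,\alpha)\le\theta^2 m(c,\alpha)+\tfrac12(\theta^{a}-\theta^2)\delta<\theta^2 m(c,\alpha)$, which is \eqref{eq:star}. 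Finally, \eqref{eq:star} states precisely that $c\mapsto m(c,\alpha)/c^2$ is strictly decreasing on $(0,\infty)$. Given $c_1,c_2>0$ with $c^2=c_1^2+c_2^2$ one has $c>c_1$ and $c>c_2$, so $m(c,\alpha)/c^2<m(c_i,\alpha)/c_i^2$ for $i=1,2$; multiplying by $c_i^2>0$ and summing yields
\[
	m(c,\alpha)
	=c_1^2\,\frac{m(c,\alpha)}{c^2}+c_2^2\,\frac{m(c,\alpha)}{c^2}
	<m(c_1,\alpha)+m(c_2,\alpha),
\]
which is \eqref{eq7.1}.

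I expect the main obstacle to be exactly the passage from the non-strict bound $m(\theta c,\alpha)\le\theta^2 m(c,\alpha)$ to the strict one. Since the existence of a minimizer for $m(c,\alpha)$ is not yet available (it is the conclusion of Theorem~\ref{Thm1.4}, which this lemma serves to prove), one cannot simply evaluate the scaling at an optimal function; the strictness must instead be extracted quantitatively from the uniform lower bound $\liminf_n\|u_n\|>0$ along a minimizing sequence. Justifying this non-vanishing — that is, that minimizing sequences stay bounded away from $0$ in the homogeneous norm — is where the sign condition $m(c,\alpha)<0$ and the subcritical Gagliardo–Nirenberg estimate of Lemma~\ref{Lem2.2} are used in an essential way.
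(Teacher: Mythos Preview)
Your proof is correct and follows the same overall architecture as the paper's: establish the scaling inequality $m(\theta c,\alpha)<\theta^{2}m(c,\alpha)$ for all $\theta>1$, deduce that $c\mapsto m(c,\alpha)/c^{2}$ is strictly decreasing, and conclude. The difference lies in how the scaling inequality is obtained. The paper uses \emph{amplitude scaling} $u\mapsto\theta u$, under which the kinetic term picks up exactly $\theta^{2}$ while the nonlocal terms pick up $\theta^{2q}$ and $\theta^{2p}$; strictness is then extracted from the nonlocal side via contradiction (if equality held, the nonlocal integrals along the minimizing sequence would have to vanish, forcing $m(\theta c,\alpha)=\lim\tfrac{\theta^{2}}{2}\|u_n\|^{2}\ge0$, contradicting $m(\theta c,\alpha)<0$). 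You instead use \emph{spatial dilation} $u\mapsto u(\cdot/\sigma)$, under which the kinetic term scales with exponent $a=2(N-2s)/N<2$ and the nonlocal terms with exponent $b=2(2N-\mu)/N>2$; your strictness comes from the kinetic side, via the quantitative lower bound $\liminf_n\|u_n\|^{2}>0$ on any minimizing sequence. Both arguments invoke $m(c,\alpha)<0$ at the same decisive point (ruling out degeneration of the relevant quantity), and both are equally valid. Your route has the minor advantage of producing an explicit gap $\tfrac{1}{2}(\theta^{a}-\theta^{2})\delta$ and avoiding a contradiction step; the paper's route has the minor advantage that amplitude scaling is trivial to compute and needs only $p,q>1$ rather than the structural inequalities $a<2<b$.
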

 
 \begin{proof}
 	Fix $c>0$ and $\theta>1$, and let $\{u_n\}\subset S_c$ be a minimizing
 	sequence for $m(c,\alpha)$, so that $J_\alpha(u_n)\to m(c,\alpha)$ as
 	$n\to\infty$. For each $n$ we have $\theta u_n\in S_{\theta c}$ and
 	\[
 	\begin{aligned}
 		J_\alpha(\theta u_n)
 		&= \frac{\theta^2}{2}\big\|(-\Delta)^{\frac{s}{2}}u_n\big\|_2^2
 		-\frac{\alpha\theta^{2q}}{2q}\int_{\R^N}(I_\mu*|u_n|^q)|u_n|^q\,dx
 		-\frac{\theta^{2p}}{2p}\int_{\R^N}(I_\mu*|u_n|^p)|u_n|^p\,dx\\
 		&= \theta^2 J_\alpha(u_n)
 		-\frac{\alpha(\theta^{2q}-\theta^2)}{2q}\int_{\R^N}(I_\mu*|u_n|^q)|u_n|^q\,dx
 		-\frac{\theta^{2p}-\theta^2}{2p}\int_{\R^N}(I_\mu*|u_n|^p)|u_n|^p\,dx.
 	\end{aligned}
 	\]
 	Since $\theta>1$ and $p,q>1$, we have $\theta^{2q}-\theta^2>0$ and
 	$\theta^{2p}-\theta^2>0$, so
 	\[
 	J_\alpha(\theta u_n)\le \theta^2 J_\alpha(u_n)
 	\quad\text{for all }n.
 	\]
 	Passing to the limit we obtain
 	\[
 	m(\theta c,\alpha)
 	\le\lim_{n\to\infty}J_\alpha(\theta u_n)
 	\le \theta^2\lim_{n\to\infty}J_\alpha(u_n)
 	=\theta^2 m(c,\alpha).
 	\]
 	
 	We now show that the inequality is in fact strict. Assume by contradiction that
 	\[
 	m(\theta c,\alpha)=\theta^2 m(c,\alpha).
 	\]
 	Then necessarily
 	\[
 	J_\alpha(\theta u_n)\to m(\theta c,\alpha)
 	\quad\text{and}\quad
 	J_\alpha(\theta u_n)-\theta^2 J_\alpha(u_n)\to 0.
 	\]
 	From the explicit expression of $J_\alpha(\theta u_n)-\theta^2 J_\alpha(u_n)$
 	we deduce
 	\[
 	\int_{\R^N}(I_\mu*|u_n|^q)|u_n|^q\,dx
 	+\int_{\R^N}(I_\mu*|u_n|^p)|u_n|^p\,dx\to 0.
 	\]
 	Hence, by the definition of $J_\alpha$ and the fact that
 	$m(c,\alpha)=\lim_{n\to\infty}J_\alpha(u_n)<0$, we obtain
 	\[
 	0>m(\theta c,\alpha)
 	=\lim_{n\to\infty}J_\alpha(\theta u_n)
 	= \lim_{n\to\infty}\frac{\theta^2}{2}
 	\big\|(-\Delta)^{\frac{s}{2}}u_n\big\|_2^2\ge 0,
 	\]
 	a contradiction. Thus, for every $c>0$ and every $\theta>1$,
 	\begin{equation}\label{eq7.2}
 		m(\theta c,\alpha)<\theta^2 m(c,\alpha).
 	\end{equation}
 	
 	Define
 	\[
 	f(c)=\frac{m(c,\alpha)}{c^2},\qquad c>0.
 	\]
 	From \eqref{eq7.2} we immediately get, for every $c>0$ and
 	$\theta>1$,
 	\[
 	f(\theta c)
 	=\frac{m(\theta c,\alpha)}{(\theta c)^2}
 	<\frac{\theta^2 m(c,\alpha)}{\theta^2 c^2}
 	=f(c),
 	\]
 	so $f$ is strictly decreasing on $(0,+\infty)$.
 	
 	Now let $c_1,c_2>0$ with $c_1^2+c_2^2=c^2$. Then $c_1<c$ and $c_2<c$, so
 	\[
 	\frac{m(c_1,\alpha)}{c_1^2}=f(c_1)>f(c)=\frac{m(c,\alpha)}{c^2},
 	\qquad
 	\frac{m(c_2,\alpha)}{c_2^2}=f(c_2)>f(c)=\frac{m(c,\alpha)}{c^2}.
 	\]
 	Multiplying by $c_1^2$ and $c_2^2$ respectively and summing up, we obtain
 	\[
 	m(c_1,\alpha)+m(c_2,\alpha)
 	>f(c)\,(c_1^2+c_2^2)
 	=f(c)\,c^2
 	=m(c,\alpha),
 	\]
 	which proves \eqref{eq7.1}.
 \end{proof}
 
 \begin{Lem}\label{Lem7.2}
 	Let $N>2s$ and
 	\[
 	\frac{2N-\mu}{N}<q<p\le \frac{2s-\mu}{N}+2.
 	\]
 	Let $\{u_n\}\subset H^s(\mathbb{R}^N)$ be a sequence such that
 	\[
 	J_\alpha(u_n)\to m(c,\alpha)
 	\quad\text{and}\quad
 	\|u_n\|_2=c_n\to c.
 	\]
 	Then $\{u_n\}$ is relatively compact in $H^s(\mathbb{R}^N)$ up to translations. More precisely, there exist a subsequence (still denoted by $\{u_n\}$), a sequence $\{y_n\}\subset\mathbb{R}^N$, and a function $\tilde u\in S_c$ such that
 	\[
 	u_n(\cdot+y_n)\to \tilde u\quad\text{strongly in }H^s(\mathbb{R}^N).
 	\]
 \end{Lem}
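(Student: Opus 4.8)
The plan is to run Lions' concentration--compactness principle on the almost-minimizing sequence, using the strict subadditivity of Lemma~\ref{Lem7.1} to exclude dichotomy and the strict negativity $m(c,\alpha)<0$ to exclude vanishing. First I would reduce to the fixed constraint $S_c$: since $c_n\to c$, the rescaled functions $\tilde u_n:=(c/c_n)\,u_n$ satisfy $\|\tilde u_n\|_2=c$, and using the boundedness of $\{u_n\}$ together with the continuity of each term of $J_\alpha$ under this amplitude dilation one gets $J_\alpha(\tilde u_n)=J_\alpha(u_n)+o(1)\to m(c,\alpha)$. The coercivity of $J_\alpha$ on $S_c$ established above (from $c<\bar c_N$ and $q\gamma_{q,s}<1$) then forces $\{\|\tilde u_n\|\}$ to be bounded, so $\{\tilde u_n\}$, hence $\{u_n\}$, is bounded in $H^s(\mathbb{R}^N)$.

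Next I would apply the concentration--compactness lemma to the densities $\rho_n:=|\tilde u_n|^2$ with $\int_{\mathbb{R}^N}\rho_n=c^2$, so that, up to a subsequence, exactly one of compactness, vanishing, or dichotomy occurs. To rule out vanishing I invoke the fractional Lions lemma: if $\sup_{y}\int_{B_R(y)}\rho_n\to0$ for every $R>0$, then $\tilde u_n\to0$ in $L^r(\mathbb{R}^N)$ for all $r\in(2,2_s^*)$. Since $q,p\in(\tfrac{2N-\mu}{N},\tfrac{2N-\mu}{N-2s})$, the Hardy--Littlewood--Sobolev inequality (Lemma~\ref{Lem2.1}) bounds each Choquard energy by $\|\tilde u_n\|_{\frac{2Nt}{2N-\mu}}^{2t}$ with exponent $\tfrac{2Nt}{2N-\mu}\in(2,2_s^*)$ for $t\in\{p,q\}$; hence both nonlocal terms tend to $0$, giving $\liminf_n J_\alpha(\tilde u_n)\ge0$, which contradicts $m(c,\alpha)<0$.

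I expect the main obstacle to be the exclusion of dichotomy, precisely because of the nonlocal character of the Choquard energies. Here I would fix cutoffs adapted to the splitting radius and write $\tilde u_n=u_n^1+u_n^2+w_n$ with $\|u_n^1\|_2^2\to c_1^2$, $\|u_n^2\|_2^2\to c_2^2$, $c_1^2+c_2^2=c^2$, $c_1,c_2>0$, and $\operatorname{dist}(\operatorname{supp}u_n^1,\operatorname{supp}u_n^2)\to\infty$. The crucial point is the asymptotic additivity of the energy: the Gagliardo seminorm is almost additive (the cross term in the double integral is controlled because the supports are far apart), and each nonlocal quadratic form splits as
\[
\int_{\mathbb{R}^N}(I_\mu*|\tilde u_n|^t)|\tilde u_n|^t\,dx
=\int_{\mathbb{R}^N}(I_\mu*|u_n^1|^t)|u_n^1|^t\,dx
+\int_{\mathbb{R}^N}(I_\mu*|u_n^2|^t)|u_n^2|^t\,dx+o(1),
\]
since the interaction integrals carry a kernel $A_{N,\mu}/|x-y|^\mu$ with $|x-y|\ge d_n\to\infty$ on the product of the two supports, so they vanish as the separation grows. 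This yields $m(c,\alpha)+o(1)=J_\alpha(\tilde u_n)\ge J_\alpha(u_n^1)+J_\alpha(u_n^2)+o(1)\ge m(c_1,\alpha)+m(c_2,\alpha)+o(1)$, contradicting the strict subadditivity $m(c,\alpha)<m(c_1,\alpha)+m(c_2,\alpha)$ of Lemma~\ref{Lem7.1}.

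Consequently compactness holds: there exist $y_n\in\mathbb{R}^N$ such that $v_n:=\tilde u_n(\cdot+y_n)$ is tight. Passing to a subsequence, $v_n\rightharpoonup\tilde u$ in $H^s(\mathbb{R}^N)$ and $v_n\to\tilde u$ a.e.; tightness together with boundedness upgrades this to $v_n\to\tilde u$ strongly in $L^r(\mathbb{R}^N)$ for all $r\in(2,2_s^*)$, so in particular $\|\tilde u\|_2=c$ and $\tilde u\in S_c$. By Proposition~\ref{Pro2.1} (or directly from the strong $L^r$ convergence) both Choquard energies of $v_n$ converge to those of $\tilde u$, whence $\tfrac12\|v_n\|^2$ converges to $m(c,\alpha)+\tfrac{\alpha}{2q}\int_{\mathbb{R}^N}(I_\mu*|\tilde u|^q)|\tilde u|^q\,dx+\tfrac1{2p}\int_{\mathbb{R}^N}(I_\mu*|\tilde u|^p)|\tilde u|^p\,dx$. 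Comparing with $J_\alpha(\tilde u)\ge m(c,\alpha)$ and weak lower semicontinuity of $\|\cdot\|$ forces $J_\alpha(\tilde u)=m(c,\alpha)$ and $\|v_n\|\to\|\tilde u\|$; together with the $L^2$ convergence this gives $v_n\to\tilde u$ strongly in $H^s(\mathbb{R}^N)$. Finally, undoing the rescaling, $u_n(\cdot+y_n)=(c_n/c)\,v_n\to\tilde u$ strongly in $H^s(\mathbb{R}^N)$, since $c_n/c\to1$, which is the desired conclusion.
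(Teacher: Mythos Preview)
Your proposal is correct and follows essentially the same concentration--compactness strategy as the paper: boundedness from coercivity, vanishing excluded via Lions' lemma and $m(c,\alpha)<0$, dichotomy excluded via the strict subadditivity of Lemma~\ref{Lem7.1}, and strong convergence recovered from norm convergence. The only cosmetic difference is that you normalize to $S_c$ at the outset while the paper keeps $\|u_n\|_2=c_n$ and rescales inside the dichotomy step; just be careful that your claim $J_\alpha(\tilde u_n)=J_\alpha(u_n)+o(1)$ presupposes boundedness of $\{u_n\}$, so establish boundedness first directly from the coercivity estimate with $c_n\to c<\bar c_N$ before rescaling.
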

 
 \begin{proof}
 	Since $c_n\to c$ and $J_\alpha(u_n)$ is bounded, it follows from Lemma\ref{Lem2.2} used in the coercivity estimate that $\{u_n\}$ is bounded in $H^s(\mathbb{R}^N)$.
 	By the fractional concentration--compactness principle (see for instance \cite[Lemma 2.4]{2013JDEFeng}), up to a subsequence we have one of the following alternatives:
 	
 	\smallskip\noindent
 	(i) Compactness: there exists $\{y_n\}\subset\mathbb{R}^N$ such that for every $\varepsilon>0$ there exists $r>0$ with
 	\[
 	\int_{|x-y_n|\le r}|u_n(x)|^2\,dx\ge c^2-\varepsilon.
 	\]
 	
 	\smallskip\noindent
 	(ii) Vanishing: for all $r>0$,
 	\[
 	\lim_{n\to\infty}\sup_{y\in\mathbb{R}^N}\int_{|x-y|\le r}|u_n(x)|^2\,dx=0.
 	\]
 	
 	\smallskip\noindent
 	(iii) Dichotomy: there exists $c_1\in(0,c)$ and two bounded sequences
 	$\{v_n\},\{w_n\}\subset H^s(\mathbb{R}^N)$ such that
 	\begin{equation*}
 		\begin{gathered}
 			\operatorname{supp}v_n\cap\operatorname{supp}w_n=\emptyset,\qquad
 			|v_n|+|w_n|\le|u_n|, \\
 			\|v_n\|_2^2\to c_1^2,\qquad
 			\|w_n\|_2^2\to c_2^2:=c^2-c_1^2, \\
 			\|u_n-v_n-w_n\|_r\to 0
 			\quad\text{for }2\le r<2_s^*, \\
 			\liminf_{n\to\infty}
 			\Bigl(\|(-\Delta)^{\frac{s}{2}}u_n\|_2^2
 			-\|(-\Delta)^{\frac{s}{2}}v_n\|_2^2
 			-\|(-\Delta)^{\frac{s}{2}}w_n\|_2^2\Bigr)\ge 0.
 		\end{gathered}
 	\end{equation*}
 	
 	\smallskip
 	First, vanishing cannot occur. Indeed, if (ii) holds, then by the standard Lions lemma for fractional Sobolev spaces we have
 	\[
 	u_n\to 0\quad\text{strongly in }L^r(\mathbb{R}^N)
 	\quad\text{for every }r\in(2,2_s^*),
 	\]
 	and therefore also $u_n\to 0$ strongly in $L^r(\mathbb{R}^N)$ for all such $r$, since $c_n/c\to 1$.
 	
 	Let $t=\frac{2N}{2N-\mu}$, so that $qt,pt\in(2,2_s^*)$ by the assumptions on $q,p$ and $N>2s$. By the Hardy--Littlewood--Sobolev inequality,
 	\[
 	\int_{\mathbb{R}^N}(I_\mu*|u_n|^q)|u_n|^q\,dx
 	\le C\|u_n\|_{qt}^{2q},
 	\qquad
 	\int_{\mathbb{R}^N}(I_\mu*|u_n|^p)|u_n|^p\,dx
 	\le C\|u_n\|_{pt}^{2p},
 	\]
 	so the Choquard terms tend to zero. Hence
 	\begin{equation*}
 		\begin{aligned}
 			m(c,\alpha)+o_n(1)
 			&=J_\alpha(u_n) \\
 			&= \frac{1}{2}\|u_n\|^2
 			-\frac{\alpha}{2q}\int_{\mathbb{R}^N}(I_\mu*|u_n|^q)|u_n|^q\,dx
 			-\frac{1}{2p}\int_{\mathbb{R}^N}(I_\mu*|u_n|^p)|u_n|^p\,dx \\
 			&\ge \frac{1}{2}\|(-\Delta)^{\frac{s}{2}}u_n\|_2^2 - o_n(1)\ge -o_n(1),
 		\end{aligned}
 	\end{equation*}
 	which implies $\liminf_{n\to\infty}J_\alpha(u_n)\ge 0$, contradicting $m(c,\alpha)<0$. Thus vanishing is impossible.
 	
 	Next, suppose dichotomy (iii) holds. Using  \cite[ Proposition 1.7.6 with Lemma 1.7.5-(ii)]{2003Cazena}  and the disjoint supports, we have
 	\begin{equation*}
 		\int_{\mathbb{R}^N}(I_\mu*|\varphi_n|^q)|\varphi_n|^q\,dx
 		= \int_{\mathbb{R}^N}(I_\mu*|v_n|^q)|v_n|^q\,dx
 		+\int_{\mathbb{R}^N}(I_\mu*|w_n|^q)|w_n|^q\,dx
 		+o_n(1),
 	\end{equation*}
 	and similarly for the $p$-term. Using also the energy splitting for the kinetic term,
    let \(t_n=\frac{c_1}{c_n}\to 1,c_n=\|v_n\|_2\to c_1\) 
    \[
    \begin{aligned}
J_\alpha(t_nv_n)&=\frac{1}{2}t_n^2-\frac{\alpha}{2q}t_n^{2q}\int_{\mathbb{R}^N}(I_\mu*|v_n|^q)|v_n|^q\,dx-\frac{1}{2p}t_n^{2p}\int_{\mathbb{R}^N}(I_\mu*|v_n|^p)|w_n|^p\,dx\\
        &=J_\alpha(v_n)+\bigl(t_n^2-1\bigl)\frac{1}{2}\|v_n\|^2-\frac{\alpha}{2q}(t_n^{2q}-1)\int_{\mathbb{R}^N}(I_\mu*|v_n|^q)|v_n|^q\,dx\\& -\frac{1}{2p}(t_n^{2p}-1)\int_{\mathbb{R}^N}(I_\mu*|v_n|^p)|v_n|^p\,dx
        \end{aligned}
    \]
    
         we obtain ,$\liminf_{n\to\infty}J_\alpha(v_n)=\liminf_{n\to\infty}J_\alpha(t_n v_n)$
   
   \begin{equation}\label{eq7.3}
 		\begin{aligned}
 			m(c,\alpha)
 			&= \lim_{n\to\infty}J_\alpha(u_n)
 		\\
 			&\ge \liminf_{n\to\infty}\bigl(J_\alpha(v_n)+J_\alpha(w_n)\bigr)
            \\
 			&\ge \liminf_{n\to\infty}J_\alpha(t_nv_n)+\liminf_{n\to\infty}J_\alpha(t_nw_n)
 			\ge m(c_1,\alpha)+m(c_2,\alpha),
 		\end{aligned}
 	\end{equation}
 	which contradicts Lemma~\ref{Lem7.1}. Therefore dichotomy cannot occur.
 	
 	The only remaining alternative is compactness. Thus there exists $\{y_n\}\subset\mathbb{R}^N$ such that the translated sequence
 	\[
 	\tilde u_n(x):=u_n(x+y_n)
 	\]
 	converges strongly in $L^2(\mathbb{R}^N)$ and weakly in $H^s(\mathbb{R}^N)$ to some $\tilde u\in S_c$. Since $c_n\to c$ and $\{u_n\}$ is bounded in $H^s$, from
  \[
 	\int_{|x-y_n|\le r}|u_n(x)|^2\,dx\ge c^2-\varepsilon.
 	\]
    we have\[\int_{{|x-y_n|> r}}|u_n(x)|^2\,dx\leq\varepsilon\]
    \[\int_{\mathbb{R}^N}|\tilde u_n-\tilde u_m|^2=\int_{|x-y_n|\le r}|u_n(x)|^2\,dx+\int_{|x-y_n|\ge r}|u_n(x)-u_m(x)|^2\,dx\leq3\varepsilon\]
 	\[
 	\tilde u_n(x):=u_n(x+y_n)
 	\to \tilde u(x)
 	\quad\text{strongly in }L^2(\mathbb{R}^N).
 	\]
 	
 	By the nonlocal Brezis--Lieb lemma (see again \cite[Lemma 2.4]{2015CCMMoroz}) we have
 	\begin{equation}\label{eq7.4}
 		\int_{\mathbb{R}^N}(I_\mu*|\tilde u_n|^q)|\tilde u_n|^q\,dx
 		= \int_{\mathbb{R}^N}(I_\mu*|\tilde u|^q)|\tilde u|^q\,dx + o(1),
 	\end{equation}
 	and
 	\begin{equation}\label{eq7.5}
 		\int_{\mathbb{R}^N}(I_\mu*|\tilde u_n|^p)|\tilde u_n|^p\,dx
 		= \int_{\mathbb{R}^N}(I_\mu*|\tilde u|^p)|\tilde u|^p\,dx + o(1).
 	\end{equation}
 	Using \eqref{eq7.4}, \eqref{eq7.5} and the weak lower semicontinuity of the $H^s$-norm, we obtain
 	\begin{equation*}
 		m(c,\alpha)
 		\le J_\alpha(\tilde u)
 		\le \liminf_{n\to\infty}J_\alpha(\tilde u_n)
 		= \liminf_{n\to\infty}J_\alpha(u_n)
 		= m(c,\alpha),
 	\end{equation*}
 	so $J_\alpha(\tilde u)=m(c,\alpha)$. Comparing the kinetic parts in the definition of $J_\alpha$ and using \eqref{eq7.4}--\eqref{eq7.5}, we get
 	\[
 	\|(-\Delta)^{\frac{s}{2}}\tilde u_n\|_2^2
 	\to \|(-\Delta)^{\frac{s}{2}}\tilde u\|_2^2,
 	\]
 	and hence
 	\[
 	\|\tilde u_n\|_{H^s(\mathbb{R}^N)}\to\|\tilde u\|_{H^s(\mathbb{R}^N)}.
 	\]
 	Therefore $\tilde u_n\to\tilde u$ strongly in $H^s(\mathbb{R}^N)$, that is,
 	\[
 	u_n(\cdot+y_n)\to\tilde u\quad\text{strongly in }H^s(\mathbb{R}^N),
 	\]
 	and the lemma is proved.
 \end{proof}

 \noindent\textbf{Proof of Theorem \ref{Thm1.4}.}
 Lemma~\ref{Lem7.2} implies the existence of a minimizer $\tilde u\in S_c$ such that
 \[
 J_\alpha(\tilde u)=m(c,\alpha).
 \]
 By the fractional Pólya--Szegő inequality and the Riesz rearrangement inequality, the Schwarz
 symmetrization $|\tilde u|^{*}$ satisfies $|\tilde u|^{*}\in S_c$ and
 \[
 J_\alpha(|\tilde u|^{*})\le J_\alpha(\tilde u).
 \]
 Hence we may assume from the beginning that $\tilde u\ge 0$ is radially symmetric and
 radially decreasing.
 
 Since $\tilde u$ is a constrained minimizer of $J_\alpha$ on $S_c$, there exists
 $\lambda\in\mathbb{R}$ such that $\tilde u$ is a weak solution of
 \begin{equation*}
 	(-\Delta)^s \tilde u
 	= \lambda \tilde u
 	+ \alpha (I_\mu*|\tilde u|^q)|\tilde u|^{q-2}\tilde u
 	+ (I_\mu*|\tilde u|^p)|\tilde u|^{p-2}\tilde u
 	\quad\text{in }\mathbb{R}^N.
 \end{equation*}
 By the strong maximum principle for the fractional Laplacian, $\tilde u>0$ in $\mathbb{R}^N$.
 
 Multiplying the above equation by $\tilde u$ and integrating over $\mathbb{R}^N$, we obtain
 \[
 \|\tilde u\|^2
 = \lambda c^2
 + \alpha \int_{\mathbb{R}^N}(I_\mu*|\tilde u|^q)|\tilde u|^q\,dx
 + \int_{\mathbb{R}^N}(I_\mu*|\tilde u|^p)|\tilde u|^p\,dx.
 \]
 On the other hand,
 \[
 m(c,\alpha)=J_\alpha(\tilde u)
 = \frac{1}{2}\|\tilde u\|^2
 -\frac{\alpha}{2q}\int_{\mathbb{R}^N}(I_\mu*|\tilde u|^q)|\tilde u|^q\,dx
 -\frac{1}{2p}\int_{\mathbb{R}^N}(I_\mu*|\tilde u|^p)|\tilde u|^p\,dx.
 \]
 Combining these identities, we get
 \begin{equation*}
 	\begin{aligned}
 		\lambda c^2
 		&= 2m(c,\alpha)
 		+ \alpha\Bigl(\frac{1}{q}-1\Bigr)
 		\int_{\mathbb{R}^N}(I_\mu*|\tilde u|^q)|\tilde u|^q\,dx \\
 		&\qquad
 		+ \Bigl(\frac{1}{p}-1\Bigr)
 		\int_{\mathbb{R}^N}(I_\mu*|\tilde u|^p)|\tilde u|^p\,dx.
 	\end{aligned}
 \end{equation*}
 Since $m(c,\alpha)<0$ and $p,q>1$, we have
 \[
 \frac{1}{q}-1<0,\qquad \frac{1}{p}-1<0,
 \]
 so
 \[
 \lambda c^2 < 2m(c,\alpha) < 0,
 \]
 which shows that $\lambda<0$.
 
 Therefore $\tilde u$ is a positive, radially symmetric, radially decreasing ground state solution
 of \eqref{eq1.1} on $S_c$, and Theorem~\ref{Thm1.4} is proved.
 \qed

\section*{Acknowledgment}

%We express our gratitude to the anonymous referee for their meticulous review of our manuscript and valuable feedback provided for its enhancement. 
This work is supported by National Natural Science Foundation of China (12301145, 12261107, 12561020) and Yunnan Fundamental Research Projects (202301AU070144, 202401AU070123). 

\medskip
{\bf Data availability:}  Data sharing is not applicable to this article as no new data were created or analyzed in this study.

\medskip
{\bf Conflict of Interests:} The author declares that there is no conflict of interest.

\bibliographystyle{plain} 
\bibliography{ref} 
\end{document}